\titlespacing{\paragraph}{0em}{0em}{0.5em}
\titlespacing{\subparagraph}{0em}{0em}{0.5em}
\theoremstyle{plain}
\newtheorem{Theo}{Theorem}[section] %compteur commençant par le numéro de la section (on pourrait aussi faire commencer par le numéro de la sous-section - remplacer "section" par "subsection")
\newtheorem{Pro}[Theo]{Proposition}        %même compteur que pour les théorèmes      
\newtheorem{Lem}[Theo]{Lemma}            %etc...
\newtheorem{cor}[Theo]{Corollary}
\newtheorem{prop}[Theo]{Properties}
\theoremstyle{definition}
\newtheorem{Defi}[Theo]{Definition}
\newtheorem{exam}[Theo]{Example}
\newtheorem{nota}[Theo]{Notation}
\newtheorem{hyp}[Theo]{Convention}
\theoremstyle{remark}
\newtheorem{rem}[Theo]{Remark}
\def\ogg~{{\rm \og}}   % guillemets ouvrants
\def\nn{\noindent}
\def\q{\nn}
\def\qq{\nn\quad}
\def\qqq{\nn\quad\quad}
\def\emptyset{\varnothing}
\def\NN{{\mathbb N}}    %naturels
\def\ZZ{{\mathbb Z}}     %entiers relatifs
\def\RR{{\mathbb R}}    %réels
\def\AA{{\mathbb A}}    % espace affine 
\def\PP{{\mathbb P}}
   \def\cM{{\mathcal M}}      \def\cT{{\mathcal T}} \def\cC{{\mathcal C}}   \def\cO{{\mathcal O}}         \def\cK{{\mathcal K}}     \def\cL{{\mathcal L}} \def\cR{{\mathcal R}}    
 \def\mfp{{\mathfrak p}}                           
\newcommand\ct{\operatorname{cotan}}         %cotangente
\newcommand{\dx}{\partial_x}                    
\newcommand{\dy}{\partial_y}
\newcommand{\car}{\operatorname{Card}}
\newcommand{\Deg}{\operatorname{deg}}
\newcommand{\disf}[2]{D^+(#1,#2)}
\newcommand{\diso}[2]{D^-(#1,#2)}
\newcommand{\fdisf}[2]{\cO(D^+(#1,#2))}
\newcommand{\fdiso}[2]{\cO(D^-(#1,#2))}
\newcommand{\h}[1]{\mathscr{H} (#1)}
\newcommand{\A}[1]{\AA^{1,\mathrm{an}}_{#1}}
\newcommand{\Po}[1]{\PP^{1,\mathrm{an}}_{#1}}
\newcommand{\D}[1]{\frac{\mathrm{d}}{\mathrm{d#1}}}
\newcommand{\LL}[2]{\cL_{#1}(#2)}
\newcommand{\Lk}[1]{\LL{k}{#1}}
\newcommand{\nsp}[1]{\rVert #1\rVert_{\mathrm{Sp}}}
\newcommand{\Nsp}[2]{\rVert #2\rVert_{\mathrm{Sp},#1}}
\newcommand{\piro}[2]{\pi_{#1/#2}}
\newcommand{\pik}[1]{\piro{#1}{k}}
\newcommand{\Log}[1]{\operatorname{Log}_{#1}}
\newcommand{\nor}[1]{\rVert #1\rVert}
\newcommand{\discf}[3]{D^+_{#1} (#2,#3)}
\newcommand{\disco}[3]{D^-_{#1} (#2,#3)}
\newcommand{\fdiscf}[3]{\cO(D^+_{#1} (#2,#3))}
\newcommand{\fcouf}[3]{\cO(C^+ (#1,#2,#3))}
\newcommand{\Couf}[4]{C^+_{#1} (#2,#3,#4)}
\newcommand{\Uni}[2]{\sigma_{#2/#1}}
\newcommand{\uni}[1]{\Uni{k}{#1}}
\newcommand{\Fonction}[4]{\begin{array}[c]{rcl} 
                            #1&\longrightarrow&#2\\ #3&\mapsto&
                                                                #4\\ \end{array}}
\newcommand{\fra}[1]{\frac{1}{#1}}
\newcommand{\Dmod}[2]{#1-\text{\bf Mod}(#2)}
\newcommand{\hh}[1]{\mathscr{H}(#1)}
\newcommand{\Rd}[1]{\cR^{#1,\mr{Sp}}}
\renewcommand\phi{\varphi}
\renewcommand\epsilon{\varepsilon}
\def\ct{\hat{\otimes}}
\def \ot{\otimes}
\def \<{\langle}
\def \>{\rangle}
\def\|{\rVert}
\def\*{\blacklozenge}
\def\Rm{\Rd{M}}
\def\b{\mathbf}
\def\mr{\mathrm}
\def\dT{\frac{\mr{d}}{\mr{dT}}}
\def\d{\frac{\mr{d}}{\mr{dS}}}
\def\dx{\frac{\mr{d}}{\mr{dS(x)}}}
\def\dz{\D{Z}}
\def\dy{\D{S(y)}}
\def \R+{\RR_{+}}
\def\Ak{\A{k}}
\def \Pk{\Po{k}}
\def \-1{^{-1}}
\def \Hx{\mathscr{H}(x)}
\def \Hy{\mathscr{H}(y)}
\def \Hz{\mathscr{H}(z)}
\def \rk{\tilde{k}}
\def \crk{\mathrm{char}(\rk)}
\def\loga{\Log{a}}
\def\forp{\operatorname{\b{Frob}}_p}
\def\Bigcup{\bigcup\limits}
\def\Lim{\lim\limits}
\def\Inf{\inf\limits}
\def\Min{\min\limits}
\def\Max{\max\limits}
\def\(({(\!(}
\def\)){)\!)}
\def\DD{\mathscr{D}}
\def\kac{\widehat{k^{alg}}}
\def\wac{\widehat{\Omega^{alg}}}  
\def\EE{\mathscr{E}}
\def\F{\mathscr{F}}
\numberwithin{equation}{section}
\begin{document}
\DeclareRobustCommand{\subtitle}[1]{\\#1}
\title{Spectrum of p-adic linear differential equations
  I\subtitle{\tiny{The shape of the spectrum}}}

% Information for first author
\author{Tinhinane A. AZZOUZ}
%    Address of record for the research reported here
\address{Tsinghua University, Yau Mathematical Sciences Center,
  Yanqi Lake Beijing Institute of Mathematical Sciences and applications,
  Beijing, China}

\email{azzouzta@bimsa.cn}
\thanks{The author was supported by NSFC Grant \#12250410239.}
% \dedicatory{This paper is dedicated to our advisors.}
\subjclass[2020]{Primary
  12H25; Secondary 14G22, 11F72}
\keywords{$p$-adic differential equations, Spectral theory, Berkovich
  spaces, Radius of convergence}
\begin{abstract}
  This paper extends our previous works \cite{Cons, Azz21} on
  determining the spectrum, in the Berkovich sense, of ultrametric
  linear differential equations. Our previous works focused on
  equations with constant coefficients or over a field of formal power
  series. In this paper, we investigate the spectrum of $p$-adic
  differential equations at a generic point on a quasi-smooth
  curve. This analysis allows us to establish a significant connection
  between the spectrum and the spectral radii of convergence of a differential
  equation when considering the affine line. Furthermore, the spectrum
  offers a more detailed decomposition compared to Robba's
  decomposition based on spectral radii \cite{Rob75a}.
\end{abstract}
\maketitle
\section{Introduction}
In the ultrametric setting, linear differential equations present
phenomena that do not appear over the complex field.  Indeed, the
solutions of such equations may fail to converge everywhere, even
without the presence of poles. This leads to a non-trivial notion of
radius of convergence, and its knowledge allows to obtain several
interesting information about the equation. Notably, it controls the
{\it finite dimensionality} of the de Rham cohomology. In practice, the radius of convergence is really hard to compute and it
represents one of the most complicate features in the theory of $p$-adic
differential equations \cite{and,np2,np3,np4}. The radius of
convergence can be expressed as the spectral norm of a certain
operator. A natural notion refining it is the entire spectrum
of this operator, in the sense of Berkovich.

In our previous works \cite{Cons, Azz21}, we determine the spectrum of differential equations over a field of power
series, and in the ultrametric case, differential equation with constant coefficients. The general
case in the $p$-adic situation is the subject of this paper, and it is
much more difficult, since it requires a fine improvement of
some of the fundamental theorems in the theory of $p$-adic
differential equations.

Let $(k,|.|)$ be an algebraically closed complete
ultrametric field of characteristic zero, and let $\Ak$ be the
Berkovich affine line. We fix $T$ to be a coordinate function on
$\Ak$. For any positive real number $r$ and $c\in k$, let $x_{c,r}$ be
the point of $\Ak$ associated to the multiplicative norm $k[T]\to
\R+$, $\sum_ia_i(T-c)^ i\mapsto \max_i|a_i|r^ i$. For a point $x\in
\Ak$, we denote by $\Hx$ the associated complete residue field, and by
$r_k(x)$ its radius (cf. \eqref{eq:60}).

This work is in line with the work on differential modules defined over a
generic point of a quasi-smooth\footnote{{\it Quasi-smooth} means that its sheaf
  of differential forms $\Omega_X$ is locally free of rank 1, see
  \cite[Definition~3.1.11]{Duc}.} curve, initiated  by Dwork
in \cite{Dwo73}. We will restrict ourselves to 
points $x$ of $\Ak\setminus k$, and consider differential module $(M,\nabla)$ of
finite rank over $(\Hx, d)$), where $d$
is a ``reasonable'' $k$-linear bounded derivation defined over $\Hx$. We will explain later why we do
not lose generality by such a restriction. For a
point $x\in\Ak\setminus k$ and a differential module $(M,\nabla)$ over
$(\Hx,d)$, we set $\Rd{(M,\nabla)}_1(x)\leq
\cdots\leq\Rd{(M,\nabla)}_n(x)$ to be the spectral radii of
convergence, as originally given in \cite{dwork} for
$\Rd{(M,\nabla)}_1(x)$, and generalized for $i\geq 2$ in \cite{Ked}.

In this setting, the spectrum of $(M,\nabla)$ is the spectrum in the sense of
Berkovich (see \cite[Chapter 7]{Ber})
$\Sigma_{\nabla,k}(\Lk{M})$ of $\nabla$ as an element of $\Lk{M}$, the
$k$-Banach algebra of $k$-linear bounded endomorphisms of $M$. This
spectrum is a subset of $\Ak$ instead of just $k$, and also enjoys the
same nice properties as in the complex case, like non-emptiness
and compactness. To avoid any confusion, we
fix another coordinate function $S$ on $\Ak$ for
differential modules.

Now let us comeback to the relation between
the spectrum and the radius of convergence, consider a point $x\in
\Ak\setminus k$ and a differential module $(M,\nabla)$ over
$(\Hx,\d)$. On the one hand we have $\nsp{\nabla}=\lim\limits_{n\to\infty}\nor{\nabla^n}^{\fra{n}}=\frac{\omega}{\Rd{(M,\nabla)}_1(x)}$,
where $\omega:=\lim|n!|^{\fra{n}}$ (cf. \cite[p. 676]{dwork} or \cite[Definition~9.4.4]{Ked}). On the
other hand, the spectral norm $\nsp{\nabla}$ is
also equal to the radius of the smallest closed disk centered at zero and
containing $\Sigma_{\nabla,k}(\Lk{M})$ (cf. \cite[Theorem~7.1.2]{Ber}). In our first computation of
the spectrum in \cite{Cons}, we prove that the spectrum of a linear differential
equation with constant coefficients $(M,\nabla)$ is a finite union of
closed disks $\cup_i D_i$. Furthermore, we observe that for each
$\Rd{(M,\nabla)}_i(x)$ there exists $D_j$ such that the smallest closed disk centered at zero
containing $D_j$ has radius equal to
$\frac{\omega}{\Rd{(M,\nabla)}_i(x)}$, and conversely for each
$D_i$. We may ask ourselves, how far can this relations between the
spectrum and the radii of convergence be generalized? In our work \cite{Azz21}, we
observe another phenomenon, the spectrum is intrinsic to the choice
of the derivation. This means that given a differential module
$(M,\nabla)$ over $(\Hx,d)$ for some $x\in \Ak\setminus k$ and $g\in
\Hx\setminus\{0\}$, then even if the kernel of $(M,\nabla)$ and
$(M,g\nabla)$  define the same differential system, the respective
spectra of these differential modules may differ. For exemple, suppose
that $k$ is trivially valued and let $x_{0,r}\in \Ak\setminus k$ with
$r<1$, then the spectrum of $(\Hx,S\d)$ is equal to $\ZZ\cup\{x_{0,1}\}$
(cf. \cite[Proposition~3.7]{Azz21}, this computation is generalized in
Section \ref{sec:bf-spectr-deriv}), however the spectrum of $(\Hx,\d)$
is equal to the closed disk $\disf{0}{\fra{r}}$ (cf. \cite{Cons}). So, the
following questions arise:
\begin{itemize}
\item What is the most suitable choice of derivations?
\item If we choose a derivation different from $\d$, does the relation
  between the spectrum and the radii of convergence remain?
\end{itemize}
The derivation $\d$ appears to be the most natural, however we quickly
realize that the determination of the spectrum becomes very
hard in this case. Indeed, the usual technique used for the computation of the
radii, like the ramification of the indeterminate or the push-forward by the Frobenius map, cannot be
used for the determination of the spectrum. That is why we privilege
the use of the derivation $S\d$ in \cite{Azz21} to determine the
spectrum of a differential module over a field of formal power
series. The interesting part is that by choosing $S\d$, not only the
determination of the spectrum becomes
affordable, but in this situation we can also recover all the data of radii of
convergence and others information in the spectrum. For these reasons, in
this paper, for a point $x_{c,r}\in \Ak\setminus k$, we claim that
$(S-c)\d$ is the most convenient choice.

In this paper we focus mainly on the case where $\crk=p>0$, and
provide an algorithm to determine the spectrum of any differential
module $(M,\nabla)$ over $(\Hx,(S-c)\d)$, with $x=x_{c,r}$ in
$\Ak$. Moreover, we establish a link between the spectrum
and all radii of convergence, which is the purpose of the main result of the paper. For $(\Omega,|.|)$ an extension of $(k,|.|)$, we define the following map
  \begin{equation}
    \Fonction{\delta_{(\Omega,|.|)}:\Omega}{\R+}{z}{\inf_{n\in \ZZ}|z-n|.}
  \end{equation}
  We may denote it only by $\delta$ instead of
  $\delta_{(\Omega,|.|)}$. We denote by $\pik{\Omega}:\A{\Omega}\to
  \Ak$ the canonical projection. The main result of the paper is the following.

 \begin{Theo}[Theorem~\ref{sec:bf-spectr-comp-3}]\label{sec:introduction-1}
  Assume that $\crk=p>0$ and $x:=x_{0,r}\in \Ak\setminus k$. Let
  $(M,\nabla)$ be a differential module $(\Hx, S\d)$. Let
  $\forp:\Ak\to \Ak$, $S\mapsto S^p$. We denote by $\cR_a(x)$ the spectral
  radii of $(\Hx,S\d-a)$ and $\forp^l(x)$ by $x^{p^l}$.
\begin{itemize}

\item There exist $z_1,\cdots,z_{\nu}\in \Ak\setminus k$ and
  $a_1,\cdots, a_\mu\in k$, such that
  \[\Sigma_{\nabla,k}(\Lk{M})=\{z_1,\cdots,z_\nu, a_1,\cdots,
    a_\mu\}+\ZZ_p,\]
 where $z_i$ has the same type as $x$, and $(\nu,\mu)$ is not equal to
 $(0,0)$.
\item We can choose $z_i$ and $a_j$ such that the set $\{z_1,\cdots,z_\nu, a_1,\cdots,
    a_\mu\}$ has minimal cardinality. Indeed it is enough to keep only
    $z_i$ and $a_j$, for which we have $\{z_i\}+\ZZ_p\cap
    \{z_{i'}\}+\ZZ_p=\emptyset$ and $\{a_j\}+\ZZ_p\cap \{a_{j'}\}+\ZZ_p=\emptyset$
    for $i\ne i'$ and $j\ne j'$.
  
  \item We choose $\{z_1,\cdots,z_\nu, a_1,\cdots,
    a_\mu\}$ to be minimal. Then we have a unique (up to an isomorphism) decomposition
    \begin{equation}\label{eq:78}
      (M,\nabla)=\bigoplus_{i=1}^{\nu}(M_{z_i},\nabla_{z_i})\oplus \bigoplus_{j=1}^{\mu}(M_{a_j},\nabla_{a_j}),
    \end{equation}
    such that, $\Sigma_{\nabla_{z_i},k}(\Lk{M_{z_i}})=\{z_i\}+\ZZ_p$
    and $\Sigma_{\nabla_{a_j},k}(\Lk{M_{a_j}})=\{a_j\}+\ZZ_p$.
    \item Let $c_i\in k$ and $r_i>0$ such that $z_i=x_{c_i,r_i}$. If
    $|p|^{l}\leq r_i< |p|^{l-1}$, with $l\in \NN\setminus\{0\}$, then $\car(\{z_i\}+\ZZ_p)=p^l$ and $\{z_i\}+\ZZ_p=\{x_{c_i,r_i},
    x_{c_i+1,r_i},\cdots, x_{c_i+p^l-1,r_i}\}$. If $r_i\geq1$ then we
    have $\car(\{z_i\}+\ZZ_p)=1$ and
    $\{z_i\}+\ZZ_p=\{x_{c_i,r_i}\}$.
  \item If $r_i>1$, let $P_{z_i}(S\d)$ be a differential polynomial
    associated to $(M_{z_i},\nabla_{z_i})$. Then the image by
    $\pi_{\widehat{\Hx^{alg}}/k}$ of all roots of $P_{z_i}(T)$ (the
    commutative polynomial associated to $P_{z_i}(S\d)$) is equal to $z_i$. 
  \item If $|p|^l<r_i\leq|p|^{l-1}$, let $P_{z_i}(p^lS\d)$ be a differential
    polynomial associated to $(\forp^l)_*(M,\nabla)$ (as a differential
    module over $(\h{x^{p^l}},p^lS\d)$). Then the image by
    $\pi_{\widehat{\h{x^{p^l}}^{alg}}/k}$ of all roots of $P_{z_i}(T)$
    (the commutative polynomial associated to $P_{z_i}(p^lS\d)$) is equal to $\{x_{c_i,r_i},
    x_{c_i+1,r_i},\cdots, x_{c_i+p^l-1,r_i}\}$. In the special case
    where $r_i=|p|^{l-1}$ we have $\{x_{c_i,r_i},
    x_{c_i+1,r_i},\cdots, x_{c_i+p^l-1,r_i}\}=\{x_{c_i,r_i},
    x_{c_i+1,r_i},\cdots, x_{c_i+p^{l-1}-1,r_i}\}$.

  \item If $r_i\geq 1$. For all $a\in k$, the differential module $(M_{z_i},\nabla_{z_i}-a)$
    is pure (all its spectral radii are equal). For $a\in \disf{c_i}{r_i}\cap k$ we have
    \begin{equation}
      \Rd{(M_{z_i},\nabla_{z_i}-a)}_1(x)=\frac{\omega}{r_i}r,
    \end{equation}
    and for all $a\in k\setminus \disf{c_i}{r_i}$
    \begin{equation}
      \Rd{(M_{z_i},\nabla_{z_i}-a)}_1(x)=\frac{\omega}{|a-c_i|}r.
    \end{equation}
 \item If $|p|^l\leq r_i<|p|^{l-1}$. For all $a\in k$, the differential module $(M_{z_i},\nabla_{z_i}-a)$
    is pure. We have for all $a\in \bigcup_{j=0}^{p^l-1} \disf{c_i+j}{r_i}\cap k$
    \begin{equation}
      \Rd{(M_{z_i},\nabla_{z_i}-a)}_1(x)=\left(\frac{|p|^l\omega}{r_i}\right)^{\fra{p^l}}r,
    \end{equation}
    and for all $a\in k\setminus \bigcup_{j=0}^{p^l-1} \disf{c_i+j}{r_i}$
    \begin{equation}
      \Rd{(M_{z_i},\nabla_{z_i}-a)}_1(x)=\cR_{a-c_i}(x).
    \end{equation}
  \item For all $a\in k$, the differential module
    $(M_{a_i},\nabla_{a_i}-a)$ is pure. More precisely, for all $a\in
    \{a_i\}+\ZZ_p$, $(M_{a_i},\nabla_{a_i}-a)$ is solvable (all its radii
    are equal to $r$), and for all $a\in
    k\setminus \{a_i\}+\ZZ_p$, we have $\Rd{(M_{a_i},\nabla_{a_i}-a)}_1(x)=\cR_{a-a_i}(x)$.
\end{itemize}
\end{Theo}

Let $(M,\nabla)$ be as in
Proposition~\ref{sec:introduction-1} and
$\Sigma_{\nabla,k}=\{\omega_1,\cdots,\omega_\mu\}+\ZZ_p$ be the
spectrum of $\nabla$, where $\{\omega_1,\cdots,\omega_\mu\}$ has minimal
cardinality and $\omega_i\in \Ak$. We emphasize that if
$(M,\nabla)=\bigoplus_{i=1}^\mu(M_{\omega_i},\nabla_{\omega_i})$ is as in
\eqref{eq:78}, then
\begin{equation}
  \label{eq:77}
  \underbrace{\Rd{(M_{\omega_1},\nabla_{\omega_1})}_1(x),\cdots,\Rd{(M_{\omega_1},\nabla_{\omega_1})}}_{\dim
  M_{\omega_1} \text{ times}},\cdots, \underbrace{\Rd{(M_{\omega_\mu},\nabla_{\omega_\mu})}_1(x),\cdots,\Rd{(M_{\omega_\mu},\nabla_{\omega_\mu})}}_{\dim
  M_{\omega_\mu} \text{ times}},
\end{equation} 
after a suitable permutation corresponding exactly to $\Rd{(M,\nabla)}_1(x),\cdots,
\Rd{(M,\nabla)}_n(x)$. %

This theorem gives a kind of geometrical incarnation of Robba’s
 decomposition by spectral radii \cite{Rob75a}, that stating a differential module $(M,\nabla)$ over $(\Hx, d)$
 decomposes into a direct sum of pure differential modules, and each
 component has all its spectal radii equal to one of the value of the spectral radii of convergence. However, the
 decomposition provided by the spectrum is finer than the one provided by radii. Moreover, when we  vary $a\in k$,  the radii of $(M,\nabla-a)$
 are well controlled by the points of the spectrum of $\nabla$. Let us
 give an example where we see concretely how the decomposition
 provided by the spectrum is that finer.

 \begin{exam}
   Let $(M,\nabla)$ be a differential module over $(\h{x_{0,1}},S\d)$,
   associated to the differential polynomial
   $P(d)=d^2-[(a+b)S+b]d+bS(aS+b)$ with $a,b\in k$ and $1<|a|<|b|$. By Young's theorem,
   $(M,\nabla)$ is a pure differential module with radius equal to
   $\frac{\omega}{|b|}$. However, by Theorem~\ref{sec:introduction}, we have
   $\Sigma_\nabla=\{x_{0,|b|},x_{b,|a|}\}$. This means that we can
   decompose $(M,\nabla)$ with respect to the spectrum.
 \end{exam}
 
 More generally we have the following results.
\begin{Theo}[Theorem~\ref{sec:spectr-diff-equat-1}]
  Assume that $\crk=p>0$. Let $\cC$ be a quasi-smooth curve and $x\in
  \cC$ of type (2) or (3). Let $(M,\nabla)$ be a differential module over
  $(\Hx,d)$, where $d=\psi^*(S\d)$, $\psi$ is a finite étale morphism from a
  neighbourhood of $x$ to $\Pk$, with $\psi(x)=x_{0,r}$. Then there exist $z_1,\cdots,
  z_\mu\in\Ak$, with $(\{z_i\}+\ZZ_p)\cap
  (\{z_j\}+\ZZ_p)=\emptyset$ for $i\ne j$, such that: 
  \begin{equation}
     \Sigma_{\nabla,k}(\Lk{M})= \{z_1,\cdots,
     z_\mu\}+\ZZ_p.
   \end{equation}
 \end{Theo}

 Note that for any point $x$ in a quasi-smooth curve $\cC$, there exists a finite étale morphism $\psi: Y\to W$, where $Y$ is an affinoid neighbourhood of $x$
and $W$ an affinoid domain of the projective Berkovich line $\Pk$
(cf. \cite[Theorem~4.5.4]{Duc},\cite[Theorem~3.12]{np2}). If $x$ has a neighbourhood isomorphic  to an open disk or annulus,
then the link between the radii and the spectrum is similar to the
case of the affine line. More generally,  for the case of a
quasi-smooth curve, to establish the link
between the spectral radii of convergence and the spectrum we need extra material, notably the
continuity of the spectrum on branches out of $x$. This is the object of our work \cite{AzzC}.   

The proof of the main result requires to devalop some key techniques. The most important one
 is a kind of spectral version of Young's theorem, that states the following:
\begin{Theo}[Theorem~\ref{sec:spectr-vers-youngs-9}]\label{sec:introduction}
   Let $(\Omega,|.|)$ be a complete extension of $(k,|.|)$ and
   $d:\Omega\to \Omega$ be a $k$-linear bounded
    derivation. We set  $\pik{\wac}:\A{\wac}\to \Ak$ to be the
    canonical projection. Let $(M,\nabla)$ be a differential module over
    $(\Omega,d)$, $P(T)=\sum_{i=0}^{n-1}a_i T^i+T^n\in \Omega[T]$ and $\{z_1,\cdots,
    z_n\}\subset \Omega^{alg}$ be the multiset of the roots of
    $P(T)$. Suppose that in some basis the associated matrix of
    $(M,\nabla)$ is
\begin{equation}G=\left(
     \raisebox{0.5\depth}{%
       \xymatrixcolsep{1ex}%
       \xymatrixrowsep{1ex}%
       \xymatrix{0\ar@{.}[rrr]& & & 0&-a_0\ar@{.}[dddd] \\
        1\ar@{.}[rrrddd]& 0\ar@{.}[rr]\ar@{.}[rrdd]& &0\ar@{.}[dd]& \\
        0\ar@{.}[dd]\ar@{.}[rrdd]& &  &  &\\
        & &  &0& \\
        0\ar@{.}[rr]& &0&1& -a_{n-1}\\ }%
        }
       \right)
       .\end{equation}
    If $ \min_i
    r_k(\pik{\wac}(z_i))>\nor{d}$ ($\nor{d}$ is the operator norm of $d$), then
    \begin{equation}\Sigma_{\nabla,k}(\Lk{M})=
      \pik{\wac}(\{z_1,\cdots,z_n\}).\end{equation}
  \end{Theo}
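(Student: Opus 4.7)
The plan is to reduce the Berkovich spectrum computation to a rank-one situation via triangularization of the companion matrix, after extending scalars to a field where $P$ splits. By a standard faithfully flat extension argument preserving $\nor{d}$ and the radii $r_k(\pik{\wac}(z_i))$, one may assume from the outset that $\Omega=\wac$, so that $P$ factors completely as $\prod_{i=1}^n(T-z_i)$.

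Next, choose $U\in\GL_n(\wac)$ with $U^{-1}GU$ upper triangular and diagonal $(z_1,\dots,z_n)$. In the new basis, the matrix of $\nabla$ is $U^{-1}GU+U^{-1}d(U)$, and $(M,\nabla)$ acquires a $\nabla$-stable filtration $0=M_0\subsetneq M_1\subsetneq\cdots\subsetneq M_n=M$ whose rank-one successive quotients $M_i/M_{i-1}$ take the form $(\wac,d+z_i+\eta_i)$, where $\eta_i$ is the $(i,i)$-entry of $U^{-1}d(U)$. A careful choice of $U$---for instance, peeling off eigenvalues one at a time via $\nabla$-stable rank-one subobjects rather than a naive Vandermonde triangularization---should keep $|\eta_i|\leq\nor{d}$, so that $r_k(\pik{\wac}(z_i+\eta_i))=r_k(\pik{\wac}(z_i))>\nor{d}$ persists by ultrametric invariance of $r_k$ under perturbations of norm at most $r_k$.

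For each rank-one piece $(\wac,d+z)$ with $r_k(\pik{\wac}(z))>\nor{d}$, I would establish $\Sigma_{d+z,k}(\LL{k}{\wac})=\{\pik{\wac}(z)\}$ by a Neumann-series argument. For $x\in\Ak\setminus\{\pik{\wac}(z)\}$, a Berkovich-geometric lower bound gives $\Nor{\wac\ct\Hx}{z\ct 1-1\ct T(x)}\geq r_k(\pik{\wac}(z))>\nor{d}$, so the expansion
\begin{equation*}
(d+z-T(x))^{-1}=(z-T(x))^{-1}\sum_{m\geq 0}\bigl(-(z-T(x))^{-1}d\bigr)^m
\end{equation*}
converges to a bounded inverse; the reverse inclusion $\pik{\wac}(z)\in\Sigma$ comes from an approximate eigenvector. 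Applying compatibility of the spectrum with $\nabla$-stable short exact sequences along the filtration then yields $\Sigma_{\nabla,k}(\Lk{M})=\pik{\wac}(\{z_1,\dots,z_n\})$.

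The hard part is the triangularization step: guaranteeing $|\eta_i|\leq\nor{d}$ for all $i$. A generic triangularization---for instance by a Vandermonde matrix in the $z_i$---can produce $U^{-1}d(U)$ of much larger norm, especially when the $z_i$ cluster or when $d(z_i)$ is large relative to $\nor{d}$. Overcoming this requires exploiting the specific companion-matrix structure, most likely by performing the triangularization incrementally and splitting off one $(d+z_i)$-factor at a time with careful norm control at each step. The secondary technical point---the Berkovich estimate $\Nor{\wac\ct\Hx}{z\ct 1-1\ct T(x)}\geq r_k(\pik{\wac}(z))$ for $x\neq\pik{\wac}(z)$---is more routine but essential to the Neumann bound.
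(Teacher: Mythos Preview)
Your triangularization step has a structural gap, not just a norm-control difficulty. If $U\in\GL_n(\wac)$ puts $G$ in upper-triangular form, the connection matrix in the new basis is $U^{-1}GU + U^{-1}d(U)$ (up to sign convention), and there is no reason for $U^{-1}d(U)$ to be upper-triangular: its lower-triangular entries destroy the flag, so the ``first $i$ basis vectors'' do \emph{not} form a $\nabla$-stable filtration. Your proposed remedy, peeling off $\nabla$-stable rank-one subobjects one at a time, is circular: it presupposes that $P(d)$ factors into linear factors $(d-w_1)\cdots(d-w_n)$ in the non-commutative ring $\DD_{\wac}$, which is false in general even over an algebraically closed differential field (e.g.\ the Airy operator $d^2-t$ is irreducible). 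The hypothesis $\min_i r_k(\pik{\wac}(z_i))>\nor{d}$ does not obviously produce such a factorization either; you would need to prove this, and it is essentially as hard as the theorem itself.

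The paper sidesteps this entirely. For the inclusion $\Sigma_\nabla\subset\pik{\wac}(\{z_i\})$, it never triangularizes over $\Omega$ or $\wac$. Instead, for each test point $y\in\Ak$ it performs a change of basis by a matrix $U$ with entries in $k\ct_k\Hy$ (a Pascal-type matrix in powers of $T(y)$); since $d$ acts trivially on $\Hy$, the correction term $U^{-1}(d\otimes 1)U$ equals $d\otimes 1$, and one is reduced to inverting a new companion matrix $G_y$ (the companion of $P(T+T(y))$) plus the small perturbation $\alpha d$. The estimate $|z_i\ot 1-1\ot T(y)|\geq r_k(\pik{\wac}(z_i))$ that you identified then gives $\nor{G_y^{-1}}\leq 1$. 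For the reverse inclusion, the paper factors $P(d)$ over $\Omega$ into pieces $P_{\omega_\nu}(d)\cdots P_{\omega_1}(d)$ grouped by the \emph{distinct} images $\omega_j\in\Ak$, via an iterated Newton-polygon factorization (Kedlaya's lemma), and then applies the weak version to each $P_{\omega_j}$ together with the exact-sequence compatibility. Your rank-one Neumann-series computation is essentially correct and matches the $n=1$ case of the paper's argument; it is the reduction to rank one that fails.
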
%
The other key technique is to provide the relation between the
 spectrum of a differential module and the spectrum of its
 push-forward by an étale morphisms (mainly by the Frobenius map).

The paper is organized as follows. Section~\ref{sec:preliminaries}
recalls all the necessary material from Berkovich geometry. In
particular, we recall some important tools developed in our previous
work \cite{Cons}.

Section~\ref{sec:spectr-diff-module-4} is divided
into two parts. In the first one, we determine the spectrum of
$S\d:\Hx\to\Hx$, where $x\in \Ak\setminus k$. We treat separately the
case where $\crk=p>0$ from the case where $\crk=0$, and the case where
$x\in(0,\infty)$ form the case where $x\not\in (0,\infty)\cup
k$. Indeed, for each situation we use different methods. Assume $\crk=p>0$,
if $x\in (0,\infty)$, we use the push-forward  by Frobenius map, and
if $x\not\in (0,\infty)\cup k$, we use the push-forward by logarithm
map. In the case where $\crk=0$, if $x\in (0,\infty)$, we compute the
spectrum without extra method, and if $x\not\in (0,\infty)\cup k$ we
also use the push-forward by logarithm map. In the second part,  the
spectrum of $S\d$ allows us to
deduce easily the spectrum of any differential module with regular
singularities. On the other hand, in the $p$-adic case, we prove that the
variation of the spectrum satisfies a continuity property. However, if
$\crk=0$, the variation is surprisingly not continuous
at all.

In Section~\ref{sec:spectr-vers-youngs-10}, we provide the spectral version of Young's
theorem. Since in $\crk=0$, the radii
are either solvable or small, this spectral version permits to determine the spectrum of any differential module
with non solvable radii. In particular, we can recover the result
of our paper \cite{Azz21} easily and without using Turrittin's
theorem.

Section~\ref{sec:spectr-diff-module-5} is devoted to prove the main result of the
paper, i.e. the determination of the spectrum of any differential module
$(M,\nabla)$ over $(\h{x_{0,r}},S\d)$, when
$\crk=p>0$. For that we proceed as follows: we start by establishing, when
the radii are small, the link
between the spectrum and the spectral radii of convergence, especially when we
choose $ p^l(S-c)\d$ as derivation. Then we use the Frobenius map
(pull-back and push-forward) to compute the spectrum and establish the
link with the spectral radii of convergence. In the last part of the section,
we explain how we can deduce from the main result the shape of the spectrum of a
differential module $(M,\nabla)$ over $(\Hx,d)$, where $x$ is a point of
a quasi-smooth curve of type (2) or (3), and $d$ is well chosen
$k$-linear bounded derivation.

{\bf Acknowledgments} The author expresses her gratitude to
A. Pulita and J. Poineau for useful comments and suggestions. She also thanks F. Baldassarri,
F. Beukers, A. Ducros, F. Truc, H. Diao, E. Lecouturier and S. Palcoux for useful occasional discussions and suggestions.

\tableofcontents

\section{Preliminaries}\label{sec:preliminaries}
\subsection{Definitions and notations}
All rings are supposed to have a unit element. We will denote by $\RR$ the field of real numbers, by $\ZZ$ the ring
of integers and by $\NN$ the set of nonnegative integers. We set
$\R+$ for $\{r\in\RR;\; r\geq 0\}$ and $\R+^*$ for $\R+\setminus\{0\}$.

In all the paper, we fix $(k,|.|)$ to be an ultrametric complete field
of characteristic $0$. Let $|k|$ be $\{|a|;\; a\in k\}$. Let $E(k)$ be the category whose objects are
$(\Omega,|.|_\Omega)$, where $\Omega$ is a field extension of $k$,
complete with respect to the valuation $|.|_\Omega$, and whose
isomorphisms are isometric rings morphisms. For
$(\Omega,|.|_\Omega)\in E(k)$, let $\Omega^{alg}$  be an algebraic closure
of $\Omega$, the absolute value extends uniquely to an absolute value
defined on $\Omega^{alg}$. We denote by $\wac$ the completion of
$\Omega^{alg}$ with respect to this absolute value.

\subsubsection{Differential modules}Let $\Omega\in E(k)$, in all the
paper any $k$-linear derivation $d:\Omega\to
\Omega$ is supposed to be bounded. Recall that a differential module $(M,\nabla)$ over $(\Omega,d)$ is
an $\Omega$-vector space with $\dim_\Omega(M)<+\infty$ and equipped with
a $k$-linear map $\nabla: M\to M$, called connection of $M$,
satisfying $\nabla(fm)=df.m+f.\nabla(m)$ for all $f\in \Omega$ and
$m\in M$.

\begin{nota}
  Let $(\Omega,d)$ be a differential field. We denote by
  $\Dmod{d}{\Omega}$  the category of differential modules over
  $(\Omega,d)$ whose arrows are morphisms of differential modules.
\end{nota}
\begin{nota}
  Let $(\Omega,d)$ be a differential field. Let
  $\DD_{\Omega,d}:=\bigoplus_{i\in\NN}\Omega\cdot d^i$ be the ring of
  differential polynomials on $d$ with coefficients in $\Omega$, where
  the multiplication is non-commutative and defined as follows:
  $d\cdot f=d(f)+f\cdot d$ for all $f\in \Omega$. Let
  $P(d)=g_0+\cdots+ g_{n-1}d^{n-1}+d^n$ be a monic differential
  polynomial. The quotient $\DD_{\Omega,d}/\DD_{\Omega,d}\cdot P(d)$ is an
  $\Omega$-vector space of dimension $n$, the
  multiplication by $d$ induces a structure of differential module over
  $(\Omega,d)$. If there is no confusion about the derivation we will
  simply denote  by $\DD_{\Omega}$. We denote by $\text{\rm
    Hom}_{\DD_{\Omega,d}}(M,N)$ the set of arrows between the two
  objects $M$ and $N$ of $\Dmod{d}{\Omega}$.
\end{nota}

\subsubsection{\bf Analytic spaces}In this paper we will consider $k$-analytic spaces in the sense of
Berkovich (see \cite{Ber}). We denote by $\Ak$ (resp. $\Pk$) the
analytic affine (resp. projective)
line over the ground field, with coordinate $T$. Let $(X, \cO_X)$ be an analytic
space. For any $x\in X$, the residue field of the local ring $\cO_{X,x}$
is naturally valued  and we denote by $\Hx$ its completion.

Let $\Omega\in E(k)$ and $c\in\Omega$. For $r\in \R+^*$ we set

\begin{equation}\discf{\Omega}{c}{r}:=\{x\in \A{\Omega};\; |T(x)-c|\leq r\}\end{equation}
and
\begin{equation}\disco{\Omega}{c}{r}:=\{x\in \A{\Omega};\; |T(x)-c|< r\}\end{equation}
Denote by $x_{c,r}$ the unique point in the Shilov boundary of
$\discf{\Omega}{c}{r}$.
For $r_1$, $r_2\in\R+$, such $0<r_1\leq r_2$ we set
\begin{equation}\Couf{\Omega}{c}{r_1}{r_2}:=\{x\in\A{\Omega};\; r_1\leq |T(x)-c|\leq
  r_2\}\end{equation}
and for $r_1<r_2$ we set:
\begin{equation}\Couf{\Omega}{c}{r_1}{r_2}:=\{x\in\A{\Omega};\; r_1< |T(x)-c|<
  r_2\}\end{equation}
We will drop the subscript $\Omega$ when no confusion is possible.

  \begin{Defi}\label{sec:type-points-a_k}
    Let $x\in \Ak$ and $y\in\pik{\widehat{k^{alg}}}\-1(x)$. We define
    the radius of $x$ to be the value:
    \begin{equation}
      \label{eq:60}
    r_k(x)=\Inf_{c\in k^{alg}}|T(y)-c|.  
  \end{equation}

    It does not depend on the choice of $y$. We will drop $k$ when no
    confusion is possible. 
  \end{Defi}

  \begin{rem}
    Assume that $k$ is algebraically closed. For a point
    $x_{c,r}$ of type (1), (2) or (3), we have $r_k(x_{c,r})=r$.
  \end{rem}

  \begin{rem}
    Let $\Omega\in E(k)$ and $\pik{\Omega}:\A{\Omega}\to \Ak$ be the
    canonical projection. Let
    $x\in\Ak$ and $y\in \pik{\Omega}\-1(x)$. In general, we have
    \begin{equation}r_k(x)\ne r_\Omega(y).\end{equation}
    We will show further when the equality holds.
  \end{rem}

Assume that $k$ is algebraically closed. Let $c\in k$. The following
map
\begin{equation}\Fonction{[0,+\infty)}{\Ak}{r}{x_{c,r}}\end{equation}
induces a homeomorphism between $[0,+\infty)$ and its image.

\begin{nota}\label{sec:type-points-a_k-1}
  We will denote by $[x_{c,r},\infty)$ (resp. $(x_{c,r},+\infty)$) the image of
  $[r,+\infty)$ (resp. $(r,\infty)$), by $[x_{c,r},x_{c,r'}]$
  (resp. $(x_{c,r},x_{c,r'}]$, $[x_{c,r},x_{c,r'})$, $(x_{c,r},x_{c,r'})$)  the image
  of $[r,r']$ (resp. $(r,r']$, $[r,r')$, $(r,r')$).
\end{nota}

\begin{nota}\label{sec:bf-analytic-spaces}
  Let $X$ be an affinoid domain of $\Ak$ and $f\in\cO_X(X)$. We can
  see $f$ as an analytic morphism $X\to \Ak$ that we still denote by
  $f$. In particular, for a polynomial $\sum_i a_iT^i\in k[T]$ we
  denote by $\sum_ia_i x^ i$, the image of $x$ by this polynomial.
\end{nota}

 \subsubsection{\bf Universal points and fiber of a point under 
      extension of scalars }\label{sec:univ-points-fiber-1}

    \begin{Defi}\label{sec:univ-points-fiber-4}
      A point $x\in \Ak$ is said to be {\em universal} if, for
      any $\Omega\in E(k)$, the tensor norm on the algebra $\Hx\ct_k
      \Omega$ is multiplicative. In this case, it defines a point of
      $\pik{\Omega}\-1(x)$ in $\A{\Omega}$ that we denote by $\uni{\Omega}(x)$. 
    \end{Defi}

    \begin{Pro}\label{sec:univ-points-fiber-2}
      If $k$ is algebraically closed, any point $x\in
      \Ak$ is universal.
    \end{Pro}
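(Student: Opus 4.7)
The plan is to argue by cases on the type of the point $x \in \Ak$, using the classification of points on the Berkovich affine line over an algebraically closed field. In each case I would realize $\Hx \ct_k \Omega$ as the completed residue field $\Hy$ of an explicit $y \in \pik{\Omega}\-1(x)$, and then the claim reduces to the fact that the norm at a point of $\A{\Omega}$ is automatically multiplicative.

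For a type $1$ point $x = x_{c,0}$, one has $\Hx = k$ and $k \ct_k \Omega = \Omega$ is a field. For $x = x_{c,r}$ of type $2$ or $3$, the norm on $k[T]$ at $x$ is the Gauss norm $\sum_i a_i(T-c)^i \mapsto \max_i |a_i|\, r^i$, which is also multiplicative when extended by the same formula to $\Omega[T]$. The resulting point $y = x_{c,r} \in \A{\Omega}$ satisfies $\pik{\Omega}(y) = x$, and the inclusions $k[T] \hookrightarrow \Omega[T] \hookrightarrow \Hy$ induce a continuous $\Omega$-algebra map $\Hx \ct_k \Omega \to \Hy$ whose image contains $\Omega[T]$ and is therefore dense. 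A direct estimate on polynomials, exploiting that over an algebraically closed base any tensor can be represented with $k$-linearly independent $\Omega$-coefficients, then shows that this map is isometric, so the tensor seminorm coincides with the multiplicative norm of $\Hy$.

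For a type $4$ point $x$ (which occurs only if $k$ is not spherically complete), $x$ is the intersection in $\Ak$ of a strictly decreasing sequence of closed disks $D_n = \discf{k}{c_n}{r_n}$, with $r_n \to r_k(x) > 0$, and $\Hx$ is the completion of $k(T)$ under the norm $|P|_x = \lim_n |P|_{x_{c_n, r_n}}$. I would extend the construction to $\Omega$: the same formula still defines a decreasing sequence of multiplicative Gauss norms on $\Omega[T]$, and since $r_k(x) > 0$ they do not collapse, so the limit seminorm is again multiplicative and defines a point $y \in \A{\Omega}$ above $x$. Combining the type $2$/$3$ result at each level $n$ with the compatibility of $\ct_k \Omega$ with the projective limit of completions defining $\Hx$ identifies $\Hx \ct_k \Omega$ with $\Hy$.

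The main obstacle will be the type $4$ case, specifically verifying that the tensor seminorm on $\Hx \ct_k \Omega$ equals the Gauss-limit norm at $y$ rather than merely dominating it; this amounts to interchanging the completed tensor product with the projective limit of Banach fields defining $\Hx$. The types $2$ and $3$ cases boil down to the classical stability of Gauss norms under scalar extension, and type $1$ is immediate.
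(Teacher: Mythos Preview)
The paper gives no argument here; it simply cites \cite[Corollary~3.14]{poi}. Your case-by-case outline is essentially how one would unwind that reference, and the strategy is correct, but two points in your sketch deserve adjustment.

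In the type (2)/(3) case the isometry holds for a simpler reason than the one you give. For $x=x_{c,r}$ with $c\in k$, any $P=\sum_i a_i(T-c)^i\in\Omega[T]$ has the tensor representation $\sum_i (T-c)^i\otimes a_i$ in $k[T]\otimes_k\Omega$, which already bounds the tensor norm by $\max_i|a_i|\,r^i=|P|_y$; the reverse inequality is contractivity of the map $\Hx\ct_k\Omega\to\Hy$, and one then passes to the dense subalgebra $k(T)\otimes_k\Omega$ by inverting nonzero $g\in k[T]$. None of this uses that $k$ is algebraically closed; the hypothesis is needed only to guarantee that every type (2) or (3) point has the form $x_{c,r}$ with $c\in k$. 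Your remark about representing tensors with $k$-linearly independent $\Omega$-coefficients is true over any base field and is not the operative mechanism. For type (4), the obstacle you anticipate is milder than stated: no interchange of $\ct_k\,\Omega$ with a projective limit is required. Since $|T-c_n|_x\leq r_n$ for every $n$, the same monomial representation around $c_n$ gives tensor norm $\leq \max_i |a_i^{(n)}|\,r_n^i$ for each $n$, hence tensor norm $\leq |P|_y$ on $\Omega[T]$; then extend to $k(T)\otimes_k\Omega$ and use density in $\Hx\ct_k\Omega$ exactly as in the type (2)/(3) case.
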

    \begin{proof}
     See \cite[Corollary~3.14.]{poi}. 
    \end{proof}

    \begin{Theo}\label{sec:univ-points-fiber}
      Suppose that $k$ is algebraically closed. Let $\Omega\in E(k)$
      algebraically closed.
      \begin{itemize}
      \item If $x$ is of  type ($i$), where $i\in\{1,2\}$, then so is
        $\uni{\Omega}(x)$. If $x$ is of type ($j$), where $j\in\{3,4\}$, then
        $\uni{\Omega}(x)$ is of type ($j$) or (2).
      \item The fiber $\pik{\Omega}\-1(x)$ is connected and the
        connected components of
        $\pik{\Omega}\-1(x)\setminus\{\uni{\Omega}(x)\}$ are
        open disks with boundary $\{\uni{\Omega}(x)\}$. Moreover they
        are open in $\A{\Omega}$.
      \end{itemize}
    \end{Theo}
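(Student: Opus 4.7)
The approach is to handle the two assertions separately: a case analysis by Berkovich type for the first, and an explicit description of the fiber via semi-norms on $\Omega[T]$ for the second. Throughout, I work inside $\A{\Omega}$, using that both $k$ and $\Omega$ are algebraically closed, so every point is either rigid or a Gauss-type point $x_{c,r}$ with distinguished center.

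For the first assertion, I proceed case by case on the type of $x$. If $x$ is of type (1), it is rigid at some $c\in k\subseteq\Omega$, and $\uni{\Omega}(x)$ is the rigid point at $c$ viewed in $\A{\Omega}$, hence still of type (1). For type (2), $x=x_{c,r}$ has $r\in|k^\times|\subseteq|\Omega^\times|$, and the universal tensor norm on $\Hx\ct_k\Omega$ is recognized as the Gauss semi-norm on $\Omega[T-c]$, giving $\uni{\Omega}(x)=x_{c,r}\in\A{\Omega}$, of type (2). For type (3), $x=x_{c,r}$ with $r\notin|k^\times|$; either $r\in|\Omega^\times|$, in which case $\uni{\Omega}(x)=x_{c,r}$ is of type (2) over $\Omega$, or $r\notin|\Omega^\times|$ and $\uni{\Omega}(x)$ stays of type (3). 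For type (4), $x$ corresponds to a descending sequence of closed discs $D^+(c_n,r_n)\subseteq\A{k}$ with empty intersection in $k$; the universal point corresponds to the same nested sequence in $\A{\Omega}$, which either remains without $\Omega$-rational center (still type (4)) or acquires one $c\in\Omega$, giving $x_{c,\lim r_n}$ of type (2), since the limit radius then necessarily lies in $|\Omega^\times|$.

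For the second assertion, I identify $\pik{\Omega}^{-1}(x)$ with the Berkovich spectrum of the Banach algebra $\Hx\ct_k\Omega$. Universality of $x$ makes the tensor seminorm multiplicative and renders $\uni{\Omega}(x)$ the maximal element of the fiber in the domination order of semi-norms. For any $y\ne\uni{\Omega}(x)$ in the fiber, I would construct a maximal open disc $D^-(c_y,r_y)\subseteq\A{\Omega}$ containing $y$, entirely contained in $\pik{\Omega}^{-1}(x)$, and having $\{\uni{\Omega}(x)\}$ as its topological boundary: existence of such a disc relies on the tree structure of $\A{\Omega}$ and on the fact that the segment $[y,\uni{\Omega}(x)]$ lies in the fiber and exits only through $\uni{\Omega}(x)$. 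These open discs are pairwise disjoint, open in $\A{\Omega}$, and exhaust the complement; connectedness of the fiber follows because every $y\ne\uni{\Omega}(x)$ is joined to $\uni{\Omega}(x)$ along such a segment lying in one of them.

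The main obstacle is showing that the components really are open discs, rather than more exotic sub-trees of $\A{\Omega}$. This demands a careful analysis of which multiplicative seminorms on $\Omega[T]$ extend the one defining $x$ on $k[T]$, together with the delicate fact, ultimately an expression of universality, that no such seminorm can strictly dominate $\uni{\Omega}(x)$. Concretely, one must verify that if $y\in\pik{\Omega}^{-1}(x)$ and $y'$ lies on the open disc determined by the first edge of $[y,\uni{\Omega}(x)]$ at $y$, then $y'$ also belongs to the fiber, which requires identifying the corresponding residue field data and tracing them back through $\pik{\Omega}$.
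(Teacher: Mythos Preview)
The paper does not prove this theorem at all: its proof reads in full ``See \cite[Theorem~2.2.9]{np2}.'' So there is nothing to compare your argument against except the external reference; what you have written is an outline toward an independent proof.

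Your outline is in the right direction, but it is not a proof, and you essentially concede this yourself. Two points stand out. For the first assertion in the type (4) case, you assert that if the nested discs acquire a center $c\in\Omega$ then the limiting radius ``necessarily lies in $|\Omega^\times|$,'' which is exactly what forces type (2) rather than type (3); but you give no reason. The argument one needs is that for a type (4) point the extension $\Hx/k$ is immediate, so $|\Hx^\times|=|k^\times|\subseteq|\Omega^\times|$, and the value group of $\h{\uni{\Omega}(x)}$ is then generated by $|\Hx^\times|$ and $|\Omega^\times|$, hence equals $|\Omega^\times|$; this rules out type (3). Without this, the case is not settled.

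For the second assertion your final paragraph is an honest admission that the key step---showing the connected components of the punctured fiber are open discs and not more complicated subtrees---is not carried out. Saying ``one must verify that\ldots'' and describing what the verification would require is a plan, not a proof. The actual content here (that every point of the fiber is dominated by $\uni{\Omega}(x)$, and that the branches at $\uni{\Omega}(x)$ inside the fiber are genuine open discs of $\A{\Omega}$) is precisely what the cited reference supplies, and it takes real work; your sketch does not replace it. If you want a self-contained argument, you will need either the graded-reduction machinery (Temkin) or the explicit semi-norm analysis carried out in Poineau--Pulita.
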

    \begin{proof}
      See \cite[Theorem~2.2.9]{np2}.
    \end{proof}

    \begin{cor}\label{sec:berkovich-line}
      Let $x\in\Ak$ be a
     point of type (i), where $i\in\{2,3, 4\}$. Let $\Omega\in E(k)$
     algebraically closed such that there is no isometric
     $k$-embedding $\Hx\hookrightarrow \Omega$. Then \begin{equation}\pik{\Omega}\-1(x)=\{\uni{\Omega}(x)\}.\end{equation} 
   \end{cor}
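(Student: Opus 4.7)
My plan is to argue by contradiction using the preceding Theorem~\ref{sec:univ-points-fiber}. Assume that $\pik{\Omega}^{-1}(x)$ contains a point $y\ne\uni{\Omega}(x)$. By that theorem, $y$ belongs to some connected component $D$ of $\pik{\Omega}^{-1}(x)\setminus\{\uni{\Omega}(x)\}$, which is an open disk of $\A{\Omega}$ with boundary $\{\uni{\Omega}(x)\}$.

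My first step is to exhibit an $\Omega$-rational point inside $D$. Since $\Omega$ is algebraically closed, $D$ can be written as $\disco{\Omega}{c}{r}$ with $c\in\Omega$ and $r>0$, and its centre $c$ defines a type (1) point $y_c\in\A{\Omega}$ with $T(y_c)=c$ satisfying $|T(y_c)-c|=0<r$. Thus $y_c\in D\subset\pik{\Omega}^{-1}(x)$.

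The second step is to extract from $y_c$ the forbidden embedding. As $y_c$ is of type (1), its complete residue field is $\h{y_c}=\Omega$. The equality $\pik{\Omega}(y_c)=x$ means that the multiplicative seminorm on $k[T]$ corresponding to $x$ coincides with the restriction, along the natural inclusion $k[T]\hookrightarrow\Omega[T]$, of the seminorm on $\Omega[T]$ corresponding to $y_c$. Quotienting both sides by their respective kernels and then passing to completions yields an isometric $k$-linear embedding $\Hx\hookrightarrow\h{y_c}=\Omega$, which contradicts the hypothesis.

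The only real obstacle is to verify cleanly that the map produced in the last step is genuinely an isometric $k$-linear embedding into $\Omega$, but this is a direct unwinding of the functorial description of $\pik{\Omega}$ as the morphism of multiplicative seminorm spectra induced by $k[T]\hookrightarrow\Omega[T]$. The whole geometric content --- the fact that the non-trivial connected components of the fibre are open disks containing type (1) points --- is already packaged in Theorem~\ref{sec:univ-points-fiber}.
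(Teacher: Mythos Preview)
Your proof is correct and follows essentially the same approach as the paper: both argue by contradiction, invoke Theorem~\ref{sec:univ-points-fiber} to conclude that the complement of $\uni{\Omega}(x)$ in the fibre is a disjoint union of open disks, observe that any such disk contains a type~(1) point (you make explicit that $\Omega$ algebraically closed ensures such a disk has a centre in $\Omega$), and extract from that point the forbidden isometric $k$-embedding $\Hx\hookrightarrow\Omega$. The paper is simply terser about the last two steps.
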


   \begin{proof}
     Recall that $\pik{\Omega}\-1(x)\setminus \{\uni{\Omega}(x)\}$
     is a disjoint union of open disks
     (cf. Theorem~\ref{sec:univ-points-fiber}). Therefore, if it is
     not empty, it contains points of
     type (1) which gives rise to isometric $k$-embeddings
     $\Hx\hookrightarrow \Omega$, which contradicts the hypothesis.
   \end{proof}

   \begin{Lem}\label{sec:univ-points-fiber-3}
     Let $\Omega\in E(k)$ such that $k^{alg}\subset\Omega$. Let $x\in
     \Ak$. Then for any $y\in \pik{\widehat{k^{alg}}}\-1(x)$ we have:
     \begin{equation}r_{\Omega}(\Uni{\widehat{k^{alg}}}{\Omega}(y))=r_k(x).\end{equation}
   \end{Lem}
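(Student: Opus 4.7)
My plan is to reduce both radii to infimums computed at a common point $w\in\A{\wac}$ lying above $x$, $y$, and $y'$.

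Since $\kac$ is algebraically closed, Proposition~\ref{sec:univ-points-fiber-2} makes $y$ universal, so $w:=\Uni{\kac}{\wac}(y)\in\A{\wac}$ is well defined. By coherence of the universal extension under the tower $\kac\subset\Omega\subset\wac$ (a standard consequence of the compatibility of multiplicative tensor norms), $w=\Uni{\Omega}{\wac}(y')$, hence $\piro{\wac}{\Omega}(w)=y'$ and $\piro{\wac}{\kac}(w)=y$. Combining Definition~\ref{sec:type-points-a_k} with the ultrametric density argument (if $|c-c'|<|T(w)-c|$ then $|T(w)-c|=|T(w)-c'|$), one obtains
\begin{equation}
r_\Omega(y')=\inf_{c\in\wac}|T(w)-c|,\qquad r_k(x)=\inf_{c\in\kac}|T(w)-c|,
\end{equation}
the second identity using also that $\|\cdot\|_w$ restricts to $\|\cdot\|_y$ on $\kac[T]$. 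The inclusion $\kac\subset\wac$ immediately yields $r_\Omega(y')\leq r_k(x)$.

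For the reverse inequality I would show $|T(w)-c|\geq r_k(x)$ for every $c\in\wac$, by cases on the type of $y$. Type~(1) is trivial since $r_k(x)=0$. If $y$ is of type~(2) or~(3), write $y=x_{c_0,r_0}$ with $c_0\in\kac$ and $r_0=r_k(x)$; Theorem~\ref{sec:univ-points-fiber} forces $w=x_{c_0,r_0}\in\A{\wac}$, so expanding $T-c$ around $c_0$ yields $|T(w)-c|=\max(r_0,|c-c_0|)\geq r_0$. If $y$ is of type~(4), realise $y$ as a nested intersection $\bigcap_n D^+(c_n,r_n)$ with $c_n\in\kac$ and $r_n\downarrow r_k(x)$. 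By Theorem~\ref{sec:univ-points-fiber}, $w$ is either the analogous type~(4) point in $\A{\wac}$ or a type~(2) point $x_{\beta,r_k(x)}\in\A{\wac}$ for some $\beta\in\bigcap_n D^+(c_n,r_n)\subset\wac$. In both cases the points $x_{c_n,r_n}\in\A{\wac}$ converge to $w$, and a direct ultrametric check shows that the sequence $\max(r_n,|c-c_n|)$ is non-increasing (using $|c_{n+1}-c_n|\leq r_n$), so
\begin{equation}
|T(w)-c|=\lim_{n\to\infty}\max(r_n,|c-c_n|)\geq\lim_{n\to\infty}r_n=r_k(x).
\end{equation}

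The main obstacle is the type~(4) case, where one must justify both the description of $w$ supplied by Theorem~\ref{sec:univ-points-fiber} and the limit formula for $|T(w)-c|$ when $c\in\wac\setminus\kac$. When $w$ is of type~(2) the formula reduces to $|T(w)-c|=\max(r_k(x),|c-\beta|)$, which one matches with the limit by a short case split on the relative size of $|c-\beta|$ and $r_k(x)$. When $w$ is still of type~(4), the formula follows from the continuity of the Gauss seminorms under specialisation and the multiplicativity of the universal tensor norm. Combining the two directions then yields the claimed equality $r_\Omega(y')=r_k(x)$.
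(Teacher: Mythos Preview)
Your proof is correct. The strategy differs from the paper's only in that you pass up to $\A{\wac}$ and compare two infimums at a single point $w$, whereas the paper works directly in $\A{\Omega}$: it identifies $y'=\Uni{\kac}{\Omega}(y)$ explicitly (as $x_{c_0,r_k(x)}$ in types (2)/(3), or as the Shilov boundary of the intersection disk in the type (4) case when the nested disks acquire a rational center over $\Omega$) and reads off $r_\Omega(y')$ from that description. Your route is slightly more uniform conceptually but requires the compatibility $\Uni{\kac}{\wac}=\Uni{\Omega}{\wac}\circ\Uni{\kac}{\Omega}$, which the paper avoids; conversely, the paper's computation in $\A{\Omega}$ bypasses the limit argument you carry out for type (4). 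One small point: when you invoke Theorem~\ref{sec:univ-points-fiber} for types (2)/(3) to conclude $w=x_{c_0,r_0}$, that theorem only tells you the type of $w$; the actual identification follows (as in the paper) from the isometry $\cO(\discf{\kac}{c_0}{r_0})\ct_{\kac}\wac\simeq\cO(\discf{\wac}{c_0}{r_0})$, which is worth stating explicitly.
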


   \begin{proof} If $x$ is of type (1), then for any $y\in
     \pik{\widehat{k^{alg}}}\-1(x)$, the point 
     $\Uni{\widehat{k^{alg}}}{\Omega}(y)$ is of type (1). Hence, we
     obtain $r_{\Omega}(\Uni{\widehat{k^{alg}}}{\Omega}(y))=r_k(x)=0$.
     
   If $x$ is of type (2) or (3), then any $y\in
   \pik{\widehat{k^{alg}}}\-1(x)$ is of the form $x_{c,r_k(x)}$, where
   ${c\in k^{alg}}$. Since the morphism
   ${\fdiscf{\widehat{k^{alg}}}{c}{r_k(x)}\to \Hy}$ is isometric, then so
   is \begin{equation}
     \fdiscf{\Omega}{c}{r_k(x)}\to\Hy\ct_{\widehat{k^{alg}}}\Omega.\end{equation}
   Therefore, we have
   $\Uni{\widehat{k^{alg}}}{\Omega}(y)=x_{c,r_k(x)}$ in
   $\A{\Omega}$. Hence
   $r_\Omega(\Uni{\widehat{k^{alg}}}{\Omega}(y))=r_k(x)$.
   
   Now suppose that $x$ is a point of type (4), then for any
   $y\in\pik{\kac}\-1(x)$ there exists a family of nested disks
   $\EE$ indexed by $(I,\leq)$ such that $\bigcap_{i\in
     I}\discf{\kac}{c_i}{r_i}=\{y\}$. Note that we have
   $r_k(x)=r_k(y)=\inf_{i\in I}r_i$. Then we have:
   \begin{equation}\piro{\Omega}{\kac}\-1(y)=\bigcap_{i\in I}\discf{\Omega}{c_i}{r_i}.\end{equation}
We distinguish two cases: the first is $\bigcap_{i\in
  I}\discf{\Omega}{c_i}{r_i}=\{\Uni{\Omega}{\kac}(y)\}$. Then, we
have:
\begin{equation}r(\Uni{\kac}{\Omega}(y))=\inf_{i\in I}r_i=r_{\kac}(y)=r_k(x).\end{equation}
The second is $\bigcap_{i\in
  I}\discf{\Omega}{c_i}{r_i}=\discf{\Omega}{c}{r_{\kac}(y)}$, where
$c\in\Omega\setminus\kac$. Here, $\Uni{\kac}{\Omega}(y)$ coincides
with the Shilov boundary of
$\discf{\Omega}{c}{r_{\kac}(y)}$. Therefore we have
\begin{equation}r(\Uni{\kac}{\Omega}(y))=r_{\kac}(y)=r_k(x).\end{equation}\end{proof}

\subsubsection{\bf Sheaf of differential forms and étale morphisms}Here we do
not give the general definition of  sheaf of differential forms given in
\cite[\S 1.4.]{ber2}, but only how it looks like in the case of an analytic
domain of $\Ak$. Let $X$ be an analytic domain of $\Ak$. Let $T$ be
the global coordinate function on $\Ak$ fixed
above. It induces a global coordinate function $T$ on $X$. The sheaf
of differential forms $\Omega_{X/k}$ of $X$ is free with $\mr{dT}$ as
a basis.

Let $\dT:\cO_X\to \cO_X$
be the formal derivation with respect to $T$. In this setting the canonical
derivation $d_{X/k}$ satisfies:
\begin{equation}
  \label{eq:8}
  \Fonction{d_{X/k}: \cO_X(U)}{\Omega_{X/k}(U)}{f}{\dT(f)\cdot \mr{dT}},
\end{equation}
where $U$ is an open subset of $X$.

\begin{Lem}\label{sec:sheaf-diff-etale-3}
  Let $X$ and $Y$ be two connected analytic domain of $\Ak$ and let $T$
  (resp. $S$) be a coordinate function defined on $X$ (resp. $Y$). Let $\phi: Y\to X$
  be a finite morphism of $k$-analytic spaces and let
  $\phi^\#:\phi\-1 (\cO_X)\to\cO_Y$ be the induced sheaves
  morphism. If $\phi$ is étale then for each analytic sub domain $Y'$ of $Y$
  \begin{equation}
    \label{eq:9}
\Fonction{\phi^*(\Omega_{X/k})(Y')=(\phi\-1(\Omega_{X/k})(Y')\ot_{\phi\-1(\cO_X)(Y')}\cO_Y(Y')}{\Omega_{Y/k}(Y')}{h.dT\ot g}{\d(\phi^\#(T)) \phi^\#(h) g.dS}
\end{equation}
is an isomorphism of $\cO_Y(Y')$-modules (resp. $\cO_Y(Y')$-Banach
module if $Y'$ is an affinoid domain). If morover $X$ and $Y$ are
smooth, if the morphisms \eqref{eq:9} are isomorphisms then $\phi$ is étale. 
\end{Lem}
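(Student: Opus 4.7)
The plan is to reduce étaleness of $\phi$ to the vanishing of $\Omega_{Y/X}$ via the first fundamental exact sequence of differentials, and then to identify the map displayed in \eqref{eq:9} with the canonical pullback $\phi^*\Omega_{X/k}\to\Omega_{Y/k}$. Since both sheaves of differential forms are free of rank $1$, this canonical map is simply multiplication by a single scalar, which makes the analysis of the isomorphism condition straightforward.

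First, I would recall the analytic first fundamental exact sequence (cf.\ \cite{ber2} and \cite{Duc})
\[\phi^*\Omega_{X/k}\longrightarrow\Omega_{Y/k}\longrightarrow\Omega_{Y/X}\longrightarrow 0,\]
whose first arrow sends $dT\ot 1$ to $d_{Y/k}(\phi^\#(T))$. By \eqref{eq:8} applied on $Y$ with coordinate $S$, this equals $\d(\phi^\#(T))\cdot dS$. Extending $\cO_Y(Y')$-linearly produces exactly the morphism of \eqref{eq:9}. Because both $\Omega_{X/k}$ and $\Omega_{Y/k}$ are free of rank $1$ with bases $dT$ and $dS$ respectively, this morphism is multiplication by the scalar $\d(\phi^\#(T))\in\cO_Y(Y')$. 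Hence it is an isomorphism for every analytic sub domain $Y'\subset Y$ if and only if $\d(\phi^\#(T))$ is invertible in each $\cO_Y(Y')$, which is in turn equivalent to $\Omega_{Y/X}=0$.

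Next, I would invoke the standard criterion that a finite morphism between quasi\nbh smooth $k$\nbh analytic curves is étale if and only if $\Omega_{Y/X}=0$: flatness is automatic since $X$ and $Y$ are regular of dimension $1$, and unramifiedness in this setting coincides exactly with the vanishing of $\Omega_{Y/X}$ (see the general treatment in \cite{Duc}). Combined with the preceding paragraph, this yields the claimed biconditional.

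The potentially delicate point is the correct invocation of the analytic first fundamental exact sequence together with the automatic flatness of a finite morphism between quasi\nbh smooth curves; both are developed in Ducros' foundational work but need a careful citation. Once these ingredients are in place, the rest of the proof collapses to the explicit computation $\phi^*dT\mapsto d_{Y/k}(\phi^\#(T))=\d(\phi^\#(T))\cdot dS$, together with the triviality that multiplication by a scalar between two free rank\nbh one modules is an isomorphism precisely when that scalar is a unit.
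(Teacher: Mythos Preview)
Your argument is correct and in fact more detailed than the paper's own proof, which consists solely of the citation ``See~\cite[Proposition~3.5.3]{ber2}.'' You have effectively unpacked that reference: the identification of the map in \eqref{eq:9} with the canonical morphism $\phi^*\Omega_{X/k}\to\Omega_{Y/k}$ in the first fundamental exact sequence, followed by the equivalence between étaleness and the vanishing of $\Omega_{Y/X}$ for a finite morphism of quasi-smooth curves, is exactly the content of Berkovich's proposition specialized to this setting. So there is no genuine difference in approach; you have simply written out what the paper leaves to the cited source.
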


\begin{proof}
  See~\cite[Proposition~3.5.3]{ber2}.
\end{proof}

\begin{rem}\label{sec:sheaf-diff-etale-2}
  Note that, any morphism $\phi:Y\to X$ between two connected open analytic  domains
  of $\Ak$ is obtained by a convenient choice of an element $f$ of
  $\cO_Y(Y)$, which is the image of $T$ by $\phi^\#$. In this setting,
  assume that $\phi$ is finite, then  $\phi$ is étale if and only if $\d(f)$
  is invertible in $\cO_Y(Y)$.
\end{rem}

\begin{cor}
  Let $\phi:Y\to X$ be a finite morphism between connected open analytic domains of
  $\Ak$. If char($k$)$=0$, then for each $x\in X$ of type (2), (3) or
  (4) there exists an affinoid neighbourhood $U$ of $x$ in $X$ such that
  $\phi|_{\phi\-1(U)}:\phi\-1(U)\to U$ is an étale morphism.
\end{cor}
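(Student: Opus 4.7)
The plan is to reduce the claim to a local non-vanishing statement for the formal derivative. Writing $f:=\phi^{\#}(T)\in \cO_Y(Y)$ and $\d=\D{S}$ (with $S$ the coordinate on $Y$), Remark~\ref{sec:sheaf-diff-etale-2} and Lemma~\ref{sec:sheaf-diff-etale-3} together characterise the étaleness of the restriction $\phi|_{\phi^{-1}(U)}:\phi^{-1}(U)\to U$ above an affinoid neighbourhood $U\subset X$ by the invertibility of $\d(f)$ in $\cO_Y(\phi^{-1}(U))$, that is, by the non-vanishing of $\d(f)$ on $\phi^{-1}(U)$. So the task becomes finding an affinoid neighbourhood $U$ of $x$ such that $\d(f)$ has no zero on $\phi^{-1}(U)$.

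First, I would establish that $\d(f)$ does not vanish at any point of the finite fiber $\phi^{-1}(x)=\{y_1,\dots,y_m\}$. Two ingredients enter. On the one hand, every $y_i$ is itself of type~(2), (3) or (4): a morphism of $k$-analytic spaces sends type~(1) points to type~(1) points, since the inclusion $\h{\phi(y)}\hookrightarrow \h{y}$ forces $\h{\phi(y)}$ to be finite over $k$ whenever $\h{y}$ is, and by hypothesis $x$ is not of type~(1). On the other hand, because $\phi$ is finite and $Y$ is connected, $f$ is non-constant; expanding on a cover of $Y$ by discs and annuli and using $\mathrm{char}(k)=0$ (to rule out pathologies such as $f=g^{p}$), one sees that $\d(f)$ is a nonzero element of $\cO_Y(Y)$. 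Finally, a nonzero analytic function on an affinoid subdomain of $\Ak$ has only finitely many zeros, all of type~(1), because the semi-norm attached to a point of type (2), (3) or (4) is strictly positive on any nonzero power series expansion. Combining the two yields $\d(f)(y_i)\ne 0$ for every $i$.

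For each $y_i$ I would then choose an affinoid neighbourhood $V_i\subset Y$ on which $\d(f)$ is a unit and set $V:=\bigcup_i V_i$, an open neighbourhood of the whole fiber $\phi^{-1}(x)$. Finite morphisms of Berkovich $k$-analytic spaces are topologically proper, so there exists an open neighbourhood $W\subset X$ of $x$ with $\phi^{-1}(W)\subset V$; shrinking $W$ to an affinoid neighbourhood $U$ of $x$, the preimage $\phi^{-1}(U)$ is itself affinoid (preimage of an affinoid under a finite morphism) and is contained in $V$. Hence $\d(f)$ is invertible on $\phi^{-1}(U)$, which gives the étaleness of $\phi|_{\phi^{-1}(U)}$. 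The main obstacle is the joint non-vanishing argument for $\d(f)$: the characteristic zero hypothesis is exactly what rules out $\d(f)$ being identically zero, and the concentration of the zeros of a nonzero analytic function at type~(1) points is precisely what keeps the ramification locus of $\phi$ away from the prescribed non-rigid point $x$; the subsequent descent from a neighbourhood of the fiber to a neighbourhood of $x$ via properness is then routine.
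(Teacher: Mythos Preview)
Your proposal is correct and follows essentially the same approach as the paper: set $f=\phi^{\#}(T)$, observe that in characteristic zero $\d(f)$ can only vanish at type~(1) points (since $f\notin k$ by finiteness), and invoke Remark~1.9 to conclude \'etaleness once $\d(f)$ is invertible on $\phi^{-1}(U)$. Your write-up is more careful than the paper's terse proof --- you make explicit that the fiber points are not of type~(1) and you spell out the properness argument needed to pass from a neighbourhood of the fiber in $Y$ to an affinoid neighbourhood of $x$ in $X$, a step the paper simply asserts --- but the underlying idea is identical.
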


\begin{proof}
  Let $f$ be $\phi^{\#}(T)$. Since $\crk=0$, for each $x\in X$ not of type
  (1) we have: $\d(f)(x)=0$ if and only if
  $f\in k$. Since $\phi$ is finite, $f\not\in k$. Hence, there exists
  an affinoid neighbourhood $U$ of $x$ such that $\d(f)$ is
  invertible in $\cO(U)$. The result follows by Remark~\ref{sec:sheaf-diff-etale-2}. 
\end{proof}

\begin{Lem}\label{sec:sheaf-diff-etale}
  Let $\phi:Y\to X$ be a finite morphism between affinoid domains of
  $\Ak$ and let $T$
  (resp. $S$) be a coordinate function defined on $X$ (resp. $Y$). Let $y$ be a point of type (2), (3) or (4). Then the induced
  extension $\h{\phi(y)}\hookrightarrow \h{y}$ is finite and we have
  \begin{equation}\h{y}=\h{\phi(y)}(S(y))\simeq \bigoplus_{i=0}^{n-1}\h{\phi(y)}\cdot S(y)^i.\end{equation}
\end{Lem}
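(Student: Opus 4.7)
The plan is to prove the two assertions of the lemma in succession; the direct-sum decomposition follows automatically once $S(y)$ is known to be a primitive element of the extension.

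For finiteness of $\h{\phi(y)}\hookrightarrow \h{y}$: since $\phi$ is finite, $\cO_Y(Y)$ is a finite $\cO_X(X)$-module through $\phi^\#$; hence the stalk $\cO_{Y,y}$ is a finite $\cO_{X,\phi(y)}$-module, and the induced extension of residue fields $\kappa(\phi(y))\hookrightarrow \kappa(y)$ is finite. The multiplicative seminorms coming from $y$ and $\phi(y)$ endow these residue fields with compatible non-Archimedean valuations whose completions are $\h{y}$ and $\h{\phi(y)}$ respectively. Writing $\kappa(y)=\kappa(\phi(y))(\alpha_1,\cdots,\alpha_m)$, the compositum $\h{\phi(y)}(\alpha_1,\cdots,\alpha_m)$ is a finite extension of the complete field $\h{\phi(y)}$, is therefore itself complete, and contains $\kappa(y)$ with the correct valuation; by uniqueness of the completion it must coincide with $\h{y}$, giving finiteness.

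For the equality $\h{y}=\h{\phi(y)}(S(y))$, the idea is to exploit that $Y$ is an affinoid sub-domain of $\Ak$ with $S$ the restriction of the ambient coordinate. Since the complete residue field is intrinsic, $\h{y}$ coincides with the complete residue field of the point $y$ viewed in $\Ak$, i.e.\ with the completion of $k(T)/\ker(y)$ where $y$ is regarded as a multiplicative seminorm on $k[T]$. As $y$ is of type $(2)$, $(3)$ or $(4)$, this seminorm is a norm, so $\h{y}$ is the completion of $k(S)$ at the norm $y$. Consequently $k(S(y))$, and a fortiori the larger subfield $\h{\phi(y)}(S(y))$, is dense in $\h{y}$. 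By the first step $S(y)$ is algebraic over $\h{\phi(y)}$, so $\h{\phi(y)}(S(y))=\h{\phi(y)}[S(y)]$ is a finite extension of the complete non-Archimedean field $\h{\phi(y)}$ and is therefore itself complete, hence closed in $\h{y}$. Density combined with closedness yields $\h{\phi(y)}(S(y))=\h{y}$.

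Setting $n:=[\h{y}:\h{\phi(y)}]$, the previous paragraph shows that $S(y)$ is a primitive element, so $\{1,S(y),\cdots,S(y)^{n-1}\}$ is a basis over $\h{\phi(y)}$ and the claimed direct-sum decomposition follows. I do not anticipate a serious obstacle: the argument is a clean combination of the standard theory of extensions of complete valued fields with the geometric input that, for a point of type $(2)$, $(3)$ or $(4)$, the complete residue field is literally the completion of $k(S)$ at the norm $y$; excluding type $(1)$ is essential here, since otherwise the seminorm has non-trivial kernel. The only real care is to keep track of which subfields are complete and to invoke density together with closedness at the right moment.
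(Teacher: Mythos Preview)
Your proof is correct and follows essentially the same approach as the paper's: both establish finiteness of the extension, observe that $S(y)$ is therefore algebraic over $\h{\phi(y)}$ so that $\h{\phi(y)}(S(y))$ is a complete (hence closed) intermediate field, and conclude by density of $k(S(y))$ in $\h{y}$. The only difference is that the paper outsources the finiteness statement to a reference, whereas you spell out the argument via the residue fields of the local rings; your version is more self-contained but otherwise identical in structure.
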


\begin{proof}
  Since $\phi:Y\to X$ is finite, we have $[\h{y}:\h{\phi(y)}]=n$ for
  some $n\in\NN$ (cf. \cite[Lemma~2.24]{np2}). Therefore, $S(y)$ is algebraic over
  $\h{\phi(y)}$. Hence, $\h{\phi(y)}(S(y))$ is a complete intermediate finite
  extension that contains
  $k(S(y))$. Because $k(S(y))$ is dense in $\Hy$, we obtain
   \begin{equation}\h{y}=\h{\phi(y)}(S(y))\simeq \bigoplus_{i=0}^{n-1}\h{\phi(y)}\cdot S(y)^i.\end{equation}
 \end{proof}

 \begin{Pro}\label{sec:sheaf-diff-etale-1}
   Assume that $k$ is algebraically closed. Let $\phi: Y\to X$ be a finite étale cover between two analytic domains of
   $\Ak$. Let $y\in Y$ and $x=\phi(y)$. Let $U$ be a neighbourhood
   of $y$ such that $U$ is a connected component of
   $\phi\-1(\phi(U))$. Then we have
   \begin{equation}[\Hy:\Hx]=\# U\cap\phi\-1({b})\end{equation}
   for each $b\in \phi(U)\cap k$.
 \end{Pro}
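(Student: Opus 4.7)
The plan is to reduce the claim to the standard degree formula for a finite étale cover between connected analytic domains, after first shrinking $U$ so that $y$ is the only preimage of $x$ lying inside $U$.

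First, since $\phi$ is finite, the fibre $\phi\-1(x)$ is a finite subset of $Y$. Finite étale morphisms of Berkovich spaces admit a fundamental system of étale neighbourhoods of the form demanded in the Proposition (namely, a connected component of the preimage of its image), so I may replace $U$ by a smaller connected neighbourhood of $y$ retaining the same property and satisfying in addition $U\cap\phi\-1(x)=\{y\}$; this is the version of $U$ that is actually needed. After this reduction $\phi|_U:U\to \phi(U)$ is a finite étale morphism between connected analytic domains, of some degree $d\geq 1$.

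For any $x'\in \phi(U)$ the additivity of local degrees along the fibre reads
\[ d=\sum_{z\in U\cap \phi\-1(x')}[\h{z}:\h{x'}]. \]
Specialising to $x'=x$ the sum collapses to the single term $[\Hy:\Hx]$, hence $d=[\Hy:\Hx]$. Specialising to $x'=b\in \phi(U)\cap k$: here $\h{b}=k$, and by Lemma~\ref{sec:sheaf-diff-etale} each $z\in U\cap\phi\-1(b)$ gives a finite extension $\h{z}/k$, which is trivial since $k$ is algebraically closed. Thus each summand equals $1$ and $d=\#\,U\cap\phi\-1(b)$. Combining the two identities yields $[\Hy:\Hx]=\#\,U\cap\phi\-1(b)$, as desired.

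The main obstacle is justifying the two preliminary inputs. The first is the existence of a neighbourhood $U$ of the required form with $U\cap\phi\-1(x)=\{y\}$: this rests on the local trivialisation of finite étale maps at a point, combined with Hausdorffness of $Y$ which lets us separate $y$ from the other (finitely many) points of $\phi\-1(x)$. The second is the degree additivity formula itself: it reflects the flatness of $\phi$, which makes $\phi_*\cO_U$ a locally free $\cO_{\phi(U)}$-module of rank $d$, and the ensuing decomposition of its stalk at $x'$, after tensoring with $\h{x'}$, into the direct sum of the complete residue fields $\h{z}$ as $z$ runs through $U\cap\phi\-1(x')$.
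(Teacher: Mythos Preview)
The paper offers no proof of its own, only citations to \cite[Corollary~9.17]{Bak10} and \cite[(3.5.4.3)]{Duc}; your argument via the local-degree additivity formula for a finite \'etale map is precisely what those references contain, so there is no genuine difference in approach.

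Two points deserve comment. Your shrinking of $U$ to arrange $U\cap\phi\-1(x)=\{y\}$ is not a harmless reduction: it changes the set of admissible $b$'s and therefore does not prove the proposition as literally stated for the given $U$. In fact, as literally stated the proposition is false without this extra hypothesis---take $\phi$ the squaring map on the punctured line, $y$ a rational point, and $U$ the entire punctured line; then $[\Hy:\Hx]=1$ while every rational fibre in $U$ has two points. Your parenthetical ``this is the version of $U$ that is actually needed'' is therefore exactly right, and the applications in the paper (Propositions~\ref{sec:logarithm-1} and~\ref{sec:forbenius-map-1}) only use such $U$. Second, your appeal to Lemma~\ref{sec:sheaf-diff-etale} for the preimages $z\in\phi\-1(b)$ is misplaced: that lemma is stated for points of type (2), (3), (4), whereas the preimages of a $k$-rational point under a finite \'etale map over an algebraically closed $k$ are themselves $k$-rational. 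The conclusion $\h{z}=k$ follows directly from \'etaleness, not from that lemma.
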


 \begin{proof}
   See \cite[Corollary 9.17]{Bak10} and \cite[(3.5.4.3)]{Duc}.
 \end{proof}

 \subsubsection{\bf Differential equations over an affinoid domain}
 Let $X$ be an analytic domain of $\Ak$. A differential equation over
 $X$ is a locally free $\cO_X$-module $\F$ of finite rank together with a
 connection $\nabla:\F\to \F\ot_{\cO_X}\Omega_{X/k}$. Consider an
 isomorphism between $\cO_X$ and $\Omega_{X/k}$ i.e a morphism of
 $\cO_X$-module of the form:

 \begin{equation}\label{eq:10}
 \Fonction{\Omega_{X/k}(U)}{\cO_X(U)}{f\cdot\mr{dT}}{g\cdot f,}
\end{equation}
where $U$ is an open subset of $X$ and $g$ is an invertible element of
$\cO_X(X)$. By composing with $d_{X/k}$  we obtain a $k$-linear derivation $d$ on 
$\cO_X$, and we have $d=g\dT$. For each $x\in X\setminus k$, $d$
extends to a $k$-linear bounded derivation on $\Hx$. Moreover, by composing with the isomorphism \eqref{eq:10}
$(\F_x\ot_{\cO_{X,x}}\Hx,\nabla)$ can be seen as a differential module
over $(\Hx,d)$.

\subsubsection{\bf Spectrum of a differential module} Let us recall the definition of the spectrum in the sense of Berkovich.

\begin{Defi}
  Let $E$ be a $k$-Banach algebra and $f\in E$. The spectrum of $f$ is
  the set $\Sigma_{f,k}(E)$ of points $x\in \Ak$ such that the element
  $f\ot 1-1\ot T(x)$ is not invertible in the $k$-Banach algebra $E\ct_k\Hx$.
\end{Defi}

\begin{rem}
  If there is no confusion we denote the spectrum of $f$, as an
  element of $E$, just by $\Sigma_f$.
\end{rem}

\begin{rem}
  The set $\Sigma_f\cap k$ coincides with the classical spectrum,
  i.e.
  \begin{equation}\Sigma_f\cap k=\{c\in k;\; f-c\; \text{ is not invertible in } E\}.\end{equation}
\end{rem}

Let $\Omega\in E(k)$. As in our paper \cite{Cons}, for each
differential module $(M,\nabla)$ in $\Dmod{d}{\Omega}$, we assign  a spectrum in the sense of
Berkovich in the following way: we can endow naturally $M$ by a
structure of $\Omega$-Banach space, hence by a structure of $k$-Banach space, for which
$\nabla:M\to M$ is a bounded, i.e. $\nabla$ is an element of $\Lk{M}$ the $k$-Banach
algebra of $k$-linear bounded operators with respect to operator norm. Then the
spectrum of $(M,\nabla)$ is the spectrum of $\nabla$ as an element of
$\Lk{M}$.\footnote{Since the $\Omega$-Banach structures on $M$ are
  equivalent, the associated spectrum is well defined.} This spectrum
is a compact non-empty set, moreover the smallest closed disk centered
at zero  and containing $\Sigma_{\nabla,k}(\Lk{M})$ has radius equal to
$\nsp{\nabla}:=\lim\limits_{m\to
    +\infty}\nor{\nabla}_\mr{op}^\fra{m}$
(c.f. \cite[Theorem~7.1.2]{Ber}). Stressing also that this spectrum is
invariant by bi-bounded isomorphisms of differential modules. However,
it depends on the choice of the derivation. Indeed, for example the
derivations $\dT$ and $T\dT$ defined over $\h{x_{0,r}}$, both are
associated to the trivial differential equation defined over some open
neighborhood $X$ of $x_{0,r}$ (i.e $(\cO_X,d_{X/k})$), but we proved,
in $\crk=0$, that they have a
different spectra (see \cite{Cons} and \cite{Azz21}). In the next
section we will see that, in some cases, two different derivations
can be linked by an étale morphism, and so is for their spectra. Since
the derivation $(T-c)\dT$ with $(c\in k)$ has a good behavior under
ramification of $(T-c)$, we will privilege the choice of these kind of
derivations.

Let us recall some technical results from \cite{Cons}, and another one that are
very useful for the computation of the spectrum.

\begin{Lem}[{\cite[Lemma~2.20]{Cons}}]\label{sec:spectr-diff-module-2}
  Let $E$ be a $k$-Banach algebra and $f\in E$. If $a\in \Ak\setminus
  \Sigma_f$, then the largest open disk centered at $a$ contained in
  $\Ak\setminus \Sigma_f$ has radius equal to $\nsp{(f-a)\-1}\-1$.
\end{Lem}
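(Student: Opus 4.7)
The plan is to prove both containments: (i) the open disk of radius $1/\nsp{(f-a)\-1}$ centered at $a$ is contained in $\Ak\setminus\Sigma_{f,k}(E)$, and (ii) there is a point of $\Sigma_{f,k}(E)$ at distance exactly $1/\nsp{(f-a)\-1}$ from $a$. The first will follow from a Neumann series argument, the second from a spectral mapping for the Möbius inversion $z\mapsto (z-a)\-1$ combined with \cite[Theorem~7.1.2]{Ber}.

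For (i), set $g:=(f-a)\-1$, which lives in $E\ct_k\hh{a}$ since $a\notin\Sigma_{f,k}(E)$. For a point $b\in\Ak$ with $|T(b)-T(a)|<1/\nsp{g}$, the formal factorization
\begin{equation}
f-b\;=\;(f-a)\bigl(1-(b-a)g\bigr)
\end{equation}
reduces the invertibility of $f-b$ to that of $1-(b-a)g$, whose spectral norm is at most $|T(b)-T(a)|\cdot\nsp{g}<1$. Consequently the Neumann series $\sum_{n\geq 0}((b-a)g)^n$ converges in the appropriate completed tensor product (a common scalar extension containing $\hh{a}$ and $\hh{b}$), producing the required inverse and showing $b\notin\Sigma_{f,k}(E)$.

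For (ii), write $\Sigma_g$ for the spectrum of $g$ as an element of $E\ct_k\hh{a}$. For $c\in k$ with $c\ne 0$, the identity
\begin{equation}
g-c\;=\;-c\,g\,\bigl(f-(a+1/c)\bigr)
\end{equation}
shows that $c\in\Sigma_g$ if and only if $a+1/c\in\Sigma_{f,k}(E)$; note also $0\notin\Sigma_g$ since $g$ is invertible with inverse $f-a$. This correspondence extends to Berkovich points as a bijection $\Sigma_g\simeq\Sigma_{f,k}(E)$ induced by the Möbius morphism $z\mapsto a+1/z$ of $\Pk$. Now \cite[Theorem~7.1.2]{Ber} applied to $g$ tells us the smallest closed disk centered at $0$ containing $\Sigma_g$ has radius exactly $\nsp{g}$, so there is a point $c_0\in\Sigma_g$ with $|T(c_0)|=\nsp{g}$. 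Its image $b_0:=a+1/c_0$ lies in $\Sigma_{f,k}(E)$ at distance $1/\nsp{g}$ from $a$, forbidding any strictly larger open disk centered at $a$ from being contained in $\Ak\setminus\Sigma_{f,k}(E)$.

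The main obstacle is the case where $a$ is not a rigid point: the element $f-a$ lives only in $E\ct_k\hh{a}$ rather than $E$, and both the Neumann factorization (when $b$ is also non-rigid) and the Möbius spectral mapping must be interpreted at the level of completed tensor products and the Berkovich projective line. One must therefore check that spectra and spectral norms behave compatibly under the requisite scalar extensions so that the bijection $\Sigma_g\simeq\Sigma_{f,k}(E)$ and the convergence of the Neumann series remain valid after base change.
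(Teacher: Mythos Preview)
The paper does not give its own proof of this lemma; it is quoted verbatim from \cite[Lemma~2.20]{Cons} and used as a black box. So there is no in-paper argument to compare against.

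Your outline is the standard one and is essentially correct. A few remarks. First, once you have the spectral mapping $\Sigma_g=\{(z-a)^{-1}:z\in\Sigma_f\}$ (via the M\"obius automorphism of $\Pk$, which is a special case of the holomorphic functional calculus in \cite[\S7.2]{Ber}), part~(i) becomes superfluous: by \cite[Theorem~7.1.2]{Ber} one gets directly
\[
\nsp{g}=\max_{w\in\Sigma_g}|T(w)|=\max_{z\in\Sigma_f}\frac{1}{|T(z)-T(a)|}=\frac{1}{\min_{z\in\Sigma_f}|T(z)-T(a)|},
\]
so $\nsp{g}^{-1}$ equals the distance from $a$ to the compact set $\Sigma_f$, which is exactly the radius of the largest open disk centered at $a$ avoiding $\Sigma_f$. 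Your Neumann-series step is then just reproving half of this spectral mapping.

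Second, the obstacle you flag for non-rigid $a$ is genuine and deserves more than the sentence you give it. In your part~(i), the Neumann series you write down converges in $E\ct_k\Omega$ for a common extension $\Omega\supset\hh{a},\hh{b}$, but the definition of $\Sigma_f$ requires non-invertibility in $E\ct_k\hh{b}$, not in a larger algebra. The bridge is \cite[Proposition~7.1.6]{Ber} (used elsewhere in this paper): $\Sigma_{f\otimes1,\Omega}(E\ct_k\Omega)=\pi_{k/\Omega}^{-1}(\Sigma_{f,k}(E))$, so exhibiting a single $\Omega$-point over $b$ outside the $\Omega$-spectrum suffices. In part~(ii), when $a$ is non-rigid the element $a+1/c$ lives in $\hh{a}$, not in $k$, so ``$a+1/c\in\Sigma_{f,k}(E)$'' must be read as the translate of the point $a\in\Ak$ by $1/c\in k$; this is fine but should be said explicitly. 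With these two clarifications your argument goes through.
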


\begin{Lem}[{\cite[Lemma~2.29, Remark~2.30]{Cons}}]\label{sec:spectr-vers-youngs-1}
  Let $(M,\phi)$, $(M_1,\phi_1)$ and $(M_2,\phi_2)$ be three
  $k$-Banach space endowed with a $k$-linear bounded operators, such
  that we have an exact sequence
\begin{equation}0\to M_1\overset{i}{\to} M\overset{p}{\to} M_2\to 0,\end{equation}
with $i\circ \phi_1=\phi\circ i$ and $p\circ \phi =\phi_2\circ p$.
Then we have
\begin{equation}
  \label{eq:53}
  (\Sigma_{\phi_1,k}(\Lk{M_1})\cup\Sigma_{\phi_2,k}(\Lk{M_2}))\setminus(\Sigma_{\phi_1,k}(\Lk{M_1})\cap
\Sigma_{\phi_2,k}(\Lk{M_2}))\subset\Sigma_{\phi,k}(\Lk{M}),
\end{equation}
and
\begin{equation}
  \label{eq:59}
  \Sigma_{\phi,k}(\Lk{M})\subset
\Sigma_{\phi_1,k}(\Lk{M_1})\cup\Sigma_{\phi_2,k}(\Lk{M_2}).
\end{equation}

Moreover, if $x\notin \Sigma_{\phi,k}(\Lk{M})$, then $\phi_1\ot 1-1\ot
T(x)$ is left invertible and $\phi_2\ot 1-1\ot T(x)$ is right invertible.
\end{Lem}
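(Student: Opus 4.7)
The plan is to reduce everything to a snake lemma chase after completed tensor product with $\Hx$. Since the given short exact sequence $0\to M_1 \overset{i}{\to} M \overset{p}{\to} M_2 \to 0$ is a strict exact sequence of $k$\nbh Banach spaces, completing $\ct_k \Hx$ preserves exactness, and setting $\psi := \phi\ot 1 - 1\ot T(x)$ and $\psi_j := \phi_j\ot 1 - 1\ot T(x)$ for $j=1,2$, the hypotheses $i\circ\phi_1 = \phi\circ i$ and $p\circ\phi = \phi_2\circ p$ give a commutative ladder of $k$\nbh Banach spaces with exact rows, whose vertical arrows are $\psi_1, \psi, \psi_2$. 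Membership of $x$ in each spectrum is then equivalent to non\nbh invertibility of the corresponding vertical map in its Banach algebra of bounded operators.

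The inclusion $\Sigma_{\phi,k}(\Lk{M}) \subset \Sigma_{\phi_1,k}(\Lk{M_1}) \cup \Sigma_{\phi_2,k}(\Lk{M_2})$ is the standard five\nbh lemma: if $\psi_1$ and $\psi_2$ are both bijective, then so is $\psi$, and the open mapping theorem turns its set\nbh theoretic inverse into a bounded operator. For the first inclusion, assume $\psi$ is invertible; the snake lemma applied to the ladder produces
\begin{equation}
0 \to \Ker \psi_1 \to \Ker \psi \to \Ker \psi_2 \overset{\partial}{\to} \operatorname{coker} \psi_1 \to \operatorname{coker} \psi \to \operatorname{coker} \psi_2 \to 0.
\end{equation}
Since $\Ker\psi = \operatorname{coker}\psi = 0$, this forces $\Ker\psi_1 = 0$, $\operatorname{coker}\psi_2 = 0$ and $\partial:\Ker\psi_2 \overset{\sim}{\to} \operatorname{coker}\psi_1$. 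If in addition $\psi_1$ (resp. $\psi_2$) is invertible, then $\operatorname{coker}\psi_1 = 0$ (resp. $\Ker\psi_2 = 0$) kills one side of $\partial$, so $\psi_2$ (resp. $\psi_1$) is invertible too. Contrapositively, if $x$ lies in the symmetric difference of $\Sigma_{\phi_1,k}$ and $\Sigma_{\phi_2,k}$, then $\psi$ cannot be invertible, giving $x \in \Sigma_{\phi,k}(\Lk{M})$.

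For the \emph{moreover} claim, the snake lemma output already shows that $\psi_1$ is injective and $\psi_2$ is surjective once $\psi$ is invertible. To upgrade these to one\nbh sided invertibility in the respective Banach algebras, one chases $\psi^{-1}$ explicitly through the diagram: a bounded right inverse of $\psi_2$ is produced as $(p\ot 1)\circ\psi^{-1}\circ\sigma$ for a bounded section $\sigma$ of $p\ot 1$, and a bounded left inverse of $\psi_1$ is produced symmetrically via a bounded projection onto $(i\ot 1)(M_1\ct_k\Hx)$; a direct verification gives $\psi_2 R = \mathrm{id}$ and $L\psi_1 = \mathrm{id}$. The main obstacle of the whole argument lies precisely here: ensuring that such bounded splittings exist and that the resulting inverses are continuous, which requires combining the strictness of the original short exact sequence (beyond mere algebraic exactness) with the open mapping theorem applied to $\psi$.
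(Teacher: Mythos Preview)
The paper does not reproduce a proof here; the lemma is quoted from \cite{Cons}. Your snake-lemma approach after base change to $\Hx$ is the natural one and is presumably what is done there.

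One point you pass over is the identification of $x\in\Sigma_{\phi,k}(\Lk{M})$ with non-invertibility of the induced operator $\psi$ on $M\ct_k\Hx$. By definition the spectrum tests invertibility in the algebra $\Lk{M}\ct_k\Hx$, and the bounded homomorphism $\Lk{M}\ct_k\Hx \to \LL{\Hx}{M\ct_k\Hx}$ is not obviously saturated; so showing that $\psi$ (or $\psi_2$) is bijective with bounded inverse does not by itself yield invertibility in the smaller algebra. This matters for both inclusions \eqref{eq:53} and \eqref{eq:59}, and needs a separate justification (which is supplied in \cite{Cons}).

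The more substantive gap is in the ``moreover'' clause. Your formulas $R=(p\ot 1)\circ\psi^{-1}\circ\sigma$ and $L=\pi\circ\psi^{-1}\circ(i\ot 1)$ are correct once a bounded section $\sigma$ of $p\ot 1$ and a bounded retraction $\pi$ of $i\ot 1$ exist, and you rightly flag their existence as the crux. But your proposed resolution is wrong: strictness of the sequence together with the open mapping theorem does \emph{not} produce bounded splittings; over a non-spherically-complete $k$ a strict short exact sequence of Banach spaces need not split. In every application of this lemma in the present paper the spaces are finite modules over a complete extension of $k$, so the sequence does split and the difficulty evaporates; but in the generality of the stated lemma your argument has not closed this gap.
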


\begin{cor}\label{sec:spectr-vers-youngs-2}
 We keep the assumptions of Lemma~\ref{sec:spectr-vers-youngs-1}. If
 in addition we have another exact sequence of the form:
\begin{equation}0\to M_2\overset{i'}{\to} M\overset{p'}{\to} M_1\to 0,\end{equation}
with $i'\circ \phi_2=\phi\circ i'$ and $p'\circ \phi =\phi_1\circ
p$. Then we have $\Sigma_{\phi,k}(\Lk{M})=
\Sigma_{\phi_1,k}(\Lk{M_1})\cup\Sigma_{\phi_2,k}(\Lk{M_2})$
\end{cor}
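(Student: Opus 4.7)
The plan is to combine the two inclusions already provided by Lemma~\ref{sec:spectr-vers-youngs-1} applied to each of the two exact sequences. From that lemma applied to the first exact sequence I immediately get
\begin{equation*}
\Sigma_{\phi,k}(\Lk{M}) \subset \Sigma_{\phi_1,k}(\Lk{M_1}) \cup \Sigma_{\phi_2,k}(\Lk{M_2}),
\end{equation*}
so only the reverse inclusion remains to be proved.

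To establish this, I would take $x \in \Ak \setminus \Sigma_{\phi,k}(\Lk{M})$ and exploit the final assertion of Lemma~\ref{sec:spectr-vers-youngs-1} twice. First, applied to the sequence $0 \to M_1 \to M \to M_2 \to 0$, it yields that $\phi_1 \otimes 1 - 1 \otimes T(x)$ is left invertible in $\Lk{M_1}\ct_k \Hx$ and that $\phi_2 \otimes 1 - 1 \otimes T(x)$ is right invertible in $\Lk{M_2}\ct_k \Hx$. Next, I apply the same lemma to the second exact sequence $0 \to M_2 \overset{i'}{\to} M \overset{p'}{\to} M_1 \to 0$, which satisfies the same compatibility hypotheses but with $M_1$ and $M_2$ (and their connections) interchanged; this produces the symmetric conclusion that $\phi_2 \otimes 1 - 1 \otimes T(x)$ is left invertible and $\phi_1 \otimes 1 - 1 \otimes T(x)$ is right invertible.

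Combining the two assertions, each of $\phi_1 \otimes 1 - 1 \otimes T(x)$ and $\phi_2 \otimes 1 - 1 \otimes T(x)$ is simultaneously left and right invertible, hence invertible in the respective Banach algebra. Therefore $x \notin \Sigma_{\phi_1,k}(\Lk{M_1}) \cup \Sigma_{\phi_2,k}(\Lk{M_2})$, which gives the missing inclusion $\Sigma_{\phi_1,k}(\Lk{M_1}) \cup \Sigma_{\phi_2,k}(\Lk{M_2}) \subset \Sigma_{\phi,k}(\Lk{M})$. I do not expect any real obstacle here: the entire substance of the argument is already packaged in Lemma~\ref{sec:spectr-vers-youngs-1}, and the corollary is a formal consequence of symmetrising its one-sided invertibility statement by means of the second exact sequence. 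The only point to verify carefully is that the hypotheses on $i'$ and $p'$ put the reversed sequence into exactly the form required by the lemma, which is immediate from the statement.
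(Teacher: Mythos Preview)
Your argument is correct and is precisely the intended one: the paper states this corollary without proof, treating it as an immediate consequence of the one-sided invertibility clause in Lemma~\ref{sec:spectr-vers-youngs-1}, and your symmetrisation via the second exact sequence is exactly how one extracts two-sided invertibility from the two applications of that clause.
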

\begin{rem}\label{sec:spectr-diff-module}
  In particular, if $(M,\nabla)=(M_1,\nabla_1)\oplus (M_2,\nabla_2)$
  as differential modules, then we have $\Sigma_\nabla=\Sigma_{\nabla_1}\cup\Sigma_{\nabla_2}$.
\end{rem}

\begin{Lem}[{\cite[p.2]{bousp}}]\label{sec:spectr-diff-module-1}
  Let $P(T)\in k[T]$. Let $E$ be a $k$-Banach algebra and let $f\in
  E$. Then we have:
  \begin{equation}\Sigma_{P(f),k}(E)=P(\Sigma_{f,k}(k)).\end{equation}
\end{Lem}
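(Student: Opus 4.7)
The statement is the polynomial spectral mapping theorem in the Berkovich setting: identifying $P$ with the induced analytic self-map of $\Ak$, we aim to prove $\Sigma_{P(f),k}(E)=P(\Sigma_{f,k}(E))$ (reading the right-hand side in that way, as the statement's $\Sigma_{f,k}(k)$ appears to be a typo). The plan is to prove the two inclusions separately, exploiting factorizations of $P(T)-T(z)$ in the relevant residue fields.

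For $P(\Sigma_{f,k}(E))\subseteq\Sigma_{P(f),k}(E)$, take $x\in\Sigma_{f,k}(E)$. In $\Hx[T]$ one has the factorization $P(T)-P(T(x))=(T-T(x))Q(T)$, and substituting $T=f\ct 1$ in $E\ct_k\Hx$ yields
\begin{equation}
P(f)\ct 1-1\ct P(T(x))=(f\ct 1-1\ct T(x))\cdot R,
\end{equation}
with $R$ commuting with the left factor. The map $P:\Ak\to\Ak$ induces an isometric $k$-embedding $\h{P(x)}\hookrightarrow\Hx$ sending $T(P(x))\mapsto P(T(x))$, hence a bounded morphism $E\ct_k\h{P(x)}\to E\ct_k\Hx$ mapping $P(f)\ct 1-1\ct T(P(x))$ to the left-hand side above. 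If $P(f)\ct 1-1\ct T(P(x))$ were invertible in $E\ct_k\h{P(x)}$, this invertibility would transfer, forcing $f\ct 1-1\ct T(x)$ to be invertible in $E\ct_k\Hx$ and contradicting $x\in\Sigma_{f,k}(E)$; hence $P(x)\in\Sigma_{P(f),k}(E)$.

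For the reverse inclusion, fix $y\in\Ak$ and let $L/\Hy$ be the splitting field of $P(T)-T(y)\in\Hy[T]$, a finite Galois extension (using $\mr{char}\,k=0$); write $P(T)-T(y)=c\prod_{i=1}^n(T-\alpha_i)$ with $c\in k^\times$ and $\alpha_i\in L$. Each $\alpha_i$ restricts to a multiplicative seminorm $Q\mapsto|Q(\alpha_i)|_L$ on $k[T]$, defining a point $x_i\in\Ak$ together with an isometric embedding $\h{x_i}\hookrightarrow L$ sending $T(x_i)\mapsto\alpha_i$; a direct seminorm computation gives $P(x_i)=y$. Suppose $y\notin P(\Sigma_{f,k}(E))$, so no $x_i$ lies in $\Sigma_{f,k}(E)$. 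Then each $f\ct 1-1\ct T(x_i)$ is invertible in $E\ct_k\h{x_i}$, and transferring along $E\ct_k\h{x_i}\to E\ct_k L$ makes each $f\ct 1-1\ct\alpha_i$ invertible in $E\ct_k L$; since these factors pairwise commute, their product $c^{-1}(P(f)\ct 1-1\ct T(y))$ is invertible in $E\ct_k L$.

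The main obstacle is to descend this invertibility from $E\ct_k L$ to $E\ct_k\Hy$. The plan is a finite Galois descent: $G=\mr{Gal}(L/\Hy)$ acts isometrically on $L$ and hence on $E\ct_k L$ via the second factor; since $L$ is a finite orthogonal Banach extension of $\Hy$, the natural isomorphism $E\ct_k L\cong(E\ct_k\Hy)\otimes_{\Hy}L$ identifies the $G$-fixed subring with $E\ct_k L^G=E\ct_k\Hy$. The element $P(f)\ct 1-1\ct T(y)$ lies in this fixed subring, so uniqueness of its inverse in $E\ct_k L$ forces that inverse also to be $G$-invariant, and thus to lie in $E\ct_k\Hy$. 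This yields invertibility of $P(f)\ct 1-1\ct T(y)$ in $E\ct_k\Hy$, i.e.\ $y\notin\Sigma_{P(f),k}(E)$. The delicate technical point is verifying that the Banach structure of $E\ct_k L$ is compatible with the finite decomposition over $\Hy$ and that the Galois-averaging projector $\frac{1}{|G|}\sum_{\sigma\in G}\mr{id}\otimes\sigma$ behaves correctly on the completed tensor product.
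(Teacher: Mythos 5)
The paper itself offers no proof of this lemma: it is quoted from the reference \cite{bousp}, so there is nothing internal to compare against. Judged on its own terms, your proof is correct. You are right that $\Sigma_{f,k}(k)$ in the statement should read $\Sigma_{f,k}(E)$. The easy inclusion via the factorization $P(T)-P(T(x))=(T-T(x))Q(T)$ and the commuting-factors argument is fine, and the hard inclusion works: the points $x_i$ attached to the roots $\alpha_i$ do satisfy $P(x_i)=y$, the product $\prod_i(f\ct 1-1\ct\alpha_i)$ of pairwise commuting invertible elements is invertible in $E\ct_k L$, and the Galois descent goes through because $[L:\Hy]<\infty$ gives $E\ct_k L\cong(E\ct_k\Hy)\otimes_{\Hy}L$ with $G$ acting isometrically through the second factor, so the averaging projector $\frac{1}{|G|}\sum_{\sigma}\mr{id}\otimes\sigma$ (which restricts to $\frac{1}{|G|}\mr{Tr}_{L/\Hy}$ on $1\ct L$) is a bounded $(E\ct_k\Hy)$-linear retraction onto $E\ct_k\Hy$, forcing the $G$-invariant inverse of $P(f)\ct 1-1\ct T(y)$ to lie in that subring.

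It is worth pointing out that a shorter route is available from tools already quoted in this paper. By Proposition~\ref{sec:berk-spectr-theory-5}, a maximal commutative subalgebra $B$ of $E$ containing $f$ (hence containing $P(f)$) satisfies $\Sigma_{f,k}(B)=\Sigma_{f,k}(E)$ and $\Sigma_{P(f),k}(B)=\Sigma_{P(f),k}(E)$, reducing everything to the commutative case; there, $\Sigma_{g,k}(B)$ is the image of $\cM(B)$ under the map $\cM(B)\to\Ak$ induced by $k[T]\to B$, $T\mapsto g$, and the identity $\Sigma_{P(f)}=P(\Sigma_f)$ is immediate because the character $T\mapsto P(f)$ factors through $T\mapsto f$ followed by $P:\Ak\to\Ak$. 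This is presumably the argument of the cited source. Your proof buys independence from the commutative Gelfand-type description of the spectrum, at the price of the splitting-field and descent machinery; it also silently assumes $\deg P\geq 1$ (the constant case is trivial but the root factorization is vacuous there), which you may want to flag.
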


\subsection{Push-forward and pull-back of a differential module and
  their spectra}

Let $(\Omega,d)$ be a differential field and $(\Omega',d')$ be a
finite differential extension of $(\Omega,d)$. Then we have two
natural functors:

\begin{equation}
  \label{eq:2}
\Fonction{\Dmod{d}{\Omega}}{\Dmod{d'}{\Omega'}}{(M,\nabla)}{(M\ot_\Omega
      \Omega',\nabla_{M\ot_\Omega\Omega'})}  
\end{equation}

  where $\nabla_{M\ot_\Omega \Omega'}:= \nabla\ot 1+1\ot d'$. Note that if
$\dim_\Omega M=n$, than so is $(M\ot_\Omega
  \Omega',\nabla_{M\ot_\Omega\Omega'})$,

  \begin{equation}
    \label{eq:3}
  \Fonction{\Dmod{d'}{\Omega'}}{\Dmod{d}{\Omega}}{(M,\nabla)}{(M_\Omega,\nabla_\Omega)}   
  \end{equation}
  where $M_\Omega$ is the restriction of scalars of $M$ via
  $\Omega\hookrightarrow \Omega'$, and $\nabla_\Omega=\nabla$ as
  $k$-linear maps. If $[\Omega':\Omega]=n'$ and $\dim_{\Omega'} M=n$,
  then $\dim_\Omega M_\Omega=n.n'$.

     \begin{nota}
                From now on we will fix $S$ to be a coordinate
                function of the analytic domain (of the affine line) where the linear
                differential equation is defined, and $T$ to be a
                coordinate function on $\Ak$ (for the computation of
                the spectrum).
              \end{nota}

              Let $Y$ and $X$ be two connected affinoid domains of $\Ak$ and $Z$
              (resp. $S$) a coordinate function on $X$ (resp. $Y$). Let
              $\phi: Y\to X$ be a finite étale morphism and
              $\phi^\#:\cO_X\to \phi_*\cO_Y$ be the induced sheaves morphism. Let
              $f:=\phi^{\#}(Z)$ and $f':=\d(f)$. Since $\phi$ is
              étale, $f'$ is invertible in $\cO_Y(Y)$
              (cf. Lemma~\ref{sec:sheaf-diff-etale-3}). To any bounded
              derivation $d=g\dz$ on $X$ we assign the bounded
              derivation on $Y$
              \begin{equation}
                \label{eq:14}
                \phi^*d:=\frac{\phi^{\#}(g)}{f'}\d
              \end{equation}
called the {\em pull-back} of $d$ by $\phi$.

Let $y\in Y$ and $x=\phi(y)$. Since the derivation $\d$ (resp. $\dz$)
extends to a bounded derivation on $Y$ (resp. $X$), the derivation $d$
(resp. $\phi^*d$) extends to a bounded derivation on $Y$ (resp. $X$). We have the commutative
diagram:
\begin{equation}\xymatrix{\Hx\ar@{^{(}->}[r]\ar[d]_d&\Hy\ar[d]^{\phi^*d}\\
    \Hx\ar@{^{(}->}[r]& \Hy}
\end{equation}
Since $\phi^{\#}$ induces a finite
extension $\Hx \hookrightarrow \Hy$, we have the {\em push-forward} functor
by $\phi$ defined as in \eqref{eq:3}:

\begin{equation}
  \label{eq:21}
  \Fonction{\phi_*:
    \Dmod{\phi^*d}{\Hy}}{\Dmod{d}{\Hx}}{(M,\nabla)}{(\phi_* M,\phi_*\nabla)}
\end{equation}
and the {\em pull-back} functor by $\phi$ defined as in~\eqref{eq:2}:

\begin{equation}
  \label{eq:19}
  \Fonction{\phi^*: \Dmod{d}{\Hx}}{\Dmod{\phi^*d}{\Hy}}{(M,\nabla)}{(\phi^*M,\phi^*\nabla)}
\end{equation}

\begin{rem}
In Sections~\ref{sec:bf-spectr-deriv} and \ref{sec:bf-frob-spectr-5}, we will describe the above
functors more explicitly.    
\end{rem}

\begin{Pro}\label{sec:push-forw-spectr}
  We have the set-theoretic equality:
  \begin{equation}\Sigma_{\nabla,k}(\Lk{M})=\Sigma_{\phi_*\nabla,k}(\Lk{\phi_*M}).
  \end{equation}
\end{Pro}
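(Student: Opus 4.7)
The plan is to show that the push-forward construction changes neither the underlying $k$-Banach space nor the $k$-linear operator; only the field of scalars acting on it is restricted from $\Hy$ to $\Hx$. Since the spectrum $\Sigma_{\nabla,k}(\Lk{M})$ is defined purely in terms of the $k$-Banach algebra $\Lk{M}$ and the element $\nabla$ in it, the equality should follow immediately.

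First I would unwind the definition of $\phi_*$ from \eqref{eq:3} (applied to the finite field extension $\Hx \hookrightarrow \Hy$ induced by $\phi^{\#}$, cf. Lemma~\ref{sec:sheaf-diff-etale}): as a $k$-vector space $\phi_* M = M$, and $\phi_*\nabla = \nabla$ as a $k$-linear map. Only the scalar multiplication is restricted from $\Hy$ to $\Hx$. In particular, for any $\Hy$-Banach norm $\|\cdot\|$ on $M$, the same function $\|\cdot\|$ is a fortiori an $\Hx$-Banach norm on $\phi_* M$, so $M$ and $\phi_* M$ coincide as $k$-Banach spaces (not merely up to equivalence).

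Second, I would conclude that the $k$-Banach algebras $\Lk{M}$ and $\Lk{\phi_* M}$ are literally equal, since both consist of the bounded $k$-linear endomorphisms of the same $k$-Banach space, with the same operator norm. Under this identification, $\nabla$ and $\phi_*\nabla$ correspond to the same element. By the definition of the Berkovich spectrum, an element $x \in \Ak$ belongs to $\Sigma_{\nabla,k}(\Lk{M})$ iff $\nabla \otimes 1 - 1 \otimes T(x)$ fails to be invertible in $\Lk{M} \ct_k \Hx$; this condition depends only on the $k$-Banach algebra $\Lk{M}$ and the distinguished element $\nabla$, and is therefore identical for $(\phi_*M,\phi_*\nabla)$.

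There is essentially no obstacle here; the statement is a tautology once one recalls that by construction the restriction of scalars does not modify either the $k$-Banach space or the $k$-linear map at play. The only point worth mentioning is the well-known fact, already invoked in the discussion preceding this proposition (and justified by the footnote on the equivalence of $\Omega$-Banach structures on finite-dimensional $M$), that different choices of $\Hy$-Banach norm on $M$ yield equivalent $k$-Banach norms, so the construction of the spectrum does not depend on such a choice. This ensures that the identification $\Lk{M} = \Lk{\phi_* M}$ is canonical at the level of spectra, and completes the proof.
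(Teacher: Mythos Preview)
Your proof is correct and follows essentially the same approach as the paper: both argue that $M$ and $\phi_*M$ are literally the same $k$-Banach space (the underlying set and norm do not change under restriction of scalars), and that $\nabla$ and $\phi_*\nabla$ are the same $k$-linear map, so $\Lk{M}=\Lk{\phi_*M}$ and the spectra coincide. Your write-up is somewhat more detailed, but the mathematical content is identical.
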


\begin{proof}
  Since $M$ and $\phi_*M$ are the same as $\Hx$-Banach spaces, then
  they are isomorphic as $k$-Banach spaces. As $\nabla$ and $\phi_*\nabla$
  coincide as $k$-linear maps, the equality of spectra holds.
\end{proof}

\section{Spectrum of a differential module with regular
  singularities}\label{sec:spectr-diff-module-4}
\begin{hyp} We assume that $k$ is algebraically closed.
\end{hyp}
\begin{nota}
  We set
  \begin{equation}
    \label{eq:4}
    \omega:=\lim\limits_{{n\to +\infty}}|n!|^{\fra{n}}=
    \begin{cases}
      |p|^{\fra{p-1}}& \crk=p>0\\
      1 & \crk=0
    \end{cases}
  \end{equation}
\end{nota}
In this part we compute the spectrum of a differential module $(M,\nabla)$ over $(\Hx,S\d)$  with
regular singularities, defined as below, for $x\in \Ak\setminus
k$. We will observe that if we fix a differential equation
$(\F,\nabla)$ over
$\Ak\setminus\{0\}$ with regular singularities (we will explain the
meaning later), we observe that the spectrum of $(\F_x\ot \Hx, \nabla)$
(as differential module over $(\Hx,S\d)$) has an interesting behavior
when we vary $x$.

\begin{Defi}
  A differential module $(M,\nabla)$ over $(\Hx,S\d)$ is said to
  be regular singular, if there exists a basis for which the
  associated matrix $G$, i.e.
\begin{equation}\nabla
    \begin{pmatrix}
      f_1\\ \vdots\\ f_n
    \end{pmatrix}
    =
    \begin{pmatrix}
      S\d f_1\\ \vdots\\ S\d f_n
    \end{pmatrix}
    +
    G \begin{pmatrix}
      f_1\\ \vdots\\ f_n
    \end{pmatrix},
  \end{equation}
  has constant entries (i.e $G\in\cM_n(k)$).  
\end{Defi}

\begin{rem}\label{sec:spectr-diff-module-3}
  As explained in \cite[Proposition~3.15]{Cons}, given a
  differential module over $(\Omega,d)$, with $\Omega\in E(k)$, such
  that:
  \begin{equation}\nabla
    \begin{pmatrix}
      f_1\\ \vdots\\ f_n
    \end{pmatrix}
    =
    \begin{pmatrix}
      d f_1\\ \vdots\\ d f_n
    \end{pmatrix}
    + G \begin{pmatrix} f_1\\ \vdots\\ f_n
    \end{pmatrix},
  \end{equation}
  with $G\in\cM_n(k)$ for some basis of $M$, then the computation
  of the spectrum of $\nabla$ is reduced to the computation of the
  spectrum of $d$. Indeed, the spectrum of $\nabla$ is
  $\Sigma_\nabla=\bigcup_{i=1}^N(a_i+\Sigma_d)$, where
  $\{a_1,\cdots, a_N\}$ are the eigenvalues of $G$.
\end{rem}

\subsection{Spectrum of the derivation $S\d$}\label{sec:bf-spectr-deriv}
\begin{Lem}\label{sec:spectr-deriv-sd}
  Let $x=x_{0,r}$, with $r>0$. The norm and spectral semi-norm of
  $S\d$ as an element of $\Lk{\h{x}}$ satisfy:
    \begin{equation}\nor{S\d}=1, \qq \nsp{S\d}=1.
    \end{equation}
  \end{Lem}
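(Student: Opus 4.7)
The plan is to compute both quantities directly by exploiting the explicit action of $S\d$ on the power-series description of $\h{x_{0,r}}$.

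Recall that for $x = x_{0,r}$ with $r>0$, the field $\h{x}$ is the completion of $k(S)$ under the norm $\nor{\sum_i a_i S^i}_x = \max_i |a_i|\,r^i$; in particular $\nor{S}_x = r$, and the subring of convergent Laurent polynomials (or $k(S)$ itself) is a dense $S\d$-stable subring of $\h{x}$ on which $S\d$ acts by $\sum_i a_i S^i \mapsto \sum_i i\,a_i S^i$.

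For the upper bound on the operator norm, the crucial observation is that, because $k$ is ultrametric of characteristic zero, $|n| \le 1$ for every $n \in \ZZ$. Hence for any Laurent polynomial $f = \sum_i a_i S^i$,
\begin{equation}
\nor{S\d(f)}_x \;=\; \max_i |i|\,|a_i|\,r^i \;\le\; \max_i |a_i|\,r^i \;=\; \nor{f}_x,
\end{equation}
and by continuity this extends to $\nor{S\d(f)}_x \le \nor{f}_x$ for every $f \in \h{x}$. Submultiplicativity then gives $\nor{(S\d)^n} \le 1$ for every $n \ge 1$.

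For the lower bound, which simultaneously pins down the spectral seminorm, I use that $S$ is an eigenvector of $S\d$ with eigenvalue $1$, so $(S\d)^n(S) = S$ for every $n \ge 0$. Therefore
\begin{equation}
\nor{(S\d)^n} \;\ge\; \frac{\nor{S}_x}{\nor{S}_x} \;=\; 1.
\end{equation}
Combining the two inequalities yields $\nor{(S\d)^n} = 1$ for every $n \ge 1$, whence both $\nor{S\d} = 1$ and $\nsp{S\d} = \lim_{n\to\infty} \nor{(S\d)^n}^{\fra{n}} = 1$.

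There is essentially no obstacle here; the only point requiring minor care is to justify that the formula $S\d(\sum a_i S^i) = \sum i\,a_i S^i$ is valid on a dense subring of $\h{x}$ large enough to transport the norm bound to the whole space, which follows from density of $k(S)$ (or of Laurent polynomials) together with the continuity of $S\d$.
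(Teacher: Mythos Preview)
Your argument is correct in spirit and, for the lower bound, takes a more direct route than the paper. The paper obtains $\nsp{S\d}\geq 1$ via the conjugation identity $S^{-1}\circ(S\d)\circ S = S\d+1$, which forces $\nsp{S\d}=\nsp{S\d+1}$, and then applies the ultrametric inequality for commuting elements to $1=(S\d+1)-S\d$. Your observation that $S$ is an eigenvector, so $(S\d)^n(S)=S$ and hence $\nor{(S\d)^n}\geq 1$ for every $n$, is simpler and pins down both $\nor{S\d}\geq 1$ and $\nsp{S\d}\geq 1$ in one stroke.

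There is, however, a small gap in your upper-bound step. Laurent polynomials are dense in $\h{x_{0,r}}$ only when $x_{0,r}$ is of type~(3); when $r\in|k^\times|$ (type~(2)), an element such as $1/(S-c)$ with $|c|=r$ lies in $\h{x}$ but is not in the closure of $k[S,S^{-1}]$, since $S-c$ fails to be invertible in $\fcouf{0}{r}{r}$. You do mention $k(S)$ as an alternative dense subring, but the explicit coefficient formula you invoke does not apply to a general rational function, so the displayed inequality is only established on a non-dense set. The cleanest repair is exactly what the paper does for the upper bound: use $\nor{S\d}\leq\nor{S}\cdot\nor{\d}=r\cdot r^{-1}=1$, quoting the standard fact $\nor{\d}=1/r$ on $\h{x_{0,r}}$. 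Alternatively one can verify $\nor{S\d(f)}\leq\nor{f}$ on all of $k(S)$ via partial fractions, which works but is less economical.
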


  \begin{proof}
    Since $\nor{S}=|S|=r$ and $\nor{\d}=\fra{r}$ (cf. \cite[Lemma~4.4.1]{and}),
    we have $\nor{S\d}\leq 1$. Hence also, $\nsp{S\d}\leq 1$. The map
   \begin{equation}\Fonction{\Lk{\Hx}}{\Lk{\Hx}}{\phi}{S\-1\circ\phi\circ
       S}
   \end{equation}
   is bi-bounded and induces a change of basis. Therefore, 
    as
    $S\-1\circ(S\d)\circ S=S\d+1$, we have $\nsp{S\d}=\nsp{S\d+1}$. Since $1$
    commutes with $S\d$, we have:
    \begin{equation}
      1=\nsp{1}=\nsp{S\d+1-S\d}\leq\max(\nsp{S\d+1},\nsp{S\d}).
    \end{equation}
    Consequently, we obtain
    
    \begin{equation}\nor{S\d}=\nsp{S\d}=1.
    \end{equation}
  \end{proof}

\subsubsection{The case of positive residual characteristic}

We assume here that $\crk=p>0$.
\paragraph{The case where $x\in (0,\infty)$}
We start with the case where $x=x_{0,r}$. In order to determine the
spectrum of $S\d$, we will use the push-forward by the Frobenius
map. We refer the reader to
Section~\ref{sec:frobenius-map} for the definition of Frobenius map
$\forp:\Ak\to \Ak$
and its properties. Since it induces an étale map $(\forp)^n:\Ak\setminus
\{0\}\to \Ak\setminus\{0\}$, the push-forward functor
(cf. (\ref{eq:21})) is well defined for any $x_{0,r}\in \Ak$ with
$r>0$. Recall that $(\forp)^n(x_{0,r})=x_{0,r^{p^n}}$ and
$[\h{x_{0,r}}:\h{x_{0,r^{p^n}}}]=p^n$ (cf. Properties~\ref{sec:forbenius-map}).

Let $x=x_{0,r}$ and $y:=(\forp)^n(x)$. According to formula~(\ref{eq:14})
the pull-back of the derivation $p^nS\d:\Hy\to \Hy$ is the derivation
$S\d:\Hx\to \Hx$. To avoid confusion in the following, we set $p^nS(y)\D{S(y)}:=p^nS\d$ and $S(x)\D{S(x)}:=S\d$. 

Now let $(M_{p^n},\nabla_{p^n})$ be the push-forward of the differential module
$(\Hx,S\d)$ by $(\forp)^n$. Since $M_{p^n}\simeq\Hx$ as an $\Hy$-Banach
space, and according to Lemma~\ref{sec:sheaf-diff-etale}, we can take
$\{1,S(x),\cdots,S(x)^{p^n-1}\}$ as a basis of $(M_{p^n},\nabla_{p^n})$. Since
\begin{equation}\nabla_{p^n}(S(x)^i)=S(x)\D{S(x)}(S(x)^i)=iS(x)^i,
\end{equation}
in this
basis  we have:

\begin{equation}
  \label{eq:22}
   \nabla_{p^n}%
    \left(
     \raisebox{0.5\depth}{%
       \xymatrixcolsep{1ex}%
       \xymatrixrowsep{1ex}%
       \xymatrix{f_1\ar@{.}[dddd]\\ \\\\ \\f_{p^n}\\}}%
   \right)
     =
\left(
     \raisebox{0.5\depth}{%
       \xymatrixcolsep{1ex}%
       \xymatrixrowsep{1ex}%
     \xymatrix{p^nS(y)\D{S(y)} 
       f_1\ar@{.}[ddd]\\ \\ \\p^nS(y)\D{S(y)}  f_{p^n}\\}}
   \right)
   +
\left(
     \raisebox{0.5\depth}{%
       \xymatrixcolsep{1ex}%
       \xymatrixrowsep{1ex}%
   \xymatrix{0& 0\ar@{.}[rr]\ar@{.}[ddrr] &    &0\ar@{.}[dd]\\
                    0\ar@{.}[ddrr]\ar@{.}[dd]&1\ar@{.}[ddrr]                 &    & \\
                      &                   &    &0\\
                    0\ar@{.}[rr]&                   &0  &p^n-1\\}
}
   \right)
   \left(
     \raisebox{0.5\depth}{%
       \xymatrixcolsep{1ex}%
       \xymatrixrowsep{1ex}%
       \xymatrix{f_1\ar@{.}[dddd]\\ \\\\ \\f_{p^n}\\}}%
   \right).
 \end{equation}
 On the other hand, we have an isomorphism of differential modules
 \begin{equation}
   \label{eq:23}
   (M_{p^n},\nabla_{p^n})\simeq \bigoplus_{i=0}^{p^n-1}(\Hy,p^nS(y)\D{S(y)} +i).
 \end{equation}

 \begin{nota}
   For simplicity, here we still denote $S(x)\D{S(x)}$ by $S\d$.
 \end{nota}

 \begin{Pro}\label{sec:bf-case-positive}
   Let $r>0$. Let $x$ be $x_{0,r}$. The spectrum of $S\d$ as an element of
   $\Lk{\Hx}$ is
   \begin{equation}\Sigma_{S\d,k}(\Lk{\Hx})=\ZZ_p.
   \end{equation}
 \end{Pro}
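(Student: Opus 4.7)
The plan is to combine two ingredients: a self-similarity relation for the spectrum obtained from the Frobenius push-forward decomposition~\eqref{eq:23}, and the explicit eigenvectors $S^i\in\Hx$, $i\in\ZZ$, which are available because $S$ is invertible in $\Hx$ when $r>0$.

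Write $\Sigma(r):=\Sigma_{S\d,k}(\Lk{\h{x_{0,r}}})$. The finite étale morphism $(\forp)^n$ sends $x_{0,r}$ to $y=x_{0,r^{p^n}}$, and combining Proposition~\ref{sec:push-forw-spectr} (push-forward preserves the spectrum) with the decomposition~\eqref{eq:23}, Remark~\ref{sec:spectr-diff-module} (direct sum gives union of spectra), and Lemma~\ref{sec:spectr-diff-module-1} applied to $P(T)=p^n T+i\in k[T]$ yields the self-similarity
\begin{equation}
\Sigma(r)=\bigcup_{i=0}^{p^n-1}\bigl(p^n\,\Sigma(r^{p^n})+i\bigr),\qquad n\geq 1,\; r>0.
\end{equation}
Lemma~\ref{sec:spectr-deriv-sd} gives $\nsp{S\d}=1$, so $\Sigma(r)\subset\discf{k}{0}{1}$ by \cite[Theorem~7.1.2]{Ber}.

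For $\ZZ_p\subset\Sigma(r)$, I would observe that for every $i\in\ZZ$ the element $S^i$ is a nonzero eigenvector of $S\d$ in $\Hx$ with eigenvalue $i$, so $S\d-i$ fails to be injective and $i\in\Sigma(r)$. Since $\Sigma(r)$ is closed in $\Ak$ and the topology induced on $k\subset\Ak$ coincides with the valuation topology, the closure of $\ZZ$ in $\Ak$ is its $p$-adic closure, namely $\ZZ_p$, and it lies in $\Sigma(r)$.

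For the reverse inclusion, fix $z\in\Sigma(r)$ and iterate the $n=1$ case of the self-similarity: one produces inductively digits $i_1,i_2,\ldots\in\{0,\ldots,p-1\}$ and points $z^{(m)}\in\Sigma(r^{p^m})\subset\discf{k}{0}{1}$ with $z=p^m z^{(m)}+s_m$, where $s_m:=\sum_{j=0}^{m-1}i_{j+1}p^j\in\ZZ$. Since the analytic map $w\mapsto p^m w+s_m$ sends $\discf{k}{0}{1}$ onto $\discf{k}{s_m}{|p|^m}$, this forces $z\in\discf{k}{s_m}{|p|^m}$ for every $m$. The sequence $(s_m)$ is $p$-adically Cauchy and converges to some $s\in\ZZ_p$ with $|s-s_m|\leq|p|^m$, hence $\discf{k}{s_m}{|p|^m}=\discf{k}{s}{|p|^m}$ and $\bigcap_m \discf{k}{s}{|p|^m}=\{s\}$; therefore $z=s\in\ZZ_p$. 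The one delicate point is the last step: translating the self-similarity into a shrinking intersection of Berkovich closed disks collapsing to a single type~(1) point. Once this is in hand, the rest is routine assembly of the lemmas recalled above.
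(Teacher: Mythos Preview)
Your argument is correct and follows essentially the same route as the paper's proof: both obtain $\ZZ_p\subset\Sigma$ from the eigenvectors $S^i$ and compactness, and both deduce $\Sigma\subset\ZZ_p$ from the Frobenius push-forward decomposition~\eqref{eq:23} together with $\nsp{S\d}=1$. The only cosmetic difference is that the paper directly intersects $\bigcap_{n}\bigcup_{i=0}^{p^n-1}\disf{i}{|p|^n}=\ZZ_p$, whereas you iterate the $n=1$ case and track the $p$-adic digits of the limit point; the ``delicate point'' you flag is in fact immediate, since $\bigcap_m\disf{s}{|p|^m}$ consists of the single type~(1) point $s$.
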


 \begin{proof}
  Since for all $l\in \NN$ we have $S\d (S(x)^l)-l(S(x)^l)=0$, we
  obtain \begin{equation}\NN\subset\Sigma_{S\d,k}(\Lk{\Hx}).
  \end{equation}
  By compactness of
  the spectrum, we
  obtain \begin{equation}\ZZ_p\subset\Sigma_{S\d,k}(\Lk{\Hx}).
  \end{equation}
  Let $n\in\NN\setminus\{0\}$. We set $y:=(\forp)^n(x)=x_{0,p^n}$. Let $(M_{p^n},\nabla_{p^n})$ be the push-forward of $(\Hx,S\d)$ by
  $(\forp)^n$. On the one hand, according to
  Proposition~\ref{sec:push-forw-spectr} we have
  \begin{equation}
    \Sigma_{S\d,k}(\Lk{\Hx})=\Sigma_{\nabla_{p^n},k}(\Lk{M_{p^n}}).\end{equation}
On
  the other hand, since we have the isomorphism~(\ref{eq:23}), we have 
  \begin{equation}\Sigma_{\nabla_{p^n},k}(\Lk{M_{p^n}})=\bigcup_{i=0}^{p^n-1}\Sigma_{p^nS(y)\D{S(y)}
      +i,k}(\Lk{\Hy})=\bigcup_{i=0}^{p^n-1}p^n\Sigma_{S\d,k}(\Lk{\Hy})+i
  \end{equation}
  (cf. Remark~\ref{sec:spectr-diff-module} and
  Lemma~\ref{sec:spectr-diff-module-1}). By
  Lemma~\ref{sec:spectr-deriv-sd}, we know that $\nsp{S\d}=1$ in $\Lk{\Hy}$. Therefore, we have
  $\Sigma_{S\d,k}(\Lk{\Hy})\subset\disf{0}{1}$ (cf. \cite[Theorem~7.1.2]{Ber}). Consequently, 
  $\Sigma_{\nabla_{p^n},k}(\Lk{M_{p^n}})\subset\bigcup_{i=0}^{p^n-1}\disf{i}{|p|^n}$. Applying this process for all $n$, we obtain
  \begin{equation}\Sigma_{S\d,k}(\Lk{\Hx})\subset\bigcap_{n\in\NN\setminus\{0\}}\bigcup_{i=0}^{p^n-1}\disf{i}{|p|^n}=\ZZ_p,
  \end{equation}
  which ends the proof.
\end{proof}

\paragraph{The case where $x\not\in (0,\infty)$} We now assume that $x\in \Ak$ is a point of type (2), (3) or (4) not
of the form $x_{0,r}$. Then there exists $c\in k\setminus\{0\}$ such that
$x\in\diso{c}{|c|}$. The logarithm map (cf. Section~\ref{sec:logarithm-2})
\begin{equation}\Log{c}:\diso{c}{|c|}\to\Ak
\end{equation}
is well defined and induces an infinite étale cover. Let
$y$ be $\Log{c}(x)$. Let $r_k:\Ak\to\R+$ be the radius map
(cf. Definition~\ref{sec:type-points-a_k}) and
$\omega=|p|^{\fra{p-1}}$. We have for $n\in \NN\setminus\{0\}$

\begin{itemize}
\item if $0< r_k(x)< |c|\omega$, then 
$0<r_k(y)<\omega$ and $[\Hx:\Hy]=1$,
\item if
$|c|\omega^{\fra{p^{n-1}}}\leq r_k(x)< |c|\omega^{\fra{p^n}}$, then $\frac{\omega}{|p|^{n-1}}\leq r_k(y)<\frac{\omega}{p^n}$ and
$[\Hx:\Hy]=p^n$ 
\end{itemize}
(cf. Properties~\ref{sec:logarithm}). Note that, since
$|T(x)|=|c|$  the inequalities above do not depend on the choice of
$c$. According to Formula~(\ref{eq:14}) the pull-back of the
derivation $\d:\Hy\to \Hy$ is the derivation $S\d:\Hx\to\Hx$.

Let $(M,\nabla)$ be the push-forward of the differential module
$(\Hx,S\d)$ by $\Log{c}$. Assume that $|c|\omega^{\fra{p^{n-1}}}\leq
r_k(x)< |c|\omega^{\fra{p^n}}$. By Lemma~\ref{sec:sheaf-diff-etale}, we can take
$\{1,S(x),\cdots, S(x)^{p^n-1}\}$ as a basis of $(M,\nabla)$. Since
\begin{equation}\nabla(S(x)^i)=S\d(S(x)^i)=iS(x)^i,
\end{equation}
in this
basis  we have:

\begin{equation}
  \label{eq:25}
   \nabla%
    \left(
     \raisebox{0.5\depth}{%
       \xymatrixcolsep{1ex}%
       \xymatrixrowsep{1ex}%
       \xymatrix{f_1\ar@{.}[dddd]\\ \\\\ \\f_{p^n}\\}}%
   \right)%
     =
\left(
     \raisebox{0.5\depth}{%
       \xymatrixcolsep{1ex}%
       \xymatrixrowsep{1ex}%
     \xymatrix{\d
       f_1\ar@{.}[ddd]\\ \\ \\\d f_{p^n}\\}}
   \right)
   +
\left(
     \raisebox{0.5\depth}{%
       \xymatrixcolsep{1ex}%
       \xymatrixrowsep{1ex}%
   \xymatrix{0& 0\ar@{.}[rr]\ar@{.}[ddrr] &    &0\ar@{.}[dd]\\
                    0\ar@{.}[ddrr]\ar@{.}[dd]&1\ar@{.}[ddrr]                 &    & \\
                      &                   &    &0\\
                    0\ar@{.}[rr]&                   &0  &p^n-1\\}
}
   \right)
   \left(
     \raisebox{0.5\depth}{%
       \xymatrixcolsep{1ex}%
       \xymatrixrowsep{1ex}%
       \xymatrix{f_1\ar@{.}[dddd]\\ \\\\ \\f_{p^n}\\}}%
   \right).
 \end{equation}
 Hence, $(M,\nabla)$ is a differential module with constant
 coefficient in the sense of \cite{Cons}. Let us recall the result
 concerning
 differential modules with constant coefficients:

  \begin{Theo}[{\cite[Theorem~4.14]{Cons}}]\label{sec:spectr-diff-equat-2}
          Let $x\in\Ak$          
          be a point of type (2), (3) or (4). Let $(M,\nabla)$ be a differential module over
          $(\Hx,\d)$ such that there exists a basis for which the associated matrix $G$
          has constant entries (i.e. $G\in\cM_\nu(k)$), and let
          $\{a_1,\cdots, a_N\}$ be the set of eigenvalues of $G$. Then we have:
            \begin{itemize}
           \item if
              $x$ is a point of type (2) or (3), then
              \begin{equation}\Sigma_{\nabla,k}(\Lk{M})=\bigcup_{i=1}^{N}\disf{a_i}{\frac{\omega}{r(x)}}.
              \end{equation}
            \item if $x$ is a point of type (4), then \begin{equation}\Sigma_{\nabla,k}(\Lk{M})=
                \begin{cases}
                  \Bigcup_{i=1}^{N}\disf{a_i}{\frac{\omega}{r(x)}}&
                  \text{If } \crk>0\\
                  &\\
                  \Bigcup_{i=1}^{N}\overline{\diso{a_i}{\frac{1}{r(x)}}}&
                  \text{If } \crk=0\\
                \end{cases},
              \end{equation}
          \end{itemize}         
\end{Theo}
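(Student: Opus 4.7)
The plan is to use Remark~\ref{sec:spectr-diff-module-3} to reduce the theorem to the computation of $\Sigma_{\d,k}(\Lk{\Hx})$, and then to pin down this spectrum via explicit eigenvectors for the lower bound and explicit resolvents for the upper bound. The reduction gives
\begin{equation}
\Sigma_{\nabla,k}(\Lk{M}) \;=\; \bigcup_{i=1}^{N}\bigl(a_{i}+\Sigma_{\d,k}(\Lk{\Hx})\bigr),
\end{equation}
so only the shape of $\Sigma_{\d,k}(\Lk{\Hx})$ as a subset of $\Ak$ remains to be determined.

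For the lower bound I would exploit the $p$\nbh adic exponential: the series $\sum_{n\geq 0}a^{n}S^{n}/n!$ has radius of convergence $\omega/|a|$, so for $a\in k$ with $|a|<\omega/r(x)$ it defines an element $e_{a}\in\Hx$ satisfying $\d(e_{a})=a\,e_{a}$. Hence $\Sigma_{\d}\cap k$ contains $\{a\in k:|a|<\omega/r(x)\}$, which is dense in the Berkovich open disk $\diso{0}{\omega/r(x)}$, and by compactness of $\Sigma_{\d}$ in $\Ak$ we obtain $\overline{\diso{0}{\omega/r(x)}}\subset\Sigma_{\d}$. When $x$ is of type (2) or (3) this topological closure is the entire affinoid $\disf{0}{\omega/r(x)}$, while when $x$ is of type (4) the type-(4) points at radius $\omega/r(x)$ that are not on the path from $0$ are omitted, yielding only $\overline{\diso{0}{\omega/r(x)}}$.

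For the upper bound, the resolvent series $(\d-a)\-1=-\sum_{n\geq 0}a^{-(n+1)}\d^{n}$ converges in operator norm whenever $|a|>\nsp{\d}=\omega/r(x)$, the identity $\nsp{\d}=\omega/r(x)$ being itself a classical computation via $\d^{n}(S^{n})=n!$. This gives $\Sigma_{\d}\cap k\subset\{a:|a|\leq\omega/r(x)\}$; to extend this upper bound to arbitrary $a\in\Ak$, one passes to the universal point (Proposition~\ref{sec:univ-points-fiber-2}), applies Lemma~\ref{sec:univ-points-fiber-3} to track the radius along scalar extension, and reruns the convergence argument inside $\Lk{\Hx}\ct_{k}\h{a}$. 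In types (2), (3), and type (4) with $\crk>0$, this produces the sharp upper bound $\bigcup_i\disf{a_i}{\omega/r(x)}$, which, combined with the lower bound, closes the proof.

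The hardest case, and the main obstacle, is the type-(4) point with $\crk=0$. There $\omega=1$ coincides with $\nsp{\d}\cdot r(x)$, and the resolvent argument above only yields $\Sigma_\d\subset\disf{0}{1/r(x)}$; one has to work harder to eliminate the type-(4) boundary points at radius exactly $1/r(x)$ that do not belong to $\overline{\diso{0}{1/r(x)}}$. This is where Corollary~\ref{sec:berkovich-line} enters: for such a boundary point $a$ the fiber $\pik{\wac}\-1(a)$ reduces to $\{\uni{\wac}(a)\}$, so the tensor norm on $\Hx\ct_k\h{a}$ remains multiplicative and one can directly build the inverse of $\d\ot 1-1\ot T(a)$ by a refined series argument, recovering the expected $\overline{\diso{0}{1/r(x)}}$.
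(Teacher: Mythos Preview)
This theorem is not proved in the present paper: it is quoted verbatim from \cite[Theorem~4.14]{Cons}, so there is no proof here to compare against. I will therefore assess your argument on its own merits.

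The reduction via Remark~\ref{sec:spectr-diff-module-3} is fine, and the upper bound $\Sigma_{\d}\subset\disf{0}{\omega/r(x)}$ follows immediately from $\nsp{\d}=\omega/r(x)$ together with \cite[Theorem~7.1.2]{Ber}; your detour through universal points is unnecessary but harmless.

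The genuine gap is in the lower bound. Your exponential eigenvector $e_{a}(S)=\sum_{n}a^{n}(S-c)^{n}/n!$ converges in $\Hx$ only when $|a|<\omega/r(x)$ strictly; at $|a|=\omega/r(x)$ the general term has norm $\omega^{s_p(n)}$ (in the $p$\nbh adic case), which does not tend to $0$. So your argument produces only
\[
\{a\in k:\ |a|<\omega/r(x)\}\ \subset\ \Sigma_{\d},
\]
and taking the topological closure in $\Ak$ you then assert that, for $x$ of type (2) or (3), this closure equals $\disf{0}{\omega/r(x)}$. That assertion is false. In the Berkovich line the closure of an open disk $\diso{0}{R}$ is $\diso{0}{R}\cup\{x_{0,R}\}$, \emph{not} the full affinoid $\disf{0}{R}$: every point $y$ with $|T(y)|=R$ and $y\ne x_{0,R}$ (for instance any rational point $c$ with $|c|=R$, or any $x_{c,r'}$ with $|c|=R$ and $r'<R$) lies in a residue disk $\diso{c}{R}$ disjoint from $\diso{0}{R}$, hence is not in the closure. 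Your lower bound therefore misses the entire ``sphere'' $\{|T(\cdot)|=\omega/r(x)\}\setminus\{x_{0,\omega/r(x)}\}$, which is a large piece of the claimed spectrum.

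To repair this you must show directly that $\d-a$ fails to be invertible for every $a\in k$ with $|a|=\omega/r(x)$ (then density of type (1) points in $\disf{0}{\omega/r(x)}$ and compactness finish the job). This cannot be done with an eigenvector; one has to exhibit non-surjectivity, e.g.\ by showing that the equation $(\d-a)f=g$ has no solution in $\Hx$ for a suitable $g$, or by computing $\nsp{(\d-a)^{-1}}$ via Lemma~\ref{sec:spectr-diff-module-2} and deriving a contradiction. This boundary analysis is exactly the delicate part of the original proof in \cite{Cons}, and it is absent from your sketch.
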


 \begin{Pro}\label{sec:case-posit-resid-1}
   Let $x\in\Ak$ be a point of type (2), (3) or (4) not of the form
   $x_{0,r}$. Let $c\in k\setminus\{0\}$ such that $x\in
   \diso{c}{|c|}$. Let $y$ be $\Log{c}(x)$. If $r_k(x)\leq|c|\omega$, then the spectrum of $S\d$, as an element of $\Lk{\Hx}$, is
   \begin{equation}\Sigma_{S\d,k}(\Lk{\Hx})=\disf{0}{\frac{\omega}{r_k(y)}}.
   \end{equation}
   If $|c|\omega^{\fra{p^{n-1}}}<r_k(x)\leq
   |c|\omega^{\fra{p^n}}$ with $n\in \NN\setminus\{0\}$, then the
   spectrum is a disjoint union of $p^n$ closed disks 
   \begin{equation}\Sigma_{S\d,k}(\Lk{\Hx})=\bigcup_{i=0}^{p^n-1}\disf{i}{\frac{\omega}{r_k(y)}}.
   \end{equation}
 \end{Pro}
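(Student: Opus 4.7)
My plan is to mirror the argument of Proposition~\ref{sec:bf-case-positive}, replacing the Frobenius cover by the logarithm cover. First I would pick an affinoid neighbourhood $U$ of $y$ in $\Ak$ and let $V$ be the connected component of $\Log{c}\-1(U)$ containing $x$. The restriction $\phi := \Log{c}|_V\colon V \to U$ is then a finite étale cover of degree $[\Hx:\Hy]$, which equals $1$ when $r_k(x) \leq |c|\omega$ and $p^n$ when $|c|\omega^{1/p^{n-1}} < r_k(x) \leq |c|\omega^{1/p^n}$. The identity $\d(\Log{c}) = 1/S$ combined with the pull-back formula~\eqref{eq:14} shows $\phi^{*}\d = S\d$, so the push-forward functor~\eqref{eq:21} transforms $(\Hx, S\d)$ into a differential module $(M,\nabla)$ over $(\Hy,\d)$ of rank $[\Hx:\Hy]$.

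Next I would make this push-forward explicit. By Lemma~\ref{sec:sheaf-diff-etale}, the set $\{1, S(x), \ldots, S(x)^{p^n-1}\}$ forms an $\Hy$-basis of $M$. The relation $\nabla(S(x)^i) = S\d(S(x)^i) = i\, S(x)^i$ immediately yields the diagonal matrix $\mathrm{diag}(0,1,\ldots,p^n-1) \in \cM_{p^n}(k)$ displayed in~\eqref{eq:25}, so $(M,\nabla)$ is a differential module with constant coefficients in the sense of~\cite{Cons}, with eigenvalue set $\{0,1,\ldots,p^n-1\}$. In the degenerate case $n=0$, $M$ has rank one and the matrix is simply $(0)$.

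The conclusion then follows by combining Theorem~\ref{sec:spectr-diff-equat-2} with Proposition~\ref{sec:push-forw-spectr}. Since $y = \Log{c}(x)$ is of type (2), (3), or (4) and $\crk = p > 0$, Theorem~\ref{sec:spectr-diff-equat-2} gives
\[
\Sigma_{\nabla,k}(\Lk{M}) = \bigcup_{i=0}^{p^n-1}\disf{i}{\omega/r_k(y)},
\]
and Proposition~\ref{sec:push-forw-spectr} applied to $\phi$ yields $\Sigma_{S\d,k}(\Lk{\Hx}) = \Sigma_{\nabla,k}(\Lk{M})$. Disjointness of the discs when $n \geq 1$ follows from the bound $r_k(y) > \omega$ implicit in the hypothesis.

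The main obstacle is the careful bookkeeping required to match derivation conventions across the cover: under the embedding $\Hy \hookrightarrow \Hx$ induced by $\phi^{\#}$, the derivation $\d$ on $\Hy$ coincides with the restriction of $S\d$ on $\Hx$, so that $(M,\nabla)$ genuinely satisfies the Leibniz rule as a differential module over $(\Hy,\d)$. One also needs to confirm that $y$ is of type (2), (3), or (4), which follows from the étaleness of $\phi$ at $x$ and the type-preservation properties of universal points recalled in Theorem~\ref{sec:univ-points-fiber}. Once these are in place, the proof is a direct application of the already established machinery.
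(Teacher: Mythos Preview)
Your approach is exactly the paper's: push forward $(\Hx,S\d)$ along the logarithm cover, identify the resulting module as a constant-coefficient module via the basis $\{1,S(x),\ldots\}$, and invoke Theorem~\ref{sec:spectr-diff-equat-2} together with Proposition~\ref{sec:push-forw-spectr}. The structure is correct, but two details need fixing.

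First, your degree count is wrong at the boundary points. Proposition~\ref{sec:logarithm-1} gives $[\Hx:\Hy]=p^n$ on the half-open range $|c|\omega^{1/p^{n-1}}\le r_k(x)<|c|\omega^{1/p^n}$, so at $r_k(x)=|c|\omega^{1/p^n}$ the degree is $p^{n+1}$, not $p^n$ (and at $r_k(x)=|c|\omega$ it is $p$, not $1$). The paper handles this separately: at $r_k(x)=|c|\omega^{1/p^n}$ one has $r_k(y)=\omega/|p|^n$, hence $\omega/r_k(y)=|p|^n$, and the a priori $p^{n+1}$ disks $\disf{i}{|p|^n}$ for $0\le i\le p^{n+1}-1$ collapse to the $p^n$ disks $\disf{i}{|p|^n}$ for $0\le i\le p^n-1$ because $|i-(i+\ell p^n)|\le|p|^n$ for $1\le\ell\le p-1$. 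Without this collapse argument your proof does not cover the closed endpoint of each range.

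Second, the disjointness justification is off. You need the $p^n$ disks $\disf{i}{\omega/r_k(y)}$, $0\le i\le p^n-1$, to be pairwise disjoint; since $\min_{i\ne j}|i-j|=|p|^{n-1}$ over that index range, the correct bound is $\omega/r_k(y)<|p|^{n-1}$, i.e.\ $r_k(y)>\omega/|p|^{n-1}$, which follows from the strict inequality $r_k(x)>|c|\omega^{1/p^{n-1}}$ via Properties~\ref{sec:logarithm}. The bound $r_k(y)>\omega$ you cite only gives $\omega/r_k(y)<1$, which suffices for $n=1$ but not for $n\ge2$.

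(As a minor aside, the type of $y$ is not a consequence of Theorem~\ref{sec:univ-points-fiber}; it follows simply because finite morphisms send non-rational points to non-rational points.)
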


 \begin{proof}
   Assume that $r_k(x)<|c|\omega$. Since $[\Hx :\Hy]=1$, the push-forward of
   $(\Hx,S\d)$ by $\Log{c}$ is isomorphic to $(\Hy,\d)$. Therefore, by
   Theorem~\ref{sec:spectr-diff-equat-2} and Proposition
   \ref{sec:push-forw-spectr} we obtain
   \begin{equation}\Sigma_{S\d,k}(\Lk{\Hx})=\Sigma_{\d,k}(\Lk{\Hy})=\disf{0}{\frac{\omega}{r_k(y)}}.
   \end{equation}
   We now assume that $|c|\omega^{\fra{p^{n-1}}}\leq
   r_k(x)<|c|\omega^{\fra{p^n}}$ with $n\in\NN\setminus\{0\}$. Let
   $(M,\nabla)$ be the push-forward of $(\Hx,S\d)$ by $\Log{c}$. Since we
   have the formula~(\ref{eq:25}) and according to
   Propositions~\ref{sec:push-forw-spectr}
   and Theorem~\ref{sec:spectr-diff-equat-2}, we have

   \begin{equation}\Sigma_{S\d,k}(\Lk{\Hx})=\Sigma_{\nabla,k}(\Lk{M})=\bigcup_{i=0}^{p^n-1}\disf{i}{\frac{\omega}{r_k(y)}}.
   \end{equation}
   If moreover $|c|\omega^{\fra{p^{n-1}}}<r_k(x)$, then
   $|p|^n<\frac{\omega}{r_k(y)}<|p|^{n-1}$. Consequently, the spectrum
   $\Sigma_{S\d,k}(\Lk{\Hx})$ is a disjoint union of $p^n$ disks. For
   the case where $r_k(x)=|c|\omega^{\fra{p^n}}$ with $n\in \NN$, we
   have $r_k(y)=\frac{\omega}{|p|^n}$
   (cf. Properties~\ref{sec:logarithm}).  For all $0\leq i\leq p^n-1$
   and $1\leq l \leq p-1$, we have
   $\disf{i}{|p|^n}=\disf{i+lp^n}{|p|^n}$. Hence, we obtain

   \begin{equation}\Sigma_{S\d,k}(\Lk{\Hx})=\bigcup_{i=0}^{p^{n+1}-1}\disf{i}{|p|^n}=\bigcup_{i=0}^{p^n-1}\disf{i}{|p|^n},
   \end{equation}
   which is obviously a disjoint union.
 \end{proof}

 \begin{cor}\label{sec:case-posit-resid-2}
   Let $x\in\Ak$ be a point of type (2), (3) or (4) not of the form
   $x_{0,r}$. Let $c\in k\setminus\{0\}$ such that $x\in
   \diso{c}{|c|}$. Let $y$ be $\Log{c}(x)$. The spectrum of $S\d$, as an
   element of $\Lk{\Hx}$, is
    \begin{equation}\Sigma_{S\d,k}(\Lk{\Hx})=\bigcup_{i\in
        \NN}\disf{i}{\frac{\omega}{r_k(y)}}.
    \end{equation}  
  \end{cor}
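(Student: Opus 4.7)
The plan is to deduce this corollary directly from Proposition~\ref{sec:case-posit-resid-1} by repackaging the case-by-case description there into a single uniform formula. The key observation is that the family $\{\disf{i}{\omega/r_k(y)}\}_{i\in\NN}$ has only finitely many distinct elements, and they are exactly the disks appearing in the Proposition.

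First, I would distinguish the two regimes of Proposition~\ref{sec:case-posit-resid-1}. In the regime $r_k(x)\leq |c|\omega$, we have $r_k(y)\leq \omega$, hence $\omega/r_k(y)\geq 1$. Since $|i|\leq 1$ for every $i\in\NN$ (recall $\crk=p>0$), we get $i\in \disf{0}{\omega/r_k(y)}$, and thus $\disf{i}{\omega/r_k(y)}=\disf{0}{\omega/r_k(y)}$ for all $i\in \NN$. In particular,
\begin{equation}
\bigcup_{i\in\NN}\disf{i}{\omega/r_k(y)}=\disf{0}{\omega/r_k(y)}=\Sigma_{S\d,k}(\Lk{\Hx}),
\end{equation}
by Proposition~\ref{sec:case-posit-resid-1}.

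In the regime $|c|\omega^{1/p^{n-1}}<r_k(x)\leq |c|\omega^{1/p^n}$ for some $n\geq 1$, the bounds on $r_k(y)$ recalled before Proposition~\ref{sec:case-posit-resid-1} give $\omega/r_k(y)\geq |p|^n$. For any $i\in\NN$, write the Euclidean division $i=qp^n+j$ with $0\leq j<p^n$; then $|i-j|=|q||p|^n\leq |p|^n\leq \omega/r_k(y)$, so $\disf{i}{\omega/r_k(y)}=\disf{j}{\omega/r_k(y)}$. Therefore
\begin{equation}
\bigcup_{i\in\NN}\disf{i}{\omega/r_k(y)}=\bigcup_{j=0}^{p^n-1}\disf{j}{\omega/r_k(y)},
\end{equation}
which, by Proposition~\ref{sec:case-posit-resid-1}, equals $\Sigma_{S\d,k}(\Lk{\Hx})$.

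The argument is essentially bookkeeping, so there is no serious obstacle; the only subtle point is to verify the inequality $\omega/r_k(y)\geq |p|^n$ in the second regime, which is immediate from the description of the Log-map (Properties~\ref{sec:logarithm}). Combining both regimes yields the claimed uniform description of the spectrum.
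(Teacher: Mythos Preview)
Your argument is correct, but it takes a different route from the paper. The paper starts from Proposition~\ref{sec:case-posit-resid-1} as you do, but to show that the infinite union $\bigcup_{i\in\NN}\disf{i}{\omega/r_k(y)}$ is contained in the spectrum it uses a spectral argument: since $S\d(S(x)^l)=l\,S(x)^l$ for every $l\in\NN$, one has $\NN\subset\Sigma_{S\d,k}(\Lk{\Hx})$, so each $l$ lies in one of the finitely many disks $\disf{i}{\omega/r_k(y)}$ ($0\leq i\leq p^n-1$), and by the ultrametric property $\disf{l}{\omega/r_k(y)}$ coincides with that disk. You instead prove directly, by Euclidean division modulo $p^n$ and the bound $\omega/r_k(y)\geq|p|^n$, that the infinite union collapses to the finite one, without ever invoking the eigenvector observation. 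Your approach is slightly more self-contained (it is pure valuation bookkeeping and needs no spectral input beyond the Proposition), while the paper's approach reuses an idea---that $\NN$ is always in the spectrum of $S\d$---which also appears in the proof of Proposition~\ref{sec:bf-case-positive} and so keeps the exposition uniform.
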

  \begin{proof}
    By Proposition~\ref{sec:case-posit-resid-1}, we have
    $\Sigma_{S\d,k}(\Lk{\Hx})=\bigcup_{i=0}^{p^n-1}\disf{i}{\frac{\omega}{r_k(y)}}
    $ for some $n\in\NN$. Since for all $l\in \NN$ we have $S\d (S(x)^l)-l(S(x)^l)=0$, we
    obtain \begin{equation}\NN\subset\Sigma_{S\d,k}(\Lk{\Hx}).
    \end{equation}
    Therefore, for each $l\in \NN$ there exists $0\leq i_l\leq p^n-1$
    such that
    $\disf{l}{\frac{\omega}{r_k(y)}}=\disf{i_l}{\frac{\omega}{r_k(y)}}$. Consequently,
        we obtain   $\bigcup_{i\in
          \NN}\disf{i}{\frac{\omega}{r_k(y)}}\subset\Sigma_{S\d,k}(\Lk{\Hx})$
        which ends the proof.
  \end{proof}
  \subsubsection{ The case of residue characteristic zero}
  We assume here that $\crk=0$.
\paragraph{The case where $x\in (0,\infty)$}
  \begin{Pro}
    Let $x\in \Ak$ be a point of type (2) of the form
    $x_{0,r}$. The spectrum of $S\d$, as an element of $\Lk{\Hx}$, is
    \begin{equation}\Sigma_{S\d,k}(\Lk{\Hx})=\disf{0}{1}.
    \end{equation}
  \end{Pro}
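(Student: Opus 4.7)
The plan is to split the claim into the two inclusions $\Sigma_{S\d,k}(\Lk{\Hx})\subseteq\disf{0}{1}$ and $\Sigma_{S\d,k}(\Lk{\Hx})\supseteq\disf{0}{1}$. The first is immediate from Lemma~\ref{sec:spectr-deriv-sd}, which gives $\nsp{S\d}=1$, combined with \cite[Theorem~7.1.2]{Ber}. For the second I would first reduce to $r=1$: since $x=x_{0,r}$ is of type $(2)$ we have $r\in|k^*|$, so picking $\alpha\in k^*$ with $|\alpha|=r$ and changing variable by $S\mapsto S/\alpha$ gives an isometric $k$-algebra isomorphism $\h{x_{0,r}}\simeq\h{x_{0,1}}$ intertwining the two copies of $S\d$, so the spectra coincide. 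I therefore assume $r=1$.

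Fix now an arbitrary $a\in\disf{0}{1}$; I argue $a\in\Sigma_{S\d,k}(\Lk{\Hx})$ by contradiction. Suppose $S\d\otimes 1-1\otimes T(a)$ is invertible in $\Lk{\Hx}\ct_k\hh{a}$; then there exists $u\in\Hx\ct_k\hh{a}$ with $(S\d-T(a))u=1/(S-1)$. The idea is to extract the obstruction from the principal part of $u$ at $S=1$. The Mittag--Leffler decomposition of $\h{x_{0,1}}$, available since $k$ is algebraically closed, provides continuous $k$-linear functionals $c_n:\Hx\to k$ ($n\geq 1$) extracting the coefficient of $(S-1)^{-n}$ in the principal part at $S=1$, satisfying $|c_n(f)|\to 0$ for every $f$. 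These extend $\hh{a}$-linearly to $\Hx\ct_k\hh{a}\to\hh{a}$ preserving the property $|c_n(g)|_{\hh{a}}\to 0$. Writing $S\d=(1+(S-1))\partial_{S-1}$ and comparing Laurent coefficients at $S=1$ yields the operator identity
\begin{equation}
c_n\circ S\d=-n\,c_n+(1-n)\,c_{n-1},\qquad n\geq 1,\ c_0:=0.
\end{equation}

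Applying $c_n$ to $(S\d-T(a))u=1/(S-1)$ gives the recursion $-(n+T(a))c_n(u)+(1-n)c_{n-1}(u)=\delta_{n,1}$, whose unique solution is
\begin{equation}
c_n(u)=\frac{-(n-1)!}{(1+T(a))(2+T(a))\cdots(n+T(a))}\in\hh{a}.
\end{equation}
Since $\crk=0$ we have $|(n-1)!|=1$, and $|j+T(a)|_{\hh{a}}\leq\max(|j|,|T(a)|)\leq 1$ for every $j\geq 1$, so $|c_n(u)|_{\hh{a}}\geq 1$ for all $n$. This contradicts $|c_n(u)|_{\hh{a}}\to 0$; hence $a\in\Sigma_{S\d,k}(\Lk{\Hx})$ and the two inclusions together give the claim.

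The main obstacle is the analytic input: establishing the Mittag--Leffler decomposition of $\h{x_{0,1}}$ with continuous principal-part functionals $c_n$, securing their compatibility with the completed tensor product $\Hx\ct_k\hh{a}$ (in particular the decay $|c_n(g)|_{\hh{a}}\to 0$), and verifying the operator identity $c_n\circ S\d=-n\,c_n+(1-n)\,c_{n-1}$. With those in hand the remaining calculation is elementary.
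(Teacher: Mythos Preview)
Your argument is correct and follows essentially the same route as the paper: the inclusion $\Sigma\subset\disf{0}{1}$ via $\nsp{S\d}=1$, then non-surjectivity of $S\d\otimes 1-1\otimes T(a)$ on $\Hx\ct_k\hh{a}$ by showing that a preimage of a simple pole at $S=c$ would have a Laurent tail whose coefficients do not decay. Your reduction to $r=1$ and use of coefficient functionals $c_n$ is just a cosmetic repackaging of the paper's direct computation (the paper keeps general $r$, picks $c$ with $|c|=r$, and uses the Mittag--Leffler decomposition of $\h{x_{0,r}}$ from \cite[Theorem~2.1.6]{Chr} and \cite[Proposition~2.10]{Cons}); note that your explicit formula for $c_n(u)$ has a sign slip (it should be $(-1)^n(n-1)!/\prod_{j=1}^n(j+T(a))$) but this is immaterial for the absolute value estimate.
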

  \begin{proof}
    We set $d:=S\d$ and $\Sigma_d=\Sigma_{d,k}(\Lk{\Hx})$. Since $\nsp{d}=1$, we have
    $\Sigma_d\subset\disf{0}{1}$. Let $y\in \disf{0}{1}$. We set
    $A_{\Hy}=\Hx\ct_k\Hy$ and $d_{\Hy}=S\d:A_{\Hy}\to A_{\Hy}$. From
    \cite[Lemma~2.5]{Cons} we have a bounded morphism:
    \begin{equation}\Lk{\Hx}\ct_k\Hy\to \LL{\Hy}{A_{\Hy}}.
    \end{equation}
    The image of $d\ot 1$ by this morphism is the derivation
    $d_{\Hy}$. We now show that the image of $d\ot 1-1\ot T(y)$ is not
    invertible in $\LL{\Hy}{A_{\Hy}}$. Let $\alpha$ be an element of $k$
    that corresponds to the class $\tilde{\alpha}$ in $\rk$. We have the following decomposition \begin{equation}\Hx=\widehat{\bigoplus}_{\tilde{\alpha}\in
                    \tilde{k}}\{\sum_{i\in\NN^*} \frac{a_{\alpha
                      i}}{(S+\gamma\alpha)^i}|\; a_{\alpha i}\in
                  k,\; \lim\limits_{i\to +\infty}|a_{\alpha i}|r^{-i}= 0  \}\oplus
      \fdisf{0}{r}  
    \end{equation}
    with $\gamma\in k$ and $|\gamma|=r$
    (cf. \cite[Theorem~2.1.6]{Chr}, \cite[Proposition~2.10]{Cons}). Therefore, we obtain the isometric isomorphism
\begin{equation}\Hx\ct_k\Hy\simeq\widehat{\bigoplus}_{\tilde{\alpha}\in
                    \tilde{k}}\{\sum_{i\in\NN^*} \frac{a_{\alpha
                      i}}{(S+\gamma\alpha)^i}|\; a_{\alpha i}\in
                  \Hy,\; \lim\limits_{i\to +\infty}|a_{\alpha i}|r^{-i}= 0  \}\oplus
                  \fdiscf{\Hy}{0}{r} .
                \end{equation}
                Each Banach space of the completed direct sum is
                stable under $d_{\Hy}-T(y)$. The operator
                $d_{\Hy}-T(y)$ is not surjective. Indeed, let $c:=\gamma\alpha$ with
                $\tilde{\alpha}\in\rk\setminus\{0\}$ and let
                $g=\frac{c}{S-c}$. If there exists $f\in \Hx\ct_k \Hy$
                such that $(d_{\Hy}-T(y))(f)=g$, then we can choose $f$ of the form
                $f=\sum_{i\in\NN\setminus\{0\}}\frac{a_i}{(S-c)^i}$,
                such that for each $i\in \NN\setminus\{0\} $ we
                have
                \begin{equation}
                  a_i=-\frac{(-c)^{i}(i-1)!}{\prod_{j=1}^i(T(y)+j)}.
                \end{equation}
                We observe that $|a_i|r^{-i}\geq1$ for each
              $i\in\NN\setminus\{0\}$. This means that such $f$ does not exist in
                $\Hx\ct_k\Hy$. Hence, $d\ot 1-1\ot T(y)$ is not
                invertible in $\Lk{\Hx}$ and we conclude that
                $\disf{0}{1}\subset \Sigma_d$.  
  \end{proof}

 The proof of the following proposition is almost similar to
 \cite[Proposition~3.7]{Azz21}, but it is in a more general context.
  \begin{Pro}
    Let $x\in \Ak$ be a point of type (3) of the form
    $x_{0,r}$. The spectrum of $S\d$ as an element of $\Lk{\Hx}$ is
    \begin{equation}\Sigma_{S\d,k}(\Lk{\Hx})=\ZZ\cup\{x_{0,1}\}.
    \end{equation}
  \end{Pro}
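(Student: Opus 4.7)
My plan is to prove the two inclusions $\ZZ\cup\{x_{0,1}\}\subseteq \Sigma_{S\d,k}(\Lk{\Hx})$ and $\Sigma_{S\d,k}(\Lk{\Hx})\subseteq \ZZ\cup\{x_{0,1}\}$ separately; set $\Sigma:=\Sigma_{S\d,k}(\Lk{\Hx})$. For the first inclusion, since $|S(x)|=r>0$, $S$ is a unit in $\Hx$, and the identity $S\d(S^n)=nS^n$ shows every $n\in\ZZ$ is an eigenvalue of $S\d$, so $\ZZ\subseteq\Sigma$. As $\crk=0$, any two distinct integers satisfy $|m-n|=1$, so each open unit disc in $\Ak$ contains at most one integer; hence every basic neighborhood of $x_{0,1}$ in $\disf{0}{1}$ (obtained by removing finitely many open unit discs) contains cofinitely many integers, making $x_{0,1}$ an accumulation point of $\ZZ$. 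The closedness of $\Sigma$ then forces $x_{0,1}\in\Sigma$.

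For the reverse inclusion, $\Sigma\subseteq\disf{0}{1}$ follows from $\nsp{S\d}=1$ (Lemma~\ref{sec:spectr-deriv-sd}). The structural input I would exploit is that, because $r\notin|k^*|$ and $k$ is algebraically closed (so $|k^*|$ is divisible), the values $\{r^n\}_{n\in\ZZ}$ lie in pairwise distinct cosets of $|k^*|$ in $\R+^*$; hence $\{S^n\}_{n\in\ZZ}$ is an orthogonal basis of $\Hx$, yielding $\Hx=\widehat{\bigoplus}_{n\in\ZZ}k\cdot S^n$, and $S\d$ acts diagonally with eigenvalue $n$ on $k\cdot S^n$. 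Given $y\in \disf{0}{1}\setminus(\ZZ\cup\{x_{0,1}\})$, the point $y$ sits in a unique open unit disc $\diso{c}{1}$ for some $c\in k$ with $|c|\leq 1$.

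If $\tilde c$ is not in the image of $\ZZ$ in $\tilde k$, then $|n-c|=1$ for all $n\in\ZZ$, so the diagonal $(S\d-c)^{-1}\in\Lk{\Hx}$ has spectral norm $1$; Lemma~\ref{sec:spectr-diff-module-2} then gives $\diso{c}{1}\cap\Sigma=\emptyset$, and $y\notin\Sigma$. Otherwise $\tilde c=\tilde{n_0}$ for a unique $n_0\in\ZZ$; put $A:=S\d-n_0$ and $u:=T(y)-n_0\in\Hy$, so that $0<|u|_y<1$. Decompose $\Hx=k\cdot S^{n_0}\oplus H'$ with $H':=\widehat{\bigoplus}_{n\ne n_0}k\cdot S^n$, both closed and $A$-stable, and write $\Phi:=A\otimes 1-1\otimes u$. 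On $k\cdot S^{n_0}$, $A$ vanishes, so $\Phi$ acts as $-1\otimes u$, invertible with inverse $-1\otimes u^{-1}$. On $H'$, $A|_{H'}$ is diagonal with eigenvalues $\{n-n_0:n\ne n_0\}$, each of absolute value $1$ (since $\crk=0$), so $A|_{H'}^{-1}\in\Lk{H'}$ has norm $1$; the factorization $\Phi|_{H'}=(A|_{H'}\otimes 1)(1-A|_{H'}^{-1}\otimes u)$, together with $\|A|_{H'}^{-1}\otimes u\|\leq |u|_y<1$, makes the Neumann series $\sum_{k\geq 0}(A|_{H'}^{-1})^{k+1}\otimes u^k$ converge in $\Lk{H'}\hat\otimes\Hy$ to an inverse of $\Phi|_{H'}$. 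Assembling the two summands yields $\Phi^{-1}\in\Lk{\Hx}\hat\otimes\Hy$, hence $y\notin\Sigma$.

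The hard part is the second case: a naive diagonal inverse $\sum_n(n-T(y))^{-1}$ defines a bounded $\Hy$-linear operator on $\Hx\hat\otimes\Hy$ even when $y=x_{0,1}$, so bounded invertibility cannot by itself be promoted to invertibility in the Banach algebra $\Lk{\Hx}\hat\otimes\Hy$. The decomposition into the kernel and stable complement of $A=S\d-n_0$, together with the Neumann series, is what produces a genuine inverse in $\Lk{\Hx}\hat\otimes\Hy$; the construction breaks down precisely when $|u|_y=1$, which accounts for why $x_{0,1}$ itself belongs to the spectrum.
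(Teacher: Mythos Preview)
Your proof is correct and follows essentially the same strategy as the paper's: both use the eigenvalue relation $S\d(S^n)=nS^n$ together with compactness for the inclusion $\ZZ\cup\{x_{0,1}\}\subseteq\Sigma$, and both exploit the orthogonal splitting $\Hx=k\cdot S^{n_0}\oplus\widehat{\bigoplus}_{i\neq n_0}k\cdot S^i$ (with $S\d-n_0$ acting isometrically invertibly on the second summand) to exclude the rest of $\diso{n_0}{1}$. The only organizational difference is that the paper first treats $k$-rational $a\notin\ZZ$ by computing $\nsp{(S\d-a)^{-1}}$ and invoking Lemma~\ref{sec:spectr-diff-module-2}, and then separately handles the segments $(n,x_{0,1})$ via the direct-sum spectrum formula $\Sigma_{d-n}=\Sigma_{\nabla_1}\cup\Sigma_{\nabla_2}$, whereas you treat every $y\in\diso{n_0}{1}\setminus\{n_0\}$ in one stroke by writing down the block-diagonal inverse explicitly with a Neumann series; this is slightly more economical but amounts to inlining the proof of Lemma~\ref{sec:spectr-diff-module-2} on the $H'$ summand.
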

  \begin{proof}
    We set $d:=S\d$ and $\Sigma_{d-n}:=\Sigma_{d-n,k}(\Lk{\Hx})$. As $\nsp{d}=1$
    (cf. Lemma~\ref{sec:spectr-deriv-sd}), we have
    $\Sigma_d\subset\disf{0}{1}$. Recall that
    \begin{equation}\Hx=\fcouf{0}{r}{r}=\{\sum_{i\in \ZZ}a_i S^i;\;
      \lim\limits_{|i|\to \infty}|a_i|r^i=0\}.
    \end{equation}
    Let $a\in k\cap\disf{0}{1}$. If $a\in \ZZ$,  then we have
    $(d-a)(S^a)=0$. Hence, $d-a$ is not
    injective and $\ZZ\subset \Sigma_d$. As the spectrum is
    compact, we have $\ZZ\cup\{x_{0,1}\}\subset \Sigma_d$. If
    $a\not\in \ZZ$, then $d-a$ is invertible in $\Lk{\Hx}$. Indeed, let
    $g(S)=\sum_{i\in \ZZ}b_i S^i\in \Hx$, if there exists
    $f=\sum_{i\in \ZZ}a_i S^i\in \Hx$ such that $(d-a)f=g$ , then for
    each $i\in \ZZ$ we have
    \begin{equation} a_i= \frac{b_i}{(i-a)}.
    \end{equation}
    If there exists $i_0\in\ZZ$ such that $a\in\diso{i_0}{1}$, then for
    each $i\ne i_0$ we have $|a_i|=|b_i|$ and
    $|a_{i_0}|=\frac{|b_{i_0}|}{|i_0-a|}$. Otherwise, for each
    $i\in\ZZ$ we have $|a_i|=|b_i|$. This means that $f$ is unique
    and converges in $\Hx$. We obtain also $|f|\leq \frac{|g|}{|i_0-a|}$ or
    $|f|= |g|$. Consequently, the set theoretical inverse
    $(d-a)^{-1}$ is bounded. We claim that if $a\in \diso{i_0}{1}$
    then $\nsp{(d-a)\-1}=\fra{|i_0-a|}$, otherwise
    $\nsp{(d-a)\-1}=1$. Indeed, in the first case, similar computations show that
    $\nor{(d-a)^{-n}}\leq \fra{|i_0-a|^n}$. Since
    $(d-a)^{-n}(S^{i_0})=\frac{S^{i_0}}{(i_0-a)^n}$, the equality
    holds and we obtain $\nsp{(d-a)\-1}=\fra{(i_0-a)}$. In the second
    case, by the above computations $(d-a)\-1$ is an isometry. Therefore, we have
    $\nsp{(d-a)\-1}=1$. Setting $R_a:=\inf_{i\in\ZZ}|i-a|$, we have
    $\nsp{(d-a)\-1}=\fra{R_a}$. According to
    Lemma~\ref{sec:spectr-diff-module-2}, we have $\diso{a}{R_a}\subset
    \Ak\setminus\Sigma_d$.

    In order to end the proof, since $\disf{0}{1}=\bigcup_{a\in
      k\setminus \ZZ} \diso{a}{R_a}\cup\bigcup_{n\in \ZZ}[n,x_{0,1}]$, it is enough to show that
    $(n,x_{0,1})\subset\Ak\setminus \Sigma_d$ for all $n\in \ZZ$. Let
    $n\in \ZZ$. Then we have 

\begin{equation}\Hx=k.S^n\oplus
      \widehat{\bigoplus}_{i\in\ZZ\setminus\{n\}}k.S^i.
    \end{equation}
    The operator $(d-n)$
      stabilizes both $k.S^n$ and
      $\widehat{\bigoplus}_{i\in\ZZ\setminus\{n\}}k.S^i$. We set
      $\nabla_1:=(d-n)|_{k.S^n}$ and
      $\nabla_2 :=(d-n)|_{\widehat{\bigoplus}_{i\in\ZZ\setminus\{n\}}k.S^i}$. We
      set  $\Sigma_{\nabla_1}:=\Sigma_{\nabla_1,k}(\Lk{k.S^n})$ and
      $\Sigma_{\nabla_2}:=\Sigma_{\nabla_2,k}(\Lk{\widehat{\bigoplus}_{i\in\ZZ\setminus\{n\}}k.S^i})$. We
      have $\nabla_1=0$. By Lemma~\ref{sec:spectr-vers-youngs-1}, we have:
    \begin{equation}\Sigma_{d-n}=\Sigma_{\nabla_1}\cup\Sigma_{\nabla_2}=\{0\}\cup
        \Sigma_{\nabla_2}.
      \end{equation}
      Now we prove that
      \begin{equation}\diso{0}{1}\cap\Sigma_{\nabla_2}=\emptyset.
      \end{equation}
      The operator $\nabla_2$ is invertible in
      $\Lk{\widehat{\bigoplus}_{i\in\ZZ\setminus\{n\}}k.S^i}$. Indeed,
      let $g(S)=\sum_{i\in \ZZ\setminus\{n\}}b_i S^i\in \widehat{\bigoplus}_{i\in\ZZ\setminus\{n\}}k.S^i$. If there exists
    $f=\sum_{i\in \ZZ\setminus\{n\}}a_i S^i\in \widehat{\bigoplus}_{i\in\ZZ\setminus\{n\}}k.S^i$ such that $\nabla_2(f)=g$ , then for
    each $i\in \ZZ\setminus\{n\}$ we have
    \begin{equation} a_i= \frac{b_i}{(i-n)}.
  \end{equation}
  Since $|a_i|=|b_i|$, the element $f$ exists and it is unique,
    moreover $|f|=|g|$. Hence, $\nabla_2$ is invertible in
    $\Lk{\widehat{\bigoplus}_{i\in\ZZ\setminus\{n\}}k.S^i}$ and as a
    $k$-linear map it is isometric. Therefore, we have
    $\nsp{\nabla_2\-1}=1$. Hence, 
    $\diso{0}{1}\subset\Ak\setminus\Sigma_{\nabla_2}$ by
    Lemma~\ref{sec:spectr-diff-module-2}. Consequently,
    $\diso{0}{1}\cap \Sigma_{d-n}=\{0\}$. As
    $\Sigma_d=\Sigma_{d-n}+n$, we have
      $\diso{n}{1}\cap\Sigma_d=\{n\}$. Therefore, for all $n\in
      \ZZ$ we have
      $(n,x_{0,1})\subset\Ak\setminus \Sigma_d$ and the claim follows. 
  \end{proof}
\paragraph{The case where $x\not\in (0,\infty)$}
  \begin{Pro}
  Let $x\in\Ak$ be a point of type (2), (3) or (4) not of the form
   $x_{0,r}$. Let $c\in k\setminus\{0\}$ such that $x\in
   \diso{c}{|c|}$. The spectrum of $S\d$ as an element of $\Lk{\Hx}$ is
   \begin{equation}\Sigma_{S\d,k}(\Lk{\Hx})=
     \begin{cases}
       \overline{\diso{0}{\frac{|c|}{r_k(x)}}} &  \text{ if } x \text{ is of
         type (4)}\\
       & \\
\disf{0}{\frac{|c|}{r_k(x)}} & \text{otherwise}\\
     \end{cases}.
   \end{equation}
   
\end{Pro}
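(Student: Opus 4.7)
The plan is to adapt the strategy of Proposition~\ref{sec:case-posit-resid-1}, which in the residue-characteristic zero setting becomes notably simpler because $\omega=1$, so the logarithm map is unramified throughout the whole open disk $\diso{c}{|c|}$, and no tower of intermediate ramification has to be analyzed.

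First, I would invoke Properties~\ref{sec:logarithm} in the case $\crk=0$: since $\omega=1$, every $x\in \diso{c}{|c|}$ lies in the regime corresponding to the first bullet of the analogous dichotomy in the $\crk=p>0$ discussion. Therefore $y:=\Log{c}(x)$ satisfies $[\Hx:\Hy]=1$, which makes $\Log{c}^{\#}:\Hy\to\Hx$ an isometric isomorphism, and the radius scales linearly: $r_k(y)=r_k(x)/|c|$. Since $\Log{c}$ is an isomorphism onto its image around $x$, the point $y$ is of the same type as $x$.

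Next, I apply formula~\eqref{eq:14}: as $\Log{c}^{\#}(T)=\log(S/c)$ has formal derivative $1/S$, the pull-back of $\d$ on $\Hy$ is precisely $S\d$ on $\Hx$. Hence the differential modules $(\Hx,S\d)$ and $(\Hy,\d)$ are intertwined by $\Log{c}$ (push-forward/pull-back), which by Proposition~\ref{sec:push-forw-spectr} yields
\begin{equation}
\Sigma_{S\d,k}(\Lk{\Hx})=\Sigma_{\d,k}(\Lk{\Hy}).
\end{equation}

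Finally, I apply Theorem~\ref{sec:spectr-diff-equat-2} to the trivial differential module $(\Hy,\d)$, viewed with constant matrix $G=0$ (single eigenvalue $a_1=0$). In the case $\crk=0$, the theorem gives $\disf{0}{1/r_k(y)}$ if $y$ is of type (2) or (3) and $\overline{\diso{0}{1/r_k(y)}}$ if $y$ is of type (4). Substituting $r_k(y)=r_k(x)/|c|$ yields the announced formula.

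The only delicate point is the degree-one and radius-scaling statement for $\Log{c}$ in residue characteristic zero, both of which reduce to the elementary observation that in residue characteristic zero all coefficients of $\log(1+u)=\sum(-1)^{n+1}u^n/n$ have absolute value $1$, so on $\diso{c}{|c|}$ the linear term $(S-c)/c$ dominates the series. Everything else is a direct formal consequence of results already established in the paper.
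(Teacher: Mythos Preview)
Your proof is correct and follows essentially the same route as the paper: use that in residue characteristic zero the logarithm $\Log{c}$ is an analytic isomorphism on all of $\diso{c}{|c|}$ (since $\omega=1$), push forward $(\Hx,S\d)$ to $(\Hy,\d)$ via Proposition~\ref{sec:push-forw-spectr}, and then read off the spectrum from Theorem~\ref{sec:spectr-diff-equat-2} with the substitution $r_k(y)=r_k(x)/|c|$. The only cosmetic difference is that you justify the degree-one and radius-scaling claims explicitly via the dominant linear term of the logarithm series, whereas the paper simply asserts that $\Log{c}$ is an isomorphism when $\crk=0$.
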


\begin{proof}
  Let $\Log{c}:\diso{c}{|c|}\to
  \diso{0}{1}$ be the logarithm, we set $y:=\Log{c}(x)$. Since
  $\crk=0$, $\Log{c}$  is an analytic isomorphism and $[\Hx :\Hy]=1$. Therefore, the push-forward of
   $(\Hx,S\d)$ by $\Log{c}$ is isomorphic to $(\Hy,\d)$. Therefore, by
   Propositions~\ref{sec:spectr-diff-equat-2} and
   \ref{sec:push-forw-spectr} we obtain
\begin{equation}\Sigma_{S\d,k}(\Lk{\Hx})=\Sigma_{\d,k}(\Lk{\Hy})=  \begin{cases}
       \overline{\diso{0}{\fra{r_k(y)}}} &  \text{ if } x \text{ is of
         type (4)}\\
       & \\
\disf{0}{\fra{r_k(y)}} & \text{otherwise}\\
\end{cases}.
\end{equation}
Since $r_k(y)=\frac{r_k(x)}{|c|}$, the result follows.
\end{proof}
\subsection{Spectrum of a regular singular differential module}
As mentioned at the beginning of the section, the computation
of the spectrum of a regular differential module follows directly from
the computation of the spectrum done above and
Remark~\ref{sec:spectr-diff-module-3}. In this section,  we will summarize all the
different cases  discussed in the previous section. We will also
discuss the variation of the spectrum.

\begin{nota}
  We denote by $\overline{\ZZ}$ the topological closure of $\ZZ$ in $\Ak$.
\end{nota}

\begin{Theo}\label{sec:spectr-regul-sing-2}
  Assume that $\crk=p>0$. Let $x\in\Ak$ be a point of type (2), (3) or
  (4). Let $(M,\nabla)$ be a regular singular differential module over
  $(\Hx,S\d)$. Let $G$ be the matrix associated to $\nabla$ with constant
  entries (i.e. $G\in\cM_\nu(k)$), and let $\{a_1,\cdots, a_N\}$ be
  the set of eigenvalues of $G$.\\
If $x$ is a point of the form $x_{0,r}$, then we have
 \begin{equation}\Sigma_{\nabla,k}(\Lk{M})=\bigcup_{i=1}^N
   a_i+\overline{\ZZ}.
 \end{equation}
Otherwise, let $c\in k\setminus\{0\}$ such that $x\in
    \diso{c}{|c|}$ and $y:=\Log{c}(x)$. Then we have 

    \begin{equation}\Sigma_{\nabla,k}(\Lk{M})=
      \begin{cases}
        \Bigcup_{j=1}^N\disf{a_j}{\frac{\omega}{r_k(y)}}& \text{ if }
        r_k(x)\in (0, |c|\omega\;]\\
        & \\
        \Bigcup_{j=1}^N\Bigcup_{i=0}^{p^n}\disf{a_j+i}{\frac{\omega}{r_k(y)}}&
        \text{ if } r_k(x)\in (|c|\omega^{\fra{p^{n-1}}},
          |c|\omega^{\fra{p^{n}}}\; ]\\ &\text{ with } n\in\NN\setminus\{0\}.\\
      \end{cases}
    \end{equation}
\end{Theo}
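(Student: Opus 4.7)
The plan is to reduce the computation to the determination of $\Sigma_{S\d,k}(\Lk{\Hx})$ carried out in the previous subsection, and then substitute the relevant formula depending on the position of $x$.

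First I would invoke Remark~\ref{sec:spectr-diff-module-3}, which reduces the spectrum of a connection whose matrix has constant entries to that of the underlying derivation translated by the eigenvalues. More precisely, after passing to a basis in which $G$ is in Jordan form (possible since $k$ is algebraically closed) and applying Corollary~\ref{sec:spectr-vers-youngs-2} iteratively to the filtration given by the flag of generalized eigenspaces, one obtains the set-theoretic equality
\begin{equation}
  \Sigma_{\nabla,k}(\Lk{M})=\bigcup_{i=1}^{N}\bigl(a_i+\Sigma_{S\d,k}(\Lk{\Hx})\bigr),
\end{equation}
using also the translation-invariance provided by Lemma~\ref{sec:spectr-diff-module-1} applied to $P(T)=T-a_i$ to rewrite $\Sigma_{S\d-a_i}=a_i+\Sigma_{S\d}$ (here one considers each Jordan block separately, and a nilpotent upper-triangular perturbation does not change the spectrum).

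Next I would substitute the known formulas for $\Sigma_{S\d,k}(\Lk{\Hx})$. In the case $x=x_{0,r}$, Proposition~\ref{sec:bf-case-positive} gives $\Sigma_{S\d,k}(\Lk{\Hx})=\ZZ_p$; since $\ZZ_p$ is the topological closure of $\ZZ$ in $k$ (and hence in $\Ak$), it equals $\overline{\ZZ}$, which yields the first formula. In the remaining case, with $c\in k\setminus\{0\}$ such that $x\in\diso{c}{|c|}$ and $y:=\Log{c}(x)$, Proposition~\ref{sec:case-posit-resid-1} gives
\begin{equation}
  \Sigma_{S\d,k}(\Lk{\Hx})=
  \begin{cases}
    \disf{0}{\omega/r_k(y)} & \text{if } r_k(x)\in(0,|c|\omega],\\
    \bigcup_{i=0}^{p^n-1}\disf{i}{\omega/r_k(y)} & \text{if } r_k(x)\in(|c|\omega^{1/p^{n-1}},|c|\omega^{1/p^{n}}].
  \end{cases}
\end{equation}
Translating each disk by $a_j$ and taking the union over $j=1,\ldots,N$ produces the claimed formulas.

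There is no serious obstacle here; the substantial work has been done in Proposition~\ref{sec:bf-case-positive} and Proposition~\ref{sec:case-posit-resid-1}. The only minor point to keep in mind is the clean reduction via Jordan blocks: one has to verify that the nilpotent part of $G$ contributes nothing to the spectrum. This follows from Corollary~\ref{sec:spectr-vers-youngs-2} applied to the exact sequences coming from the flag of generalized eigenspaces, together with the fact that the spectrum of a nilpotent operator on a finite-dimensional Banach space is reduced to $\{0\}$, so that on each generalized eigenspace the spectrum of $\nabla$ equals $a_i+\Sigma_{S\d,k}(\Lk{\Hx})$.
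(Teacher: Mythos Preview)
Your proposal is correct and follows exactly the approach the paper takes: the theorem is stated as a direct summary of the preceding subsection, and the paper explicitly says that the result follows from Remark~\ref{sec:spectr-diff-module-3} combined with the computations of $\Sigma_{S\d,k}(\Lk{\Hx})$ in Propositions~\ref{sec:bf-case-positive} and~\ref{sec:case-posit-resid-1}. Your additional discussion of the Jordan-block reduction is more explicit than the paper (which defers this to \cite[Proposition~3.15]{Cons} via Remark~\ref{sec:spectr-diff-module-3}), but it is the same argument.
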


\begin{Theo}\label{sec:spectr-regul-sing-1}
  Assume that $\crk=0$. Let $x\in\Ak$ be a point of type (2), (3) or
  (4). Let $(M,\nabla)$ be a regular singular differential module over
  $(\Hx,S\d)$. Let $G$ be the matrix associated to $\nabla$ with constant
  entries (i.e. $G\in\cM_\nu(k)$), and let $\{a_1,\cdots, a_N\}$ be
  the set of eigenvalues of $G$.
\\
If $x$ is a point of type (2) of the form $x_{0,r}$, then we
    have
\begin{equation}\Sigma_{\nabla,k}(\Lk{M})=\bigcup_{j=1}^N\disf{a_j}{1}.
\end{equation}
If $x$ is a point of type (3) of the form $x_{0,r}$, then we
    have
    \begin{equation}\Sigma_{\nabla,k}(\Lk{M})=\bigcup_{j=1}^N
      a_j+\overline{\ZZ}.
    \end{equation}
Otherwise, let $c\in k\setminus\{0\}$ such that $x\in
    \diso{c}{|c|}$. Then we have
    \begin{equation}\Sigma_{\nabla,k}(\Lk{M})=
     \begin{cases}
       \Bigcup_{j=1}^N\overline{\diso{a_j}{\frac{|c|}{r_k(x)}}} &  \text{ if } x \text{ is of
         type (4),}\\
       & \\
\Bigcup_{j=1}^N\disf{a_j}{\frac{|c|}{r_k(x)}} & \text{otherwise.}\\
     \end{cases}
   \end{equation}
\end{Theo}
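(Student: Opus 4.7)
The plan is to reduce the theorem directly to the computation of $\Sigma_{S\d,k}(\Lk{\Hx})$ that has just been carried out in the three preceding propositions of this subsection, via the translation-by-eigenvalues principle. Since the matrix $G$ of $\nabla$ has constant entries in $\cM_\nu(k)$, Remark~\ref{sec:spectr-diff-module-3} gives
\[
\Sigma_{\nabla,k}(\Lk{M}) \;=\; \bigcup_{j=1}^{N}\bigl(a_j + \Sigma_{S\d,k}(\Lk{\Hx})\bigr),
\]
so the proof becomes a case-by-case substitution.

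For $x = x_{0,r}$ of type (2), the corresponding proposition yields $\Sigma_{S\d,k}(\Lk{\Hx}) = \disf{0}{1}$, and translation by each $a_j$ produces $\bigcup_{j=1}^{N}\disf{a_j}{1}$. For $x = x_{0,r}$ of type (3), the corresponding proposition yields $\Sigma_{S\d,k}(\Lk{\Hx}) = \ZZ \cup \{x_{0,1}\}$; I then verify that this set coincides with $\overline{\ZZ}$. Since $\crk = 0$, every nonzero integer satisfies $|n| = 1$, so distinct integers lie in pairwise disjoint open unit disks $\disco{n}{1}$; these are open neighborhoods of $\Ak$ each containing only the integer $n$, showing that $\ZZ$ is discrete away from $x_{0,1}$. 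Conversely, any Berkovich-open neighborhood of $x_{0,1}$ omits only finitely many of these open disks and hence meets $\ZZ$ in an infinite set, so $x_{0,1} \in \overline{\ZZ}$. Translation by $a_j$ then gives the stated union. Finally, for $x$ of type (2), (3) or (4) not of the form $x_{0,r}$, one chooses $c \in k \setminus \{0\}$ with $x \in \diso{c}{|c|}$ and invokes the corresponding proposition; it gives $\Sigma_{S\d,k}(\Lk{\Hx}) = \disf{0}{|c|/r_k(x)}$ in types (2) and (3) and $\overline{\diso{0}{|c|/r_k(x)}}$ in type (4), and translation by each $a_j$ produces the announced expressions.

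The whole argument is essentially bookkeeping built on top of the three previously established spectra of the derivation $S\d$; no genuinely new obstacle arises. The only delicate point is the topological identification $\ZZ \cup \{x_{0,1}\} = \overline{\ZZ}$ in the type-(3) sub-case, whose verification relies crucially on the hypothesis $\crk = 0$ to guarantee that the integers are mutually separated at distance exactly $1$ in $k$.
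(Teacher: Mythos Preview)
Your proof is correct and mirrors the paper's approach exactly: the theorem is presented there as a direct summary of the three preceding propositions combined with Remark~\ref{sec:spectr-diff-module-3}, without a separate argument. The only addition you make is the explicit verification that $\ZZ \cup \{x_{0,1}\} = \overline{\ZZ}$, which the paper silently encodes in its notation $\overline{\ZZ}$.
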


\begin{rem}
  Note that from the computation of
  the spectrum of $S\d$, we observe that for a point $x_{c,r}\in
  \Ak\setminus k$ with $c\ne 0$ and $|c|>r$, it is better to choose $(S-c)\d$ than $S\d$.
\end{rem}
\subsubsection{Variation of the spectrum}
Let $(\F,\nabla)$ be a differential equation over
$\Ak\setminus\{0\}$. We fix the derivation $S\d$ over $\Ak$. For each
$x\in \Ak\setminus k$ we set $(M_x,\nabla_x):=(\F_x\ot\Hx,\nabla)$ the
differential module over $(\Hx,S\d)$. We say that $(\F,\nabla)$ is a
differential equation with regular singularities if there exists a matrix $G\in
\cM_n(k)$ such that $G$ is an associated matrix of $(M_x, \nabla_x)$
for each $x$. Note that it is for its own interest to study the
variation of the spectrum of $\nabla_x$.
\begin{nota}
  Let $\cK(\Ak)$ be the set of nonempty compact subsets of
  $\Ak$. We endow $\cK(\Ak)$ with the exponential topology, the
  topology generated by the following family of sets:

\begin{equation} (U,\{U_i\}_{i\in I})=\{\Sigma\in \cK(\Ak);\; \Sigma\subset U,\;
    \Sigma\cap U_i\ne \emptyset\, \forall i\},
  \end{equation}
  where $U$ is an open of $\Ak$ and $\{U_i\}_{i\in I}$ is a finite
  open cover of $U$. In this case, since $\Ak$ is a Hausdorff space,
  then so is for $\cK(\Ak)$.
\end{nota}

\begin{Lem}\label{sec:topology-kct}
  The following function is continuous
\begin{equation}\Fonction{\Upsilon:\cK(\cT)\times
    \cK(\cT)}{\cK(\cT)}{(\Sigma,\Sigma')}{\Sigma\cup\Sigma'}
\end{equation}
\end{Lem}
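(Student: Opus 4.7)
The plan is to verify continuity directly from the definition of the exponential (Vietoris) topology. Fix $(\Sigma,\Sigma')\in \cK(\Ak)\times\cK(\Ak)$ and let $(U,\{U_i\}_{i\in I})$ be any basic open neighborhood of $\Sigma\cup\Sigma'$; I need to produce basic open neighborhoods $\mathcal{V}$ of $\Sigma$ and $\mathcal{V}'$ of $\Sigma'$ such that $\Upsilon(\mathcal{V}\times\mathcal{V}')\subset (U,\{U_i\}_{i\in I})$.

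The key step is to split the finite index set $I$ according to where the two compacts live. Set
\begin{equation}
I_1:=\{i\in I\,;\;\Sigma\cap U_i\neq\emptyset\},\qquad I_2:=\{i\in I\,;\;\Sigma'\cap U_i\neq\emptyset\}.
\end{equation}
Since every $U_i$ meets $\Sigma\cup\Sigma'$, we have $I=I_1\cup I_2$ (the two sets need not be disjoint). I would then define
\begin{equation}
\mathcal{V}:=(U,\{U_i\}_{i\in I_1}\cup\{U\}),\qquad \mathcal{V}':=(U,\{U_i\}_{i\in I_2}\cup\{U\}).
\end{equation}
Each of these is a legitimate basic open set: the adjoined copy of $U$ guarantees that the family is a finite open cover of $U$, the containment conditions $\Sigma\subset U$ and $\Sigma'\subset U$ hold by hypothesis, the intersection conditions on the $U_i$ hold by the very definition of $I_1$ and $I_2$, and the trivial condition $\Sigma\cap U\neq\emptyset$ holds since $\Sigma$ is nonempty and contained in $U$.

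The verification then proceeds by routine checking: for any $\Sigma_1\in\mathcal{V}$ and $\Sigma_2\in\mathcal{V}'$, the union $\Sigma_1\cup\Sigma_2$ is contained in $U$, and for each $i\in I=I_1\cup I_2$ at least one of $\Sigma_1,\Sigma_2$ meets $U_i$, so $(\Sigma_1\cup\Sigma_2)\cap U_i\neq\emptyset$. Hence $\Upsilon(\Sigma_1,\Sigma_2)\in(U,\{U_i\}_{i\in I})$, establishing continuity. There is no real obstacle here; the only subtle point is to remember that the $U_i$ must form a \emph{cover} of $U$, which is exactly why the set $U$ itself is adjoined to the families indexed by $I_1$ and $I_2$.
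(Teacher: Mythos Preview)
Your proof is correct and follows essentially the same route as the paper: split the index set according to which of the two compacts meets each $U_i$, and use the resulting subfamilies to build basic neighborhoods of $\Sigma$ and $\Sigma'$ whose product maps into the target. The one point where you are actually more careful than the paper is the adjunction of $U$ itself to the families indexed by $I_1$ and $I_2$: the paper simply writes $(U,\{U_i\}_{i\in J})$ without noting that $\{U_i\}_{i\in J}$ need not cover $U$, whereas you explicitly repair this so that the neighborhoods conform to the stated definition of the generating sets.
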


\begin{proof}
 Let $\Sigma$ and $\Sigma'$ be two non-empty compact subsets of
 $\cT$. Let $(U,\{U_i\}_{i\in I})$ be an open neighbourhood of
 $\Sigma\cup\Sigma'$. We set \begin{equation}J:=\{i\in I|\; \Sigma\cap U_i\ne
 \emptyset\} \text{ and } J':=\{i\in I|\; \Sigma'\cap U_i\ne
 \emptyset\}.\end{equation}
Then $(U,\{U_i\}_{i\in J})$ (resp. $(U,\{U_i\}_{i\in J'})$) is an open
neighbourhood of $\Sigma$ (resp. $\Sigma'$) and we have 
\begin{equation}(U,\{U_i\}_{i\in J})\times (U,\{U_i\}_{i\in J'})\subset \Upsilon\-1(
  (U,\{U_i\}_{i\in I})).\end{equation}
Hence we obtain the result.
\end{proof}
\paragraph{The case of positive residue characteristic}
Assume that $\crk=p>0$. We observe from Theorem~\ref{sec:spectr-regul-sing-2} that, although
the spectrum is 
roughly different from the constant case studied in \cite{Cons}, it satisfies
analogous continuity properties.

\begin{Theo}\label{sec:case-posit-resid-3}
  Let $(\F,\nabla)$ be a differential equation over $\Ak\setminus k$ with regular
  singularities. Let $x\in \Ak\setminus k$. We set:
          \begin{equation}\Fonction{\Psi:
              [x,\infty)}{\cK(\Ak)}{y}{\Sigma_{\nabla_y,k}(\Lk{M_y})}.\end{equation}
         Let $y\in [x,\infty)$, then we have:
          \begin{itemize}
          \item the restriction of
            $\Psi$ to $[x,y]$ is continuous at $y$,
          \item the map $\Psi$ is
            continuous at $y$ if and only if $y$ is of type (3) or of
            the form $x_{0,R}$.
          \end{itemize}
        \end{Theo}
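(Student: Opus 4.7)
The plan is to analyze $\Psi$ along the path $[x,\infty)$ by an explicit computation of the spectrum via Theorem~\ref{sec:spectr-regul-sing-2}, reducing the continuity question through Remark~\ref{sec:spectr-diff-module-3} and Lemma~\ref{sec:topology-kct} to tracking how $y\mapsto\Sigma_{S\d,k}(\Lk{\h{y}})$ (the ``single eigenvalue part'') varies. Writing $x=x_{c,r_0}$ with $c\in k$, the path is parameterized by $r\mapsto y_r$, where $y_r=x_{c,r}$ for $r\in[r_0,|c|]$ (the annulus regime, present only when $c\ne 0$ and $r_0<|c|$) and $y_r=x_{0,r}$ for $r\ge|c|$ (the disk regime). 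In the disk regime, $\Psi(y_r)=\bigcup_{j=1}^N(a_j+\overline{\ZZ})$, independent of $r$; in the annulus regime, $\Psi(y_r)$ is the finite union $\bigcup_{j,i}\disf{a_j+i}{\omega/r_k(\Log{c}(y_r))}$, with $i$ ranging over $\{0,\ldots,p^n-1\}$ when $r_k(y_r)$ lies in the critical interval $(|c|\omega^{1/p^{n-1}},|c|\omega^{1/p^n}]$.

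For the first bullet, I fix $y$ and take $y_m\in[x,y]$ with $r_m\nearrow r_y$. In the strict interior of the disk regime, $\Psi$ is locally constant. If $y$ is in the annulus, $y_m$ eventually lies in the critical interval containing $y$ (or in the preceding interval if $r_y$ is the right endpoint); there the set of disk-centers is fixed and the common radius $\omega/r_k(\Log{c}(y_m))$ depends continuously on $r_m$, yielding convergence in the exponential topology. At the annulus-to-disk transition $r_y=|c|$, I show that as $r_m\nearrow|c|$, the interval index $n_m\to\infty$ and the common radius $|p|^{n_m}$ tends to zero, so the finite union $\bigcup_{i=0}^{p^{n_m}-1}\disf{a_j+i}{|p|^{n_m}}$ converges to $a_j+\ZZ_p=a_j+\overline{\ZZ}$ in the exponential topology: any open $U\supset\ZZ_p$ eventually contains every such finite disk-union (compactness of $\ZZ_p$), and any open $W$ meeting $\ZZ_p$ eventually meets every such union (density of $\ZZ$ in $\ZZ_p$).

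For the second bullet, the content lies in analyzing right-continuity, and splits by position and type of $y$. In the strict disk regime, $\Psi$ is constant, hence continuous; at $r_y=|c|$ the argument above is symmetric on both sides. If $y$ is of type (3) in the annulus, then $r_y\notin|k|$, hence $R_y:=\omega/r_k(\Log{c}(y))\notin|k|$ and the Gauss points $x_{a_j+i,R_y}$ of the disks in $\Psi(y)$ are of type (3); their basic open neighborhoods in $\Ak$ are open annuli $\couo{a_j+i}{r_1}{r_2}$ with $r_1<R_y<r_2$. For $y'$ close to $y$ on the path, $R_{y'}\in(r_1,r_2)$ and the closed disk $\disf{a_j+i}{R_{y'}}$ is contained in any such neighborhood, giving continuity. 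If $y$ is of type (2) in the annulus ($r_y\in|k|$, $r_y<|c|$), then $R_y\in|k|$ and the Gauss points $x_{a_j+i,R_y}$ are of type (2); exploiting the richer neighborhood basis of type (2) points, I select a Gauss point $x_{a_{j_0}+i_0,R_y}$ and construct a basic open neighborhood $W=\diso{a_{j_0}+i_0}{R_y+\epsilon}\setminus\bigcup_{\mathrm{finite}}\disf{b}{\rho}$ that contains this Gauss point but excludes every closed sub-disk appearing in $\Psi(y')$ for $y'$ just above $y$ (where $R_{y'}<R_y$ and each disk of $\Psi(y')$ has strictly smaller radius). Then $x_{a_{j_0}+i_0,R_y}\in W\cap\Psi(y)$ while $W\cap\Psi(y')=\emptyset$, so lower semi-continuity of $\Psi$ fails at $y$ from the right.

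The main obstacle is the annulus-to-disk limit $r\nearrow|c|$: one must verify in the exponential topology that the sequence of finite disk-unions $\bigcup_i\disf{a_j+i}{|p|^{n_m}}$ converges to the topologically intricate set $\overline{\ZZ}$, which requires the identification $\overline{\ZZ}=\ZZ_p\subset k\subset\Ak$ and a careful pairing of open covers. A secondary subtlety is the type (2) construction: one must choose the Gauss point $(j_0,i_0)$ and the removed disks $\disf{b}{\rho}$ so that, even in the presence of multiple eigenvalues, the open set $W$ genuinely isolates $x_{a_{j_0}+i_0,R_y}$ from every sub-disk of $\Psi(y')$ near $y$. This is done by picking $(j_0,i_0)$ such that $a_{j_0}+i_0$ is maximally separated (in the $p$-adic sense) from the other centers, and enlarging each removed $\disf{b}{\rho}$ just enough to swallow the corresponding small disk from $\Psi(y')$ while keeping the chosen Gauss point outside every removed piece.
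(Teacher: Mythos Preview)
Your overall strategy matches the paper's: parameterize $[x,\infty)$ by radius, reduce to a single eigenvalue via Remark~\ref{sec:spectr-diff-module-3} and Lemma~\ref{sec:topology-kct}, treat the disk regime (where $\Psi$ is constant) and the annulus regime separately, and handle the transition $r\nearrow|c|$ by the compactness/density argument you describe, which is essentially the paper's argument verbatim. The one substantive divergence is in the annulus regime: the paper simply records that $\Psi(y)$ is a finite union of closed disks of common radius $\phi(y)$ with $\phi$ continuous decreasing and $\phi(y)\notin|k|$ exactly when $y$ is of type~(3), and then invokes \cite[Theorem~5.3]{Cons} wholesale for both bullets; you instead supply a direct construction.

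Your type~(2) discontinuity construction is correct. Your type~(3) continuity argument, however, contains an error: you assert that $\disf{a_j+i}{R_{y'}}$ is contained in the open annulus $\couo{a_j+i}{r_1}{r_2}$, which is false, since the center $a_j+i$ belongs to the disk but not to the annulus. What is true is only that the Gauss point $x_{a_j+i,R_{y'}}$ lies in that annulus for $R_{y'}\in(r_1,r_2)$, and this alone does not establish lower semicontinuity: a member $U_l$ of the finite cover may meet $\Psi(y)$ only inside some open disk $\diso{a_j+i}{R_y}$, without containing any Gauss point. To complete the argument you must add the separate observation that any $z\in U_l\cap\diso{a_j+i}{R_y}$ satisfies $|T(z)-(a_j+i)|<R_y$, hence already lies in $\disf{a_j+i}{R_{y'}}\subset\Psi(y')$ once $R_{y'}$ exceeds that distance; together with the Gauss-point case this gives full continuity at type~(3) points.
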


        \begin{proof}
         We identify $[x,\infty)$ with the interval
          $[r(x),\infty)$ by the map $y\mapsto r(y)$ (cf. Definition
          \ref{sec:type-points-a_k}). Let $y\in
          [x, \infty)$. Assume that there exists $x' \in
          [x, \infty)$ such that $[x,x']\cap(0,\infty)=\emptyset$
          and $[x,y]\subset [x,x']$. Let
          $y'\in [x,x']$. By
        Theorem~\ref{sec:spectr-regul-sing-2} and
        Corollary~\ref{sec:case-posit-resid-2}, we have $\Psi(y)=\bigcup_{i=1}^N\disf{a_i}{\phi(y)}$ and
        $\Psi(y') =\bigcup_{i=1}^N\disf{a_i}{\phi(y')}$, where $\phi:[x,y]\to\R+$
        is a decreasing continuous function and $\phi(y)\not\in |k|$
        if $y$ is of type (3). Therefore, the claims:
        \begin{itemize}
        \item $\Psi$ is continuous at $y$ if and only if $y$ is of
          type (3),
        \item the restriction of $\Psi$ to $[x,y]$ is continuous at $y$, 
        \end{itemize}
        holds by the continuity results of \cite[Theorem~5.3]{Cons}.
        
        Now assume that $y\in[x_{0,R},\infty)$. In the case where $y\ne x_{0,R}$,  the
        restriction of $\Psi$ to $[x_{0,R}, \infty)$ is constant
        (cf. Theorem~\ref{sec:spectr-regul-sing-2}). Hence the
        restriction of $\Psi$ to $[x_{0,R}, \infty)$ is
        continuous. Otherwise, i.e. $y=x_{0,R}$, on the one hand the restriction of
        $\Psi$ to $[y,\infty)$ is continuous at $y$. On the
        other hand, since for all $y'\in [x,y]$ we
        have
        $\Psi(y')=\bigcup_{i=1}^N\alpha_i+\Sigma_{S\d,k}(\Lk{\h{y'}})$,
        it is enough to show the result for the differential module
        $(\hh{y},S\d)$ (cf. Lemma~\ref{sec:topology-kct}). Hence, we reduce to the case where
        $\Psi(y)=\ZZ_p$ and
        $\Psi(y')=\bigcup_{i\in\NN}\disf{i}{\phi(y')}$, with $\phi:
        [x,y)\to \R+$ a
         decreasing continuous function and $\lim\limits_{y'\to
          y}\phi(y')=0$ (cf. Theorem~\ref{sec:spectr-regul-sing-2} and
        Corollary~\ref{sec:case-posit-resid-2}). Let $(U,\{U_i\}_{i\in
          I})$ be an open neighbourhood of $\Psi(y)$. Since $\Psi(y)$
        is a set of points of type (1), we can assume that $U_i$
        is an open disk for all $i\in I$. Since $\ZZ_p$ is the topological closure of
        $\NN$ in $\Ak$, for all $i\in I$ we have $\NN\cap
        U_i\ne\emptyset$. Therefore, for all $i\in I$ we have
        $\Psi(y')\cap U_i\ne\emptyset$. We now prove that there exists
        $x'\in[x,y)$ such that for all $y'\in (x',y)$ we have
        $\Psi(y')\subset U$. Let $L$ be the smallest radius of the disks
        $U_i$. Since $\phi$ is a decreasing continuous function, there exists
        $y_L$ such that for all $y'\in(y_L,y)$ we have
        $\phi(y')<L$. Therefore, since $\NN\subset U=\bigcup_{i\in
          I}U_i$, for all $j\in\NN$, there exists $i\in
        I$ such that $\disf{j}{\phi(y')}\subset U_i$. Consequently, we have
        $\Psi(y')\subset U$ and $\Psi(y')\in(U,\{U_i\}_{i\in I})$.      
        \end{proof}

\paragraph{The case of residue characteristic zero}
Assume that $\crk=0$. We observe from Theorem~\ref{sec:spectr-regul-sing-1} that the
spectrum behaves differently from the case where
$\crk=p>0$. In the special case where $k$ is not trivially valued and $|k|\ne\R+$, the map
\begin{equation}\Fonction{\Psi:(0,\infty)}{\cK(\Ak)}{y}{\Sigma_{\nabla_y,k}(\Lk{M_y})}\end{equation}
is not continuous at all. Indeed, let $y\in (0,\infty)$ be a point of
type (2). Assume that $(\F,\nabla)=(\cO_{\Ak\setminus\{0\}},d_{\Ak\setminus\{0\}/k})$. Then we have $\Psi(y)=\disf{0}{1}$. Let $U$ be an open
neighbourhood of $\Psi(y)$ in $\Ak$. Let $a\in(\disf{0}{1}\cap
k)\setminus \ZZ$  and let $0<r<1$ such that $\diso{a}{r}\cap\ZZ=\emptyset$. For any $y'\in (0,\infty)$ of type (3) we have
$\Psi(y')=\ZZ\cup\{x_{0,1}\}$, hence $\Psi(y')\cap
\diso{a}{r}=\emptyset$. Therefore, $\Psi(y')\not\in
(U,\{U,\disf{a}{r}\})$. 

If $k$ is trivially valued, the only point where there
is no continuity is $x_{0,1}$. For the other points of $(0,\infty)$,
since $\Psi$ is constant, it is continuous on
$(0,\infty)\setminus\{x_{0,1}\}$.

For branches $(c,x_{0,|c|}\;]$ with $c\in k\setminus\{0\}$, the map

\begin{equation}\Fonction{\Psi:(c,x_{0,|c|}\;]}{\cK(\Ak)}{y}{\Sigma_{\nabla_y,k}(\Lk{M_y})}\end{equation}
satisfies the same continuity properties as those of
\cite[Theorem~5.3]{Cons}. Indeed, for any $y\in (c,x_{c,|c|}]$
we have $\Psi(y)=\bigcup_{i=1}^N\disf{a_i}{\phi(y)}$ with $\phi:
(c,x_{0,|c|}]\to \R+$ a decreasing continuous function and
$\phi(y)\not\in |k|$ if $y$ is of type (3).

We have the following results:

\begin{Theo}
 Assume that $|k|=\R+$. Let $(\F,\nabla)$ be a differential equation over $\Ak\setminus k$ with regular
  singularities. Let $x\in \Ak\setminus k$. We set:
          \begin{equation}\Fonction{\Psi:
              [x,\infty)}{\cK(\Ak)}{y}{\Sigma_{\nabla_y,k}(\Lk{M_y})}.\end{equation}
         Let $y\in [x,\infty)$, then we have:
          \begin{itemize}
          \item the restriction of
            $\Psi$ to $[x,y]$ is continuous at $y$,

            \item the map $\Psi$ is
            continuous at $y$ if and only if $y$ is of type (4) or of
            the form $x_{0,R}$.
              
          \end{itemize}
        \end{Theo}

        \begin{proof}
          The proof is analogous to the proof of
          Theorem~\ref{sec:case-posit-resid-3}.
        \end{proof}

\section{Spectral version of Young's theorem}\label{sec:spectr-vers-youngs-10}
        In this part we give a spectral version of
        Young's theorem \cite{Young}, \cite[Theorem~6.5.3]{Ked},
        \cite[Theorem~6.2]{CM02}, which states the following.
\begin{hyp} We still assume that $k$ is algebraically closed.
\end{hyp}
        \begin{Theo}[Young]
          Let $x\in \Ak$ be a point of type (2), (3) or (4). Let
          $\cL=\sum_{i=0}^ng_{n-i}\d^i$ with $g_0=1$ and $g_i\in\Hx$,
          and let $(M,\nabla)$ be the associated differential module
          over $(\Hx, \d)$. We set $|\cL|_{\mr{Sp}}=\Max_{0\leq i\leq
            n}|g_i|^{\fra{i}}$. If $|\cL|_{\mr{Sp}}>\nor{\d}$ then $\nsp{\nabla}=|\cL|_{\mr{Sp}}$.
        \end{Theo}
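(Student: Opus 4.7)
Set $s := |\cL|_{\mr{Sp}}=\max_{1\leq i\leq n}|g_i|^{1/i}$. The goal is to establish both inequalities $\nsp{\nabla}\leq s$ and $\nsp{\nabla}\geq s$, the central device being a rescaling of the natural companion basis of $M$. With $(e_1,\nabla e_1,\ldots,\nabla^{n-1}e_1)$ the cyclic basis dictated by $\cL$, introduce on $M$ the equivalent norm $\Nor{s}{\sum_{i=1}^n a_i e_i}:=\max_{1\leq i\leq n} s^{i-1}|a_i|$; since it is equivalent to the initial norm, $\nsp{\nabla}$ is unchanged. Using $\nabla e_i=e_{i+1}$ for $i<n$ and $\nabla e_n=-\sum_j g_j e_{n-j+1}$, one computes that for $v=\sum a_i e_i$ the $i$-th component of $\nabla v$ reads $b_i=d(a_i)+a_{i-1}-a_n g_{n-i+1}$ (with the convention $a_0:=0$). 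A direct estimate of each of these three contributions using $|g_j|\leq s^j$ and the hypothesis $\nor{d}<s$ yields $s^{i-1}|b_i|\leq s\cdot\Nor{s}{v}$ for every $i$. Consequently $\Nor{s}{\nabla}\leq s$, so $\nsp{\nabla}\leq s$.

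For the lower bound, the key structural observation is that in the rescaled basis the purely $\Hx$-linear part $\tilde G$ of $\nabla$ is, up to a uniform factor of $s$, the companion matrix of $P(T)=T^n+g_1 T^{n-1}+\cdots+g_n$; by the Newton polygon of $P$, at least one root of $P$ in $\Hx^{\mr{alg}}$ has norm exactly $s$ (attained at the index $l_0$ realising $|g_{l_0}|^{1/l_0}=s$), and no root of $P$ exceeds $s$. Consequently $\tilde G$ has spectral radius $s$ as an $\Hx$-linear endomorphism of $M$. The remaining piece of $\nabla$ is the coordinatewise derivation $d$, whose operator norm $\nor{d}$ is strictly smaller than $s$, so $d$ should act as a strictly subdominant perturbation of $\tilde G$, preserving the spectral radius and hence forcing $\nsp{\nabla}\geq s$.

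The main obstacle is to make this perturbation argument rigorous, since $d$ and $\tilde G$ do not commute. The strategy is to expand $(\tilde G + d\cdot I)^k$ as a sum of $2^k$ non-commutative monomials: any monomial containing at least one factor of $d$ has operator norm bounded by $\nor{d}\cdot s^{k-1}<s^k$, while $\Nor{s}{\tilde G^k}$ is of order $s^k$ because $\tilde G$ admits a cyclic vector (the orbit of $e_1$) whose norm grows at the rate dictated by the spectral radius of $\tilde G$. The ultrametric strict inequality then forces $\Nor{s}{\nabla^k}=\Nor{s}{\tilde G^k}$ as soon as the mixed terms are strictly dominated, giving $\nsp{\nabla}\geq s$. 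The technical heart is therefore the pointwise growth estimate $\Nor{s}{\tilde G^k}\geq C\cdot s^k$ for some positive constant $C$: it demands careful tracking, across iterations, of the dominant component of $\tilde G^k e_1$ in the rescaled basis and systematic use of the strict ultrametric inequality (driven by $s>\nor{d}$) to exclude accidental cancellations with the $d$-derivative corrections.
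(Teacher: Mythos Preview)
The paper does not give its own proof of Young's theorem: the statement is quoted as classical background, with references to \cite{Young}, \cite[Theorem~6.5.3]{Ked}, and \cite[Theorem~6.2]{CM02}, and the paper then moves on to prove a \emph{spectral} version (Theorems~\ref{sec:spectr-vers-youngs-6} and \ref{sec:spectr-vers-youngs-9}). So there is no in-paper argument to compare against; let me instead comment on your sketch directly.

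Your upper bound is clean and correct: the weighted norm $\Nor{s}{\cdot}$ makes $\nabla$ an operator of norm $\le s$, hence $\nsp{\nabla}\le s$.

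For the lower bound, the decomposition $\nabla=\tilde G+D$ and the estimate that every mixed monomial in $(\tilde G+D)^k$ has $\Nor{s}{\cdot}$-operator norm at most $\nor{d}\,s^{k-1}<s^k$ are both fine. The piece you flag as the ``technical heart'', namely $\Nor{s}{\tilde G^k}\ge C\,s^k$, is left unproven, and as stated your justification (``$\tilde G$ admits a cyclic vector whose norm grows at the rate dictated by the spectral radius'') is not an argument: the cyclic orbit of $e_1$ has $\Hx$-valued coordinates once $k\ge n$, and tracking it does not obviously avoid cancellation. Without this estimate the ultrametric domination step collapses, because $\lim\Nor{s}{\tilde G^k}^{1/k}=s$ alone allows $\Nor{s}{\tilde G^k}$ to dip below $\nor{d}\,s^{k-1}$ for arbitrarily large $k$.

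Here is the missing observation that closes the gap cleanly. For the weighted max norm the operator norm of a matrix $T=(t_{ij})$ is $\max_{i,j}s^{i-j}|t_{ij}|$, a quantity depending only on the absolute values of the entries; hence it is unchanged under the isometric extension $\Hx\hookrightarrow\widehat{\Hx^{\mathrm{alg}}}$. Over the algebraic closure pick an eigenvector $v\ne 0$ for an eigenvalue $\lambda$ of $\tilde G$ with $|\lambda|=s$ (such $\lambda$ exists by the Newton polygon of $P$). Then $\Nor{s}{\tilde G^k v}=|\lambda|^k\Nor{s}{v}=s^k\Nor{s}{v}$, so $\Nor{s}{\tilde G^k}\ge s^k$; combined with $\Nor{s}{\tilde G}\le s$ this gives $\Nor{s}{\tilde G^k}=s^k$ \emph{exactly} for every $k$. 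Now the ultrametric inequality yields $\Nor{s}{\nabla^k}=s^k$ for all $k$, hence $\nsp{\nabla}=s$. With this one-line fix your argument is complete and is essentially the standard proof.
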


In order to state and prove the main statement of the section, we will
need the following additional results.

\begin{Defi}
      Let $E$ be a $k$-Banach algebra and $B$ a commutative
     $k$-subalgebra of $E$. We say that $B$ is a maximal commutative subalgebra of
     $E$, if for any commutative $k$-subalgebra $B'$ of $E$ we have the following
     property:
     \begin{equation}(B\subset B'\subset E)\Leftrightarrow(B'=B).\end{equation} 
   \end{Defi}

   \begin{rem}
     A maximal subalgebra $B$ is necessarily closed in $E$, hence a
     $k$-Banach algebra.
   \end{rem}
   
   \begin{Pro}[{\cite[Proposition~7.2.4]{Ber}}]\label{sec:berk-spectr-theory-5}
     Let $E$ be a $k$-Banach algebra. For any
     maximal commutative subalgebra $B$ of $E$, we have:
     \begin{equation}\forall f\in B,\qqq \Sigma_f(B)=\Sigma_f(E).\end{equation}
   \end{Pro}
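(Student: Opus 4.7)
The plan is to split the equality into the two inclusions $\Sigma_f(E) \subset \Sigma_f(B)$ and $\Sigma_f(B) \subset \Sigma_f(E)$. The first is automatic from the definition: if $f \otimes 1 - 1 \otimes T(x)$ admits an inverse in the smaller Banach algebra $B \hat{\otimes}_k \mathcal{H}(x)$, that same inverse also serves in the larger algebra $E \hat{\otimes}_k \mathcal{H}(x)$, so non-invertibility in the latter forces non-invertibility in the former.

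For the reverse inclusion, suppose $x \notin \Sigma_f(E)$, so that $f \otimes 1 - 1 \otimes T(x)$ admits an inverse $g \in E \hat{\otimes}_k \mathcal{H}(x)$. Since $B$ is commutative and $f \in B$, the element $f \otimes 1 - 1 \otimes T(x)$ lies in the commutative subalgebra $B \hat{\otimes}_k \mathcal{H}(x)$, and hence commutes with every element of it. The standard computation ($ab=ba$ with $a$ invertible implies $a^{-1}b = ba^{-1}$) then shows that $g$ too commutes with every element of $B \hat{\otimes}_k \mathcal{H}(x)$. I would now invoke the fact that $B \hat{\otimes}_k \mathcal{H}(x)$ is itself a maximal commutative subalgebra of $E \hat{\otimes}_k \mathcal{H}(x)$: the subalgebra it generates together with $g$ would then be commutative and strictly larger unless $g$ were already in it, so by maximality $g \in B \hat{\otimes}_k \mathcal{H}(x)$, which is the desired invertibility.

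The main obstacle is therefore proving that $B \hat{\otimes}_k \mathcal{H}(x)$ is maximal commutative in $E \hat{\otimes}_k \mathcal{H}(x)$. The strategy is to show that any $h$ in its commutant must lie in it. Given $h$ commuting with $b \otimes 1$ for all $b \in B$ (and trivially with the central image of $\mathcal{H}(x)$), one approximates $h$ by finite sums $\sum_i e_i \otimes \lambda_i$ with the $\lambda_i \in \mathcal{H}(x)$ linearly independent over $k$. The relation
\begin{equation}
[b \otimes 1, \textstyle\sum_i e_i \otimes \lambda_i] = \sum_i [b,e_i] \otimes \lambda_i = 0
\end{equation}
together with the linear independence of the $\lambda_i$ forces $[b,e_i]=0$ for every $b \in B$, so each $e_i$ lies in the commutant of $B$ in $E$, which equals $B$ by the maximality hypothesis. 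Each approximation therefore lies in $B \hat{\otimes}_k \mathcal{H}(x)$, and passing to the limit places $h$ in its closure. This closure equals $B \hat{\otimes}_k \mathcal{H}(x)$ itself, because $B$ is closed in $E$ (indeed the closure of a commutative subalgebra is commutative, so maximality of $B$ forces it to be closed).

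The delicate step will be making the approximation argument fully rigorous inside the completed tensor product: one must arrange that the independent system of $\lambda_i$'s can be chosen compatibly as one refines the approximation, and verify that closedness of $B$ in $E$ indeed lifts to closedness of $B \hat{\otimes}_k \mathcal{H}(x)$ in $E \hat{\otimes}_k \mathcal{H}(x)$ in the completed-projective-tensor-product topology relevant for non-archimedean Banach algebras. Once this maximality is in hand, the conclusion follows purely formally from the commutativity argument above.
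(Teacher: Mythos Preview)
The paper does not prove this proposition; it is simply quoted from Berkovich's book \cite[Proposition~7.2.4]{Ber} with no argument given. So there is no ``paper's own proof'' to compare against, and your attempt stands on its own.

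Your easy inclusion $\Sigma_f(E)\subset\Sigma_f(B)$ is fine, and the overall strategy for the reverse inclusion---show that $B\ct_k\Hx$ is maximal commutative in $E\ct_k\Hx$, then use that inverses of central elements stay in a maximal commutative subalgebra---is a natural one. The gap is in the approximation step. You write that one approximates $h$ by finite sums $\sum_i e_i\otimes\lambda_i$ and then asserts
\[
[b\otimes 1,\textstyle\sum_i e_i\otimes\lambda_i]=0.
\]
But only $h$ itself commutes with $b\otimes 1$; an arbitrary approximation has no reason to. At best $[b\otimes 1,\sum_i e_i\otimes\lambda_i]$ is \emph{small}, and then linear independence of the $\lambda_i$ over $k$ is far too weak to force the individual $[b,e_i]$ to be small: linearly independent elements of $\Hx$ can be arbitrarily close to one another, so the ``coordinate projections'' you are implicitly using need not be bounded. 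What you actually need is an $\alpha$-orthogonal Schauder basis of $\Hx$ over $k$ (so that $E\ct_k\Hx$ decomposes as a completed direct sum $\Cbigoplus_i E\cdot\lambda_i$ with bounded coordinate maps); then $h=\sum_i e_i\otimes\lambda_i$ uniquely, the commutator relation holds on the nose, and orthogonality lets you read off $[b,e_i]=0$ term by term. Such bases exist under additional hypotheses (e.g.\ $k$ spherically complete, or $\Hx$ of countable type), but not in complete generality, so either you must supply that reduction or argue differently.

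Berkovich's own route avoids this issue entirely: he first proves the ``saturated'' version (your paper's Proposition labelled \texttt{sec:defin-first-prop-1}), using the holomorphic functional calculus to show that the resolvent $R(x)=(f\otimes 1-1\otimes T(x))^{-1}$, once it exists in $E\ct_k\Hx$, can be reconstructed inside $B\ct_k\Hx$. The maximal-commutative case then follows because a maximal commutative subalgebra is automatically saturated (if $a\in B$ is invertible in $E$, then $a^{-1}$ commutes with $B$, hence lies in $B$ by maximality). That reduction is a one-line observation you could add, and it sidesteps the tensor-product maximality question altogether.
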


   \begin{Defi}
     Let $E$ be a $k$-algebra and let $B$ be a $k$-subalgebra of
     $E$. If any element of $B$ invertible in $E$ is also invertible in
     $B$, we say that $B$ is a {\em saturated} subalgebra of $E$. 
   \end{Defi}

   \begin{Pro}[{\cite[Proposition~7.2.4]{Ber}}]\label{sec:defin-first-prop-1}
     Assume that $k$ is not trivially valued. Let $E$ be a $k$-Banach
     algebra and let $B$ be a saturated $k$-Banach subalgebra of
     $E$. Then we have:
     \begin{equation}\forall f\in B,\qqq \Sigma_f(B)=\Sigma_f(E).\end{equation}
   \end{Pro}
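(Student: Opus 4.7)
The plan is to deduce the equality $\Sigma_f(B)=\Sigma_f(E)$ from the stronger statement that $B\ct_k\hh{x}$ is saturated in $E\ct_k\hh{x}$ for every $x\in\Ak$. By definition of the Berkovich spectrum, $x\notin\Sigma_f(A)$ means that $f\ot 1-1\ot T(x)$ is invertible in $A\ct_k\hh{x}$. The inclusion $B\ct_k\hh{x}\to E\ct_k\hh{x}$ is a bounded unital $k$-algebra morphism, which trivially sends invertible elements to invertible elements; this gives $\Sigma_f(E)\subset\Sigma_f(B)$ unconditionally.

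For the reverse inclusion, suppose $x\notin\Sigma_f(E)$, so that $f\ot 1-1\ot T(x)\in B\ct_k\hh{x}$ is invertible in $E\ct_k\hh{x}$. Granting that $B\ct_k\hh{x}$ is a saturated $k$-Banach subalgebra of $E\ct_k\hh{x}$, the inverse then already lies in $B\ct_k\hh{x}$, hence $x\notin\Sigma_f(B)$. The entire proposition therefore reduces to showing that the functor $-\ct_k\hh{x}$ preserves the saturated property.

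To prove that preservation, I would proceed in two steps, both using that $(k,|.|)$ is not trivially valued. First, since a saturated subalgebra is in particular closed, standard results on completed tensor products over a non-trivially valued field ensure that $B\ct_k\hh{x}$ sits as a closed $k$-Banach subalgebra of $E\ct_k\hh{x}$. Second, the non-triviality of $|k|$ guarantees the existence of an orthonormal (Schauder) basis $(e_i)_{i\in I}$ of $\hh{x}$ as a $k$-Banach space, which produces, for any $k$-Banach algebra $A$, bounded $k$-linear coordinate projections $\pi_i\colon A\ct_k\hh{x}\to A$ whose restriction to $B\ct_k\hh{x}$ takes values in $B$. Given $u\in B\ct_k\hh{x}$ with inverse $v\in E\ct_k\hh{x}$, the identity $uv=vu=1$ translates, after applying the $\pi_i$, into a recursive system expressing each coefficient $v_i\in E$ as a limit of sums of products involving coefficients of $u$ (which lie in $B$) and inverses of elements of $B$ that are invertible in $E$. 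The saturation of $B$ in $E$ makes these latter inverses belong to $B$, and the closedness of $B$ in $E$ then forces $v_i\in B$ for all $i$, i.e.\ $v\in B\ct_k\hh{x}$.

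The main obstacle is organising this coefficient-extraction argument so that the recursion actually converges in $B$ and so that saturation is invoked only on genuine $B$-valued elements. The non-trivial valuation is essential precisely at this point: it provides both the orthonormal basis and the continuous coordinate projections, which together let the scalar saturation property propagate to the completed tensor products. Once $B\ct_k\hh{x}$ is shown to be saturated in $E\ct_k\hh{x}$, the proposition follows at once from the reduction in the first paragraph.
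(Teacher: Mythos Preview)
The paper does not give its own proof of this proposition; it is quoted verbatim from Berkovich \cite[Proposition~7.2.4]{Ber}. So there is no in-paper argument to compare against, only Berkovich's.

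Your reduction is sound: the easy inclusion $\Sigma_f(E)\subset\Sigma_f(B)$ is immediate, and for the other inclusion it would indeed suffice to know that $B\ct_k\hh{x}$ is saturated in $E\ct_k\hh{x}$. The gap is in your justification of that last claim. Writing $u=\sum_i u_i\ot e_i$ and $v=\sum_j v_j\ot e_j$ with respect to an orthonormal basis $(e_i)$ of $\hh{x}$, the relation $uv=1$ reads $\sum_{i,j} c_{ijk}\,u_i v_j=\delta_{k}$, where $e_ie_j=\sum_k c_{ijk}e_k$ are the structure constants of the field $\hh{x}$. This is a genuinely infinite coupled system in the unknowns $v_j$, and there is no visible recursion: no coefficient $v_j$ is expressed in terms of finitely many others, and nowhere does a single element of $B$ get inverted so that saturation could be invoked. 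Your sketch says the $v_j$ arise as limits of products of $u_i$'s and of inverses of elements of $B$, but it does not identify which elements of $B$ are being inverted or why such a limit expression exists. As stated, the argument does not go through, and in fact the blanket statement ``$-\ct_k\Omega$ preserves saturated subalgebras'' is not something one should expect to prove by pure coefficient bookkeeping.

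Berkovich's proof avoids this by not aiming at the stronger saturation statement at all. He uses the holomorphic functional calculus (his Theorem~7.2.1): the inverse of $f\ot 1-1\ot T(x)$ in $E\ct_k\hh{x}$, when it exists, lies in the closed subalgebra generated by $f$ together with the resolvents $(f-a)^{-1}$ for rational $a\in k\setminus\Sigma_f(E)$. Each such $f-a$ lies in $B$ and is invertible in $E$, so saturation of $B$ in $E$ gives $(f-a)^{-1}\in B$; closedness of $B\ct_k\hh{x}$ in $E\ct_k\hh{x}$ (here the non-triviality of $|k|$ enters) then forces the inverse into $B\ct_k\hh{x}$. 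In other words, the special shape of the element $f\ot 1-1\ot T(x)$ is used essentially, rather than a general saturation-preservation principle.
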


\begin{Lem}
  Let $\Omega\in E(k)$, let $E$ be an $\Omega$-Banach algebra and
  $f\in E$. Then we have
  \begin{equation}\Sigma_{f,k}(E)=\pik{\Omega}(\Sigma_{f,\Omega}(E)),\end{equation}
  where $\pik{\Omega}:\A{\Omega}\to \Ak$ is the canonical projection. 
  
\end{Lem}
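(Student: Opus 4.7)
The plan is to identify both spectra with invertibility loci in a common algebra and exploit the tensor-product decomposition. By associativity of the completed tensor product there is an isometric isomorphism of $k$-Banach algebras $E \hat{\otimes}_k \Hx \simeq E \hat{\otimes}_\Omega \mathcal{A}$, where $\mathcal{A} := \Omega \hat{\otimes}_k \Hx$ is a commutative $\Omega$-Banach algebra whose Berkovich spectrum is canonically identified with the fiber $\pik{\Omega}^{-1}(x)$. For each $y \in \pik{\Omega}^{-1}(x)$, the associated bounded character $\chi_y : \mathcal{A} \to \h{y}$ extends to a bounded $\Omega$-algebra map $\rho_y : E \hat{\otimes}_\Omega \mathcal{A} \to E \hat{\otimes}_\Omega \h{y}$, and since $\chi_y(1 \otimes T(x)) = T(y)$, this map sends $\bar{f} := f \otimes 1 - 1 \otimes T(x)$ to $f \otimes 1 - 1 \otimes T(y)$. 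The inclusion $\pik{\Omega}(\Sigma_{f,\Omega}(E)) \subseteq \Sigma_{f,k}(E)$ is then immediate by functoriality: invertibility of $\bar{f}$ in $E \hat{\otimes}_k \Hx$ passes through $\rho_y$ to invertibility of $f \otimes 1 - 1 \otimes T(y)$ in every $E \hat{\otimes}_\Omega \h{y}$ with $\pik{\Omega}(y) = x$, so no $y$ lying over $x$ can belong to $\Sigma_{f,\Omega}(E)$.

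For the reverse inclusion, I would assume that $f \otimes 1 - 1 \otimes T(y)$ is invertible in $E \hat{\otimes}_\Omega \h{y}$ for every $y \in \pik{\Omega}^{-1}(x)$ and deduce the invertibility of $\bar{f}$ in $E \hat{\otimes}_\Omega \mathcal{A}$. The key structural observation is that $1 \otimes \mathcal{A}$ lies in the center of $E \hat{\otimes}_\Omega \mathcal{A}$, since $(e \otimes 1)(1 \otimes \alpha) = e \otimes \alpha = (1 \otimes \alpha)(e \otimes 1)$ for all $e \in E$ and $\alpha \in \mathcal{A}$. Setting $a := 1 \otimes T(x) \in \mathcal{A}$, we have $\bar{f} = f \otimes 1 - 1 \otimes a$, a shift of $f \otimes 1$ by a central element. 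The plan is then to apply the standard principle from Berkovich's spectral theory (cf.\ \cite[Chapter~7]{Ber}): for a Banach algebra equipped with a central commutative Banach subalgebra $\mathcal{A}$, an element is invertible if and only if its image in the fiber algebra $(-) \hat{\otimes}_\mathcal{A} \h{y}$ is invertible for every $y$ in the Berkovich spectrum of $\mathcal{A}$. Since $(E \hat{\otimes}_\Omega \mathcal{A}) \hat{\otimes}_\mathcal{A} \h{y} = E \hat{\otimes}_\Omega \h{y}$ and $\bar{f}$ maps to $f \otimes 1 - 1 \otimes T(y)$ in this fiber, the hypothesis yields the invertibility of $\bar{f}$, and hence $x \notin \Sigma_{f,k}(E)$.

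The main obstacle is the rigorous invocation of this fiberwise-to-global invertibility principle in the possibly non-commutative setting: one must promote the family of pointwise resolvents defined over the compact space $\pik{\Omega}^{-1}(x)$ to a single inverse residing in $E \hat{\otimes}_\Omega \mathcal{A}$. A clean way to do this is to argue by contraposition: were $\bar{f}$ not invertible, it would be contained in a proper closed left ideal of $E \hat{\otimes}_\Omega \mathcal{A}$, whose contraction to the central commutative subalgebra $\mathcal{A}$ is a proper closed ideal; by Berkovich's characterization of $\mathcal{M}(\mathcal{A})$ via bounded multiplicative seminorms, this proper ideal is contained in the kernel of some character corresponding to a point $y \in \pik{\Omega}^{-1}(x)$, and at that point $f \otimes 1 - 1 \otimes T(y)$ fails to be invertible in $E \hat{\otimes}_\Omega \h{y}$, contradicting the hypothesis.
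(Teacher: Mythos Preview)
Your argument for the inclusion $\pik{\Omega}(\Sigma_{f,\Omega}(E)) \subseteq \Sigma_{f,k}(E)$ is fine and matches the paper. The problem is the reverse inclusion.

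Your contrapositive sketch does not go through. Suppose $\bar f$ is not invertible and lies in a proper closed left ideal $I$ of $E\hat\otimes_\Omega\mathcal A$. The intersection $I\cap\mathcal A$ is indeed a proper closed ideal of $\mathcal A$, but it may very well be $\{0\}$; in that case \emph{every} point $y$ of $\cM(\mathcal A)$ satisfies $I\cap\mathcal A\subset\ker\chi_y$, and you have learned nothing. Even when $I\cap\mathcal A\neq\{0\}$ and you find a point $y$ with $I\cap\mathcal A\subset\ker\chi_y$, there is no reason for the image of $I$ under $E\hat\otimes_\Omega\mathcal A\to E\hat\otimes_\Omega\h{y}$ to remain proper: the map only kills $\ker\chi_y$ in the $\mathcal A$-direction, and $I$ may still surject. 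The ``fiberwise-to-global'' principle you invoke from \cite[Chapter~7]{Ber} is established for \emph{commutative} Banach algebras (where one has $\cM(E\hat\otimes_\Omega\mathcal A)$ available and can pick a single point $z$ at which $\bar f$ vanishes, then push it down to $\cM(\mathcal A)$); it does not transfer to the non-commutative case by the left-ideal argument you sketch.

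The paper closes exactly this gap by first reducing to the commutative situation: one chooses a maximal commutative $\Omega$-subalgebra $B\subset E$ containing $f$, observes that $B$ is automatically a maximal commutative $k$-subalgebra as well, and then computes both spectra inside $B$ using Proposition~\ref{sec:berk-spectr-theory-5}. Now $B\hat\otimes_k\Hx\simeq B\hat\otimes_\Omega\mathcal A$ is commutative, so Lemma~\ref{sec:topology-cma-1} applies directly: non-invertibility of $\bar f$ gives a point $z\in\cM(B\hat\otimes_\Omega\mathcal A)$ with $\bar f(z)=0$, and composing with $\mathcal A\to B\hat\otimes_\Omega\mathcal A\to\h z$ produces the required $y\in\pik{\Omega}^{-1}(x)$. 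The passage to a maximal commutative subalgebra is the key step you are missing.
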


\begin{proof}
  Let $B$ be a maximal commutative $\Omega$-subalgebra of $E$
  containing $f$. Let $B'$ be a commutative $k$-subalgebra of $E$ such
  that $B\subset B'$. Then $B'$ is also an $\Omega$-subalgebra of $E$.
  Therefore $B$ is also  maximal
  as a commutative $k$-subalgebra of $E$. Let $x\in\Ak$, we have an
  isometric isomorphism $B\ct_k\Hx\simeq B\ct_\Omega(\Omega\ct_k\Hx)$
  (cf. \cite[Section~2.1, Proposition 7]{Bosc}). For each $y\in
  \pik{\Omega}\-1(x)$, we have a contracting map $\Omega\ct_k\Hx\to
  \Hy$. Therefore, the induced map $B\ct_k\Hx\to B\ct_\Omega\Hy$ is
  contracting too.  Hence, if $f\ot 1-1\ot T(y)$ is not invertible in
  $B\ct_\Omega\Hy$, then $f\ot 1-1\ot T(x)$ is not invertible too in
  $B\ct_k \Hx$. Therefore,
  $\pik{\Omega}(\Sigma_{f,\Omega}(B))\subset\Sigma_{f,k}(B)$. Now let 
  $x\in \Sigma_{f,k}(B)$. 
  Since $f\ot 1-1\ot T(x)$ is not invertible in $B\ct_k\Hx\simeq
  B\ct_\Omega(\Omega\ct_k\Hx)$, according to
  Lemma~\ref{sec:topology-cma-1}, there exists $y\in
  \cM(\Omega\ct_k\Hx)=\pik{\Omega}\-1(x)$ such that $f\ot 1-1\ot T(y)$
  is not invertible in $B\ct_\Omega\Hy$. Therefore,
  $\Sigma_{f,k}(B)\subset\pik{\Omega}(\Sigma_{f,\Omega}(B))$. Hence, by
  Proposition~\ref{sec:berk-spectr-theory-5} we obtain
   \begin{equation}\Sigma_{f,k}(E)=\pik{\Omega}(\Sigma_{f,\Omega}(E)). \qedhere \end{equation}
 \end{proof}

 \begin{Defi}
   Let $\Omega\in E(k)$ and $f\in \cM_n(\Omega)$. Let $\{a_1,\cdots
   a_N\}$ be the set of eigenvalues of $f$ in
   $\wac$. Let us call $\piro{\wac}{\Omega}(\{a_1,\cdots, a_N\})$
   the set of eigenvalues of $f$ in $\A{\Omega}$.
 \end{Defi}

 \begin{cor}\label{sec:defin-first-prop-2}
   Let $\Omega\in E(k)$ and $f\in \cM_n(\Omega)$. Let
   $\{a_1,\cdots,a_N\}$ be the set of eigenvalues of $f$ in $\A{\Omega}$. Then we have
\begin{equation}\Sigma_{f,k}(\cM_n(\Omega))=\pik{\Omega}(\{a_1,\cdots, a_N\}).\end{equation}
\end{cor}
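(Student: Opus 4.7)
The plan is to combine the preceding lemma with a direct computation of $\Sigma_{f,\Omega}(\cM_n(\Omega))$. Applied to the $\Omega$-Banach algebra $E=\cM_n(\Omega)$ with the element $f$, that lemma yields
\[
\Sigma_{f,k}(\cM_n(\Omega)) = \pik{\Omega}(\Sigma_{f,\Omega}(\cM_n(\Omega))),
\]
so it suffices to prove the identity $\Sigma_{f,\Omega}(\cM_n(\Omega)) = \{a_1,\ldots,a_N\}$ inside $\A{\Omega}$; applying $\pik{\Omega}$ to both sides then gives the corollary.

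For any $y\in\A{\Omega}$, the natural contracting map $\cM_n(\Omega)\ct_\Omega\Hy\to\cM_n(\Hy)$ is an isometric isomorphism, since both sides are free $\Hy$-Banach modules of rank $n^2$ with matching bases coming from the elementary matrices. Hence $f\otimes 1 - 1\otimes T(y)$ is invertible on the left if and only if $f - T(y)\cdot I_n$ is invertible in $\cM_n(\Hy)$, which by the determinant criterion amounts to $\chi_f(T(y))\ne 0$ in $\Hy$, where $\chi_f\in\Omega[T]$ denotes the characteristic polynomial of $f$. Therefore $y\in\Sigma_{f,\Omega}(\cM_n(\Omega))$ if and only if $\chi_f(T(y))=0$.

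Now factor $\chi_f$ in $\wac[T]$ as $\prod_j(T-b_j)^{m_j}$, where $\{b_1,\ldots,b_N\}\subset\wac$ is the multiset of eigenvalues of $f$, so that $a_j=\piro{\wac}{\Omega}(b_j)$ by definition. If $\chi_f(T(y))=0$, then $T(y)$ is algebraic over $\Omega$ and is an $\Omega$-conjugate of some $b_j$. As $\Omega$ is complete, the finite extension $\Omega(b_j)$ is already complete and carries a unique extension of the absolute value of $\Omega$; consequently the pair $(\Hy,T(y))$ is determined up to isometric $\Omega$-isomorphism by the minimal polynomial of $b_j$ over $\Omega$, which forces $y=\piro{\wac}{\Omega}(b_j)=a_j$. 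Conversely, each $a_j$ obviously satisfies $\chi_f(T(a_j))=0$ and so lies in $\Sigma_{f,\Omega}(\cM_n(\Omega))$. The most delicate step is this last identification: it rests on the fact that algebraic points over a complete base field need no further completion, so a point $y$ with $T(y)$ algebraic is pinned down by the $\Omega$-Galois orbit of $T(y)$, which by hypothesis is exactly the orbit corresponding to some $a_j$.
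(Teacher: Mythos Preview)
Your proof is correct and is precisely the argument the paper leaves implicit: the Corollary is stated without proof because it is meant to follow immediately from the preceding Lemma together with the direct computation of $\Sigma_{f,\Omega}(\cM_n(\Omega))$ as the set of (rigid) eigenvalue points, which is exactly what you carry out. The key isomorphism $\cM_n(\Omega)\ct_\Omega\Hy\simeq\cM_n(\Hy)$ you invoke is the $\Omega$-analogue of the paper's Lemma on $\cM_n(\Omega)\ct_k\Omega'$, and your identification of the zero locus of $\chi_f$ in $\A{\Omega}$ with $\{a_1,\ldots,a_N\}$ via uniqueness of the extension of the absolute value to a finite extension of a complete field is the standard and correct justification.
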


 \begin{cor}\label{sec:spectr-vers-youngs-3}
   Suppose that $k$ is not trivially valued. Let $\Omega\in E(k)$ and $f\in \cM_n(\Omega)$. Let
   $\{a_1,\cdots,a_N\}$ be the set of rigid points of $\A{\Omega}$
that correspond to the eigenvalues
   of $f$ in some finite extensions of $\Omega$. Then we have
\begin{equation}\Sigma_{f,k}(\Lk{\Omega^n})=\pik{\Omega}(\{a_1,\cdots, a_N\}).\end{equation}
 \end{cor}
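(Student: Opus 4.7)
The plan is to embed $\cM_n(\Omega)$ as a saturated $k$-Banach subalgebra of $\Lk{\Omega^n}$, then apply Proposition \ref{sec:defin-first-prop-1} to equate the two spectra, and finally invoke Corollary \ref{sec:defin-first-prop-2}. Note that Corollary \ref{sec:defin-first-prop-2} describes the spectrum in terms of the eigenvalues viewed in $\A{\Omega}$, which is the same data as the rigid points arising from eigenvalues in finite extensions of $\Omega$, so the conclusions match.

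First, I would identify $\cM_n(\Omega)$ with the $k$-subalgebra of $\Lk{\Omega^n}$ consisting of those bounded $k$-linear endomorphisms that are in addition $\Omega$-linear, equivalently that commute with all scalar multiplications $v\mapsto \lambda v$ for $\lambda\in\Omega$. Being $\Omega$-linear is cut out by the closed conditions $\phi(\lambda v)-\lambda\phi(v)=0$ indexed by $(\lambda,v)\in\Omega\times\Omega^n$, so $\cM_n(\Omega)$ is closed in $\Lk{\Omega^n}$. Equipped with the restriction of the operator norm, it is therefore a $k$-Banach subalgebra of $\Lk{\Omega^n}$ (and the norm agrees with the usual operator norm on $n\times n$ matrices).

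Next I would check the saturation property. Suppose $f\in\cM_n(\Omega)$ is invertible in $\Lk{\Omega^n}$ with inverse $g$. For any $\lambda\in\Omega$ and $v\in\Omega^n$, using the $\Omega$-linearity of $f$ one computes
$$g(\lambda v)=g(\lambda f(g(v)))=g(f(\lambda g(v)))=\lambda g(v),$$
so $g$ is $\Omega$-linear and hence lies in $\cM_n(\Omega)$. This shows $\cM_n(\Omega)$ is a saturated $k$-Banach subalgebra of $\Lk{\Omega^n}$.

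Since $k$ is not trivially valued, Proposition \ref{sec:defin-first-prop-1} applied to the pair $(\cM_n(\Omega),\Lk{\Omega^n})$ yields $\Sigma_{f,k}(\Lk{\Omega^n})=\Sigma_{f,k}(\cM_n(\Omega))$, and combining with Corollary \ref{sec:defin-first-prop-2} gives the desired equality. No serious obstacle is expected; the only nontrivial point is the verification of saturation, which is the short direct computation above. The hypothesis that $k$ be nontrivially valued enters precisely to license the use of Proposition \ref{sec:defin-first-prop-1}, and it is needed here because without it the spectrum of a saturated subalgebra can differ from that of the ambient algebra.
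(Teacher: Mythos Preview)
Your proposal is correct and follows exactly the paper's approach: show $\cM_n(\Omega)$ is a saturated $k$-Banach subalgebra of $\Lk{\Omega^n}$, apply Proposition~\ref{sec:defin-first-prop-1}, and conclude via Corollary~\ref{sec:defin-first-prop-2}. In fact your write-up is more complete than the paper's, since you actually verify saturation with the direct computation $g(\lambda v)=\lambda g(v)$, whereas the paper simply asserts it.
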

 \begin{proof}
   Since $\cM_n(\Omega)$ is a saturated subalgebra of $\Lk{\Omega^n}$,
   according to Proposition~\ref{sec:defin-first-prop-1}, we have
   $\Sigma_{f,k}(\cM_n(\Omega))=\Sigma_{f,k}(\Lk{\Omega^n})$. The
   result follows by
   Corollary~\ref{sec:defin-first-prop-2}.
 \end{proof}
 \begin{rem}
 In the case where $k$ is trivially valued, we have at least the
 inclusion $\Sigma_{f,k}(\Lk{\Omega^n})\subset\pik{\Omega}(\{a_1,\cdots, a_N\}) $. 
 \end{rem}

 \begin{Lem}\label{sec:spectr-vers-youngs-5}
   Let $(A,|.|)$ be a commutative $k$-Banach algebra. Let
   $P(T)=\sum_{i=0}^n a_i T^i\in A[T]$ with $a_n=1$. Let $G\in\cM_n(A)$ such that

   \begin{equation}G=\left(
     \raisebox{0.5\depth}{%
       \xymatrixcolsep{1ex}%
       \xymatrixrowsep{1ex}%
       \xymatrix{0\ar@{.}[rrr]& & & 0&-a_0\ar@{.}[dddd] \\
        1\ar@{.}[rrrddd]& 0\ar@{.}[rr]\ar@{.}[rrdd]& &0\ar@{.}[dd]& \\
        0\ar@{.}[dd]\ar@{.}[rrdd]& &  &  &\\
        & &  &0& \\
        0\ar@{.}[rr]& &0&1& -a_{n-1}\\ }%
        }
       \right)
       .\end{equation}
    If $a_0$ is invertible in $A$, then $G$ is invertible and we have
    $\nor{G\-1}=\max\limits_{0\leq i\leq n}|a_ia_0\-1|$.
     
   \end{Lem}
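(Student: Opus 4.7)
The plan is to invert $G$ by interpreting it as the matrix of a concrete operator, and then read off the norm directly from the entries. The key observation is that $G$ is the matrix, in the basis $(1,T,\ldots,T^{n-1})$, of the multiplication-by-$T$ endomorphism $\mu_T$ of the free $A$-module $B:=A[T]/(P(T))$. Indeed, $\mu_T(T^i)=T^{i+1}$ for $0\le i\le n-2$, while the relation $P(T)=0$ forces $\mu_T(T^{n-1})=T^n=-(a_0+a_1T+\cdots+a_{n-1}T^{n-1})$; the coordinate vectors of these images are exactly the columns of $G$.

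Under the hypothesis that $a_0$ is invertible in $A$, I rewrite $P(T)=0$ as
$$T\cdot\bigl(-a_0^{-1}(a_1+a_2T+\cdots+a_{n-1}T^{n-2}+T^{n-1})\bigr)=1$$
in $B$. Hence $\mu_T$ is invertible with inverse $\mu_U$, where $U:=-a_0^{-1}(a_1+a_2T+\cdots+a_{n-1}T^{n-2}+T^{n-1})$, and consequently $G$ is invertible in $\cM_n(A)$. Computing $U\cdot T^i$ for each $0\le i\le n-1$ (using the relation once more to reduce $U\cdot T$ to $1$) yields an explicit description of $G^{-1}$, whose entries are $0$, $1$ (on the superdiagonal), $-a_0^{-1}a_i$ for $1\le i\le n-1$ in the first column, and $-a_0^{-1}$ in the bottom-left corner.

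To convert this into the norm identity, I equip $A^n$ with the sup-norm. A standard ultrametric argument, upper bound by the triangle inequality and lower bound by evaluation on the standard basis vectors, gives $\|M\|=\max_{i,j}|M_{ij}|$ for every $M\in\cM_n(A)$, where the operator norm is computed as an element of $\LL{k}{A^n}$. Combining this with the explicit matrices of $G$ and $G^{-1}$, together with the identity $|a_0^{-1}|=|a_0|^{-1}$ (which follows from multiplicativity of $|.|$ on $A$), one obtains
$$\|G\|=\max(1,|a_0|,|a_1|,\ldots,|a_{n-1}|),$$
$$\|G^{-1}\|=\max\bigl(1,|a_0|^{-1},|a_0|^{-1}|a_1|,\ldots,|a_0|^{-1}|a_{n-1}|\bigr)=|a_0|^{-1}\|G\|,$$
which is the claimed equality.

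There is no real obstacle: the argument is essentially a direct calculation of $G^{-1}$ combined with the elementary description of the operator norm on $\cM_n(A)$ for ultrametric $A$ with the sup-norm on $A^n$. The only two places where care is needed are verifying the formula for $U$, which rests on a single application of $P(T)=0$, and keeping track of the multiplicativity of $|.|$, which is what ensures $|a_0^{-1}|=|a_0|^{-1}$ and hence that the two maxima factor in the desired way.
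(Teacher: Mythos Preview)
Your proof is correct and follows essentially the same approach as the paper: both compute $G^{-1}$ explicitly and then read off the operator norm as the maximum of the absolute values of the entries. The only cosmetic difference is that the paper invokes $\det(G)=(-1)^n a_0$ for invertibility and writes down $G^{-1}$ directly, whereas you obtain the same explicit inverse via the interpretation of $G$ as multiplication by $T$ on $A[T]/(P(T))$; the resulting matrix and the norm computation are identical.
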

   \begin{proof}
   Since $\det(G)=(-1)^na_0$ then $G$ is invertible if and only if
   $a_0$ is invertible in $A$. Assume now that $a_0$ is
   invertible. Then we have
    \begin{equation}G\-1=\fra{a_0}\left(
     \raisebox{0.5\depth}{%
       \xymatrixcolsep{1ex}%
       \xymatrixrowsep{1ex}%
       \xymatrix{-a_1\ar@{.}[ddd]&a_0\ar@{.}[rrrddd]& 0\ar@{.}[rr]\ar@{.}[rrdd]&&0\ar@{.}[dd]\\
         &0\ar@{.}[rrrddd]\ar@{.}[ddd]& &&\\
         & &  &&0\\
         -a_{n-1}&& & & a_0\\
         -1& 0\ar@{.}[rrr] &&&0
         \ }%
        }
       \right)
       .\end{equation}
     Hence, $\nor{G\-1}=\max\limits_{0\leq i\leq n}|a_ia_0\-1|$.
   \end{proof}
   \begin{Lem}[{\cite[Proposition~2.1.8]{Bosc}}]\label{sec:spectr-vers-youngs-4}
     Let $\Omega,\Omega'\in E(k)$. Then
     $$(\cM_n(\Omega\ct_k\Omega'),\nor{.})\simeq
     (\cM_n(\Omega)\ct_k\Omega',\nor{.}'),$$ where $\nor{.}$ is the
     maximum norm and $\nor{.}'$ is the tensor product norm.
   \end{Lem}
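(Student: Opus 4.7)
The plan is to reduce this to the standard fact that the completed projective (equivalently, for non-archimedean fields, the injective) tensor product commutes with finite direct sums. The starting point is that, as a $k$-Banach space, $\cM_n(\Omega)$ is canonically identified with $\Omega^{n^2}$ equipped with the maximum norm on the $n^2$ entries. So the isomorphism to construct is
\begin{equation}
  \cM_n(\Omega)\ct_k\Omega' \;\simeq\; \Omega^{n^2}\ct_k\Omega' \;\simeq\; (\Omega\ct_k\Omega')^{n^2} \;\simeq\; \cM_n(\Omega\ct_k\Omega'),
\end{equation}
and it suffices to check that the first and the last terms carry, respectively, the tensor norm $\|.\|'$ and the maximum norm $\|.\|$, and that the isomorphism is isometric and compatible with matrix multiplication.

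First I would make the middle isomorphism explicit: for any $k$-Banach space $F$ and any finite index set $I$, the map $F^{|I|}\otimes_k G \to (F\otimes_k G)^{|I|}$ given by $(f_i)_i \otimes g \mapsto (f_i\otimes g)_i$ is an isomorphism of $k$-vector spaces, and upon completing both sides with respect to their projective tensor norms it becomes an isometric isomorphism of $k$-Banach spaces. In the non-archimedean setting this is a direct consequence of the universal property of the completed tensor product together with the fact that for finite $I$ the maximum norm on $F^{|I|}$ makes it the coproduct in $\banc{k}$. Applying this with $F = \Omega$, $G = \Omega'$ and $|I|=n^2$ yields the middle identification as $k$-Banach spaces.

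Next I would transport this along the entrywise identifications $\cM_n(\Omega)\simeq \Omega^{n^2}$ and $\cM_n(\Omega\ct_k\Omega')\simeq (\Omega\ct_k\Omega')^{n^2}$. By construction, the composite sends the pure tensor $A\otimes \lambda$ (with $A=(a_{ij})\in\cM_n(\Omega)$ and $\lambda\in\Omega'$) to the matrix $(a_{ij}\otimes\lambda)\in \cM_n(\Omega\ct_k\Omega')$. Checking isometry is then simply a matter of unwinding definitions: on the right the norm of the resulting matrix is $\max_{i,j}\|a_{ij}\otimes\lambda\|$ in $\Omega\ct_k\Omega'$, while on the left the tensor norm $\|.\|'$ of $A\otimes\lambda$ coincides by definition with $\max_{i,j}\|a_{ij}\otimes\lambda\|$ via the coordinate decomposition of $\cM_n(\Omega)=\Omega^{n^2}$.

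Finally I would verify multiplicativity: on pure tensors, $(A\otimes\lambda)(B\otimes\mu)=AB\otimes\lambda\mu$ on the left, while on the right the $(i,j)$-entry of the product of $(a_{ij}\otimes\lambda)$ and $(b_{ij}\otimes\mu)$ is $\sum_l a_{il}b_{lj}\otimes\lambda\mu$, i.e.\ the $(i,j)$-entry of $AB\otimes\lambda\mu$. By density of pure tensors and continuity of multiplication in a $k$-Banach algebra the identification is multiplicative on the whole algebra. The only delicate point—the one that makes this a \emph{lemma} rather than a triviality—is the isometry of the finite-sum/tensor-product interchange, but for the completed projective tensor product over a non-archimedean field it follows from the universal property applied to the $n^2$ coordinate projections.
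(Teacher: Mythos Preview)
The paper does not give its own proof of this lemma: it is stated with a direct citation to \cite[Proposition~2.1.8]{Bosc} and immediately followed by the next theorem. So there is no in-paper argument to compare against; your task was effectively to supply the proof that the paper outsources.

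Your approach is the standard one and is essentially correct. Reducing to the isometric identification $\Omega^{n^2}\ct_k\Omega'\simeq(\Omega\ct_k\Omega')^{n^2}$ and then transporting along the entrywise identifications is exactly how the cited reference proceeds. One small remark: when you write that ``the tensor norm $\|.\|'$ of $A\otimes\lambda$ coincides by definition with $\max_{i,j}\|a_{ij}\otimes\lambda\|$'', this is not literally the definition of the tensor norm (which is an infimum over all representations), but rather a consequence of the isometry $F^{|I|}\ct_k G\simeq (F\ct_k G)^{|I|}$ you established just before. It would be cleaner to phrase it as a consequence of that isometry rather than ``by definition''. Apart from this wording issue, the argument is sound: the isometry on finite direct sums is the only nontrivial ingredient, multiplicativity follows from the check on pure tensors plus density and continuity, and the whole thing assembles as you describe.
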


   \begin{Theo}[Spectral version of Young's theorem (weak version)]\label{sec:spectr-vers-youngs-6}
   Let $\Omega\in E(k)$ and $d:\Omega\to \Omega$ be a $k$-linear bounded
    derivation. Let $(M,\nabla)$ be a differential module over
    $(\Omega,d)$ with $(M,\nabla)\simeq (\DD_\Omega/\DD_\Omega\cdot
    P(d), d)$ and $P(d)=\sum_{i=0}^{n}a_i d^i$ with $a_n=1$. Let $\{z_1,\cdots,
    z_n\}\subset \Omega^{alg}$ be the multiset of roots of $P(T)$ (the
    commutative polynomial associated to $P(d)$). If $ \min_i
    r_k(\pik{\wac}(z_i))>\nor{d}$, then
    \begin{equation}\Sigma_{\nabla,k}(\Lk{M})\subset
      \pik{\wac}(\{z_1,\cdots,z_n\}).\end{equation}
    In particular if
    $\pik{\wac}(\{z_1,\cdots,z_n\})=\{z\}$, we have $\Sigma_{\nabla,k}(\Lk{M})=\{z\}$.
  \end{Theo}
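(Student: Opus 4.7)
The plan is to use the cyclic presentation and a change of basis in $M \ct_k \Hx$ to bring $\nabla - T(x) I$ into a companion-plus-derivation form, then invoke Lemma~\ref{sec:spectr-vers-youngs-5} together with the geometric bound coming from $r_k$. I begin by identifying $(M,\nabla)$ with $(\DD_\Omega/\DD_\Omega \cdot P(d), d)$ and choosing the cyclic basis $\{1, d, \ldots, d^{n-1}\}$, in which $\nabla$ acts as $d I + G$ with $G$ the companion matrix of $P$ shown in the statement.

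For $x \in \Ak$ lying outside $\pik{\wac}(\{z_1,\ldots,z_n\})$, I would rebase $M \ct_k \Hx$ using $\{1, d - T(x), (d-T(x))^2, \ldots, (d-T(x))^{n-1}\}$. Combining the commutator identity $[d - T(x), f] = (d\otimes 1)(f)$ in $\DD_{\Omega \ct_k \Hx, d \otimes 1}$ with $P(d) = \tilde P(d - T(x))$, where $\tilde P(Y) := P(Y + T(x))$, shows that left multiplication by $d-T(x)$, which is exactly $\nabla \otimes 1 - 1 \otimes T(x)$, acquires the matrix $(d \otimes 1) I + \tilde G$, with $\tilde G$ the companion of $\tilde P$ (roots $\tilde z_i = z_i - T(x)$). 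Corollary~\ref{sec:defin-first-prop-2} gives $\Sigma_{G,k}(\cM_n(\Omega)) = \pik{\wac}(\{z_i\})$, so our assumption makes $G - T(x) I$, and hence $\tilde G$, invertible in $\cM_n(\Omega \ct_k \Hx)$ via Lemma~\ref{sec:spectr-vers-youngs-4}; Lemma~\ref{sec:spectr-vers-youngs-5} then yields $\nor{\tilde G^{-1}} = |P(T(x))|^{-1} \nor{\tilde G}$.

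The geometric input is the lower bound $|T(x) - z_i| \geq r_k(w_i) > \nor{d}$ for every $i$, where $w_i := \pik{\wac}(z_i)$. Since $k$ is algebraically closed, every point of $\Ak$ is universal (Proposition~\ref{sec:univ-points-fiber-2}), so the tensor norm on $\Omega \ct_k \Hx$ is multiplicative through $\uni{\wac}(x) \in \A{\wac}$ and $|P(T(x))| = \prod_i |T(\uni{\wac}(x)) - z_i|$. The identity $\pik{\wac}(\uni{\wac}(x)) = x \neq w_i$ together with Theorem~\ref{sec:univ-points-fiber} and Lemma~\ref{sec:univ-points-fiber-3} forces $\uni{\wac}(x)$ out of the open disk of radius $r_k(w_i)$ around $z_i$ in $\A{\wac}$, which is precisely the claim. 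One then inverts $(d \otimes 1) I + \tilde G$ via the Neumann series $\tilde G^{-1} \sum_{m \geq 0} (-(d \otimes 1) I\, \tilde G^{-1})^m$ and concludes $x \notin \Sigma_{\nabla, k}(\Lk M)$; the particular case follows from non-emptiness of the spectrum.

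The hardest part will be closing the Neumann estimate uniformly in $x$: the naive $\nor{d} \cdot \nor{\tilde G^{-1}} \leq \nor{d} \cdot \rho^{-n} \nor{\tilde G}$, with $\rho := \min_i r_k(w_i)$, fails when $\nor{\tilde G}$ grows with $\max_i|z_i - T(x)|$, i.e.\ when $x$ is far from the $w_i$. This likely forces either an adapted norm on the new basis (rescaling by suitable powers of a parameter comparable to $\max_i|\tilde z_i|$ to balance $\nor{\tilde G}$ against $|P(T(x))|$), or a dichotomy between an inner regime where the Neumann expansion around $\tilde G$ works and an outer regime where $\nabla - T(x) I$ is instead expanded around the dominant scalar $-T(x) I$, with Lemma~\ref{sec:spectr-vers-youngs-1} applied to a suitable filtration of $(M, \nabla)$ to splice the two regimes together.
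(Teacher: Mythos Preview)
Your skeleton is essentially the paper's proof: same cyclic basis, same translation $T\mapsto T-T(x)$ (the paper realises it as conjugation by an explicit upper-triangular matrix $U\in\cM_n(k\ct_k\Hx)$, which is exactly the change-of-basis matrix between $\{d^j\}$ and $\{(d-T(x))^j\}$; since $U$ has entries in $k\ct_k\Hx$ it commutes with $dI$), same use of Lemma~\ref{sec:spectr-vers-youngs-5}, and the same geometric input via universal points. The one substantive gap is precisely the one you flag in your last paragraph, and the paper closes it by your first suggested fix, executed more cleanly than you indicate.

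Rather than rescale adaptively by something comparable to $\max_i|\tilde z_i|$ (which depends on $x$), the paper rescales \emph{once}, before fixing $x$: assuming $k$ is not trivially valued, pick $\alpha\in k$ with $\nor{\alpha d}<1$ and $|\alpha|\cdot\min_i r_k(w_i)\geq 1$, and replace $(d,\nabla,z_i)$ by $(\alpha d,\alpha\nabla,\alpha z_i)$, working in the rescaled cyclic basis $\{\alpha^j d^j\}$. After this normalisation the translated roots satisfy $|\alpha z_i\otimes 1-1\otimes T(y)|\geq 1$ for every $y$ outside $\pik{\wac}(\{\alpha z_i\})$, so the constant term of the translated companion matrix dominates all the other entries and Lemma~\ref{sec:spectr-vers-youngs-5} gives $\nor{\tilde G^{-1}}=1$ \emph{uniformly in $y$}. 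The Neumann series then converges because $\nor{\alpha d}<1$. The trivially valued case is handled by base-changing to some $k'\in E(k)$ that contains such an $\alpha$ and using $\Sigma_{\nabla\otimes 1,k'}=\pik{k'}^{-1}(\Sigma_{\nabla,k})$. No dichotomy or filtration is needed.
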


  \begin{proof}
     Since $(M,\nabla)\simeq (\DD_\Omega/\DD_\Omega\cdot
    P(d), d)$, there exists a cyclic basis $\{m,\nabla(m),\cdots,
    \nabla^{n-1}(m)\}$ such that:
     \begin{equation}\nabla \left(
     \raisebox{0.5\depth}{%
       \xymatrixcolsep{1ex}%
       \xymatrixrowsep{1ex}%
       \xymatrix{f_0\ar@{.}[ddddd]\\\\
        \\
 \\
\\
    f_{n-1}\\ }%
        }
       \right)=\left(
     \raisebox{0.5\depth}{%
       \xymatrixcolsep{1ex}%
       \xymatrixrowsep{1ex}%
       \xymatrix{df_0\ar@{.}[ddddd]\\\\ 
        \\
 \\
\\
    df_{n-1}\\ }%
        }
       \right)+\left(
     \raisebox{0.5\depth}{%
       \xymatrixcolsep{1ex}%
       \xymatrixrowsep{1ex}%
       \xymatrix{0\ar@{.}[rrr]& & & 0&-a_0\ar@{.}[dddd] \\
        1\ar@{.}[rrrddd]& 0\ar@{.}[rr]\ar@{.}[rrdd]& &0\ar@{.}[dd]& \\
        0\ar@{.}[dd]\ar@{.}[rrdd]& &  &  &\\
        & &  &0& \\
        0\ar@{.}[rr]& &0&1& -a_{n-1}\\ }%
        }
       \right) \left(
     \raisebox{0.5\depth}{%
       \xymatrixcolsep{1ex}%
       \xymatrixrowsep{1ex}%
       \xymatrix{f_0\ar@{.}[ddddd]\\\\
        \\
 \\
\\
    f_{n-1}\\ }%
        }
       \right).\end{equation}
     As first step we will assume that
     $k$ is not trivially valued. Then there exists $\alpha\in k$ such
     that $|\alpha|\cdot \min_i r_k(\pik{\wac}(z_i))\geq 1$ and $\nor{\alpha d}<1$. In
     order to prove the statement, it is enough to show that
     $\Sigma_{\alpha\nabla,k}(\Lk{M})\subset \pik{\wac}(\{\alpha
     z_1,\cdots,\alpha z_n\})$. In
     the basis $\{m,\alpha \nabla(m),\cdots,
     \alpha^{n-1}\nabla^{n-1}(m)\}$ we have:
     \begin{equation}\alpha\nabla \left(
     \raisebox{0.5\depth}{%
       \xymatrixcolsep{1ex}%
       \xymatrixrowsep{1ex}%
       \xymatrix{f_0\ar@{.}[ddddd]\\
         \\
        \\
 \\
\\
    f_{n-1}\\ }%
        }
       \right)=\left(
     \raisebox{0.5\depth}{%
       \xymatrixcolsep{1ex}%
       \xymatrixrowsep{1ex}%
       \xymatrix{\alpha df_0\ar@{.}[ddddd]\\
         \\
        \\
 \\
\\
    \alpha df_{n-1}\\ }%
        }
      \right)+
      \left(
     \raisebox{0.5\depth}{%
       \xymatrixcolsep{1ex}%
       \xymatrixrowsep{1ex}%
       \xymatrix{0\ar@{.}[rrr]& & & 0&-a_0\alpha^n\ar@{.}[dddd] \\
        1\ar@{.}[rrrddd]& 0\ar@{.}[rr]\ar@{.}[rrdd]& &0\ar@{.}[dd]& \\
        0\ar@{.}[dd]\ar@{.}[rrdd]& &  &  &\\
        & &  &0& \\
        0\ar@{.}[rr]& &0&1& -a_{n-1}\alpha\\ }%
        }
       \right)
      \left(
     \raisebox{0.5\depth}{%
       \xymatrixcolsep{1ex}%
       \xymatrixrowsep{1ex}%
       \xymatrix{f_0\ar@{.}[ddddd]\\
         \\
        \\
 \\
\\
    f_{n-1}\\ }%
        }
      \right).\end{equation}
    We set
    \begin{equation}G:= \left(
     \raisebox{0.5\depth}{%
       \xymatrixcolsep{1ex}%
       \xymatrixrowsep{1ex}%
       \xymatrix{0\ar@{.}[rrr]& & & 0&-b_0\ar@{.}[dddd] \\
        1\ar@{.}[rrrddd]& 0\ar@{.}[rr]\ar@{.}[rrdd]& &0\ar@{.}[dd]& \\
        0\ar@{.}[dd]\ar@{.}[rrdd]& &  &  &\\
        & &  &0& \\
        0\ar@{.}[rr]& &0&1& -b_{n-1}\\ }%
        }
      \right),\end{equation}
     with $b_i=a_i\alpha^{n-i}$. We set $w_i:=\alpha z_i$ and
     $\Delta:=\alpha \nabla-G$. Then on the one
     hand, it is easy to see that $\{w_1,\cdots
    w_n\}$ are the eigenvalues of $G$ in $\A{\Omega}$. On the other hand,
    we have $\nor{\Delta}=\nor{\alpha d}<1$ and $\min_i
    r_k(\pik{\wac}(w_i))\geq 1$. By Corollary~\ref{sec:spectr-vers-youngs-3}, $G\ot
     1-1\ot T(y)$ is invertible for
     all $y\in \Ak\setminus \pik{\wac}(\{w_1,\cdots,w_n\})$.
     
     Let $y\in \Ak\setminus \pik{\wac}(\{w_1,\cdots,w_n\})$, now we show that $\alpha \nabla \ot 1- 1\ot T(y)$ is invertible.
     Since we have  $\cM_n(\Omega\ct_k\Hy)\simeq
     \cM_n(\Omega)\ct_k\Hy$
     (cf. Lemme~\ref{sec:spectr-vers-youngs-4}), then we can write
     \begin{equation}G\ot 1-1\ot T(y)=\left(
     \raisebox{0.5\depth}{%
       \xymatrixcolsep{1ex}%
       \xymatrixrowsep{1ex}%
       \xymatrix{-1\ot T(y)\ar@{.}[rrr]& & & 0&-b_0\ot 1\ar@{.}[dddd] \\
        1\ar@{.}[rrrddd]& -1\ot T(y) \ar@{.}[rr]\ar@{.}[rrdd]& &0\ar@{.}[dd]& \\
        0\ar@{.}[dd]\ar@{.}[rrdd]& &  &  &\\
        & &  &-1\ot T(y)& \\
        0\ar@{.}[rr]& &0&1& -b_{n-1}\ot 1-1\ot T(y)\\\ }%
        }
      \right).\end{equation}
Let $Q_y(T):=\sum_i b_i^yT^i+T^n\in \Omega\ct_k\Hy[T]$ such that
$Q_y(T)=P(T+1\ot T(y))$. For 

\begin{equation}U=\left(
     \raisebox{0.5\depth}{%
       \xymatrixcolsep{1ex}%
       \xymatrixrowsep{1ex}%
       \xymatrix{1& 1\ot (-T(y))& 1\ot T(y)^2 \ar@{.}[rr]& &1\ot (-T(y))^{n-1}\\
         0 \ar@{.}[rrrddd]\ar@{.}[ddd]&1 \ar@{.}[rrrddd]&2\ot (-T(y)) \ar@{.}[rr]\ar@{.}[rrdd]  & &\binom{n-1}{1}\ot(-T(y))^{n-2}\ar@{.}[dd]\\
         && & & \\
         & &  &&\binom{n-1}{n-2}\ot(-T(y))\\
         0\ar@{.}[rrr]&&&0&1\\
         \ }%
        }
      \right),\end{equation}
    then, it is easy to see that $U$ is invertible in $\cM_n(\Omega\ct_k\Hy)$ and we have:
    \begin{equation}G_y:=U^{-1}(G\ot 1-1\ot T(y))U=\left(
     \raisebox{0.5\depth}{%
       \xymatrixcolsep{1ex}%
       \xymatrixrowsep{1ex}%
       \xymatrix{0\ar@{.}[rrr]& & & 0&-b_0^y\ar@{.}[dddd] \\
        1\ar@{.}[rrrddd]& 0\ar@{.}[rr]\ar@{.}[rrdd]& &0\ar@{.}[dd]& \\
        0\ar@{.}[dd]\ar@{.}[rrdd]& &  &  &\\
        & &  &0& \\
        0\ar@{.}[rr]& &0&1& -b_{n-1}^y\\ }%
        }
      \right).\end{equation}
    Since $U\in \cM_n(k\ct_k\Hy)$, we have $U^{-1}\Delta
    U=U^{-1}U\Delta=\Delta$. Then $U^{-1}(\alpha\nabla\ot 1-1\ot
    T(y))U=\Delta+G^y$. Consequently, in order to proof that
    $\alpha\nabla\ot 1-1\ot T(y)$ is invertible, since $\nor{\Delta}<1$, it is enough to show
    that
    \begin{equation}1\leq \nor{G_y^{-1}}^{-1}.\end{equation}
Since $k$ is algebraically closed, $\Omega\ct_k\Hy$ is a multiplicative
$k$-algebra (cf. Proposition~\ref{sec:univ-points-fiber-2}). By
Lemma~\ref{sec:spectr-vers-youngs-5}, we have
$\nor{G_y\-1}=|b_0^y|\-1\nor{G_y}$. Now we show that
$\nor{G_y\-1}=1$, i.e. $\nor{G_y}=|b_0|$. 
Since $\Omega\hookrightarrow \wac$ is an isometry, so is
$\Omega\ct_k\Hy\hookrightarrow \wac\ct_k\Hy$
(cf. \cite[Lemme~3.1]{poi}). We have $Q_y(T)=P(T+1\ot
T(y))=\prod_{i=0}^n(T-(w_i\ot 1-1\ot T(y)))$, hence
\begin{equation}b_{n-i}^y=\sum_j \prod_{l=1}^i(w_{j_l}\ot 1-1\ot T(y)).\end{equation}
Therefore, if we show that $|w_i\ot 1-1\ot T(y)|\geq 1$ for all $i$,
then we have automatically

\begin{equation}\nor{G_y}=\max_{1\leq i\leq n}|b_i^y|=|b_0^y|.\end{equation}

Now let us show that $|w_i\ot 1-1\ot T(y)|\geq 1$ for all $i$. To avoid
any confusion, we fix another coordinate function $S$ on $\Ak$. Note that
we have an isometric embedding $\h{\pik{\wac}(w_i)}\hookrightarrow
\wac$, that assigns $w_i$ to $S(\pik{\wac}(w_i))$. By above
argument, we have an isometric embedding of $k$-algebra $\h{\pik{\wac}(w_i)}\ct_k\Hy\hookrightarrow
\wac\ct_k \Hy$, and we have $|w_i\ot 1-1\ot T(y)|=|S(\pik{\wac}(w_i))\ot
1-1\ot T(y)|$.
The natural map $\h{\pik{\wac}(w_i)}\ct_k\Hy\to
\h{\uni{\Hy}(\pik{\wac}(w_i))}$ is an isometry (see
Definition~\ref{sec:univ-points-fiber-4}), mapping $S(\pik{\wac}(w_i))\ot
1-1\ot T(y)$ to $S(\uni{\Hy}(\pik{\wac}(w_i)))-T(y)$. By
Lemma~\ref{sec:berkovich-line}, we have
$r_{\Hy}(\uni{\Hy}(\pik{\wac}(w_i)))=r_k(\pik{\wac}(w_i))$. Consequently,
we obtain 

\begin{equation}1\leq r_k(\pik{\wac}(w_i))\leq |S(\uni{\Hy}(\pik{\wac}(w_i)))-T(y)|=|w_i\ot 1-1\ot T(y)|.\end{equation}
Hence, we obtain $\nor{G_y\-1}=1$ and conclude that $\alpha \nabla\ot 1-1\ot
T(y)$ is invertible for all $y\in \Ak\setminus\{w_1,\cdots, w_n\}$.

Now we assume that $k$ is trivially valued. Let $k'\in E(k)$ be
algebraically closed, such that
there exists $\alpha\in k'$, with $|\alpha|\min_ir_k(z_i)\geq 1$ and
$|\alpha|\nor{d}<1$. We know that $\Sigma_{\nabla\ot 1,k'}(\Lk{M}\ct_k
k')=\pik{k'}\-1(\Sigma_{\nabla,k}(\Lk{M})$
(cf. \cite[Proposition~7.1.6]{Ber}). Therefore, in order to show that
$\Sigma_{\nabla,k}(\Lk{M})\subset\pik{\wac}(\{z_1,\cdots, z_n\})$, it
is enough to show that $\Sigma_{\nabla\ot 1,k'}(\Lk{M}\ct_k
k')\subset \pik{k'}\-1(\pik{\wac}(\{z_1,\cdots, z_n\}))$. For that we
will consider $\nabla\ot \alpha$ and show, as for
the non trivial case, that $\Sigma_{\nabla\ot \alpha,k'}(\Lk{M}\ct_k
k')\subset \alpha\pik{k'}\-1(\pik{\wac}(\{z_1,\cdots, z_n\}))$.

If in addition we assume that  $\pik{\wac}(\{z_1,\cdots,z_n\})=\{z\}$,
then since the spectrum is not empty \cite[Theorem~7.1.2]{Ber} we must
have $\Sigma_{\nabla,k}(\Lk{M})=\{z\}$.
\end{proof}

\begin{rem}\label{sec:spectr-vers-youngs-11}
Set the notations as in Theorem~\ref{sec:spectr-vers-youngs-6}. If $P(T)$ is irreducible as a commutative polynomial,
then it is easy to see that $|z_1-a|=\cdots=|z_n-a|$ for all $a\in
k$. Hence $\Sigma_{\nabla,k}(\Lk{M})=\pik{\wac}(\{z_1,\cdots, z_n\}=\{z\}$.   
\end{rem}

We need the
following results to prove that the same result (as in Remark~\ref{sec:spectr-vers-youngs-11}) hold for a monic irreducible
differential polynomial $P(d)$.

\begin{Lem}[{\cite[Theorem~6.4.4]{Ked}}]\label{sec:spectr-vers-youngs}
  Let $\Omega\in E(k)$ and $d:\Omega\to \Omega$ be a $k$-linear bounded
  derivation. Let $P(d)\in \DD_\Omega$ be a monic differential
  polynomial. Let $\{z_1,\cdots, z_n\}$ be the
  multiset\footnote{Counted with multiplicity.} of roots of $P(T)$
  (the commutative polynomial associated to $P(d)$), with $|z_1|\leq \cdots \leq|z_n|$. Let $r>\R+^*$. If $\nor{d}<r$ and for some
  $i_0$ we have $|z_{i_0}|<r<|z_{i_0+1}|$, then there exists a unique factorization $P(d)=Q(d)R(d)$ such that $\{\omega_1,\cdots,\omega_{i_0}\}$
  (resp. $\{\omega_{i_0+1},\cdots,\omega_{n}\}$) is the multiset of roots of
  $R(T)$ (resp. $Q(T)$) with $|\omega_i|<r$
  (resp. 
  $|\omega_i|=|z_i|$) for $i\leq i_0$ (resp. $i>i_0$), where $R(T)$
  (resp. $Q(T)$) is the commutative polynomial associated to $R(d)$
  (resp. $Q(d)$). If moreover we
  have $|z_i|>\nor{d}$ for each $i$ then $|\omega_i|=|z_i|$ for each $i$.
\end{Lem}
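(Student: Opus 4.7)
The plan is to prove this as a non-commutative analogue of Hensel's lemma in the twisted polynomial ring $\DD_\Omega$, bootstrapping from the corresponding factorization in the commutative polynomial ring $\Omega[T]$.

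First, I would produce the commutative counterpart $P(T) = Q_0(T) R_0(T)$ in $\Omega[T]$. Since the roots $z_1, \ldots, z_{i_0}$ have absolute value strictly less than $r$ and $z_{i_0+1}, \ldots, z_n$ strictly greater, the Newton polygon of $P$ has a break between abscissae $i_0$ and $i_0 + 1$, so the standard Hensel-type splitting gives such a factorization with $R_0$ having roots exactly $z_1, \ldots, z_{i_0}$ and $Q_0$ having roots $z_{i_0+1}, \ldots, z_n$. Symmetric-function bounds on the roots control the coefficients, in particular the norms of $Q_0$ and $R_0$ with respect to the natural Gauss norms at radius $r$.

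Second, I would view $Q_0(d), R_0(d)$ as elements of $\DD_\Omega$ via the same coefficients. The product $Q_0(d) R_0(d)$ differs from $P(d)$ by an error $E_0 \in \DD_\Omega$ produced by the commutation relation $d \cdot f = f \cdot d + d(f)$: every time a $d$ is moved past a scalar, one picks up a factor bounded by $\nor{d}$. Combined with the coefficient estimates from the first step, this yields $\nor{E_0} \leq (\nor{d}/r)\cdot C$ for some controlled constant $C$, so the defect is small because $\nor{d} < r$.

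Third, I would set up a fixed-point iteration: look for corrections $A, B \in \DD_\Omega$ with $\deg A < n-i_0$ and $\deg B < i_0$ satisfying
\begin{equation}
P(d) = (Q_0(d) + A)(R_0(d) + B).
\end{equation}
Linearising gives the Sylvester-type equation $Q_0(d) B + A R_0(d) = -E_0 + (\text{higher order in } A, B)$. Because $Q_0$ and $R_0$ have commutative resultant of large norm (the roots lie in disjoint annuli separated by $r$), the linearised operator is invertible with norm bounded by a power of the separation gap; this allows to run a contraction argument in which each step shrinks the error by a factor at most $\nor{d}/r < 1$. The iteration converges to the desired $Q(d), R(d)$, and uniqueness follows because any two such factorizations differ by an element of the kernel of the same Sylvester operator, which is trivial.

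Finally, I would read off the roots of $Q(T)$ and $R(T)$ from the limiting factors. The coefficient estimates ensure that the Newton polygon of $R(T)$ has all slopes below $\log r$, so $|\omega_i| < r$ for $i \leq i_0$, while $Q(T)$ inherits the slopes of the top part of the polygon of $P$, giving $|\omega_i| = |z_i|$ for $i > i_0$. Under the stronger assumption $|z_i| > \nor{d}$ for all $i$, iterating the splitting at each break of the Newton polygon pins down $|\omega_i| = |z_i|$ throughout. The main obstacle is the careful bookkeeping in the non-commutative iteration: one needs sharp enough estimates on the commutator error, on the inverse of the Sylvester-type operator, and on how the Newton polygon is preserved through the limit, in order to prove both convergence and the sharp root-location statements.
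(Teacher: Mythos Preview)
The paper does not supply its own proof of this lemma; it is quoted verbatim from Kedlaya's book as \cite[Theorem~6.4.4]{Ked} and used as a black box. So there is nothing in the paper to compare against. Your outline is in fact the strategy of the cited reference: begin with the commutative Newton-polygon factorization $P(T)=Q_0(T)R_0(T)$, regard $Q_0(d)R_0(d)$ as an approximate factorization in $\DD_\Omega$ whose defect is governed by the commutation rule $d\cdot f=f\cdot d+d(f)$ (hence by $\nor{d}/r<1$), and correct it by a convergent iteration.

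One point in your sketch deserves tightening. In step three you appeal to the invertibility of a ``Sylvester-type operator'' via the commutative resultant of $Q_0$ and $R_0$. In the twisted ring $\DD_\Omega$ this is not the right mechanism: the resultant does not directly control the non-commutative linear problem $Q_0(d)B+AR_0(d)=-E$. What makes the iteration contract in Kedlaya's argument is rather that $Q_0$ is monic with all roots of absolute value $>r$, so Euclidean division on the right by $Q_0(d)$ in $\DD_\Omega$ is well behaved with respect to the $r$-Gauss norm; combined with the bound on the defect this yields a genuine contraction with ratio comparable to $\nor{d}/r$. If you replace the resultant heuristic by this division-with-remainder estimate, the rest of your plan (including the final Newton-polygon reading of $|\omega_i|$ and the refinement when every $|z_i|>\nor{d}$) goes through.
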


\begin{Lem}
  Let $\Omega\in E(k)$ and $d:\Omega\to \Omega$ be a $k$-linear bounded
  derivation. Let $P(d)\in \DD_\Omega$ be a monic irreducible differential
  polynomial. Let $\{z_1,\cdots, z_n\}$ be the multiset of roots of $P(T)$ (the
  commutative polynomial associated to $P(d)$). If  $\Min_i r_k(\pik{\wac}(z_i))>\nor{d}$,
  then $\pik{\wac}(z_1,\cdots, z_n)=\{z\}$.
\end{Lem}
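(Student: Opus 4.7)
The plan is to argue by contradiction. Suppose the projections $\pi_{k/\widehat{\Omega^{alg}}}(z_1),\dots,\pi_{k/\widehat{\Omega^{alg}}}(z_n)$ are not all equal. Then there are two indices $i,j$ and a polynomial $Q\in k[T]$ with $|Q(z_i)|\ne|Q(z_j)|$. Since $k$ is algebraically closed, $Q$ splits into linear factors over $k$, so at least one of these factors witnesses the inequality: there exists $a\in k$ with $|z_i-a|\ne|z_j-a|$. In particular, the values $|z_1-a|,\dots,|z_n-a|$ are not all equal.

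The next step is to transport the situation by a translation of $d$. Because $a\in k$ is central and $d$ is $k$-linear, the element $d+a\in\DD_{\Omega,d}$ satisfies $[d+a,f]=d(f)=[d,f]$ for every $f\in\Omega$, so the $\Omega$-algebra endomorphism $\psi_a:\DD_{\Omega,d}\to\DD_{\Omega,d}$ fixing $\Omega$ and sending $d\mapsto d+a$ is well defined, with two-sided inverse $\psi_{-a}$. In particular $P(d)$ is irreducible in $\DD_{\Omega,d}$ if and only if $\psi_a(P(d))=P(d+a)$ is. The commutative polynomial attached to $P(d+a)$ is $P(T+a)\in\Omega[T]$, whose multiset of roots is $\{w_l:=z_l-a\}_{l=1}^n$.

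I would now apply Lemma~\ref{sec:spectr-vers-youngs} to $P(d+a)$. Sort the $w_l$ so that $|w_1|\le\cdots\le|w_n|$; by the choice of $a$ these values are not all equal, so there is an index $i_0$ with $|w_{i_0}|<|w_{i_0+1}|$. Since the radius function $r_k$ is translation invariant under elements of $k$, we have $r_k(\pi_{k/\widehat{\Omega^{alg}}}(w_l))=\inf_{c\in k}|z_l-a-c|=r_k(\pi_{k/\widehat{\Omega^{alg}}}(z_l))>\nor{d}$, and therefore
\begin{equation}
|w_l|\ \ge\ r_k(\pi_{k/\widehat{\Omega^{alg}}}(w_l))\ >\ \nor{d}\qquad\text{for every }l.
\end{equation}
Any $r\in(|w_{i_0}|,|w_{i_0+1}|)$ then automatically satisfies $r>\nor{d}$, so Lemma~\ref{sec:spectr-vers-youngs} produces a nontrivial factorization $P(d+a)=Q(d)R(d)$ in $\DD_{\Omega,d}$. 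Applying $\psi_{-a}$ yields a nontrivial factorization $P(d)=\psi_{-a}(Q(d))\,\psi_{-a}(R(d))$, contradicting the irreducibility of $P(d)$.

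The main point that needs to be checked carefully is that $\psi_a$ really is a $k$-algebra automorphism of $\DD_{\Omega,d}$ and that the commutative polynomial associated with $P(d+a)$ is indeed $P(T+a)$; both follow from the centrality of $a\in k$ and a direct expansion of $(d+a)^i=\sum_j\binom{i}{j}d^j a^{i-j}$, but they are precisely what allows one to reduce the irreducibility obstruction to the classical factorization lemma after shifting the roots to have distinct absolute values.
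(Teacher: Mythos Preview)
Your argument is correct and follows essentially the same route as the paper's proof: both observe that the translation $d\mapsto d+a$ preserves irreducibility, that the commutative roots of $P(d+a)$ are the $z_i-a$, and that the hypothesis $\min_i r_k(\pik{\wac}(z_i))>\nor{d}$ forces $|z_i-a|>\nor{d}$ for every $a\in k$, so Lemma~\ref{sec:spectr-vers-youngs} would yield a nontrivial factorization unless all $|z_i-a|$ coincide; since $k$ is algebraically closed this gives equality of the projections. Your version is organized as a contradiction (pick a single $a$ separating two projections) rather than the paper's direct statement (for all $a$ the norms coincide), and you spell out the automorphism $\psi_a$ more carefully, but the substance is identical.
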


\begin{proof}
  Since $P(d)$ is irreducible, than so is for $P_a(d)=P(d+a)$ for all
  $a\in k$. Let $a\in k$, we have $P_a(T)=P(T+a)$, hence the roots of $P_a$ as a
  commutative polynomial are exactly $\{z_1-a,\cdots,z_n-a\}$.
  Since
  \begin{equation}\forall i,\quad |z_i-a|\geq\Min_i r_k(\pik{\wac}(z_i))>\nor{d},\end{equation}
  then by Lemme~\ref{sec:spectr-vers-youngs} we have
  \begin{equation}|z_1-a|=\cdots=|z_n-a|.\end{equation}
  Consequently, since $k$ is algebraically closed, we have
  \begin{equation}\pik{\wac}(z_1)=\cdots=\pik{\wac}(z_n)=\{z\}\end{equation}
  for some $z\in \Ak\setminus k$.
\end{proof}

\begin{cor}
  Let $\Omega\in E(k)$ and $d:\Omega\to \Omega$ be a $k$-linear bounded
    derivation. Let $(M,\nabla)$ be a differential module over
    $(\Omega,d)$ with $(M,\nabla)\simeq (\DD_\Omega/\DD_\Omega\cdot
    P(d), d)$ and $P(d)=\sum_ia_i d^i+d^n$. We assume that $P(d)$ is
    an irreducible differential polynomial. Let $\{z_1,\cdots,
    z_n\}\subset \Omega^{alg}$ be the multiset
    of roots of $P(T)$ (the commutative polynomial associated to $P(T)$). If $ \min_i
    r_k(\pik{\wac}(z_i))>\nor{d}$, then
    \begin{equation}\Sigma_{\nabla,k}(\Lk{M})=\pik{\wac}(\{z_1,\cdots,z_n\})=\{z\}.\end{equation} 
  \end{cor}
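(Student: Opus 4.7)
The plan is to combine the two immediately preceding results, both of which have already done the genuine work. First, by the preceding lemma, the irreducibility of $P(d)$ \emph{as a differential polynomial} together with the hypothesis $\min_i r_k(\pik{\wac}(z_i))>\nor{d}$ forces the projections $\pik{\wac}(z_1),\dots,\pik{\wac}(z_n)$ all to coincide at a single point $z\in\Ak\setminus k$. This is where differential (rather than merely commutative) irreducibility is essential: one applies the lemma to $P_a(d):=P(d+a)$ for every $a\in k$, using that differential irreducibility is preserved under such translations, and invokes the factorization lemma (Lemma~\ref{sec:spectr-vers-youngs}) to deduce $|z_i-a|=|z_j-a|$ for all $i,j$ and all $a\in k$; algebraic closedness of $k$ then forces the common projection.

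Once this collapse is established, I would invoke the ``in particular'' clause of Theorem~\ref{sec:spectr-vers-youngs-6}: since the hypothesis $\min_i r_k(\pik{\wac}(z_i))>\nor{d}$ is in force and $\pik{\wac}(\{z_1,\dots,z_n\})=\{z\}$ is a single point, that theorem concludes directly that $\Sigma_{\nabla,k}(\Lk{M})=\{z\}$, which is exactly the statement of the corollary. The non-emptiness of the Berkovich spectrum (cf.~\cite[Theorem~7.1.2]{Ber}) that promotes the inclusion to an equality has already been absorbed into the statement of Theorem~\ref{sec:spectr-vers-youngs-6}, so no further argument is required.

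The proof is therefore a one-line concatenation of the two preceding statements with no new analytic input. There is no genuine obstacle: the real technical content lies in the proof of Theorem~\ref{sec:spectr-vers-youngs-6} (via the companion-matrix estimate of Lemma~\ref{sec:spectr-vers-youngs-5} and the eigenvalue-spectrum correspondence of Corollary~\ref{sec:spectr-vers-youngs-3}, together with the reduction to the multiplicative setting through $\Hy$-scalar extension) and in Kedlaya's factorization lemma (Lemma~\ref{sec:spectr-vers-youngs}) used to bridge the gap between commutative and differential irreducibility. The role of the corollary is simply to package these ingredients into the clean statement that for an irreducible differential operator with ``large enough'' roots, the spectrum is a single point of $\Ak$.
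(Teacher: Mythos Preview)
Your proposal is correct and matches the paper's approach exactly: the corollary is stated immediately after the lemma showing that differential irreducibility plus the radius hypothesis forces $\pik{\wac}(\{z_1,\dots,z_n\})=\{z\}$, and the paper leaves the proof implicit because it is precisely the concatenation you describe with the ``in particular'' clause of Theorem~\ref{sec:spectr-vers-youngs-6}. Your explanation of why differential irreducibility (rather than commutative) is what is needed, and of where the real work lies, is accurate.
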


  The first spectral version of Young's theorem can be
  refined as follows. Before, we need the following
  results. 
  \begin{Lem}\label{sec:spectr-vers-youngs-8}
    Let $\{\omega_1,\cdots,\omega_n\}\subset\Ak$ and assume that
    $r_k(\omega_1)\leq\cdots\leq r_k(\omega_n)$. Then for
    each $\omega_i$ there exists an $a_i\in k$ such that for all $j>i$ we have $|T(\omega_j)-a_i|>|T(\omega_i)-a_i|$. 
  \end{Lem}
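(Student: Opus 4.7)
The plan is to use the tree structure of $\Ak$. For $\omega,\omega'\in\Ak$, write $\omega\wedge\omega'$ for their join in the tree rooted at the Gauss point at infinity, i.e., the point of $\Ak$ corresponding to the smallest closed disk containing both. Recall the basic identity $|T(\omega)-a|=r_k(a\wedge\omega)$ for $a\in k$ and $\omega\in\Ak$, which in particular gives $|T(\omega)-a|\geq r_k(\omega)$ with equality approximated as $a$ varies in $k$ (by the very definition of $r_k(\omega)$). Throughout I assume the $\omega_l$ are pairwise distinct, as the statement manifestly fails otherwise.

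For each $j>i$, set $\omega_{ij}:=\omega_i\wedge\omega_j$ and note that $r_k(\omega_{ij})\geq\max(r_k(\omega_i),r_k(\omega_j))=r_k(\omega_j)\geq r_k(\omega_i)$. The first key step is to upgrade the outer inequality to a strict one: $r_k(\omega_{ij})>r_k(\omega_i)$. Indeed, if $r_k(\omega_{ij})=r_k(\omega_i)$ then $\omega_{ij}=\omega_i$ (both lie on the same vertical branch to infinity and share the same radius), whence $\omega_j$ lies below $\omega_i$ in the tree, forcing $r_k(\omega_j)\leq r_k(\omega_i)$ and then $\omega_i=\omega_j$, contradicting distinctness.

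Set $R:=\min_{j>i}r_k(\omega_{ij})>r_k(\omega_i)$, and use the infimum characterization of $r_k(\omega_i)$ to pick $a_i\in k$ with $|T(\omega_i)-a_i|<R$. For each $j>i$ we then have $r_k(a_i\wedge\omega_i)=|T(\omega_i)-a_i|<R\leq r_k(\omega_{ij})$, so $a_i\wedge\omega_i$ lies strictly below $\omega_{ij}$ in the tree; a standard tripod argument for the three points $\{a_i,\omega_i,\omega_j\}$ (whose spanning subtree is a Y with central vertex $\omega_{ij}$) then forces $a_i\wedge\omega_j=\omega_{ij}$. Therefore $|T(\omega_j)-a_i|=r_k(\omega_{ij})\geq R>|T(\omega_i)-a_i|$, as required.

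The main subtlety lies in the choice of $a_i$ when $\omega_i$ is of type~(4): in that case the infimum defining $r_k(\omega_i)$ is never attained by an element of $k$, so one cannot demand $|T(\omega_i)-a_i|=r_k(\omega_i)$. It is precisely the strict inequality $R>r_k(\omega_i)$ proved in the first step that leaves enough room to select such an $a_i$ approximating $r_k(\omega_i)$ from above while staying strictly below $R$.
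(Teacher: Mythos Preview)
Your proof is correct. The paper's argument has the same geometric content but proceeds by a case split on whether $\omega_i$ is of type~(4): when it is not, one takes $a_i$ achieving $|T(\omega_i)-a_i|=r_k(\omega_i)$ exactly and then argues directly that $|T(\omega_j)-a_i|>r_k(\omega_i)$ for $j>i$; when $\omega_i$ is of type~(4), one instead chooses $a_i$ and a small $\epsilon>0$ so that $\omega_i\in\disf{a_i}{r_k(\omega_i)+\epsilon}$ while $\omega_j\notin\disf{a_i}{r_k(\omega_i)+\epsilon}$ for $j\ne i$. Your introduction of the quantity $R=\min_{j>i}r_k(\omega_i\wedge\omega_j)$ and the strict inequality $R>r_k(\omega_i)$ unifies both cases into a single argument and makes the existence of a suitable $a_i$ in the type~(4) case completely explicit, something the paper asserts without further justification. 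The tripod step you invoke is exactly what underlies the paper's claim that $\omega_j\notin\disf{a_i}{r_k(\omega_i)+\epsilon}$ forces $|T(\omega_j)-a_i|>r_k(\omega_i)+\epsilon$.
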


  \begin{proof}
    If $\omega_i$ is not a point of type (4), then there
    exists $a_i$ such that $r_k(\omega_i)=|T(\omega_i)-a_i|$, and if
    $j>i$, then we have $r_k(\omega_i)\leq r(\omega_j)\leq
    |T(\omega_j)-a_i|$. If $r_k(\omega_i)< r_k(\omega_j)$ then it is
    clear that  $|T(\omega_j)-a_i|>|T(\omega_i)-a_i|$. If
    $r_k(\omega_i)=r_k(\omega_j)$, then we must have
    $r_k(\omega_j)<|T(\omega_j)-a_i|$, otherwise we get
    $\omega_i=\omega_j$ which contradicts the hypothesis. Hence we
    have $|T(\omega_j)-a_i|>|T(\omega_i)-a_i|$.
    Now if $\omega_i$ is a point of type (4), then we choose $a_i$ such
    that $\omega_i\in\disf{a_i}{r_k(\omega_i)+\epsilon}$ with
    $\epsilon>0$ and for all $j\ne i$ we have $\omega_j\notin
    \disf{a_i}{r_k(\omega_i)+\epsilon}$. Then we have
    \begin{equation}|T(\omega_i)-a_i|\leq r_k(\omega_i)+\epsilon <
      |T(\omega_j)-a_i|,\end{equation}
    which prove the statement. 
  \end{proof}

  \begin{Theo}\label{sec:spectr-vers-youngs-7}
  Let $\Omega\in E(k)$ and $d:\Omega\to \Omega$ be a $k$-linear bounded
  derivation. Let $P(d)\in \DD_\Omega$ be a monic differential
  polynomial. Let $\{z_1,\cdots, z_n\}$ be the multiset of the roots
  of $P(T)$ (the commutative polynomial associated to $P(d)$). Assume that $\Min_ir_k(\pik{\wac}(z_i))>\nor{d}$. Let $\{\omega_1,\cdots,\omega_\nu\}:=\pik{\wac}(\{z_1,\cdots,
  z_n\})$ with $\omega_i\ne \omega_j$ if $i\ne j$ and 
    $r_k(\omega_1)\leq\cdots\leq r_{k}(\omega_\nu)$. Then for each
  $\omega_i$ there exists a commutative monic polynomial $P_{\omega_i}(T)$ such that the
  projection of its roots in $\Ak$ are equal to $\omega_i$, and we
  have $P(d)=P_{\omega_{\nu}}(d)\cdots P_{\omega_{1}}(d)$, where
  $P_{\omega_i}(d)$ are the differential polynomials associated to $P_{\omega_i}(T)$.
  \end{Theo}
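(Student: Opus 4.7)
I argue by induction on $\nu$, the number of distinct projections. The base case $\nu=1$ is trivial: take $P_{\omega_1}(T):=P(T)$. For the inductive step, my strategy is to peel off the rightmost factor $P_{\omega_1}(d)$ via a translated form of Young's factorization (Lemma~\ref{sec:spectr-vers-youngs}), then apply the induction hypothesis to the remaining left factor.

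Concretely, apply Lemma~\ref{sec:spectr-vers-youngs-8} at $\omega_1$ to obtain $a_1\in k$ such that $|T(\omega_j)-a_1|>|T(\omega_1)-a_1|=:\rho$ for every $j>1$. Since $|z_i-a_1|=|T(\pik{\wac}(z_i))-a_1|$, the roots of $P(T)$ split sharply by $|z_i-a_1|$: those $z_i$ projecting to $\omega_1$ all yield $\rho$, while the others yield values strictly greater than $\rho$. The hypothesis $\min_i r_k(\pik{\wac}(z_i))>\nor{d}$ further guarantees $\nor{d}<\rho$ and $|z_i-a_1|>\nor{d}$ for every $i$. Consider the translated commutative polynomial $\tilde P(T):=P(T+a_1)$, whose roots are $\{z_i-a_1\}$; in the Ore ring $\DD_\Omega=\Omega[\partial;d]$ we have $P(d)=\tilde P(d-a_1)$. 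This Ore ring is unchanged if one uses $\partial=d-a_1$ in place of $\partial=d$ as generator, since $(d-a_1)f=d(f)+f(d-a_1)$ yields the same commutation rule ($a_1\in k$ being central). Applying Lemma~\ref{sec:spectr-vers-youngs} in this shifted presentation --- legitimate because its proof depends only on the Ore-ring structure of $\DD_\Omega$ and on the norm $\nor{d}$, both invariant under the generator shift --- produces a factorization $P(d)=Q(d)R(d)$ in $\DD_\Omega$ with $R(T),Q(T)\in\Omega[T]$ monic, whose roots $\beta\in\Omega^{alg}$ satisfy respectively $|\beta-a_1|=\rho$ (for $R$) and $|\beta-a_1|>\rho$ (for $Q$). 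Setting $P_{\omega_1}:=R$, the factor $Q(d)$ is a monic differential polynomial of degree $n-\deg R$ whose commutative-polynomial roots project to $\{\omega_2,\ldots,\omega_\nu\}$ and inherit the radii hypothesis, so the induction hypothesis produces $Q(d)=P_{\omega_\nu}(d)\cdots P_{\omega_2}(d)$, completing the desired factorization.

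The delicate technical point I foresee is verifying that the roots of $R$ genuinely project to $\omega_1\in\Ak$ (as the theorem's conclusion requires), rather than merely lying at distance $\rho$ from $a_1$ --- a condition which confines them only to a disk of $\Ak$ containing $\omega_1$ but possibly other points too. To bridge this gap I would factor $R(d)$ further into irreducible differential polynomials in $\DD_\Omega$ and invoke the Corollary preceding the theorem: each irreducible factor of $R(d)$ has all of its commutative-polynomial roots projecting to a single point of $\Ak$ at distance $\rho$ from $a_1$. Iterating the shifted Young construction with auxiliary translates $a'\in k$ supplied by further applications of Lemma~\ref{sec:spectr-vers-youngs-8} should rule out any projection other than $\omega_1$ and thereby pin down every irreducible sub-factor of $R$ to have roots projecting to $\omega_1$. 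This iterative refinement is the principal obstacle of the proof.
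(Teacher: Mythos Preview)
Your inductive scheme---peel off a right factor at $\omega_1$ via a translated application of Lemma~\ref{sec:spectr-vers-youngs}, then recurse on the left factor---is precisely the paper's strategy, and you have correctly located the main difficulty. But two claims you make in passing are not justified and constitute the real content of the proof. First, after writing $P(d)=Q(d)R(d)$ you assert that the roots of $Q(T)$ project into $\{\omega_2,\ldots,\omega_\nu\}$ and ``inherit the radii hypothesis'' $\min_i r_k(\pi(\cdot))>\nor{d}$. Young's factorization only tells you $|\beta-a_1|$ for the \emph{new} roots $\beta$; it says nothing about $|\beta-a|$ for other $a\in k$, and the new roots are not the original $z_i$. (The radii bound can in fact be salvaged via Lemma~\ref{sec:bf-diff-polyn} and a subquotient argument, but this must be argued.) Without it you cannot invoke the induction hypothesis on $Q$, nor the Corollary on irreducible pieces of $R$.

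Second, your proposed mechanism for pinning the roots of $R$ to $\omega_1$---factor $R(d)$ into irreducibles, apply the Corollary, then iterate further shifted Young factorizations---is circular: each refinement produces yet more polynomials whose new roots again need the radii hypothesis and whose projections again need identifying, with no independent tool to close the loop. The paper breaks this circularity with spectral input you do not use: Theorem~\ref{sec:spectr-vers-youngs-6} (weak spectral Young) combined with Lemma~\ref{sec:spectr-vers-youngs-1} applied to the exact sequence $0\to\DD_\Omega/\DD_\Omega Q\to\DD_\Omega/\DD_\Omega P\to\DD_\Omega/\DD_\Omega R\to 0$ forces $\Sigma_R\cup\Sigma_Q=\Sigma_P\subset\{\omega_1,\ldots,\omega_\nu\}$. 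Since every point of $\Sigma_R$ lies at distance $\rho$ from $a_1$ and $\omega_1$ is the unique $\omega_j$ with that property, one gets $\Sigma_R=\{\omega_1\}$ directly. Only then does a single further application of Young to $R$ (at a suitable $a$) give a contradiction if some root of $R$ projected elsewhere, because the resulting two factors would have disjoint nonempty spectra whose union is the singleton $\{\omega_1\}$. This spectral bookkeeping is the missing ingredient; once you have it, the induction on $Q$ proceeds with the same device, tracking degrees against the original multiset of $z_i$ rather than asserting the projections outright.
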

  \begin{proof}
    For each differential polynomial $Q(d)$, we denote by $\Sigma_Q$
    the spectrum of the differential module
    $(\DD_\Omega/\DD_\Omega.Q(d),d)$. Let $S$ be the multiset of roots of $P(T)$,
    in particular we have $\car(S)=\Deg(P)$, and let
    $S(\omega_i):=\{z_{i_1},\cdots,z_{i_{n_i}}\}$ be the multiset of
    all the elements of $S$
    such that $\pik{\wac}(S(\omega_i))=\{\omega_i\}$ . 
    
We will prove by induction that there exist $Q_i\in \Omega[T]$ where
$S_i$ is the multiset
    of its roots, and $P_{\omega_j}\in\Omega[T]$ where $S_{\omega_j}
    $ is the multiset of its roots for each $j\in \{1,\cdots, i\}$,  such that $P(d)=Q_i(d).P_{\omega_i}(d)\cdots
    P_{\omega_1}(d)$, $\Sigma_{Q_i}\subset \{\omega_{i+1},\cdots,
    \omega_\nu\}$, $\pik{\wac}(S_{\omega_j})=\{\omega_j\}$ and
    $\Deg(P_{\omega_j})=\car(S_{\omega_j})=\car(S(\omega_j))$ for each
    $j\in \{1,\cdots,i\}$. First of all, for each $\omega_i$ we will associate an
    $a_i\in k$ as in Lemma~\ref{sec:spectr-vers-youngs-8}, and we set $r_i=|T(\omega_i)-a_i|$.

    Now we prove the induction hypothesis for $i=1$. For each $z_l\in S$ with
    $\pik{\wac}(z_l)\ne \omega_1$, we have $|z_l-a_1|>r_1$. Since $r_1>\nor{d}$, then by
    Lemma~\ref{sec:spectr-vers-youngs} there exists a unique $Q,R\in
    \Omega[T]$ such that $P(d+a_1)=Q(d).R(d)$ and
    $\{\alpha_1,\cdots,\alpha_{n_1}\}$
    (resp. $\{\alpha_{n_1+1},\cdots, \alpha_n\}$) the multiset of
    roots of $R(T)$ (resp. $Q(T)$) with $|\alpha_l|=r_1$ for all
    $l\leq n_1$ and $|\alpha_l|>r_1$ for all $l>n_1$. We set
    $Q_1(T)=Q(T-a_1)$ and $P_{\omega_1}(T)=R(T-a_1)$ and $S_1$
    (resp. $S_{\omega_1}$) the multiset of roots of $Q_1(T)$ (resp. $P_{\omega_1}(T)$). We clearly have
    $\car(S_{\omega_1})=\car(S(\omega_1))$, it remains to prove that
    $\pik{\wac}(S_{\omega_1})=\{\omega_1\}$ and $\Sigma_{Q_1}\subset \{\omega_2,\cdots,\omega_\nu\}$. It is clear that
$\pik{\wac}(S_1)\cap \pik{\wac}(S_{\omega_1})=\emptyset$. Moreover if $z$ is a root of $Q_1(T)$
    or $P_{\omega_1}(T)$ then we must have $r_k(z)>\nor{d}$. On
    one hand we have the short exact sequence
    \begin{equation}0\to (\DD_\Omega/\DD_\Omega.Q_1,d)\to (\DD_\Omega/\DD_\Omega.P(d),d)\to (\DD_\Omega/\DD_\Omega.P_{\omega_1}(d),d)\to 0.\end{equation}
    On the other hand, by Theorem~\ref{sec:spectr-vers-youngs-6} we have
    $\Sigma_{Q_1}\subset \pik{\wac}(S_1)$,
    $\Sigma_{P_{\omega_1}}\subset \pik{\wac}(S_{\omega_1})$ and
    $\Sigma_P\subset \pik{\wac}(S)$. Since $\pik{\wac}(S_1)\cap
    \pik{\wac}(S_{\omega_1})=\emptyset$, we must have $\Sigma_{Q_1}\cap
    \Sigma_{P_{\omega_1}}=\emptyset$. By Lemma~\ref{sec:spectr-vers-youngs-1}, we
    have $\Sigma_{Q_1}\cup\Sigma_{P_{\omega_1}}=\Sigma_P$. Hence
    $\Sigma_{Q_1},\,\Sigma_{P_{\omega_1}} \subset \{\omega_1,\cdots,\omega_\nu\}$. Since
    $\omega_1$ is the only point such that $|T(\omega_1)-a_1|=r_1$,
    then we must have $\Sigma_{P_{\omega_1}}=\{\omega_1\}$ and $\Sigma_{Q_1}\subset\{\omega_2,\cdots,\omega_\nu\}$. If we
    suppose that there exists $\omega\in
    \pik{\wac}(S_{\omega_1})\setminus\{\omega_1\}$, then there exists $a\in k$ such
    that $|T(\omega)-a|<|T(\omega)-a_1|=r_1$. In particular we have
    $|a-a_1|=r_1$. This means that there exists a root $z$ of $P(T)$
    such that $\pik{\wac}(z)\ne \omega_1$ and $|z-a|< r_1$, on the other
    hand we have
    \begin{equation}|z-a|=|z-a_1+a_1-a|=|z-a_1|>r_1,\end{equation}
    which is
    impossible. Consequently, we have
    $\pik{\wac}(S_{\omega_1})=\{\omega_1\}$.

    Assume now that we have $P(d)=Q_i(d)\cdot P_{\omega_i}(d)\cdots P_{\omega_1}(d)$
    that satisfies the induction hypothesis for $i<\nu$. Let $S_{r_{i+1}}$
    be the multiset of all the roots $z$ of $P(T)$ such that
    $|z-a_{i+1}|=r_{i+1}$. There exist
    $\car(S_{r_{i+1}})-\car(S(\omega_{i+1}))$ roots $z\in S_{r_{i+1}}$
    such that $\pik{\wac}(z)=\omega_j$ with $j\leq i$, i.e. $|z-a_j|=r(z)=r_j$. This means
    that if $R(d)=P_{\omega_i}(d)\cdots P_{\omega_1}(d)$, then $R(T)$ admits
    exactly  $\car(S_{r_{i+1}})-\car(S(\omega_{i+1}))$ roots $z$
    counted with multiplicity such that $|z-a_{i+1}|=r_{i+1}$. Therefore $Q_i$ admits exactly $\car(S(\omega_{i+1}))$ roots $z$ counted
    with multiplicity such that $|z-a_{i+1}|=r_{i+1}$. On the other hand
    if $z\in S\setminus (S_{r_{i+1}}\cup \bigcup_{j\leq
      i}S(\omega_j))$, than we have $|z-a_{i+1}|> r_{i+1}$. Hence, if
    $z\in S_i$ than $|z-a_{i+1}|\geq r_{i+1}$.  By
    Lemma~\ref{sec:spectr-vers-youngs}, there exists $Q_{i+1}$
    (resp. $P_{\omega_{i+1}}$) with $S_{i+1}$ (resp. $S_{\omega_{i+1}}$) the multiset of its roots, such that $Q_i(d)=Q_{i+1}(d)\cdot
    P_{\omega_{i+1}}(d)$, $\forall \alpha\in S_{\omega_{i+1}}$ we have
    $|\alpha-a_{i+1}|=r_{i+1}$, $\forall \alpha\in S_{i+1}$ we have
    $|\alpha-a_{i+1}|>r_{i+1}$ and
    $\car(S_{\omega_{i+1}})=\car(S(\omega_{i+1}))$. It remains to
    prove that $\pik{\wac}(S_{\omega_{i+1}})=\{\omega_{i+1}\}$ and
    $\Sigma_{Q_{i+1}}\subset \{\omega_{i+2},\cdots,\omega_\nu\}$. It
    is clear that
    $\pik{\wac}(S_{i+1})\cap\pik{\wac}(S_{\omega_{i+1}})=\emptyset$. Moreover if $\alpha$ is a root of $Q_{i+1}(T)$
    or $P_{\omega_{i+1}}(T)$ then we must have $r_k(z)>\nor{d}$. On the
    one hand we have the short exact sequence
    \begin{equation}0\to (\DD_\Omega/\DD_\Omega.Q_{i+1}(d),d)\to (\DD_\Omega/\DD_\Omega.Q_i(d),d)\to (\DD_\Omega/\DD_\Omega.P_{\omega_{i+1}}(d),d)\to 0.\end{equation}
    On the other hand, by Theorem~\ref{sec:spectr-vers-youngs-6} we have
    $\Sigma_{Q_{i+1}}\subset \pik{\wac}(S_{i+1})$,
    $\Sigma_{P_{\omega_{i+1}}}\subset \pik{\wac}(S_{\omega_{i+1}})$ and
    $\Sigma_P\subset \pik{\wac}(S)$. Since $\pik{\wac}(S_1)\cap
    \pik{\wac}(S_{\omega_{i+1}})=\emptyset$, we must have $\Sigma_{Q_{i+1}}\cap
    \Sigma_{P_{\omega_{i+1}}}=\emptyset$. By Lemma~\ref{sec:spectr-vers-youngs-1}, we
    have $\Sigma_{Q_{i+1}}\cup\Sigma_{P_{\omega_{i+1}}}=\Sigma_{Q_i}$. Hence
    $\Sigma_{Q_i},\,\Sigma_{P_{\omega_1}} \subset \{\omega_{i+1},\cdots,\omega_\nu\}$. Since
    $\omega_{i+1}$ is the only point such that $|T(\omega_{i+1})-a_{i+1}|=r_{i+1}$,
    then we must have $\Sigma_{P_{\omega_{i+1}}}=\{\omega_{i+1}\}$ and $\Sigma_{Q_{i+1}}\subset\{\omega_{i+2},\cdots,\omega_\nu\}$. If we
    suppose that there exists $\omega\in
    \pik{\wac}(S_{\omega_{i+1}})\setminus\{\omega_{i+1}\}$, then there exists $a\in k$ such
    that $|T(\omega)-a|<|T(\omega)-a_{i+1}|=r_{i+1}$. By
    Lemme~\ref{sec:spectr-vers-youngs}, there exists $P_1$ and $P_2$
    such that $P_{\omega_{i+1}}(d)=P_1(d).P_2(d)$ with the roots $z$
    of $P_1(d)$ (resp. $P_2(d)$) satisfying $|z-a|\geq r_{i+1}$
    (resp. $|z-a|<r_{i+1}$). By the same argument as above we have
    $\Sigma_{P_1}\cap\Sigma_{P_2}=\emptyset$ and
    $\Sigma_{P_1}\cup\Sigma_{P_2}=\Sigma_{P_{\omega_{i+1}}}=\{\omega_{i+1}\}$,
    which is impossible. Therefore we have
    $\pik{\wac}(S_{\omega_{i+1}})=\{\omega_{i+1}\}$, which proves the
    induction for $i+1$.

    Let $Q_{\nu-1}\in \Omega[T]$ with $S_{\nu-1}$ the multiset
    of its roots and $P_{\omega_j}\in\Omega[T]$ with $S_{\omega_j}
    $ the multiset of its roots,
    for each $j\in \{1,\cdots, \nu-1\}$,  such that $P(d)=Q_{\nu-1}(d).P_{\omega_i}(d)\cdots
    P_{\omega_1}(d)$, $\Sigma_{Q_{\nu-1}}\subset \{
    \omega_\nu\}$, $\pik{\wac}(S_{\omega_j})=\{\omega_j\}$ and
    $\Deg(P_{\omega_j})=\car(S_{\omega_j})=\car(S(\omega_j))$ for each
    $j\in \{1,\cdots,\nu-1\}$. Then we must have
    $\Deg(Q_{\nu-1})=\car(S(\omega_{\nu}))$. Since
    $\Sigma_{Q_{\nu-1}}\ne\emptyset$, we have $\Sigma_{Q_{\nu-1}}=\{
    \omega_\nu\}$. We can prove as above that we must have
    $\pik{\wac}(S_{\nu-1})=\{\omega_{\nu}\}$. We set
    $P_{\omega_{\nu}}:=Q_{\nu+1}$ and we get our decomposition $P(d)=P_{\omega_{\nu}}(d)\cdots P_{\omega_{1}}(d)$.
  \end{proof}

  \begin{rem}
    This result can be seen as a strong version of the decomposition
    of a differential polynomial by its slopes. 
  \end{rem}

  \begin{Theo}[Spectral version of Young's theorem (strong version)]\label{sec:spectr-vers-youngs-9}
   Let $\Omega\in E(k)$ and $d:\Omega\to \Omega$ be a $k$-linear bounded
    derivation. Let $(M,\nabla)$ be a differential module over
    $(\Omega,d)$ with $(M,\nabla)\simeq (\DD_\Omega/\DD_\Omega\cdot
    P(d), d)$ and $P(d)=\sum_ia_i d^i+d^n$. Let $\{z_1,\cdots,
    z_n\}\subset \Omega^{alg}$ be the multiset of the roots of $P(T)$
    (the commutative polynomial associated to $P(d)$). If $ \min_i
    r_k(\pik{\wac}(z_i))>\nor{d}$, then
    \begin{equation}\Sigma_{\nabla,k}(\Lk{M})=
      \pik{\wac}(\{z_1,\cdots,z_n\}).\end{equation}
  \end{Theo}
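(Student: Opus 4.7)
The plan is to upgrade the weak spectral Young's theorem (Theorem~\ref{sec:spectr-vers-youngs-6}) to an equality by combining it with the factorization result of Theorem~\ref{sec:spectr-vers-youngs-7}, and then propagating the spectra through an explicit filtration using the exact-sequence bound of Lemma~\ref{sec:spectr-vers-youngs-1}. The weak version already provides the inclusion $\Sigma_{\nabla,k}(\Lk{M})\subset\pik{\wac}(\{z_1,\ldots,z_n\})$, so the real task is to exhibit each $\omega_i\in\pik{\wac}(\{z_1,\ldots,z_n\})$ as a point of the spectrum.

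First I would invoke Theorem~\ref{sec:spectr-vers-youngs-7} to obtain a factorization $P(d)=P_{\omega_\nu}(d)\cdots P_{\omega_1}(d)$ with pairwise distinct $\omega_1,\ldots,\omega_\nu$, where the commutative-polynomial roots of each $P_{\omega_i}(T)$ project under $\pik{\wac}$ to the single point $\omega_i$. Since these roots form a sub-multiset of $\{z_1,\ldots,z_n\}$, the hypothesis $\min_j r_k(\pik{\wac}(z_j))>\nor{d}$ is inherited by each factor. Applying the single-point conclusion of Theorem~\ref{sec:spectr-vers-youngs-6} to the differential module $(\DD_\Omega/\DD_\Omega\cdot P_{\omega_i}(d),d)$ then yields $\Sigma_{P_{\omega_i}}=\{\omega_i\}$ (the nonemptiness of Berkovich spectra turns the inclusion into an equality).

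Next, set $Q_0:=1$ and $Q_i:=P_{\omega_i}(d)\cdots P_{\omega_1}(d)$ for $1\leq i\leq\nu$, so that $Q_\nu=P$. Using the relation $Q_i=P_{\omega_i}(d)\cdot Q_{i-1}$, right-multiplication by $Q_{i-1}$ and the canonical surjection produce a short exact sequence of differential modules
\begin{equation}
0\longrightarrow \DD_\Omega/\DD_\Omega\cdot P_{\omega_i}(d)\longrightarrow \DD_\Omega/\DD_\Omega\cdot Q_i\longrightarrow \DD_\Omega/\DD_\Omega\cdot Q_{i-1}\longrightarrow 0,
\end{equation}
in which all maps are $\Omega$-linear (hence bounded, as everything is finite-dimensional over $\Omega$) and commute with the natural $d$-actions.

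I would then induct on $i$ to prove $\Sigma_{Q_i}=\{\omega_1,\ldots,\omega_i\}$; the base case $i=1$ is exactly $\Sigma_{P_{\omega_1}}=\{\omega_1\}$. For the inductive step, Lemma~\ref{sec:spectr-vers-youngs-1} applied to the exact sequence above sandwiches $\Sigma_{Q_i}$ between $(\Sigma_{P_{\omega_i}}\cup\Sigma_{Q_{i-1}})\setminus(\Sigma_{P_{\omega_i}}\cap\Sigma_{Q_{i-1}})$ and $\Sigma_{P_{\omega_i}}\cup\Sigma_{Q_{i-1}}$; by the induction hypothesis these two spectra are $\{\omega_i\}$ and $\{\omega_1,\ldots,\omega_{i-1}\}$, which are disjoint because the $\omega_j$ are pairwise distinct, so the two bounds coincide and force $\Sigma_{Q_i}=\{\omega_1,\ldots,\omega_i\}$. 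Taking $i=\nu$ gives $\Sigma_{\nabla,k}(\Lk{M})=\pik{\wac}(\{z_1,\ldots,z_n\})$. The only mildly delicate point is ensuring the exact sequence is genuinely one of $k$-Banach spaces with bounded maps so that Lemma~\ref{sec:spectr-vers-youngs-1} applies, but this is automatic from finite-dimensionality, so the argument is essentially a bookkeeping assembly of the ingredients already developed.
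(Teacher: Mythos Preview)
Your proof is essentially the same as the paper's: factor via Theorem~\ref{sec:spectr-vers-youngs-7}, apply the single-point case of Theorem~\ref{sec:spectr-vers-youngs-6} to each factor, and climb the filtration using Lemma~\ref{sec:spectr-vers-youngs-1}. One small inaccuracy: the roots of $P_{\omega_i}(T)$ are \emph{not} in general a sub-multiset of $\{z_1,\ldots,z_n\}$ (the factorization in Theorem~\ref{sec:spectr-vers-youngs-7} comes from Lemma~\ref{sec:spectr-vers-youngs}, which produces new roots), but your conclusion that the hypothesis is inherited is still correct, since Theorem~\ref{sec:spectr-vers-youngs-7} guarantees those roots project to $\omega_i$ and hence have $r_k$ equal to $r_k(\omega_i)>\nor{d}$.
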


  \begin{proof}
    We set $\pik{\wac}(\{z_1,\cdots,z_n\})=\{\omega_1,\cdots,
    \omega_\nu\}$ such that $r_k(\omega_1)\leq \cdots\leq r_k(\omega_\nu)$. By Theorem~\ref{sec:spectr-vers-youngs-7} we have
    $P(d)=P_{\omega_\nu}(d)\cdots P_{\omega_1}(d)$. We set
    $P_{\omega_{1,\cdots,j}}(d):=P_{\omega_{j}}(d)\cdots
    P_{\omega_{1}}(d)$ and
    $(M_{\omega_{1,\cdots,j}},\nabla_{\omega_{1,\cdots,j}})=(\DD_\Omega/\DD_\Omega.P_{\omega_{i_1,\cdots,i_j}},d)$.
    By
    Theorem~\ref{sec:spectr-vers-youngs-6}, we have
    $\Sigma_{\nabla_{\omega_i}}(\Lk{M_{\omega_i}})=\{\omega_i\}$. By
    induction and using Lemma~\ref{sec:spectr-vers-youngs-1} we obtain
    the result. 
  \end{proof}

  \begin{rem}
    Assume that $k$ is trivially valued. Let $r<1$. If
    $(M,\nabla)$ is a differential module over $(\h{x_{0,r}},S\d)$ with
    irregular singularities, the use of Turrittin's theorem in 
    \cite{Azz21} was unavoidable to determine the
    spectrum of $\nabla$. However, by using only
    Theorem~\ref{sec:spectr-vers-youngs-9} we can obtain the result directly. 
  \end{rem}

  \section{Spectrum of differential module}\label{sec:spectr-diff-module-5}
  The aim of this section is to determine the spectrum of a differential
module $(M,\nabla)$ over $(\Hx, (S-c)\d)$, where $x\in (c,\infty)$ and
$c\in k$. Note that we can reduce the computation only for $c=0$.

The section is divided into four parts, the first one is to recall the
definitions of subsidiary spectral radii of
  convergence. In the special case where they are small, we
  establish the link between 
  the spectral radii and the
  spectrum. In the second, we determine the spectrum of the pull-back
  by the Frobenius of a differential module having small radii, and we
  establish the link between the spectrum and the spectral radii. In
  third part we announce and prove the main result of the paper. In
  the last part, we explain how we can deduce from the main result the shape of the spectrum of a
differential module $(M,\nabla)$ over $(\Hx,d)$, where $x$ is a point of
a quasi-smooth curve of type (2) or (3), and $d$ is well chosen
$k$-linear bounded derivation.
\begin{hyp} We still assume that $k$ is algebraically closed.
\end{hyp}
 
\subsection{Link between the radius of convergence and the
  spectrum when the radii are small} 

\paragraph{Subsidiary spectral radii of
  convergence} Let $x\in \Ak\setminus k$. Let $(M,\nabla)$ be a differential module over $(\Hx,
\d)$ with rank equal to $n$. We set

\begin{equation}\label{eq:1}
\Rd{(M,\nabla)}_1(x):=\frac{\omega}{\nsp{\nabla}}
\end{equation}

Consider the following Jordan-Hölder sequence of $(M,\nabla)$
\begin{equation}
  \label{eq:7}
  0=M_0\subset M_1\subset\cdots \subset M_\nu=M.
\end{equation}
This means that for all
$i$, $N_i:=M_i/M_{i-1}$ has non trivial strict differential
sub-modules.
Let $n_i$ be the rank of $N_i$, and let $R_i:=\Rd{(N_i,\nabla_i)}_1(x)$. Perform a
permutation of the indexes  to get $R_1\leq \cdots \leq
R_\nu$. Let $\Rd{(M,\nabla)} (x): \Rd{(M,\nabla)}_1(x)\leq\cdots\leq \Rd{(M,\nabla)}_n(x)$ be the sequence
obtained from $R_1\leq\cdots\leq R_\nu$ by counting the value $R_i$
with multiplicity $n_i$, i.e.
\begin{equation}
  \label{eq:11}
  \Rm(x):\; \underbrace{R_1=\cdots =R_1}_{n_1\,\text{times}}\leq \underbrace{R_2=\cdots =R_2}_{n_2\,\text{times}}\leq\cdots\leq \underbrace{R_\nu=\cdots =R_\nu}_{n_\nu\,\text{times}}.
\end{equation}
The values $\Rd{(M,\nabla)}_i(x)$ are called the {\it subsidiary spectral radii of
  convergence} of $(M,\nabla)$. We will just denote by
$\Rm_i(x)$ when no confusion is possible.
\begin{Defi}
  For a differential module $(M,\nabla)$ over $(\Hx,g\d)$ with
  $g\in\Hx\setminus\{0\}$, the {\it subsidiary spectral radii of convergence} of
  $(M,\nabla)$, that we still denote by $\Rd{(M,\nabla)}_i(x)$, are the subsidiary
  radii of $(M,g\-1\nabla)$ as a differential module over $(\Hx,\d)$. If $\Rd{(M,\nabla)}_1(x)=\cdots=\Rd{(M,\nabla)}_n(x)$, we say that $(M,\nabla)$ is {\it pure}.
\end{Defi}

\begin{Lem}[{\cite[Lemma~6.2.3]{Ked}}]
Let $(M,\nabla)$ be a differential module over $(\Hx,\d)$. We have
$\Rm_i(x)\leq r(x)$.
\end{Lem}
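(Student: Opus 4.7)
The plan is first to reduce the bound to the case of a single spectral radius. By the definitions (\ref{eq:7}), (\ref{eq:11}) and (\ref{eq:1}), each value $\Rm_i(x)$ coincides with $\Rd{(N_j,\nabla_j)}_1(x) = \omega/\nsp{\nabla_j}$ for some Jordan--H\"older factor $(N_j,\nabla_j)$ of $(M,\nabla)$. It therefore suffices to prove, for any differential module $(N,\nabla_N)$ over $(\Hx,\d)$, the lower bound
\begin{equation}
\nsp{\nabla_N} \;\geq\; \frac{\omega}{r_k(x)}.
\end{equation}

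To establish this lower bound I would use the classical Dwork--Robba Taylor-solution argument. Recall from Lemma \ref{sec:spectr-deriv-sd} and \cite[Lemma~4.4.1]{and} that $\nor{\d} = \nsp{\d} = 1/r_k(x)$ on $\Hx$. Fix a basis of $N$ so that $\nabla_N = \d\cdot I + G$ with $G\in \cM_\mu(\Hx)$, and form the formal matrix series
\begin{equation}
Y(z) := \sum_{n\geq 0} \frac{G_n}{n!}\, z^n,
\end{equation}
where $G_0 = I$ and $G_{n+1} = \d(G_n) + G_n G$, so that $Y$ is a fundamental solution matrix for $\nabla_N$. By the $p$-adic Cauchy--Hadamard formula its radius of convergence equals $\omega/\nsp{\nabla_N}$; on the other hand, the entries of $G$ lie in $\Hx$, and so $Y$ cannot converge on a disk strictly larger than the generic disk of $\Hx$, which has radius $r_k(x)$. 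Combining, $\omega/\nsp{\nabla_N} \leq r_k(x)$, as required.

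The main obstacle will be in justifying the last comparison at points $x$ of type (4), where $r_k(x)$ is not achieved as $|S(x)-a|$ for any $a\in k$, so that there is no natural open disk of radius $r_k(x)$ in $\Ak$ containing $x$. To deal with this I would extend scalars to a complete field $\Omega\in E(k)$ containing a point $a\in \Omega$ with $|S(x)-a| = r_k(x)$, whose existence is guaranteed by combining Theorem \ref{sec:univ-points-fiber} and Lemma \ref{sec:univ-points-fiber-3}. The Taylor expansion is then performed in the extended module $N\ct_\Hx \h{\uni{\Omega}(x)}$, and the resulting estimate descends to $\nsp{\nabla_N}$ in $\Lk{N}$ thanks to invariance of the spectral seminorm under isometric extension of scalars (\cite[Proposition~7.1.6]{Ber}). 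Together with the Jordan--H\"older reduction above, this proves $\Rm_i(x) \leq r_k(x)$ for all $i$.
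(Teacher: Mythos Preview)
The paper does not supply its own proof; it merely cites \cite[Lemma~6.2.3]{Ked}. Kedlaya's argument goes through the adjoint action: for $\phi\in\mathrm{End}_{\Hx}(N)$ one sets $\mathrm{ad}(\nabla_N)(\phi)=\nabla_N\phi-\phi\nabla_N$, and on scalar endomorphisms $f\cdot\mathrm{id}_N$ this restricts to $\d(f)\cdot\mathrm{id}_N$. Since in the ultrametric setting $\|\mathrm{ad}(\nabla_N)^n\|\leq\max_{0\leq k\leq n}\|\nabla_N^k\|\,\|\nabla_N^{n-k}\|$, one obtains $\nsp{\d}\leq\nsp{\mathrm{ad}(\nabla_N)}\leq\nsp{\nabla_N}$; together with $\nsp{\d}=\omega/r_k(x)$ this gives the bound.

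Your Taylor-series argument, by contrast, has a genuine gap. Both central claims --- that the radius of convergence of $Y(z)=\sum_{n\geq0} G_n z^n/n!$ equals $\omega/\nsp{\nabla_N}$, and that this radius cannot exceed $r_k(x)$ --- fail already for the trivial module $(N,\nabla_N)=(\Hx,\d)$. There $G=0$, so $G_n=0$ for all $n\geq1$ and $Y\equiv I$ has infinite radius of convergence, whereas $\omega/\nsp{\d}=r_k(x)<\infty$. The point is that the matrices $G_n$ only record $\nabla_N^n$ on \emph{basis vectors}; the Leibniz contributions $\binom{n}{k}\d^k(f)\,\nabla_N^{n-k}(e_j)$ arising from general elements $f\,e_j$ are precisely what force $\nsp{\nabla_N}\geq\nsp{\d}$, and these are invisible in $Y$. (Incidentally, the assertion $\nsp{\d}=1/r_k(x)$ is also incorrect when $\crk=p>0$: one has $\nsp{\d}=\omega/r_k(x)<\nor{\d}=1/r_k(x)$.) The adjoint-action argument sketched above is the clean repair; alternatively, one can run your Taylor idea not on $Y$ but on $\tau(f\,e_1)=\sum_n\nabla_N^n(f e_1)z^n/n!$ for a specific $f\in\Hx$ whose own Taylor expansion has radius exactly $r_k(x)$, though controlling cancellations then requires more care.
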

\begin{Defi}
  Let $(M,\nabla)$ be a differential module over $(\Hx,\d)$. If $\Rm_i(x)=r(x)$ we say
that $\Rm_i(x)$ is {\it solvable}.
\end{Defi}

\subsubsection{Differential polynomial and radii of convergence}

Let $\Omega\in E(k)$ and let $d:\Omega\to \Omega$ be a $k$-linear bounded derivation. Let $\cL:=\sum_{i=0}^nf_{n-i}\cdot d^i$ be a differential operator
with $f_0=1$. Let $\{\lambda_1,\cdots, \lambda_n\}$ be the multiset
of roots of the commutative polynomial $\tilde{\cL}=\sum_{i=0}^nf_{n-i}\cdot T^i\in \Omega[T]$, and assume that
$|\lambda_n|\leq\cdots \leq |\lambda_1|$. We set:
\begin{equation}
  \label{eq:12}
  \cR_i^{\cL,d}:=\frac{\omega}{\max(\nor{d},|\lambda_{i}|)}.
\end{equation}

\begin{Lem}\label{sec:bf-diff-polyn}
  Let $(M,\nabla)$ be a differential module over $(\Omega,d)$. Let
  $\cL$ and $\cL'$ be two attached differential operator of
  $(M,\nabla)$. Then for each $i$ we have
  \begin{equation}
    \label{eq:29}
    \cR_i^{\cL,d}=\cR_i^{\cL',d}
  \end{equation}
\end{Lem}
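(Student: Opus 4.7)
\emph{Approach.} I want to show that the multiset $\{\max(\nor{d},|\lambda_i|)\}_{i=1}^n$ depends only on $(M,\nabla)$, which immediately yields $\cR_i^{\cL,d}=\cR_i^{\cL',d}$. The plan is to identify this multiset with the capped multiset of spectral norms of the simple Jordan--Hölder subquotients of $(M,\nabla)$ (each counted with its dimension); the latter is intrinsic to $(M,\nabla)$ by Jordan--Hölder uniqueness.

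\emph{Factorization and filtration.} Let $\rho_1>\rho_2>\cdots>\rho_s>\nor{d}$ be the distinct absolute values appearing among the roots $\lambda_i$ with $|\lambda_i|>\nor{d}$, with multiplicities $m_1,\ldots,m_s$, and set $m_0:=n-\sum_{j=1}^s m_j$. Iterating Lemma~\ref{sec:spectr-vers-youngs} at thresholds $r_s\in(\nor{d},\rho_s)$, $r_{s-1}\in(\rho_s,\rho_{s-1}),\ldots,r_1\in(\rho_2,\rho_1)$, I obtain a factorization
\begin{equation*}
\cL=\cL_1\,\cL_2\cdots\cL_s\,\cL_0\qquad\text{in }\DD_\Omega,
\end{equation*}
with $\deg\cL_j=m_j$ for $j=0,1,\ldots,s$. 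For $j\ge 1$, every commutative root of $\cL_j$ has absolute value $\rho_j$ (the ``moreover'' clause of Lemma~\ref{sec:spectr-vers-youngs} applies at each step after the first, since all roots of the intermediate factor exceed $\nor{d}$), while $\cL_0$ has all commutative roots of absolute value $\le\nor{d}$ (obtained by letting $r_s\to\nor{d}^+$ and using the uniqueness of the Young factorization). The short exact sequences $0\to\DD_\Omega/\DD_\Omega B\to\DD_\Omega/\DD_\Omega AB\to\DD_\Omega/\DD_\Omega A\to 0$ coming from right multiplication assemble into a filtration of $(M,\nabla)\simeq(\DD_\Omega/\DD_\Omega\cL,d)$ whose successive quotients are $(\DD_\Omega/\DD_\Omega\cL_j,d)$ for $j=1,\ldots,s,0$.

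\emph{Identification and main obstacle.} For each $j\ge 1$, Young's theorem applied to $\cL_j$ gives $\nsp{\nabla_{\cL_j}}=|\cL_j|_{\mathrm{Sp}}=\rho_j$, and the purity of $(\DD_\Omega/\DD_\Omega\cL_j,d)$ (every simple subquotient has spectral norm $\rho_j$) contributes exactly $m_j$ copies of $\rho_j$ to the subsidiary spectral norm multiset of $(M,\nabla)$. For $j=0$, the bound $|\cL_0|_{\mathrm{Sp}}\le\nor{d}$ implies $\nsp{\nabla_{\cL_0}}\le\nor{d}$, so every simple subquotient of $(\DD_\Omega/\DD_\Omega\cL_0,d)$ has spectral norm $\le\nor{d}$, accounting for the remaining $m_0$ contributions. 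Jordan--Hölder uniqueness then shows that $m_1,\ldots,m_s,m_0$ depend only on $(M,\nabla)$, hence so does the multiset $\{\max(\nor{d},|\lambda_i|)\}_i$; applying the same reasoning to $\cL'$ yields the lemma. The principal difficulty is the purity claim for $j\ge 1$: since a differential right-factorization of $\cL_j$ in $\DD_\Omega$ need not agree with a commutative factorization of $\cL_j(T)$, one cannot directly transfer the constant-absolute-value property of the commutative roots of $\cL_j$ to right-divisors. I expect to overcome this either by invoking the slope decomposition for $p$-adic differential modules (cf.~\cite[Ch.~6]{Ked}), under which $(\DD_\Omega/\DD_\Omega\cL_j,d)$ is pure of slope $\rho_j$, or by an inductive argument on $\dim M$ combining Lemma~\ref{sec:spectr-vers-youngs-1} (giving $\nsp{\nabla_N}\le\rho_j$ for any subquotient) with Young applied to an attached operator of $N$.
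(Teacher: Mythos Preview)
The paper's own proof is a one-line citation to \cite[Corollary~6.5.4]{Ked}, so there is no in-paper argument to compare against. Your proposal is effectively an attempt to unfold that reference using the tools assembled in the paper (Young's theorem and the factorization Lemma~\ref{sec:spectr-vers-youngs}).

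Your outline is sound up to the point you yourself flag: the purity of $(\DD_\Omega/\DD_\Omega\cL_j,d)$ for $j\ge 1$. The upper bound $\nsp{\nabla_N}\le\rho_j$ for any subquotient $N$ is indeed immediate, and Young gives $\nsp{\nabla_{\cL_j}}=\rho_j$; but showing that \emph{every} simple subquotient has spectral norm exactly $\rho_j$ is the real content of the lemma. Your first proposed fix---invoking the slope decomposition from \cite[Ch.~6]{Ked}---is precisely what the paper does by citing Kedlaya, so that route is correct but not an independent argument. Your second proposed fix, induction on $\dim M$, does not close the loop as sketched: if $\cL_j=QP$ in $\DD_\Omega$ arises from a nonzero proper quotient, the inductive hypothesis only tells you that the $\cR_i$ of that quotient are independent of the choice of cyclic vector; it says nothing about where the commutative roots of $P$ actually sit. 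What is missing is the additivity of the Newton polygon (above the threshold $\nor{d}$) under multiplication in $\DD_\Omega$, i.e.\ that the multiset of slopes of $QP$ exceeding $\nor{d}$ is the union of those of $Q$ and of $P$. That statement is exactly what underlies Kedlaya's Corollary~6.5.4, so either branch of your plan ultimately rests on the same reference the paper cites.
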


\begin{proof}
  See \cite[Corollary~6.5.4]{Ked}.
\end{proof}

\begin{Theo}[{\cite{Young},\cite[Theorem~6.5.3]{Ked},
    \cite[Theorem~6.2]{CM02}}]\label{sec:bf-diff-polyn-1}
  Let $x\in \Ak\setminus k$, and
  $\cL:=\sum_{i=0}^{n-1}f_{n-i} \d ^i+\d^n$ be a differential operator
  with coefficients in $\Hx$. Let
  $(M,\nabla)$ be the
  differential module over $(\Hx,\d)$ attached to $\cL$. Then
  $\cR_i^{\cL,\d}<\omega \cdot r(x)$ if and only if
  $\Rm_i(x)<\omega\cdot r(x)$, and in this case we have
  \begin{equation}
    \label{eq:13}
    \Rm_i(x)=\cR_i^{\cL,\d}.
  \end{equation}
  
\end{Theo}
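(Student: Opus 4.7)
The strategy is a slope factorization of $\cL$ matched against a compatible filtration of $(M,\nabla)$. Since Lemma~\ref{sec:bf-diff-polyn} makes $\cR_i^{\cL,\d}$ intrinsic to $(M,\nabla)$, I work with the given $\cL$ throughout. Using $\nor{\d}=1/r_k(x)$, the condition $\cR_i^{\cL,\d}<\omega\, r_k(x)$ becomes the slope inequality $|\lambda_i|>\nor{\d}$. Both sequences $\{\cR_i^{\cL,\d}\}$ and $\{\Rm_i(x)\}$ are monotone increasing in $i$, so both directions of the ``iff'' reduce to establishing $\Rm_i(x)=\cR_i^{\cL,\d}$ throughout the fast-slope range $|\lambda_i|>\nor{\d}$, combined with the inequality $\Rm_i(x)\geq\omega\, r_k(x)$ whenever $|\lambda_i|\leq\nor{\d}$.

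First I collect the distinct moduli of roots $\rho_1>\cdots>\rho_s>\nor{\d}\geq\rho_{s+1}>\cdots$, with multiplicities $n_1,\ldots,n_{s+1},\ldots$. Iterating Lemma~\ref{sec:spectr-vers-youngs} with thresholds strictly between consecutive $\rho_j$'s (all above $\nor{\d}$) yields a slope factorization
\[
\cL=Q_1(\d)\,Q_2(\d)\cdots Q_s(\d)\,Q_{\mathrm{sl}}(\d),
\]
where each $Q_j$ $(j\leq s)$ is monic of degree $n_j$ with commutative roots of modulus exactly $\rho_j$, and $Q_{\mathrm{sl}}$ gathers the slow roots. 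This factorization induces a filtration of $(M,\nabla)\simeq(\DD_{\Hx}/\DD_{\Hx}\cdot\cL,\d)$ whose successive quotients are the blocks $(\DD_{\Hx}/\DD_{\Hx}\cdot Q_j,\d)$ and $(\DD_{\Hx}/\DD_{\Hx}\cdot Q_{\mathrm{sl}},\d)$, so by the Jordan-Hölder definition of subsidiary radii the multiset $\{\Rm_i(x)\}_{i=1}^n$ is the disjoint union of the corresponding block multisets.

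For a fast block $j\leq s$, every commutative root of $Q_j$ has absolute value $\rho_j>\nor{\d}$, so classical Young's theorem applied to $Q_j$ gives $\nsp{\nabla_j}=|Q_j|_{\mathrm{Sp}}=\rho_j$ and thus $\Rm_1$ of the block equals $\omega/\rho_j$. To propagate this to every subsidiary radius of the block, I refine via Jordan-Hölder: each irreducible sub-quotient has Newton-polygon slopes contained in those of $Q_j$, hence all equal to $\rho_j$, and classical Young applied to a cyclic operator for that sub-quotient yields first radius $\omega/\rho_j$; the block thus contributes exactly $n_j$ copies of $\omega/\rho_j$, matching the $n_j$ indices $i$ with $|\lambda_i|=\rho_j$, i.e.\ $\cR_i^{\cL,\d}=\omega/\rho_j$. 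For the slow block, $|Q_{\mathrm{sl}}|_{\mathrm{Sp}}\leq\nor{\d}$ gives $\nsp{\nabla_{\mathrm{sl}}}\leq\nor{\d}$, so $\Rm_1$ of this block is at least $\omega\, r_k(x)$ and by monotonicity all its subsidiary radii lie in $[\omega\, r_k(x),r_k(x)]$. Reassembling block radii in increasing order reproduces $\cR_i^{\cL,\d}$ exactly in the range where $\cR_i^{\cL,\d}<\omega\, r_k(x)$, and confirms $\Rm_i(x)\geq\omega\, r_k(x)$ otherwise; this gives both directions of the theorem simultaneously.

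The main obstacle is the internal propagation inside a fast block, i.e.\ showing that every subsidiary radius of $(\DD_{\Hx}/\DD_{\Hx}\cdot Q_j,\d)$ equals $\omega/\rho_j$ and not merely the first. This rests on the classical fact that Newton-polygon slopes are preserved as multisets under passage to sub-quotients of differential modules, which I would invoke explicitly from the $p$-adic slope-filtration theory. A secondary technical check is that the non-commutative factorization $\cL=Q_1\cdots Q_{\mathrm{sl}}$ in $\DD_{\Hx}$ yields the claimed filtration of $(\DD_{\Hx}/\DD_{\Hx}\cdot\cL,\d)$: one identifies each left ideal $\DD_{\Hx}\cdot(Q_j\cdots Q_{\mathrm{sl}})$ as a sub-left-ideal of the previous one, routine but requiring care with the non-commutativity of $\DD_{\Hx}$.
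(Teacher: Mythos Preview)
The paper does not give a proof of this theorem: it is stated with citations to \cite{Young}, \cite[Theorem~6.5.3]{Ked}, and \cite[Theorem~6.2]{CM02}, and no proof environment follows. So there is nothing in the paper to compare your argument against.

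Your sketch follows the standard route (slope factorization via Lemma~\ref{sec:spectr-vers-youngs}, then classical Young on each fast block, then bounding the slow block). The one point you flag yourself is the real one: showing that inside a pure-slope fast block $Q_j$ \emph{every} subsidiary radius equals $\omega/\rho_j$, not just the first. Your appeal to ``Newton-polygon slopes are preserved as multisets under passage to sub-quotients'' is exactly what is needed, but be careful not to make this circular with Lemma~\ref{sec:bf-diff-polyn}: that lemma (Kedlaya's Corollary~6.5.4) already presupposes the relationship between slopes and subsidiary radii that you are proving. The clean way is to argue directly: for any irreducible sub-quotient $N$ of the $Q_j$-block, pick a cyclic operator $R$; since the $Q_j$-block has all slopes equal to $\rho_j>\nor{\d}$, the slope factorization of Lemma~\ref{sec:spectr-vers-youngs} forces $R$ to have all roots of modulus $\rho_j$ as well (otherwise one could split the block further), and then classical Young on $R$ gives $\nsp{\nabla_N}=\rho_j$. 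This avoids invoking preservation of slopes as a black box and stays within the tools already available in the paper.
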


\subsubsection{Link between the subsidiary radii and the spectrum, the
case of small radii}
Let $x\in \Ak\setminus k$, and let
$(M,\nabla)$ be a differential module over $(\Hx,\d)$. The starting point of our motivation to study the spectrum is the
interesting relation between $\Rm_1(x)$ and the spectrum of
$\nabla$. Indeed, the smallest closed disk centered at zero and
containing $\Sigma_\nabla$ has radius
equal to $\frac{\omega}{\Rm_1(x)}$. In our work \cite{Cons} we 
prove, in the case of  constant coefficients, that if
$\Sigma_\nabla=\bigcup_i\Sigma_i$ where $\Sigma_i$ are the connected component
of the spectrum, then for each $\Rm_j(x)$ there exists $\Sigma_j$ such
that the smallest closed disk centered at zero and containing this
component has radius equal to $\frac{\omega}{\Rm_i(x)}$, and conversely. In this paper we
prove that this can be generalized to the case where the radii are
small. Indeed from Theorem~\ref{sec:spectr-vers-youngs-9} and
\ref{sec:bf-diff-polyn-1} we have the following result.

\begin{Pro}
 Let $x\in \Ak$ be a point of type (2), (3) or (4), and let
$(M,\nabla)$ be a differential module over $(\Hx,\d)$. For each
$a\in k$ we set $(M_a,\nabla_a):=(M,\nabla-a)$. Suppose that
$\Rd{M_a}_i<\omega.r(x)$ for each $a$ and all $i$. Then we have for each
$\Rm_i$ there exists $x_i\in \Sigma_\nabla$ such that
\begin{equation}
  \label{eq:40a}
  |T(x_i)|=\frac{\omega}{\Rm_i(x)},
\end{equation}
and conversely, the same holds for each $x_i\in \Sigma_\nabla$.
\end{Pro}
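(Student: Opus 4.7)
The strategy is to derive the result from the strong spectral version of Young's theorem (Theorem~\ref{sec:spectr-vers-youngs-9}), after putting $(M,\nabla)$ in cyclic form and extracting, from the hypothesis on the $\Rd{M_a}_i$, the strict bound $r_k(\pik{\wac}(\lambda_i))>\nor{\d}$ for every commutative root $\lambda_i$ of the attached differential polynomial.

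First, since $\crk=0$, I would pick a cyclic vector so that $(M,\nabla)\simeq(\DD_{\Hx}/\DD_{\Hx}\cdot P(d),d)$ with $P(d)=d^n+\sum_{i=0}^{n-1}f_{n-i}d^i$, and let $\lambda_1,\dots,\lambda_n\in\Hx^{alg}$ be the multiset of roots of $P(T)$ viewed as a commutative polynomial, ordered so that $|\lambda_1|\geq\cdots\geq|\lambda_n|$. For each $a\in k$, the shifted module $(M_a,\nabla_a)$ admits the cyclic presentation $\DD_{\Hx}/\DD_{\Hx}\cdot P_a(d)$ with $P_a(T)=P(T+a)$, whose commutative roots are $\lambda_i-a$.

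Next I would combine Lemma~\ref{sec:bf-diff-polyn} and Theorem~\ref{sec:bf-diff-polyn-1}: since $\nor{\d}=1/r(x)$, the hypothesis $\Rd{M_a}_i<\omega\cdot r(x)$ reads $\omega/\max(\nor{\d},|\lambda_i-a|)<\omega/\nor{\d}$, hence $|\lambda_i-a|>\nor{\d}$ for every $a\in k$ and every $i$. Taking the infimum over $a\in k$ yields $r_k(\pik{\wac}(\lambda_i))\geq\nor{\d}$. For projections of type $(1)$, $(2)$ or $(3)$ the infimum is attained in $k$, so the strict inequality for every $a$ transfers to the infimum (type $(1)$ is excluded as it would force $r_k=0<\nor{\d}$). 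The type $(4)$ case is where genuine work is required, since the infimum may equal $\nor{\d}$ without being attained; a natural remedy is to pass to a scalar extension and rescale by some $\alpha\in k^{\times}$ with $|\alpha|\cdot r_k(\pik{\wac}(\lambda_i))>1>|\alpha|\cdot\nor{\d}$, as in the proof of Theorem~\ref{sec:spectr-vers-youngs-6}, reducing to the strict case.

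Having secured the hypothesis of Theorem~\ref{sec:spectr-vers-youngs-9}, one obtains $\Sigma_{\nabla,k}(\Lk{M})=\pik{\wac}(\{\lambda_1,\dots,\lambda_n\})$. Setting $x_i:=\pik{\wac}(\lambda_i)$ gives $|T(x_i)|=|\lambda_i|$, while Theorem~\ref{sec:bf-diff-polyn-1} gives $\Rm_i(x)=\omega/\max(\nor{\d},|\lambda_i|)=\omega/|\lambda_i|$ (using $|\lambda_i|>\nor{\d}$, from the previous step with $a=0$). The two asserted correspondences then follow: for each $i$ the point $x_i\in\Sigma_\nabla$ satisfies $|T(x_i)|=\omega/\Rm_i(x)$, and conversely every $y\in\Sigma_\nabla$ equals some $\pik{\wac}(\lambda_j)$, so $|T(y)|=|\lambda_j|=\omega/\Rm_j(x)$. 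The principal obstacle, as noted, is making strict the inequality $r_k(\pik{\wac}(\lambda_i))\geq\nor{\d}$ for type $(4)$ projections; the remaining steps are direct applications of results already developed in the paper.
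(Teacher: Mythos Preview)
Your overall approach is exactly the paper's: the proposition is stated there as an immediate consequence of Theorem~\ref{sec:spectr-vers-youngs-9} together with Theorem~\ref{sec:bf-diff-polyn-1}, and you correctly spell out how the hypothesis $\Rd{M_a}_i<\omega\cdot r(x)$ translates, via Young's theorem, into $|\lambda_i-a|>\nor{\d}$ for every $a\in k$ and every root $\lambda_i$.

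Two remarks. First, a minor slip: the existence of a cyclic vector relies on $\mathrm{char}(k)=0$ (which the paper assumes throughout), not on $\crk=0$; the proposition is not restricted to residue characteristic zero.

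Second, your remedy for type~(4) projections does not work as written. You propose to choose $\alpha\in k^{\times}$ with $|\alpha|\cdot r_k(\pik{\wac}(\lambda_i))>1>|\alpha|\cdot\nor{\d}$, but such an $\alpha$ exists only if $r_k(\pik{\wac}(\lambda_i))>\nor{\d}$ strictly, which is precisely the inequality you are trying to secure; when the infimum equals $\nor{\d}$, no rescaling separates the two quantities. The rescaling in the proof of Theorem~\ref{sec:spectr-vers-youngs-6} already \emph{assumes} the strict bound $\min_i r_k(\pik{\wac}(z_i))>\nor{d}$ and uses it to create room for $\alpha$, so invoking that argument here is circular. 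The paper's own one-line justification does not address this point either, so you have in fact isolated a genuine subtlety; but the fix you propose does not close it.
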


If $\crk=0$, except if the radii are
solvable, the radii are small and we can compute the spectrum of
$\nabla$. However, if $\crk=p>0$, there are many cases where the
radii are neither small nor solvable. In such situation the use of
the push-forward by $\forp$ (cf. Section~\ref{sec:frobenius-map}) is primordial to compute the radii, but it
is not that easy to use the push-forward by $\forp$ to determine the
spectrum. Indeed, in order to use
Proposition~\ref{sec:push-forw-spectr}, we need to find $g\d$ such
that $\forp^*(g\d)=\d$ which is impossible. That is why it is more
convenient to use the derivation $S\d$, in fact we have
$\forp^*(pS\d)=S\d$.

In the following, we will show how we can recover the data of radii in
the spectrum of a differential module $(M,\nabla)$ over $(\Hx, p^lS\d)$.

\begin{nota}
  We set $s(i,j)$ and $S(i,j)$ to be the numbers satisfying the
  following identities in $\NN[T]$:
  \begin{equation}
    \label{eq:16}
    T(T-1)\cdots (T-i+1)=\sum_{j=0}^is(i,j) T^j,
  \end{equation}
  \begin{equation}
    \label{eq:17}
    T^i=\sum_{j=0}^iS(i,j) T(T-1)\cdots(T-j+1).
  \end{equation}
\end{nota}

\begin{Lem}\label{sec:bf-like-between}
  Let $x=x_{0,r}\in \Ak\setminus k$. Let
  $(M,\nabla_M)$ (resp. $(N,\nabla_N)$) be a differential module over
  $(\Hx,\d)$ (resp. $(\Hx,p^lS\d)$ with $l\in\NN$) such that
  $\cL_{\nabla_M}:=\sum_{i=0}^{n}f_{n-i}(\d)^i$ with $f_0=1$
  (resp. $\cL_{\nabla_N}:=\sum_{i=0}^{n}g_{n-i}(p^lS\d)^i$ with $g_0=1$) is an attached
  differential operator of $(M,\nabla_M)$ (resp. $(N,\nabla_N)$). Then:
  \begin{equation}
    \label{eq:15}
    \cL_{p^lS\nabla_M}:=\sum_{i=0}^{n}p^{(n-i)l}\left(\sum_{j=i}^nf_{n-j}S^{n-j}s(j,i)\right)
    (p^lS\d)^i
  \end{equation}
  is an attached differential operator of $(M,p^lS\nabla_M)$
  (differential module over $(\Hx,p^lS\d)$), and
  \begin{equation}
    \label{eq:18}
    \cL_{p^{-l}S\-1\nabla_N}:=\sum_{i=0}^nS^{(i-n)}\left(\sum^n_{j=i}g_{n-j}p^{l(j-n)}S(j,i)\right)(\d)^i
  \end{equation}
  is an attached differential operator of $(N, p^{-l}S\-1\nabla_N)$
  (differential module over $(\Hx,\d)$).
\end{Lem}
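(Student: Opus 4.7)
The core identity driving both formulas is the classical operator equality
\[ \theta(\theta-1)\cdots(\theta-i+1) = S^i\d^i \]
in the non-commutative ring $\DD_{\Hx,\d}$, where $\theta := S\d$. Combined with the definitions of $s(i,j)$ and $S(i,j)$, this gives two mutually inverse conversion formulas
\[ S^i\d^i = \sum_{j=0}^i s(i,j)\,\theta^j, \qquad \theta^i = \sum_{j=0}^i S(i,j)\, S^j\d^j, \]
holding equally well when $\d$ is replaced by any connection $\nabla$ (with $\theta_\nabla := S\nabla$). Since the new derivation is $\tilde d = p^lS\d = p^l\theta$, we have $\tilde d^i = p^{li}\theta^i$. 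These elementary translations are the only ingredients.

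For the first formula, fix a cyclic vector $m$ for $(M,\nabla_M)$, so $\cL_{\nabla_M}(m)=0$. Apply $\d^i = S^{-i}\sum_{j=0}^i s(i,j)\theta_{\nabla_M}^j$ to rewrite $\cL_{\nabla_M}$ in terms of $\theta_{\nabla_M}$, then substitute $\theta_{\nabla_M}^j = p^{-lj}(p^lS\nabla_M)^j = p^{-lj}\tilde\nabla^j$ with $\tilde\nabla := p^lS\nabla_M$. Multiplying the resulting annihilating relation on the left by $S^n p^{ln}$ (which is invertible in $\Hx$) and swapping the order of summation over $i$ and $j$ produces
\[ \sum_{i=0}^n p^{(n-i)l}\Bigl(\sum_{j=i}^n f_{n-j}\,S^{n-j}\,s(j,i)\Bigr)\tilde\nabla^i(m) = 0. \]
This is monic because the coefficient of $\tilde\nabla^n$ is $p^0\cdot f_0\cdot S^0\cdot s(n,n) = 1$. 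Moreover $m$ remains cyclic for $(M,\tilde\nabla)$, since the change-of-basis matrix from $\{\nabla_M^i(m)\}_{i=0}^{n-1}$ to $\{\tilde\nabla^i(m)\}_{i=0}^{n-1}$ is lower triangular with invertible diagonal entries $p^{li}S^i\neq 0$. Hence the displayed operator is an attached differential operator for $(M,\tilde\nabla)$, proving \eqref{eq:15}.

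For the second formula, the argument is symmetric. Fix a cyclic vector $m$ for $(N,\nabla_N)$ and set $\nabla' := p^{-l}S^{-1}\nabla_N$, so $\nabla_N = p^l S\nabla'$. Using $(S\nabla')^i = \sum_{j=0}^i S(i,j)\,S^j(\nabla')^j$, expand
\[ \nabla_N^i = p^{li}(S\nabla')^i = p^{li}\sum_{j=0}^i S(i,j)\,S^j(\nabla')^j, \]
substitute into $\cL_{\nabla_N}(m)=0$, swap the order of summation, and divide by the leading coefficient $p^{ln}$ (the coefficient of $(\nabla')^n$ is $g_0\,p^{ln}\,S(n,n)=p^{ln}$) to obtain the monic operator
\[ \sum_{i=0}^n S^{i-n}\Bigl(\sum_{j=i}^n g_{n-j}\,p^{l(j-n)}\,S(j,i)\Bigr)(\nabla')^i(m) = 0, \]
as required by \eqref{eq:18}. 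Cyclicity of $m$ for $(N,\nabla')$ follows from the same triangularity argument in reverse.

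The only obstacle in this proof is purely combinatorial bookkeeping: tracking the monic normalization factor, the precise placement of the scalars $p^{l\bullet}$ and $S^\bullet$, and the correct indexing of $s(j,i)$ versus $S(j,i)$ after reversing summation. Once the operator identity $\theta(\theta-1)\cdots(\theta-i+1)=S^i\d^i$ is recognized as the abstract fact underlying both conversions, the rest is substitution.
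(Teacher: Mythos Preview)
Your proof is correct and follows essentially the same route as the paper's: the paper states the conversion formulas $(\nabla_M)^i=S^{-i}\sum_j p^{-lj}s(i,j)(p^lS\nabla_M)^j$ and $(\nabla_N)^i=p^{li}\sum_j S(i,j)S^j((p^lS)^{-1}\nabla_N)^j$ (to be proved by induction), applies them to a cyclic vector, swaps the summation, and normalizes by $p^{ln}S^n$ (resp.\ $p^{-ln}S^{-n}$). You make the underlying operator identity $\theta(\theta-1)\cdots(\theta-i+1)=S^i\d^i$ explicit and add the observation that cyclicity is preserved via a triangular change of basis, which the paper leaves implicit; one minor slip is that in the second part the leading coefficient you divide by is $p^{ln}S^n$, not just $p^{ln}$, but your displayed formula already reflects the correct normalization.
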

\begin{proof}
We prove by induction that:

  \begin{equation}
    \label{eq:20}
  (\nabla_M)^i=(S)^{-i}\sum_{j=1}^ip^{l(-j)}s(i,j) (p^lS\cdot \nabla_M)^j,
  \end{equation}
  \begin{equation}
    \label{eq:24}
   (\nabla_N)^i=p^{li}\sum_{j=1}^iS(i,j)S^j ((p^lS)\-1\cdot \nabla_N)^j.  
 \end{equation}

 Let $c$ be a cyclic vector of $(M,\nabla_M)$. Then:

   \begin{align}
       \label{eq:26}
 \sum_{i=0}^nf_{n-i}\nabla_M^i(c)  =&\sum_{i=0}^nf_{n-i}(S)^{-i}\sum_{j=1}^ip^{l(-j)}s(i,j)
   (p^lS\cdot\nabla_M)^j(c)
                                       \\
                                       =&\sum_{i=0}^np^{l(-i)}\left(\sum_{j=i}^nf_{n-j}S^{-j}s(j,i)\right)(p^lS\cdot
         \nabla_M)^i(c)
         \\
         =& 0. \end{align}

 Hence, multiplying  the equality by $p^{ln}S^n$ we obtain $\sum_{i=0}^np^{l(n-i)}\left(\sum_{j=i}^nf_{n-j}S^{n-j}s(j,i)\right)(p^lS\cdot
         \nabla_M)^i(c)=0$, and deduce that $\cL_{p^lS\nabla_M}$ is
         an attached differential operator of $(M,p^lS\nabla_M)$.

         Let $e$ be a cyclic vector of $(N,\nabla_N)$. Then:
         \begin{align}
           \label{eq:27}
           \sum_{i=0}^ng_{n-i}\nabla_N^i(e)=&\sum_{i=0}^n
                                              g_{n-i}p^{li}\sum_{j=1}^iS(i,j)S^j
                                              ((p^lS)\-1\cdot
                                              \nabla_N)^j(e)\\
           =&\sum_{i=0}^nS^i\left(\sum_{j=i}^ng_{n-j}p^{lj}S(j,i)\right)
              ((p^lS)\-1\cdot \nabla_N)^i(e)\\
           =&0.
         \end{align}
         Hence, by multiplying by $p^{-nl}S^{-n}$ we obtain
         $\sum_{i=0}^nS^{n-i}\left(\sum_{j=i}^ng_{n-j}p^{l(j-n)}S(j,i)\right)
         ((p^lS)\-1\cdot \nabla_N)^i(e)=0$, and deduce that
         $\cL_{p^{-l}S\-1\nabla_N}$ is an attached differential module
         of $(N, p^{-l}S\-1\nabla_N)$.
       \end{proof}

       \begin{rem}
         We have a bijection
         \begin{equation}
           \label{eq:30}
           \Fonction{\Lambda:\DD_{\Hx,\d}}{\DD_{\Hx,p^lS\d}}{P(\d)}{\Lambda(P)(p^lS\d)},
         \end{equation}
         such that $\Lambda(P)(p^lS\d)$ is the differential polynomial
         obtained from $P(\d)$ using the formula \eqref{eq:15}, and
         $\Lambda\-1(Q)(\d)$ is the differential polynomial obtained
         from $Q(p^lS\d)$ using the formula \eqref{eq:18}.
       \end{rem}

       \begin{Defi}
         Let $(\Omega,|.|)$ be an ultrametric complete field. Let
         $P(T)=\sum_{i=0}^na_iT^i\in \Omega[T]$. For $r>0$, let us denote by $W_r(P)$
          the {\it width } of $P$
         under the $r$-Gauss norm $|.|_r$ (i.e
         $|P|_r:=\max_i|a_i|r^i$), as the difference between the
         maximum and minimum values of $i$ for which $\max_i|a_i|r^i$
         is achieved. 
       \end{Defi}
       \begin{Lem}\label{sec:bf-like-between-2}
      Let $(\Omega,|.|)$ be an ultrametric complete field. Let
      $P(T)=\sum_{i=0}^na_iT^i\in \Omega[T]$ and let $r>0$. If $W_r(P)=l$,
      then $P$ admits exactly $l$ roots, counted with multiplicity,
      whose absolute value are equal to $r$. 
    \end{Lem}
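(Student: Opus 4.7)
The plan is to recognize this as the Newton polygon theorem rephrased in terms of Gauss norms, and to extract it by a direct analysis of the elementary symmetric functions. First I would pass to a completed algebraic closure of $\Omega$ and factor $P(T) = a_n \prod_{i=1}^n (T - \alpha_i)$, ordering the roots so that $|\alpha_1| \geq \cdots \geq |\alpha_n|$. Set $l_1 := \#\{i : |\alpha_i| > r\}$ and $N_r := \#\{i : |\alpha_i| = r\}$, so that the roots of absolute value exactly $r$ are $\alpha_{l_1+1}, \ldots, \alpha_{l_1+N_r}$; the goal is to prove $W_r(P) = N_r$.

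Using $a_j = (-1)^{n-j} a_n\, e_{n-j}(\alpha_1,\ldots,\alpha_n)$ and the ultrametric inequality on this symmetric function, I would set $\pi_j := r^j |\alpha_1|\cdots|\alpha_{n-j}|$ and obtain $|a_j|r^j \leq |a_n|\pi_j$, with equality whenever the monomial $\alpha_1\cdots\alpha_{n-j}$ strictly dominates every other monomial of $e_{n-j}$, i.e.\ whenever $|\alpha_{n-j}| > |\alpha_{n-j+1}|$. Computing the ratio $\pi_{j+1}/\pi_j = r/|\alpha_{n-j}|$ then shows that $\pi_j$ is strictly increasing for $j < n - l_1 - N_r$, constant on the plateau $[n - l_1 - N_r,\; n - l_1]$ of length $N_r$, and strictly decreasing for $j > n - l_1$.

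At the two endpoints of the plateau the strict drops $|\alpha_{l_1+N_r}| = r > |\alpha_{l_1+N_r+1}|$ and $|\alpha_{l_1}| > r = |\alpha_{l_1+1}|$ force the dominant monomial of $e_{n-j}$ to be unique, so the bound $|a_j|r^j \leq |a_n|\pi_j$ is an equality there. Hence both extremal indices of the plateau realize $|P|_r$, while outside the plateau one has $|a_j|r^j < |P|_r$ by the strict monotonicity of $\pi_j$. This gives $W_r(P) = (n - l_1) - (n - l_1 - N_r) = N_r$, as required.

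The delicate point will be the equality check at the two boundary indices: in the interior of the plateau several $|\alpha_i|$'s coincide with $r$, and cancellations among the dominant monomials of $e_{n-j}$ may well make the ultrametric bound strict. What saves the argument is that equality is only needed at the two extremal indices of the plateau, where the strict drop of $|\alpha_{n-j}|$ singles out a unique maximal monomial; this is exactly enough to pin down the width $W_r(P)$.
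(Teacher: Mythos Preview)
Your argument is correct. The paper does not give its own proof of this lemma; it simply cites \cite[Section~2.1]{Ked}, where exactly this Newton-polygon computation is carried out. What you have written is essentially that standard argument: bound $|a_j|r^j$ above by $|a_n|\,r^j|\alpha_1|\cdots|\alpha_{n-j}|$, observe that this upper envelope has a plateau of length $N_r$, and verify equality at the two extremal indices by uniqueness of the dominant monomial there.

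A couple of minor edge cases you may want to state explicitly, even though they cause no trouble: when $l_1=0$ the right endpoint is $j=n$ and the equality $|a_n|r^n=|a_n|\pi_n$ is trivial; when $l_1+N_r=n$ the left endpoint is $j=0$ and $|a_0|=|a_n|\,|\alpha_1\cdots\alpha_n|$ gives equality directly. In both cases the ``strict drop'' hypothesis you invoke is vacuous rather than available, but the conclusion holds for the trivial reason that $e_0=1$ or $e_n=\prod\alpha_i$ is a single monomial. With those remarks added, your write-up is a complete, self-contained substitute for the citation.
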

    \begin{proof}
      See \cite[Section~2.1]{Ked}.
    \end{proof}

    \begin{Lem}\label{sec:bf-like-between-1}
      Let $x=x_{0,r}\in \Ak\setminus k$. Let
  $(M,\nabla_M)$ be a differential module over
  $(\Hx,\d)$ such that
  $\cL_{\nabla}:=\sum_{i=0}^{n}f_{n-i}(\d)^i$, with $f_0=1$,
  is an attached
  differential operator of $(M,\nabla)$. Let 
    $\cL_{p^lS\nabla}$ be an attached differential operator of
    $(M,p^lS\nabla)$. Suppose that $\cL_{\nabla}=P_2(\d)\cdot P_1(\d)$
   . Then there exists a decomposition of
    $\cL_{p^lS\nabla}=Q_{2}(p^lS\d)\cdot Q_1(p^lS\d)$ such that
    \begin{equation}
      \label{eq:31}
      \cR^{Q_j,p^lS\d}_i=\cR^{\Lambda(P_j),p^lS\d}_i.
    \end{equation}
  \end{Lem}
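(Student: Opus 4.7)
The plan is to exploit the fact that sub-differential modules of $(M,\nabla_M)$ and of $(M, p^lS\nabla_M)$ coincide, since $p^lS$ is invertible in $\Hx$ and therefore the change of derivation does not alter the lattice of sub-differential modules. I will then invoke the standard correspondence between factorizations of an attached differential operator and sub-differential modules (arising from the left-principal ideal structure of the ring of differential operators over a field).

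First, the given factorization $\cL_\nabla = P_2(\d)\cdot P_1(\d)$ corresponds, via the inclusion of left ideals $\DD_{\Hx,\d}\cdot P_1 \supset \DD_{\Hx,\d}\cdot \cL_\nabla$, to a sub-differential module $N_1\subset M$ for $\nabla_M$, with $P_2(\d)$ an attached operator of $(N_1,\nabla_M|_{N_1})$ and $P_1(\d)$ an attached operator of the quotient $(M/N_1,\nabla_M|_{M/N_1})$. Since $p^lS$ is a nonzero element of $\Hx$, the same $N_1$ is a sub-differential module of $(M,p^lS\nabla_M)$. By Lemma~\ref{sec:bf-like-between} applied separately to $N_1$ and $M/N_1$, the operators $\Lambda(P_2)(p^lS\d)$ and $\Lambda(P_1)(p^lS\d)$ are attached differential operators of $(N_1,p^lS\nabla_M|_{N_1})$ and $(M/N_1,p^lS\nabla_M|_{M/N_1})$ respectively.

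Next, $\cL_{p^lS\nabla}$ is an attached operator of $(M,p^lS\nabla_M)$, meaning $(M,p^lS\nabla_M)\simeq \DD_{\Hx,p^lS\d}/\DD_{\Hx,p^lS\d}\cdot\cL_{p^lS\nabla}$. The sub-differential module $N_1\subset M$ corresponds to a left ideal of $\DD_{\Hx,p^lS\d}$ containing $\DD_{\Hx,p^lS\d}\cdot\cL_{p^lS\nabla}$, which is principal; call its generator $Q_1(p^lS\d)$. Then $\cL_{p^lS\nabla}=Q_2(p^lS\d)\cdot Q_1(p^lS\d)$ for some $Q_2(p^lS\d)\in\DD_{\Hx,p^lS\d}$, with $Q_2(p^lS\d)$ an attached operator of $(N_1,p^lS\nabla_M|_{N_1})$ and $Q_1(p^lS\d)$ an attached operator of $(M/N_1,p^lS\nabla_M|_{M/N_1})$.

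Finally, by Lemma~\ref{sec:bf-diff-polyn}, the quantities $\cR^{\cdot,p^lS\d}_i$ depend only on the underlying differential module and not on the choice of attached operator. Since, for each $j\in\{1,2\}$, both $Q_j$ and $\Lambda(P_j)$ are attached operators of the same differential module, the equality $\cR^{Q_j,p^lS\d}_i=\cR^{\Lambda(P_j),p^lS\d}_i$ follows at once. The main point to check is that the correspondence between sub-differential modules and factorizations of any given attached operator goes through in both derivations simultaneously; this is a purely formal consequence of the left-Euclidean structure of $\DD_{\Hx,\d}$ and $\DD_{\Hx,p^lS\d}$, and the factorization of $\cL_{p^lS\nabla}$ is determined by the submodule $N_1$ rather than by any specific choice of cyclic vector, so no compatibility issue arises.
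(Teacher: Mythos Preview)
Your proof is correct and follows essentially the same approach as the paper: both use the fact that the factorization $\cL_\nabla=P_2(\d)P_1(\d)$ yields a short exact sequence of differential modules over $(\Hx,\d)$, which remains exact over $(\Hx,p^lS\d)$ since $p^lS$ is invertible, and then both apply Lemma~\ref{sec:bf-like-between} to identify $\Lambda(P_j)$ as attached operators of the pieces and Lemma~\ref{sec:bf-diff-polyn} to conclude. The only difference is presentational: you phrase the exact sequence in terms of the submodule/left-ideal correspondence, while the paper writes the exact sequence directly.
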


  \begin{proof}
   Let $(M_1,\nabla_1)=(\DD_{\Hx,\d}/\DD_{\Hx,\d}\cdot P_1,\d)$ and
   $(M_1,\nabla_2)=(\DD_{\Hx,\d}/\DD_{\Hx,\d}\cdot P_2,\d)$. Then
   \begin{equation}
     \label{eq:1a}
     0\to (M_2,\nabla_2)\to (M,\nabla)\to (M_1,\nabla_1)\to 0,
   \end{equation}
   therefore
   \begin{equation}
     \label{eq:32}
     0\to (M_2,p^lS\nabla_2)\to (M,p^lS\nabla)\to (M_1,p^lS\nabla_1)\to 0.
   \end{equation}
   Consequently, on one hand there exist $Q_1(p^lS\d)$ and
   $Q_2(p^lS\d)$ such that $\cL_{p^lS\nabla}=Q_{2}(p^lS\d)\cdot
   Q_1(p^lS\d)$, $(M_1,p^lS\nabla_1)=(\DD_{\Hx,p^lS\d}/\DD_{\Hx,p^lS\d}\cdot Q_1,p^lS\d)$ and
   $(M_1,p^lS\nabla_2)=(\DD_{\Hx,p^lS\d}/\DD_{\Hx,p^lS\d}\cdot Q_2,p^lS\d)$.
   On the other hand, by Lemma~\ref{sec:bf-like-between} we have $(M_1,p^lS\nabla_1)=(\DD_{\Hx,p^lS\d}/\DD_{\Hx,p^lS\d}\cdot \Lambda(P_1),p^lS\d)$ and
   $(M_1,p^lS\nabla_2)=(\DD_{\Hx,p^lS\d}/\DD_{\Hx,p^lS\d}\cdot
   \Lambda(P_2),p^lS\d)$. By Lemma~\ref{sec:bf-diff-polyn}, we
   conclude that $\cR^{Q_j,p^lS\d}_i=\cR^{\Lambda(P_j),p^lS\d}_i$ for
   each $i$ and $j$.
  \end{proof}

       \begin{Pro}\label{sec:bf-like-between-3}
          Let $x=x_{0,r}\in \Ak\setminus k$. Let
  $(M,\nabla_M)$ be a differential module over
  $(\Hx,\d)$ such that
  $\cL_{\nabla_M}:=\sum_{i=0}^{n}f_{n-i}(\d)^i$, with $f_0=1$,
  is an attached
  differential operator of $(M,\nabla)$. Let 
    $\cL_{p^lS\nabla}$ be an attached differential operator of $(M,p^lS\nabla)$. Then
    $\cR^{\cL_{\nabla},\d}_i<\omega\cdot r(x)$ if and only if
    $\cR^{\cL_{p^lS\nabla},p^lS\d}_i<\omega\cdot |p|^{-l}$ and we have
    \begin{equation}
      \label{eq:28}
      \cR_i^{\cL_{\nabla},\d}=|p|^l r(x)\cR_i^{\cL_{p^lS\nabla},p^lS\d}.
    \end{equation}
  \end{Pro}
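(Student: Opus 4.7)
Since by Lemma~\ref{sec:bf-diff-polyn} the quantities $\cR_i^{\cdot,\cdot}$ are independent of the choice of attached differential operator, I would take $\cL_{p^lS\nabla}$ to be the explicit operator given by formula~(\ref{eq:15}) of Lemma~\ref{sec:bf-like-between}, and let $P(T)=\sum_{i=0}^n f_{n-i}T^i$ and $Q(T)$ denote the corresponding commutative polynomials. Let $\lambda_1,\ldots,\lambda_n$ and $\mu_1,\ldots,\mu_n$ be their roots in $\Hx^{alg}$, ordered so that $|\lambda_1|\geq\cdots\geq|\lambda_n|$ and $|\mu_1|\geq\cdots\geq|\mu_n|$. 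Since $\nor{\d}=1/r$ and, by Lemma~\ref{sec:spectr-deriv-sd}, $\nor{p^lS\d}=|p|^l$, the inequalities $\cR_i^{\cL_\nabla,\d}<\omega r$ and $\cR_i^{\cL_{p^lS\nabla},p^lS\d}<\omega|p|^{-l}$ are equivalent respectively to $|\lambda_i|>1/r$ and $|\mu_i|>|p|^l$. It therefore suffices to prove the numerical equality $|\mu_i|=|p|^l r\,|\lambda_i|$ whenever $|\lambda_i|>1/r$.

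The key algebraic manipulation is to rewrite $Q(T)$ in a form that makes its Newton polygon visibly close to a rescaling of that of $P(T)$. Using identity~(\ref{eq:16}) applied to $T/p^l$, one checks that formula~(\ref{eq:15}) collapses to
\begin{equation}
Q(T)=\sum_{j=0}^n f_{n-j}\,\alpha^{n-j}\,T^{(j)},\qquad \alpha:=p^lS,\quad T^{(j)}:=T(T-p^l)(T-2p^l)\cdots(T-(j-1)p^l),
\end{equation}
so $|\alpha|=|p|^l r$. Introduce the auxiliary polynomial $\widetilde Q(T):=\sum_j f_{n-j}\alpha^{n-j}T^j=\alpha^n P(T/\alpha)$, whose roots are $\alpha\lambda_i$ (with $|\alpha\lambda_i|=|p|^l r|\lambda_i|$) and which satisfies $|\widetilde Q|_\rho=|\alpha|^n|P|_{\rho/|\alpha|}$ under the $\rho$-Gauss norm on $\Hx[T]$ for every $\rho>0$. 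We want to argue that $Q$ and $\widetilde Q$ have the same $\rho$-Gauss norm, and hence the same Newton polygon, in the range $\rho>|p|^l$.

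Expanding $T^{(j)}-T^j=\sum_{i<j}s(j,i)\,p^{(j-i)l}T^i$ and using $|s(j,i)|\leq1$, one obtains the bound $|T^{(j)}-T^j|_\rho\leq\max_{i<j}|p|^{(j-i)l}\rho^i=|p|^l\rho^{j-1}$ whenever $\rho>|p|^l$, because the maximum over $i<j$ is reached at $i=j-1$. Consequently
\begin{equation}
|Q-\widetilde Q|_\rho\leq\max_j|f_{n-j}||\alpha|^{n-j}|p|^l\rho^{j-1}=\frac{|p|^l}{\rho}\,|\widetilde Q|_\rho<|\widetilde Q|_\rho,
\end{equation}
so the ultrametric triangle inequality forces $|Q|_\rho=|\widetilde Q|_\rho=|\alpha|^n|P|_{\rho/|\alpha|}$ for every $\rho>|p|^l$.

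To conclude, I would invoke the Newton-polygon dictionary of Lemma~\ref{sec:bf-like-between-2}: the function $\rho\mapsto|Q|_\rho$ is piecewise monomial in $\rho$, with break points exactly at the $|\mu_i|$ counted with multiplicity, and similarly for $|P|_\sigma$. The identity above puts the break points of $|Q|$ in the range $\rho>|p|^l$ in multiplicity-preserving bijection with the break points of $|P|$ in the range $\sigma>1/r$ via $|\mu|=|\alpha||\lambda|=|p|^l r|\lambda|$. This gives the equivalence $|\lambda_i|>1/r\Leftrightarrow|\mu_i|>|p|^l$ and, in that case, the identity $|\mu_i|=|p|^l r|\lambda_i|$, whence $\cR_i^{\cL_\nabla,\d}=\omega/|\lambda_i|=|p|^l r\cdot\omega/|\mu_i|=|p|^l r\,\cR_i^{\cL_{p^lS\nabla},p^lS\d}$. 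The main obstacle is spotting the rewriting $Q(T)=\sum_j f_{n-j}\alpha^{n-j}T^{(j)}$ via the inverse Stirling identity; once in this shape, the proximity of $Q$ to $\alpha^n P(T/\alpha)$ in Gauss norm and the resulting Newton polygon comparison are routine.
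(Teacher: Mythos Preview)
Your argument is correct and is in fact cleaner than the paper's proof. The paper first invokes the slope factorisation of Lemma~\ref{sec:spectr-vers-youngs} to reduce to the case where all $\cR_i^{\cL_\nabla,\d}$ coincide, transports this factorisation to the $p^lS\d$-side via Lemma~\ref{sec:bf-like-between-1}, and then treats the two sub-cases $\cR_i=\omega r(x)$ and $\cR_i<\omega r(x)$ by separate width computations. You bypass this reduction entirely: the identity $Q(T)=\sum_j f_{n-j}\alpha^{n-j}T^{(j)}$ (which is indeed exactly the content of formula~\eqref{eq:15} after applying~\eqref{eq:16} to $T/p^l$) lets you compare $Q$ directly to the rescaled polynomial $\widetilde Q=\alpha^nP(T/\alpha)$, and the bound $|Q-\widetilde Q|_\rho<|\widetilde Q|_\rho$ for $\rho>|p|^l$ forces the Newton polygons to agree above that threshold. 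Since both $Q$ and $\widetilde Q$ are monic of degree $n$, this pins down the multisets $\{|\mu_i|:|\mu_i|>|p|^l\}$ and $\{|\alpha\lambda_i|:|\lambda_i|>1/r\}$ as equal, giving the equivalence and the scaling for all $i$ at once. The only thing you might add for completeness is the observation that the equality $\cR_i^{\cL_\nabla,\d}=|p|^lr(x)\,\cR_i^{\cL_{p^lS\nabla},p^lS\d}$ also holds trivially when both sides sit at their maximal value $\omega r(x)$ resp.\ $\omega|p|^{-l}$, so the identity is valid unconditionally.
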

  
  \begin{proof}
By
    Lemma~\ref{sec:spectr-vers-youngs} there exists a unique
    decomposition such that $\cL_{\nabla}=P_\mu.P_{\mu-1}\cdots P_0$
    with $\cR^{P_0,\d}_j={\omega r(x)}$ for each $j$, and for each
    $P_i$ ($i\ne 0$), there exists $r_i<\omega r(x)$ such that
    $\cR^{P_i,\d}_j=r_i$. By induction and using
    Lemma~\ref{sec:bf-like-between-1} there exists a decomposition of
    $\cL_{p^lS\nabla_M}=Q_{\mu}.Q_{\mu-1}\cdots Q_0$ such that
    $\cR^{Q_j,p^lS\d}_i=\cR^{\Lambda(P_j),p^lS\d}_i$. Then in order to
    prove the statement it is enough to show that
    $\cR^{P_j,\d}_i=p^lr(x) \cR^{\Lambda(P_j),p^lS\d}_i$. Therefore, we can reduce to the case where
    $\cR^{\cL_{\nabla},\d}_i=\rho$ for each $i$. We choose
    $\cL_{p^lS\nabla,p^lS\d}=\Lambda(\cL_{\nabla,\d})$, we set
    \begin{equation}
      \label{eq:33}
      g_{n-i}=p^{(n-i)l}\sum_{j=i}^nf_{n-j}S^{n-j}s(j,i),
    \end{equation}
    then we get
    $\cL_{p^lS\nabla,p^lS\d}=\sum_{i=0}^ng_{n-i}(p^lS\d)^i$. We set $P(T):=\sum_{i=0}^ng_{n-i}T^i$. Suppose that
    $\rho=\omega.r(x)$ in order to prove that
    $\cR_i^{\cL_{p^lS\nabla},p^lS\d}=\omega|p|^{-l}$ for all $i$, we
    need to prove that the commutative polynomial
    $P$ does not admit a root $\lambda$ with $|\lambda|>|p|^l$. Since
    $\rho= \omega\cdot r(x)$, then
    \begin{equation}
      \label{eq:35}
      |f_{n-i}S^{n-i}|\leq 1.
    \end{equation}
    Therefore we have
     \begin{equation}
      \label{eq:34a}
      |g_{n-i}|\leq |p|^{(n-i)l}
    \end{equation}
    Let
    $\alpha>|p|^{l}$ then we have

    \begin{equation}
      \label{eq:34}
      \forall i,\quad |g_{n-i}|\alpha^i< |g_0|\alpha^n.
    \end{equation}
    Hence, $W_\alpha(P)=0$ and we conclude that $P$ does not admit a root
    with absolute value greater than $|p^l|$
    (cf. Lemma~\ref{sec:bf-like-between-2}). Consequently we obtain
    $\cR_i^{\cL_{p^lS\nabla},p^lS\d}=\frac{\omega}{\nor{p^lS\d}}=\omega|p|^{-l}$.
    Now suppose that $\rho< \omega.r(x)$. Then we have
    \begin{equation}
      \label{eq:36}
      |f_{n-i}S^{n-i}|\leq \left(\frac{\omega.r(x)}{\rho}\right)^{n-i}.
    \end{equation}
    Consequently,
    \begin{equation}
      \label{eq:37}
      |g_{n-i}|\leq|p|^{(n-i)l}\left(\frac{\omega.r(x)}{\rho}\right)^{n-i}.
    \end{equation}
    In order to prove that
    $\cR_i^{\cL_{p^lS\nabla},p^lS\d}=\frac{\rho}{|p|^{l}r(x)}$, it is
    enough to prove that $W_{|p|^l
      \left(\frac{\omega.r(x)}{\rho}\right)}(P)=n$.
    We have
    \begin{equation}
      \label{eq:38}
        |g_{n-i}||p|^{li}
      \left(\frac{\omega.r(x)}{\rho}\right)^i\leq|p|^{(n)l}\left(\frac{\omega.r(x)}{\rho}\right)^{n}
    \end{equation}
    with $|g_n|=|p^{nl}f_n
    S^n|=|p|^{(n)l}\left(\frac{\omega.r(x)}{\rho}\right)^{n}$ and
    $|g_0||p|^{(n)l}\left(\frac{\omega.r(x)}{\rho}\right)^{n}=|p|^{(n)l}\left(\frac{\omega.r(x)}{\rho}\right)^{n}$. Consequently,
    \begin{equation}
      \label{eq:39}
    W_{|p|^l
      \left(\frac{\omega.r(x)}{\rho}\right)}(P)=n,
  \end{equation}
which completes the proof.
\end{proof}
Now we can establish the link between the spectrum of a differential
module over $(\Hx,p^lS\d)$ and the radii of convergence.  
\begin{Pro}\label{sec:bf-like-between-4}
 Let $x=x_{0,r}\in \Ak\setminus k$, and let
$(M,\nabla)$ be a differential module over $(\Hx,p^lS\d)$. For each
$a\in k$ we set $(M_a,\nabla_a):=(M,\nabla-a)$. Suppose that
$\Rd{M_a}_i<\omega.r(x)$ for each $a$ and $i$. Then for each
$\Rm_i$, there exists $x_i\in \Sigma_\nabla$ such that
\begin{equation}
  \label{eq:40}
  \Rm_i(x)=\frac{\omega\cdot |p|^l r(x)}{|T(x_i)|},
\end{equation}
and conversely, the same holds for each $x_i\in \Sigma_\nabla$.
\end{Pro}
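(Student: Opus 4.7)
The plan is to relate both $\Rm_i(x)$ and the spectrum $\Sigma_{\nabla,k}(\Lk{M})$ to a single auxiliary datum: the roots in $\Hx^{alg}$ of the commutative polynomial attached to a cyclic presentation of $(M,\nabla)$. The strategy combines Proposition~\ref{sec:bf-like-between-3}, which converts the small-radii hypothesis into a numerical identity of the shape $\Rm_i(x)=\omega|p|^l r(x)/|\lambda_i|$, with the strong spectral Young theorem (Theorem~\ref{sec:spectr-vers-youngs-9}), which identifies $\Sigma_\nabla$ with the projection in $\Ak$ of those same roots.

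Fix an attached differential operator $\cL_\nabla\in\DD_{\Hx,p^lS\d}$ of $(M,\nabla)$ and let $\cP(T)\in\Hx[T]$ be its commutative polynomial, with multiset of roots $\{\lambda_1,\ldots,\lambda_n\}\subset\Hx^{alg}$ ordered by decreasing absolute value. Since $\Rm_i(x)$ is by definition the $i$-th subsidiary radius of $(M,p^{-l}S^{-1}\nabla)$ over $(\Hx,\d)$, the hypothesis $\Rm_i(x)<\omega\,r(x)$ combined with Theorem~\ref{sec:bf-diff-polyn-1} gives $\Rm_i(x)=\cR_i^{\cL_{p^{-l}S^{-1}\nabla},\d}$, and Proposition~\ref{sec:bf-like-between-3} then yields
\begin{equation*}
\Rm_i(x)=|p|^l\,r(x)\cdot \cR_i^{\cL_\nabla,p^lS\d}.
\end{equation*}
Because $\nor{p^lS\d}\leq|p|^l$ while $\cR_i^{\cL_\nabla,p^lS\d}<\omega|p|^{-l}$, the maximum in the definition of $\cR_i^{\cL_\nabla,p^lS\d}$ must be realised by $|\lambda_i|>|p|^l\geq\nor{p^lS\d}$, which gives $\Rm_i(x)=\omega|p|^l r(x)/|\lambda_i|$.

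For the spectrum, let $\pi:\A{\widehat{\Hx^{alg}}}\to\Ak$ denote the canonical projection, and apply the analysis above to each twisted module $(M,\nabla-a)$ with $a\in k$. A direct computation in $\DD_{\Hx,p^lS\d}$ shows that subtracting a scalar $a\in k$ from the connection transforms $\cP(T)$ into $\cP(T+a)$, so that the roots are shifted to $\lambda_i-a$; hence the hypothesis applied to all $\nabla-a$ forces $|\lambda_i-a|>|p|^l$ for every $a\in k$ and every~$i$. Since $k$ is algebraically closed, this is equivalent to $\pi(\lambda_i)$ lying outside every closed Berkovich disk $\disf{a}{|p|^l}$ centred at a point of~$k$, and one extracts $r_k(\pi(\lambda_i))>\nor{p^lS\d}$. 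Theorem~\ref{sec:spectr-vers-youngs-9} then yields $\Sigma_{\nabla,k}(\Lk{M})=\pi(\{\lambda_1,\ldots,\lambda_n\})$, and setting $x_i:=\pi(\lambda_i)$ the identity $|T(x_i)|=|\lambda_i|$ produces the required formula in both directions.

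The main obstacle is verifying the strict inequality $r_k(\pi(\lambda_i))>|p|^l$ needed to apply Theorem~\ref{sec:spectr-vers-youngs-9}: the hypothesis directly yields only $r_k(\pi(\lambda_i))\geq|p|^l$, and while the strict inequality is automatic when $\pi(\lambda_i)$ is of type $(2)$ or $(3)$ (the infimum defining $r_k$ being then attained by some $a\in k$, which would contradict the hypothesis if equality held), a type $(4)$ point of radius exactly~$|p|^l$---possible only if $k$ is not spherically complete---requires separate treatment, for instance through a scalar-extension argument to a spherically complete extension of $k$ together with the compatibility of the Berkovich spectrum with such extensions.
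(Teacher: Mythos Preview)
Your argument is essentially the paper's own: its proof is a one-line citation of Theorem~\ref{sec:spectr-vers-youngs-9}, Theorem~\ref{sec:bf-diff-polyn-1}, and Proposition~\ref{sec:bf-like-between-3}, and you have correctly spelled out how these three ingredients combine---in particular the passage from the hypothesis on all translates $(M,\nabla-a)$ to the strict bound $|\lambda_i-a|>|p|^l$ for every $a\in k$, and thence to the radius condition needed in Theorem~\ref{sec:spectr-vers-youngs-9}.

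The type~(4) edge case you isolate in your last paragraph is a genuine subtlety that the paper's terse proof does not address either. However, your proposed remedy via extension to a spherically complete $\Omega\supset k$ does not work as stated: if $\pi(\lambda_i)$ is of type~(4) with $r_k(\pi(\lambda_i))=|p|^l$, then over $\Omega$ this point acquires a centre $a_0\in\Omega\setminus k$ with $|\lambda_i-a_0|\leq|p|^l$, and nothing in the hypothesis---which constrains only the radii of $(M,\nabla-a)$ for $a\in k$---prevents $\Rd{(M\ct_k\Omega,\nabla-a_0)}_i$ from reaching $\omega\,r(x)$. Thus after base change the small-radii hypothesis may fail for the new centres, and one cannot simply rerun the argument over~$\Omega$. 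If you want to close this case you should instead argue directly with the spectrum over $k$: for instance, show that the weak inequality $r_k(\pi(\lambda_i))\geq\|p^lS\d\|$ together with the strict inequality $|\lambda_i-a|>|p|^l$ for every individual $a\in k$ already suffices to run the invertibility estimate in the proof of Theorem~\ref{sec:spectr-vers-youngs-6} at each point $y\in\Ak\setminus\{\pi(\lambda_1),\dots,\pi(\lambda_n)\}$, since that estimate only needs $|w_i\otimes 1-1\otimes T(y)|\geq 1$ pointwise rather than a uniform lower bound on $r_k$.
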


\begin{proof}
  The result holds directly from
  Theorem~\ref{sec:spectr-vers-youngs-9}, \ref{sec:bf-diff-polyn-1}
  and Proposition~\ref{sec:bf-like-between-3}.
\end{proof}

\begin{cor}\label{sec:bf-like-between-5}
  Let $x=x_{0,r}\in \Ak\setminus k$, and let
$(M,\nabla)$ be a differential module over $(\Hx,p^lS\d)$. For each
$a\in k$ we set $(M_a,\nabla_a):=(M,\nabla-a)$. If $(M_a,\nabla_a)$
is pure and with small radii for each $a\in k$, then there exists
$z\in \Ak\setminus k$ such that $\Sigma_\nabla=\{z\}$.
\end{cor}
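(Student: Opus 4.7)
The plan is to apply Proposition~\ref{sec:bf-like-between-4} to each twist $(M_a,\nabla_a)$, to transfer the resulting information back to $\Sigma_\nabla$ via the shift $\Sigma_{\nabla_a}=\Sigma_\nabla-a$ (an instance of Lemma~\ref{sec:spectr-diff-module-1} applied to $P(T)=T-a$), and finally to separate points of $\Ak$ by polynomial evaluations using the algebraic closedness of $k$.

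Fix $a\in k$. By Proposition~\ref{sec:bf-like-between-4} applied to the differential module $(M_a,\nabla_a)$ over $(\Hx,p^lS\d)$, one has the two-sided correspondence
\[
\Rd{(M_a,\nabla_a)}_j(x)=\frac{\omega\,|p|^l\,r(x)}{|T(y_j)|},\qquad y_j\in\Sigma_{\nabla_a},
\]
every point of $\Sigma_{\nabla_a}$ arising this way. Purity collapses the left-hand side to a single value $R_a<\omega\cdot r(x)$, so every $y\in\Sigma_{\nabla_a}$ satisfies $|T(y)|=c_a:=\omega\,|p|^l\,r(x)/R_a>0$. In particular $0\notin\Sigma_{\nabla_a}$, hence $a\notin\Sigma_\nabla$, and letting $a$ range over $k$ yields $\Sigma_\nabla\cap k=\emptyset$. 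The identification $\Sigma_{\nabla_a}=\Sigma_\nabla-a$ at the level of the coordinate $T$ means that if $w\in\Sigma_\nabla$ then $w-a$, the translate of $w$ in $\Ak$, lies in $\Sigma_{\nabla_a}$ and satisfies $|T(w-a)|=|T(w)-a|$. Combining these observations, for \emph{every} $w\in\Sigma_\nabla$ and every $a\in k$,
\[
|T(w)-a|=c_a,
\]
a quantity depending only on $a$.

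Suppose for contradiction that $w_1,w_2\in\Sigma_\nabla$ are distinct. Since $k$ is algebraically closed, every $f\in k[T]$ factors as $f=\lambda\prod_i(T-\alpha_i)$ with $\alpha_i\in k$, and multiplicativity of the seminorms attached to $w_1,w_2$ yields
\[
|f|_{w_1}=|\lambda|\prod_i|T(w_1)-\alpha_i|=|\lambda|\prod_i c_{\alpha_i}=|f|_{w_2}.
\]
Thus $w_1$ and $w_2$ induce the same multiplicative seminorm on $k[T]$, so $w_1=w_2$ in $\Ak$, a contradiction. Since $\Sigma_\nabla$ is nonempty by \cite[Theorem~7.1.2]{Ber}, this forces $\Sigma_\nabla=\{z\}$ for a unique $z\in\Ak$, and $\Sigma_\nabla\cap k=\emptyset$ places $z$ in $\Ak\setminus k$.

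The only delicate point is the use of the \emph{converse} direction in Proposition~\ref{sec:bf-like-between-4}: one truly needs that every point of $\Sigma_{\nabla_a}$, not just those visibly produced from the subsidiary radii, has $|T(\cdot)|=c_a$, for otherwise purity would not immediately impose constancy on the whole spectrum. Once this is in hand, the rest reduces to the well-known fact that the points of the Berkovich affine line over an algebraically closed field are separated by the family of seminorms $f\mapsto|f|_{(-)}$ on $k[T]$.
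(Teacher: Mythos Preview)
Your proof is correct and is precisely the argument the paper leaves implicit (the corollary is stated without proof). You correctly apply Proposition~\ref{sec:bf-like-between-4} to each twist, use the translation identity $\Sigma_{\nabla-a}=\Sigma_\nabla-a$ from Lemma~\ref{sec:spectr-diff-module-1}, and then exploit algebraic closedness of $k$ to separate points of $\Ak$ via linear factorization of polynomials; your observation that the \emph{converse} clause of Proposition~\ref{sec:bf-like-between-4} is essential is exactly the point.
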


\subsection{Frobenius and spectrum}\label{sec:bf-frob-spectr-5}
As we explained before if $\crk=0$, except the case where the radii are
solvable, the condition of Theorem~\ref{sec:spectr-vers-youngs-9} are
satisfied, which allows to determine the spectrum. However this is not
the case when
$\crk=p>0$. That is why in this part we will focus on this case and
assume from now on that $\crk=p>0$.

Let $x=x_{0,r}$ and $y:=\forp^l(x)$. Let $(M,\nabla)$ be a
differential module over $(\Hy, p^lS\d)$. According to formula
\eqref{eq:14}, we have $(\forp^l)^*(p^lS\d)=S\d$. To avoid confusion,
we set $p^lS(y)\dy:\Hy\to \Hy$ and $S(x)\dx: \Hx\to \Hx$. Recall from Lemma~\ref{sec:sheaf-diff-etale} that,
we have
\begin{equation}
  \label{eq:1b}
  \Hx=\bigoplus_{i=0}^{p^l-1}\Hy\cdot S(x)^i.
\end{equation}
Let
$\{e_1,\cdots,e_n\}$ be a basis of $(M,\nabla)$ and let $G$ be the
associated matrix in this basis. Then $\{e_1\ot 1,\cdots,e_n\ot 1\}$
is a basis of $(\forp^l)^*(M,\nabla)$ for which the associated matrix
is $G$. Moreover, the following inequality holds \cite[Lemma~10.3.2]{Ked}
\begin{equation}
  \label{eq:47}
  \Rd{(\forp^l)^*M}_1(x)\geq \min(\Rm_1(y)^{\fra{p}},p\Rm_1(y)).
\end{equation}

We set
$(M_{p^l},\nabla_{p^l}):=(\forp^l)_*(\forp^l)^*(M,\nabla)$. The
associated matrix of
$(M_{p^l},\nabla_{p^l})$ in the basis $\{e_1\ot 1,\cdots,e_n\ot 1,
e_1\ot S(x),\cdots,e_n\ot S(x),\cdots, e_1\ot
S(x)^{p^l-1},\cdots,e_n\ot S(x)^{p^l-1}\}$ is:
\begin{equation}
  \label{eq:41}
  \left(
     \raisebox{0.5\depth}{%
       \xymatrixcolsep{1ex}%
       \xymatrixrowsep{1ex}%
   \xymatrix{G& 0\ar@{.}[rr]\ar@{.}[ddrr] &    &0\ar@{.}[dd]\\
                    0\ar@{.}[ddrr]\ar@{.}[dd]&G+I_n\ar@{.}[ddrr]                 &    & \\
                      &                   &    &0\\
                    0\ar@{.}[rr]&                   &0
                    &G+(p^l-1)\cdot I_n\\}
}
   \right).
 \end{equation}
 Therefore we have the following isomorphism:
 \begin{equation}
   \label{eq:42}
   (M_{p^l},\nabla_{p^l})\simeq \bigoplus_{i=0}^{p^l-1}(M,\nabla+i).
 \end{equation}
 By Proposition~\ref{sec:push-forw-spectr} and
 Remark~\ref{sec:spectr-diff-module} we have
 \begin{equation}
   \label{eq:43}
   \Sigma_{(\forp^l)^*\nabla,k}(\Lk{(\forp^l)^*M})=\Sigma_{\nabla_{p^l},k}(\Lk{M_{p^l}})=\bigcup_{i=0}^{p^l-1}(\Sigma_{\nabla,k}(\Lk{M})+i).
 \end{equation}

 \begin{Theo}\label{sec:bf-frob-spectr}
   Let $x=x_{0,r}\in \Ak\setminus k$, $y:=\forp^l(x)$ and  consider
   the embedding $\Hy\hookrightarrow \Hx$ induced by $\forp^l$. Let
   $(M,\nabla)$ be a differential module over $(\Hx,S\d)$. Assume that
   $(M,\nabla)\simeq (\DD_{\Hx}/\DD_{\Hx}\cdot P(S\d),S\d) $ with
   $P(S\d)=\sum_i a_iS\d+S\d^n$ and $a_i\in \Hy$. Let
   $\{z_1,\cdots,z_n\}\subset\Hx^{alg}$ be the multiset of the roots
   of $P(T)$. If $\min_i r_k(\pik{\widehat{\Hx^{alg}}}(z_i))>|p|^l$,
   then
   \begin{equation}
     \label{eq:44}
     \Sigma_{\nabla,k}(\Lk{M})=\bigcup_{i=0}^{p^l-1}(\pik{\widehat{\Hx^{alg}}}(\{z_1,\cdots,z_n\})+i).
   \end{equation}
 \end{Theo}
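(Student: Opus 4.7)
The plan is to descend $(M, \nabla)$ along the Frobenius to a differential module $(M', \nabla')$ over $(\Hy, p^l S \d)$, compute the spectrum of $(M', \nabla')$ by the strong spectral Young's theorem (Theorem~\ref{sec:spectr-vers-youngs-9}), and then recover $\Sigma_{\nabla,k}(\Lk{M})$ through the push-forward/pull-back formula~\eqref{eq:43}.

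Concretely, set $(M', \nabla') := (\DD_{\Hy}/\DD_{\Hy}\cdot P(p^l S\d),\, p^l S\d)$, which is well defined precisely because the hypothesis forces all the $a_i$ to lie in $\Hy$. Since formula~\eqref{eq:14} identifies the pull-back of $p^l S\d$ by $\forp^l$ with $S\d$ on $\Hx$, a cyclic-vector computation shows that $e\otimes 1$, with $e$ the canonical cyclic generator of $M'$, is cyclic in $(\forp^l)^*(M', \nabla')$ and is killed by $P(S\d)$: one uses $a_i \in \Hy$ to commute the scalars past the tensor. This yields a canonical isomorphism
\begin{equation}
(\forp^l)^*(M', \nabla') \simeq (\DD_{\Hx}/\DD_{\Hx}\cdot P(S\d),\, S\d) \simeq (M, \nabla).
\end{equation}

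To apply Theorem~\ref{sec:spectr-vers-youngs-9} to $(M', \nabla')$, observe that the attached commutative polynomial is still $P(T)$, with multiset of roots $\{z_1, \ldots, z_n\}$, and that Lemma~\ref{sec:spectr-deriv-sd} applied on $\Hy$ gives $\nor{p^l S\d} = |p|^l$. Moreover, since $\Hy \hookrightarrow \Hx$ is a finite algebraic extension, we have $\widehat{\Hy^{alg}} = \widehat{\Hx^{alg}}$, so the assumption $\min_i r_k(\pik{\widehat{\Hx^{alg}}}(z_i)) > |p|^l$ is exactly the hypothesis needed. Theorem~\ref{sec:spectr-vers-youngs-9} therefore gives $\Sigma_{\nabla', k}(\Lk{M'}) = \pik{\widehat{\Hx^{alg}}}(\{z_1, \ldots, z_n\})$.

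Finally, \eqref{eq:43} applied to $(M', \nabla')$, combined with the isomorphism above, gives
\begin{equation}
\Sigma_{\nabla, k}(\Lk{M}) = \bigcup_{i=0}^{p^l-1}\bigl(\Sigma_{\nabla', k}(\Lk{M'}) + i\bigr) = \bigcup_{i=0}^{p^l-1}\bigl(\pik{\widehat{\Hx^{alg}}}(\{z_1, \ldots, z_n\}) + i\bigr),
\end{equation}
which is the claim. The only genuinely new content is the identification $(\forp^l)^*(M', \nabla') \simeq (M, \nabla)$, and the only place the hypothesis $a_i \in \Hy$ is used; once this is in place, there is no significant obstacle given Theorem~\ref{sec:spectr-vers-youngs-9} and~\eqref{eq:43}.
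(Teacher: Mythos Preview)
Your proof is correct and follows essentially the same route as the paper: both descend to $(N,\nabla_N)=(\DD_{\Hy}/\DD_{\Hy}\cdot P(p^lS\d),\,p^lS\d)$, identify $(M,\nabla)\simeq(\forp^l)^*(N,\nabla_N)$, apply Theorem~\ref{sec:spectr-vers-youngs-9} over $\Hy$ using $\nor{p^lS\d}=|p|^l$ and $\widehat{\Hy^{alg}}=\widehat{\Hx^{alg}}$, and conclude via~\eqref{eq:43}. Your version is slightly more explicit about why the pull-back identification holds (the cyclic-vector check using $a_i\in\Hy$), which the paper simply asserts.
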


 \begin{proof}
   Let $(N,\nabla_N)\simeq (\DD_{\Hy}/\DD_{\Hy}\cdot
   P(p^lS\d),p^lS\d)$. Then we have $(M,\nabla)\simeq
   (\forp^l)^*(N,\nabla_N)$. From \eqref{eq:43} we have:
   \begin{equation}
     \label{eq:45}
     \Sigma_{\nabla,k}(\Lk{M})=\bigcup_{i=0}^{p^l-1}(\Sigma_{\nabla_N,k}(\Lk{N})+i).
   \end{equation}
   It remains to prove that
   $\Sigma_{\nabla_N,k}(\Lk{N})=\pik{\widehat{\Hx^{alg}}}(\{z_1,\cdots,z_n\})$. Since
   $\Hx$ is an algebraic finite extension of $\Hy$ then we have $\widehat{\Hx^{alg}}\simeq\widehat{\Hy^{alg}}$. Hence, we should
   have
   $\pik{\widehat{\Hx^{alg}}}(\{z_1,\cdots,z_n\})=\pik{\widehat{\Hy^{alg}}}(\{z_1,\cdots,z_n\})$. Recall
   that we have $\nor{p^lS\d}=|p|^l$
   (cf. Lemma~\ref{sec:spectr-deriv-sd}). Then by
   Theorem~\ref{sec:spectr-vers-youngs-9} we have
   $\Sigma_{\nabla_N,k}(\Lk{N})=\pik{\widehat{\Hx^{alg}}}(\{z_1,\cdots,z_n\})$,
   which ends the proof.
 \end{proof}
 Now we need to establish, on the one hand, the link between the spectrum
 and the spectral radii of convergence. On the other hand, we are looking for
 conditions such that
 the hypothesis of Theorem~\ref{sec:bf-frob-spectr} hold. For that we
 need the following result.

 \begin{Theo}[after Christol and Dwork {\cite[Theorem~10.4.2]{Ked}}]\label{sec:bf-frob-spectr-2}
   Let $x=x_{0,r}\in \Ak\setminus k$, $y:=\forp(x)$. Let
   $(M,\nabla)$ be a finite differential module over $(\Hx,S\d)$ such
   that $\Rm_1(x)>\omega.r(x)$. Then there exists a
   unique (up to isomorphism) $(N,\nabla_N)$ such that
   $(M,\nabla)\simeq (\forp)^*(N,\nabla_N)$ and
   $\Rd{N}_1(y)>\omega^p\cdot r(y)$. Moreover,
   \begin{equation}
     \label{eq:48}
     \Rd{N}_i(y)=\Rm_i(x)^{p}.
   \end{equation}
 \end{Theo}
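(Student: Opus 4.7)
The result is the Christol--Dwork Frobenius antecedent theorem~\cite[Theorem~10.4.2]{Ked}, adapted from the derivation $\d$ to the derivation $S\d$. The plan is to perform this adaptation in three steps.

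\emph{Step 1 (Setting up the dictionary).} Using equation~\eqref{eq:14} with $\phi=\forp$ together with the identity $\forp^{\#}(S(y))=S(x)^p$, one checks that $(\forp)^*\bigl(pS(y)\D{S(y)}\bigr)=S(x)\D{S(x)}$. Consequently, any sought antecedent $(N,\nabla_N)$ must belong to $\Dmod{pS\d}{\Hy}$, and, by convention, $\Rd{N}_i(y)$ denotes the radii of $(N,(pS)^{-1}\nabla_N)$ viewed as a $\d$-module over $\Hy$. In this dictionary, the hypothesis $\Rm_1(x)>\omega\cdot r(x)$ is precisely the Christol--Dwork hypothesis applied to the $\d$-module $(M,S^{-1}\nabla)$ over $(\Hx,\d)$.

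\emph{Step 2 (Existence and uniqueness).} Apply \cite[Theorem~10.4.2]{Ked} to $(M,S^{-1}\nabla)$ to obtain a unique $\d$-module $(N,\widetilde{\nabla}_N)$ over $\Hy$ together with an isomorphism $(M,S^{-1}\nabla)\simeq (\forp)^*(N,\widetilde{\nabla}_N)$. Setting $\nabla_N:=pS\cdot\widetilde{\nabla}_N$, a direct computation using $(\forp)^*(pS\d)=S\d$ and the Leibniz rule along $\forp^{\#}$ shows that the $pS\d$-module $(N,\nabla_N)$ satisfies $(\forp)^*(N,\nabla_N)\simeq (M,\nabla)$ in $\Dmod{S\d}{\Hx}$. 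Uniqueness of $(N,\nabla_N)$ is inherited from that of the Christol--Dwork antecedent via the obvious bijection between $\d$-modules and $pS\d$-modules over $\Hy$.

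\emph{Step 3 (The radii formula).} Apply inequality~\eqref{eq:47} term by term along a Jordan--H\"older filtration to obtain, for each $i$,
\begin{equation*}
  \Rm_i(x) \;\geq\; \min\!\bigl(\Rd{N}_i(y)^{1/p},\;p\Rd{N}_i(y)\bigr).
\end{equation*}
In the regime $\Rd{N}_i(y) \geq \omega^p\cdot r(y)$, a short numerical check shows that the first argument of the minimum is the smaller one, giving $\Rm_i(x)\geq\Rd{N}_i(y)^{1/p}$. The reverse inequality $\Rm_i(x)\leq \Rd{N}_i(y)^{1/p}$ is the substantive content of \cite[Theorem~10.4.2]{Ked}: horizontal sections of $(M,\nabla)$ on a disk of radius $\rho$ correspond, under the ramified change of coordinate $S(y)=S(x)^p$, to horizontal sections of $(N,\nabla_N)$ on a disk of radius $\rho^p$. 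Combining yields $\Rd{N}_i(y)=\Rm_i(x)^p$, from which the required lower bound $\Rd{N}_i(y)>\omega^p\cdot r(y)$ follows by raising $\Rm_i(x)>\omega\cdot r(x)$ to the $p$-th power.

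The main obstacle is not conceptual but organizational: the consistent tracking of the three derivations $\d$, $S\d$, $pS\d$, and of the normalizations of subsidiary radii under pull-back by $\forp$. Once this dictionary is fixed, the theorem reduces entirely to \cite[Theorem~10.4.2]{Ked}.
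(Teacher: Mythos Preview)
The paper does not supply a proof of this theorem at all: it is stated with the attribution ``after Christol and Dwork \cite[Theorem~10.4.2]{Ked}'' and immediately followed by remarks, with no \texttt{proof} environment. It is treated as a known result imported from Kedlaya's book, so there is no ``paper's own proof'' to compare against.

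Your proposal is therefore more than the paper offers: you spell out the dictionary between the $\d$-, $S\d$-, and $pS\d$-modules and explain why Kedlaya's statement transports to the present setting. That is a reasonable and correct reduction. One small point of presentation in Step~3: you invoke the regime $\Rd{N}_i(y)\geq\omega^p\cdot r(y)$ to decide which branch of the minimum in~\eqref{eq:47} is active, and then at the end you say this lower bound ``follows'' from the radii formula. This reads as circular. In fact the bound $\Rd{N}_i(y)>\omega^p\cdot r(y)$ is part of the \emph{output} of \cite[Theorem~10.4.2]{Ked} (it is the condition characterizing the antecedent), so you should simply take it as given by the citation rather than re-derive it. Once you do that, Step~3 is unnecessary: the radii identity \eqref{eq:48} is already contained in Kedlaya's theorem, and your argument via \eqref{eq:47} only recovers one of the two inequalities anyway (you cite Kedlaya again for the other). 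It would be cleaner to stop after Step~2 and say that both the bound and the radii formula are part of the cited statement.
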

 \begin{rem}
   The differential module $(N,\nabla_N)$ in
   Theorem~\ref{sec:bf-frob-spectr-2} is called the {\it Frobenius
     antecedent } of $(M,\nabla)$.
 \end{rem}
 \begin{rem}\label{sec:bf-frob-spectr-6}
   By induction we can prove that, if moreover $\Rm_1(x)>\omega^{\fra{p^{l-1}}}.r(x)$, then there exists a
   unique (up to isomorphism) $(N,\nabla_N)$ such that
   $(M,\nabla)\simeq (\forp^l)^*(N,\nabla_N)$ and
   $\Rd{N}_1(y)>\omega^p\cdot r(y)$. Moreover, we have
   $\Rd{N}_i(y)=\Rm_i(x)^{p^l}$. We will call it the {\it l-Frobenius
     antecedent } of $(M,\nabla)$.
 \end{rem}

 \begin{rem}\label{sec:bf-frob-spectr-1}
   In particular if $(M,\nabla)=(\forp^l)^*(N,\nabla)$ and
   $\Rd{N}_i(y)>\omega^p\cdot r(y)$, then we must have $\Rd{N}_i(y)=\Rm_i(x)^{p^l}$.
 \end{rem}

 Therefore, the following proposition shows the link between the
 spectrum and the radius of convergence.

 \begin{Pro}\label{sec:bf-frob-spectr-3}
 Let $x=x_{0,r}\in \Ak\setminus k$, $y:=\forp^l(x)$ and we consider
   the embedding $\Hy\hookrightarrow \Hx$ induced by $\forp^l$. Let
   $(M,\nabla)$ be a differential module over $(\Hx,S\d)$. Assume that
   $(M,\nabla)\simeq (\DD_{\Hx}/\DD_{\Hx}\cdot P(S\d),S\d) $ with
   $P(S\d)=\sum_i a_iS\d+S\d^n$ and $a_i\in \Hy$. Let
   $\{z_1,\cdots,z_n\}\subset\Hx^{alg}$ be the multiset of the roots
   of $P(T)$. If $\min_i r_k(\pik{\widehat{\Hx^{alg}}}(z_i))>|p|^l$
   and $\max_i|T(z_i)|<|p|^{l-1}$,
   then for each $\Rm_i(x)$ there exists $z_i$ such that:
   \begin{equation}
     \label{eq:46}
     \Rm_i(x)=\frac{\omega^{\fra{p^l}}.|p|^{\frac{l}{p^l}}.r(x)}{|T(z_i)|^{\fra{p^l}}},
   \end{equation}
   and conversely, the same holds for each $z_i$.
 \end{Pro}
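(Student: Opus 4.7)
The plan is to descend the problem via Frobenius to a module over $(\Hy, p^lS\d)$, compute its spectral radii of convergence directly from the attached polynomial, and then pass back to $(M,\nabla)$ through the Frobenius descent formula of Remark~\ref{sec:bf-frob-spectr-1}. Concretely, since all coefficients $a_i$ lie in $\Hy$, I would set $(N,\nabla_N) := (\DD_{\Hy}/\DD_{\Hy}\cdot P(p^lS\d),\, p^lS\d)$. The identity $(\forp^l)^*(p^lS\d)=S\d$, read off from~\eqref{eq:14}, identifies $(\forp^l)^*(N,\nabla_N)$ with $(M,\nabla)$. The first hypothesis forces $|T(z_i)|>|p|^l$, since $r_k(\pik{\widehat{\Hx^{alg}}}(z_i))\leq|T(z_i)|$; combined with the second hypothesis, every root satisfies the sandwich $|p|^l<|T(z_i)|<|p|^{l-1}$.

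I would next compute the radii $\Rd{N}_i(y)$. Because $\nor{p^lS\d}=|p|^l<|T(z_i)|$, the definition~\eqref{eq:12} gives $\cR_i^{P(p^lS\d),\, p^lS\d}=\omega/|T(z_i)|$. Proposition~\ref{sec:bf-like-between-3}, whose hypothesis reduces precisely to $|T(z_i)|>|p|^l$, then transports this to
\begin{equation}
\cR_i^{\Lambda^{-1}(P(p^lS\d)),\,\d} \;=\; \frac{|p|^l\,\omega\, r(y)}{|T(z_i)|},
\end{equation}
a quantity strictly less than $\omega\, r(y)$. Theorem~\ref{sec:bf-diff-polyn-1} now identifies it with $\Rd{N}_i(y)$.

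Finally, recalling $\omega^{p-1}=|p|$, the bound $|T(z_i)|<|p|^{l-1}$ is equivalent to $\Rd{N}_i(y)>\omega^p\, r(y)$, which is exactly the hypothesis needed to apply Remark~\ref{sec:bf-frob-spectr-1} to $(M,\nabla)=(\forp^l)^*(N,\nabla_N)$. This yields $\Rd{N}_i(y)=\Rm_i(x)^{p^l}$; combining with $r(y)=r(x)^{p^l}$ and extracting the $p^l$-th root delivers the announced formula, while the natural indexing bijection between the multiset of roots and the multiset of radii furnishes the converse direction. The main bookkeeping obstacle I expect is keeping the three attached operators straight---$P(S\d)$ for $(M,\nabla)$, $P(p^lS\d)$ for $(N,\nabla_N)$, and $\Lambda^{-1}(P(p^lS\d))$ for $(N,(p^lS)^{-1}\nabla_N)$ over $(\Hy,\d)$---and ensuring that each of the two bounds on $|T(z_i)|$ is invoked in its correct role: the lower one for the Christol--Dwork-type identification of $\cR_i$ with $\Rd{N}_i(y)$, the upper one for the existence of the $l$-Frobenius antecedent.
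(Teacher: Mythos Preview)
Your proof is correct and follows essentially the same route as the paper: define $(N,\nabla_N)=(\DD_{\Hy}/\DD_{\Hy}\cdot P(p^lS\d),\,p^lS\d)$, compute its radii over $(\Hy,\d)$, check the $\omega^p r(y)$ bound, and invoke the $l$-Frobenius antecedent relation of Remark~\ref{sec:bf-frob-spectr-1}. The only cosmetic difference is that the paper packages your steps 3--5 into a single citation of Proposition~\ref{sec:bf-like-between-4} (together with Theorem~\ref{sec:spectr-vers-youngs-9}), whereas you unfold that proposition into its constituents Theorem~\ref{sec:bf-diff-polyn-1} and Proposition~\ref{sec:bf-like-between-3}; your unpacked version has the mild advantage of using only the consequence $|T(z_i)|>|p|^l$ rather than the full radius hypothesis at that step.
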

 
 \begin{proof}
Let $(N,\nabla_N)\simeq (\DD_{\Hy}/\DD_{\Hy}\cdot
   P(p^lS\d),p^lS\d)$. Then we have $(M,\nabla)\simeq
   (\forp^l)^*(N,\nabla_N)$. By Theorem~\ref
{sec:spectr-vers-youngs-9} and Proposition~\ref{sec:bf-like-between-4}
for each $\Rm_i(y)$ there exists $z_i$ such that:
\begin{equation}
  \label{eq:51}
    \Rd{N}_i(y)=\frac{\omega\cdot |p|^l r(y)}{|T(z_i)|}=\frac{\omega\cdot |p|^l r(x)^{p^l}}{|T(z_i)|}>\omega^p\cdot
    r(x)^{p^l}=\omega\cdot r(y),
  \end{equation}
  By Remark~\ref{sec:bf-frob-spectr-1} we obtain the result.
 \end{proof}

 \begin{rem}
   The condition $\max_i|T(z_i)|<|p|^{l-1} $ of Proposition~\ref{sec:bf-frob-spectr-3} means that
   if $(M,\nabla)\simeq(\forp^l)^*(N,\nabla_N)$, then $(N,\nabla_N)$
   does not admit a Frobenius antecedent. 
 \end{rem}

 Note that in practice we can compute the spectrum in a more general
 case. However, it is not that easy to recover the link between the
 spectrum and the spectral radii of convergence as in formula \eqref{eq:40} and
 \eqref{eq:46}. Indeed, let $x=x_{0,r}$ with $r>0$, given a differential module
 $(M,\nabla)$ over $(\Hx,S\d)$, the following results show how to compute the
 radii of ${\forp}_*(M,\nabla)$. 
 \begin{Pro}[{\cite[Theorem~10.5.1]{Ked}}]\label{sec:bf-frob-spectr-4}
   Let $x=x_{0,r}\in \Ak\setminus k$. Let $(M,\nabla)$ be a
   differential module over $(\Hx, S\d)$ of rank $n$ with subsidiary
   spectral radii $\Rm_1(x)\leq\cdots\leq \Rm_n(x)$. Then the multiset
   of spectral subsidiary radii
   of ${\forp}_*(M,\nabla)$ is   \begin{equation}
     \label{eq:49}
     \bigcup_{i=1}^n
     \begin{cases}
       \{\Rm_i(x)^p,\, \omega^p r(x)^p \, (p-1\text{ times})\}&
       \Rm_i(x)\geq\omega r(x),\\
       \{|p|r(x)^{p-1}\Rm_i(x) \, (p\text{ times})\}&
       \Rm_i(x)\leq\omega r(x)
     \end{cases}.
   \end{equation}
 \end{Pro}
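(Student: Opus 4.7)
My plan is to reduce the statement to the pure case and then split according to whether $R := \Rm_i(x)$ exceeds the over-solvability threshold $\omega r(x)$. Since the Frobenius push-forward functor ${\forp}_*$ is exact (it is essentially restriction of scalars along the finite étale extension $\Hy \hookrightarrow \Hx$), any Jordan--Hölder filtration $0 = M_0 \subset M_1 \subset \cdots \subset M_\nu = M$ of $(M,\nabla)$ pushes forward to a filtration whose graded pieces have subsidiary radii that aggregate into the multiset of radii of ${\forp}_*(M,\nabla)$. Hence it suffices to prove each case when $(M,\nabla)$ is pure with all subsidiary radii equal to a single value $R$.

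In the large radius case $R > \omega r(x)$, Theorem~\ref{sec:bf-frob-spectr-2} (Christol--Dwork) provides a Frobenius antecedent $(N, \nabla_N)$ such that $(M,\nabla) \simeq {\forp}^*(N,\nabla_N)$ with $\Rd{N}_1(y) = R^p$. The projection formula \eqref{eq:42} then yields the isomorphism ${\forp}_*(M,\nabla) \simeq \bigoplus_{i=0}^{p-1}(N, \nabla_N + i)$. The term $i=0$ carries radius $R^p$, and for each $i \in \{1,\ldots,p-1\}$, the twist $(N, \nabla_N + i)$ has all radii dropping to the generic bound $\omega^p r(y) = \omega^p r(x)^p$ — this is the expected behavior because adding a non-integer constant (recall $i \not\equiv 0 \bmod p$ in the relevant exponential character) destroys the over-convergence of solutions and forces the solvability threshold.

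In the small radius case $R \leq \omega r(x)$, I would work through a cyclic vector representation $(M,\nabla) \simeq (\DD_{\Hx}/\DD_{\Hx}\cdot P(S\d), S\d)$ and use the Newton polygon interpretation of subsidiary radii provided by Theorem~\ref{sec:bf-diff-polyn-1}. Since ${\forp}_*(M,\nabla)$ is a module over $(\Hy, pS\d)$, its radii must be read off from the Newton polygon of an attached differential polynomial in the variable $pS\d$, for which the relation \eqref{eq:28} of Proposition~\ref{sec:bf-like-between-3} is the crucial input: the slopes get multiplied in an explicit fashion. Combining this rescaling with the fact that taking pushforward corresponds (via $\Hx = \bigoplus_{i=0}^{p-1} \Hy \cdot S(x)^i$) to passing from a rank-$n$ Newton polygon to a rank-$np$ Newton polygon that is essentially the $p$-fold concatenation at each slope, one obtains exactly $p$ copies of the value $|p|r(x)^{p-1}R$ for each small subsidiary radius $R$.

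The main obstacle will be the large-radius case, specifically confirming that all the twists $(N, \nabla_N + i)$ for $i=1,\ldots,p-1$ have radii equal to the solvability bound $\omega^p r(y)$ rather than something larger. This amounts to an explicit computation of the radius of convergence of $\exp(iS/p)$-type solutions, and requires a careful use of the identity $\omega = |p|^{1/(p-1)}$ so that the boundary values $R = \omega r(x)$ match consistently across both cases (indeed $|p|r(x)^{p-1}\cdot \omega r(x) = \omega^p r(y)$, as expected). Once this boundary matching is established, the reduction to the pure case and the Newton polygon bookkeeping become standard.
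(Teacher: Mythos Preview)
The paper does not prove this statement; it is cited directly from \cite[Theorem~10.5.1]{Ked}. Your reduction to the pure case is correct, and your treatment of the large-radius case via the antecedent theorem and the projection formula $\forp_*\forp^* N \simeq \bigoplus_{i=0}^{p-1}(N,\nabla_N+i)$ is a valid route. The obstacle you flag is real but surmountable: the rank-one module $L_i=(\Hy,pS\dy+i)$ for $1\le i\le p-1$ has radius exactly $|p|\,\omega\, r(y)=\omega^p r(y)$ (a direct binomial-series computation, since $i/p\notin\ZZ_p$ and $|i/p|=|p|^{-1}$), and then the standard rank-one twist lemma (e.g.\ \cite[Lemma~6.2.8]{Ked}) forces $(N,\nabla_N+i)=N\otimes L_i$ to be pure with that radius, because $\Rd{N}_1(y)=R^p>\omega^p r(y)$ strictly.

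Your small-radius argument, however, has a genuine gap. Proposition~\ref{sec:bf-like-between-3} compares the Newton polygons of the \emph{same} module under the two derivations $\d$ and $p^lS\d$ on the \emph{same} field; it says nothing about how the Newton polygon transforms under $\forp_*$, which multiplies the rank by $p$ and changes the base field from $\Hx$ to $\Hy$. Your claim that the pushforward polygon is ``essentially the $p$-fold concatenation at each slope'' is precisely what needs proof, and it is not automatic: in the decomposition $\Hx=\bigoplus_{j=0}^{p-1}\Hy\cdot S(x)^j$, the matrix of $\forp_*\nabla$ in the induced basis mixes the blocks whenever the entries of the original connection matrix lie in $\Hx\setminus\Hy$, so the slopes cannot simply be read off. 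Kedlaya's proof of this case goes through an explicit Newton-polygon comparison for the pushforward, and that computation is the substantive missing ingredient in your sketch.
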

 In practice, by induction, we can prove the following result.
 \begin{cor}\label{H2}
   Let $x=x_{0,r}\in \Ak\setminus k$. Let $(M,\nabla)$ be a
   differential module over $(\Hx,S\d)$ of rank $n$ with subsidiary
   radii $\Rm_1(x)\leq \cdots \leq \Rm_m(x)$ . Suppose that
   $(M,\nabla)$ is pure and $\omega^{\frac{1}{p^{l-1}}} r(x)\leq
   \Rm_1(x)\leq \omega^{\frac{1}{p^l}} r(x)$, with
   $l\in\NN\setminus\{0\}$. Then the multiset of subsidiary spectral radii of $(\forp)^l_*(M,\nabla)$ is
   \begin{equation}
   \label{eq:H1}
       \bigcup_{i=2}^l\{|p|^i \omega r(x)^{p^l}\, (n(p-1)p^{i-1}\text{ times})\}\cup \{|p|\omega r(x)^{p^l}\, (n(p-1) \text{ times})\}\cup \{\Rm_1(x)^{p^l}\, (n \text{ times})\}.
   \end{equation}
 \end{cor}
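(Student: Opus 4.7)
The plan is to proceed by induction on the number of Frobenius push-forwards applied, establishing the following stronger claim: for each $0\leq j\leq l$, the multiset of subsidiary radii of $(\forp)^j_*(M,\nabla)$ equals
\begin{equation}
\Sigma_j:=\{\Rm_1(x)^{p^j}\ (n\text{ times})\}\cup\bigcup_{i=1}^j\{|p|^i\omega\, r(x)^{p^j}\ (n(p-1)p^{i-1}\text{ times})\}.
\end{equation}
The case $j=l$ of this claim is exactly the corollary. The base case $j=0$ is just the purity hypothesis: $\Sigma_0=\{\Rm_1(x)\ (n\text{ times})\}$, with the second union empty.

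For the inductive step, assuming the formula at step $j<l$, I would apply Proposition~\ref{sec:bf-frob-spectr-4} at the point $\forp^j(x)=x_{0,r^{p^j}}$ separately to each distinct value of radius appearing in $\Sigma_j$. The role played by the hypothesis $\omega^{1/p^{l-1}}r(x)\leq \Rm_1(x)$ is that it rewrites as $\Rm_1(x)^{p^{l-1}}\geq\omega\, r(x)^{p^{l-1}}$, hence a fortiori $\Rm_1(x)^{p^j}\geq\omega\, r(x)^{p^j}$ for every $j\leq l-1$. Consequently the ``main'' radius $\Rm_1(x)^{p^j}$ lies in the first case of Proposition~\ref{sec:bf-frob-spectr-4}, so each of its $n$ copies contributes one copy of $\Rm_1(x)^{p^{j+1}}$ and $p-1$ copies of $\omega^p r(x)^{p^{j+1}}$. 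On the other hand every other radius in $\Sigma_j$ has the form $|p|^i\omega\, r(x)^{p^j}$ with $i\geq 1$, which is strictly smaller than $\omega\, r(x)^{p^j}$ (since $|p|^i<1$), hence falls in the second case of the proposition; its $n(p-1)p^{i-1}$ copies thus produce $n(p-1)p^i$ copies of $|p|r(x)^{p^j(p-1)}\cdot|p|^i\omega\, r(x)^{p^j}=|p|^{i+1}\omega\, r(x)^{p^{j+1}}$.

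Summing these contributions, using the identity $\omega^p=|p|\omega$ (which follows at once from $\omega=|p|^{1/(p-1)}$) to identify the $n(p-1)$ copies of $\omega^p r(x)^{p^{j+1}}$ coming from the main radius with the $i=1$ term, and reindexing $i\mapsto i+1$ in the contributions coming from the small radii, one recovers exactly $\Sigma_{j+1}$. The main obstacle here is not analytic — all the hard content already sits in Proposition~\ref{sec:bf-frob-spectr-4} — but purely combinatorial: tracking multiplicities through $l$ iterations and matching the resulting expression to the claimed closed form. The only delicate point is the boundary case $\Rm_1(x)=\omega^{1/p^{l-1}}r(x)$, in which at step $j=l-1$ the radius $\Rm_1(x)^{p^{l-1}}$ coincides with $\omega\, r(x)^{p^{l-1}}$ and a priori falls in the second case of the proposition; but the two cases of Proposition~\ref{sec:bf-frob-spectr-4} agree on that boundary, since $|p|\,r(x)^{p-1}\cdot\omega\, r(x)=|p|\omega\, r(x)^p=\omega^p r(x)^p$, so the inductive computation goes through unchanged.
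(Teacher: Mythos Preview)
Your proof is correct and follows exactly the approach indicated by the paper, which simply states that the result follows by induction from Proposition~\ref{sec:bf-frob-spectr-4}. Your treatment is in fact more detailed than the paper's: you spell out the inductive invariant $\Sigma_j$, verify the rank count, and carefully handle the boundary case $\Rm_1(x)=\omega^{1/p^{l-1}}r(x)$ where the two alternatives in Proposition~\ref{sec:bf-frob-spectr-4} coincide.
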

 \begin{rem}\label{H3}
   We keep the assumption of Corollary~\ref{H2}. We can observe from
   equation~\eqref{eq:H1} that the greatest spectral subsidiary
   spectral radius of $(\forp)^l_*(M,\nabla)$ is equal to $\Rm_1(x)^{p^l}$.
This criterion allows to recover easily the radii of convergence of $(M,\nabla)$. \end{rem}
\subsection{The main result}We assume here that $\crk=p>0$. Now we will determine
the spectrum of any differential equation and establish the link with the subsidiary radii.

\begin{Theo}[{Robba, \cite[Corollary~3.6.9]{np3}, \cite[Theorem~10.6.2]{Ked}}]\label{sec:bf-spectr-comp}
Let $x=x_{0,r}\in\Ak\setminus k$. Let $(M,\nabla)$ be a differential
module over $(\Hx,S\d)$. Then there exists a unique decomposition
\begin{equation}
  \label{eq:52}
  (M,\nabla)=\bigoplus_{\rho\leq r(x)}(M_\rho,\nabla_\rho)
\end{equation}
of differential modules, such that every sub-quotient $(N,\nabla_N)$of
$(M_\rho,\nabla_\rho)$ satisfies $\Rd{N}_{i}(x)=\rho$
. This decomposition is called \emph{the spectral decomposition}.
\end{Theo}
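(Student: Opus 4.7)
The plan is to derive this from the tools assembled earlier in the paper rather than invoking \cite{Ked} as a black box. The strategy combines Frobenius descent with the strong spectral version of Young's theorem (Theorem~\ref{sec:spectr-vers-youngs-9}) and its underlying factorization of the cyclic differential polynomial (Theorem~\ref{sec:spectr-vers-youngs-7}). The decomposition will be produced first in the ``small radii'' regime and then pulled back through iterated Frobenius antecedents to handle larger radii.

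First I would pick a cyclic vector so that $(M,\nabla)\simeq(\DD_{\Hx}/\DD_{\Hx}\cdot P(S\d),S\d)$ for a monic $P$. Assume temporarily that $\Rm_i(x)\leq\omega\,r(x)$ for all $i$. Proposition~\ref{sec:bf-like-between-3} transports this smallness to the commutative polynomial side (i.e.\ it forces $\min_i r_k(\pik{\wac}(z_i))>\nor{S\d}$ for the roots $z_i$ of $P(T)$), and Theorem~\ref{sec:spectr-vers-youngs-7} then produces a factorization $P(S\d)=P_{\omega_\nu}(S\d)\cdots P_{\omega_1}(S\d)$ in which the commutative roots of each factor $P_{\omega_i}$ all project to a single point $\omega_i\in\Ak$, and the $\omega_i$ are pairwise distinct. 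If instead some $\Rm_i(x)>\omega\,r(x)$, iterate the Frobenius antecedent of Theorem~\ref{sec:bf-frob-spectr-2} and Remark~\ref{sec:bf-frob-spectr-6} a finite number of times $l$ to pass to $(N,\nabla_N)$ over $(\h{\forp^l(x)},S\d)$ with $\Rd{N}_i=\Rm_i(x)^{p^l}$ small enough to reduce to the previous case; the decomposition we will obtain for $(N,\nabla_N)$ pulls back to one for $(M,\nabla)$ through $(\forp^l)^*$.

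The main obstacle is promoting the filtration associated to the factorization $P=P_{\omega_\nu}\cdots P_{\omega_1}$ into an honest direct sum. By Theorem~\ref{sec:spectr-vers-youngs-9}, successive subquotients of this filtration have spectra reduced to the distinct points $\{\omega_i\}$, which are pairwise disjoint compact subsets of $\Ak$. I would exploit this disjointness to split the filtration by a resolvent/idempotent construction in the $k$-Banach algebra $\Lk{M}$: for each $\omega_i$ one builds a bounded projector $\pi_i$ commuting with $\nabla$ out of the resolvent $(\nabla-T)^{-1}$ restricted to a neighbourhood separating $\omega_i$ from the rest of $\Sigma_{\nabla,k}(\Lk{M})$. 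Equivalently, since $P$ factors in $\Hx[T]$ into coprime factors (their root sets have disjoint images under $\pik{\wac}$), a Bézout argument in $\DD_{\Hx,S\d}/\DD_{\Hx,S\d}\cdot P$ produces complementary idempotent differential endomorphisms whose images are the desired summands $M_{\omega_i}$.

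Once the direct sum $(M,\nabla)=\bigoplus_i(M_{\omega_i},\nabla_{\omega_i})$ is in place, the identification with subsidiary radii is straightforward: in the small-radii regime, Proposition~\ref{sec:bf-like-between-4} pairs each $\omega_i$ with a unique value of $\Rm_j(x)$ through $\Rm_j(x)=\omega\,|p|^l r(x)/|T(\omega_i)|$, and in the Frobenius-descended regime the pairing is propagated by $\Rd{N}_i(y)=\Rm_i(x)^{p^l}$ from Theorem~\ref{sec:bf-frob-spectr-2}. Collecting summands that share a common subsidiary radius value $r$ produces the decomposition indexed by $r\leq r(x)$, and each subquotient of $(M_r,\nabla_r)$ has all its radii equal to $r$ by construction. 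Finally, uniqueness follows from the intrinsic (basis-free) nature of the subsidiary radii: any two such decompositions must both coincide with the canonical filtration by subsidiary radius, so they agree on the nose.
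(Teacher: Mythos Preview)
The paper does not prove this statement: it is quoted verbatim from \cite[Theorem~10.6.2]{Ked} as an external input, and the subsequent results (Corollary~\ref{H1}, Proposition~\ref{sec:bf-spectr-comp-9}, Theorem~\ref{sec:bf-spectr-comp-3}) are built on top of it. Your attempt to re-derive it from the other tools in the paper is more ambitious than what the paper does, and it has real gaps.

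First, the reduction strategy is circular in the mixed-radius case. Theorem~\ref{sec:bf-frob-spectr-2} requires $\Rm_1(x)>\omega\,r(x)$, i.e.\ the \emph{smallest} radius must exceed $\omega\,r(x)$; it does not apply when some radii are $\leq\omega\,r(x)$ and others are $>\omega\,r(x)$. To separate those two regimes before invoking Frobenius descent is precisely the content of the spectral decomposition you are trying to prove. Second, even in your ``all radii small'' branch, the passage from $\Rm_i(x)\leq\omega\,r(x)$ to the hypothesis of Theorem~\ref{sec:spectr-vers-youngs-7} is not what Proposition~\ref{sec:bf-like-between-3} gives: that proposition controls $|z_i|$, whereas Theorem~\ref{sec:spectr-vers-youngs-7} needs $r_k(\pik{\wac}(z_i))>\nor{S\d}=1$, i.e.\ $|z_i-a|>1$ for \emph{every} $a\in k$. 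That is the stronger assumption ``$\Rd{M_a}_i(x)<\omega\,r(x)$ for all $a$'' used later in Proposition~\ref{sec:bf-like-between-4}, not merely smallness for $a=0$.

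Third, and most seriously, the step that upgrades the filtration coming from $P=P_{\omega_\nu}\cdots P_{\omega_1}$ to a direct sum is the heart of Robba's theorem, and neither of your sketches closes it. The resolvent/functional-calculus idempotents live in $\Lk{M}$ and commute with $\nabla$, but they are only $k$-linear; since $[\nabla,f]=S\d(f)\ne 0$ for $f\in\Hx$, there is no reason their images are $\Hx$-submodules, so you do not get a decomposition of differential modules. The ``B\'ezout'' alternative is equally unjustified: disjointness of the images $\pik{\wac}(\text{roots})$ is a statement about commutative polynomials and does not by itself produce a left B\'ezout identity in the non-commutative ring $\DD_{\Hx,S\d}$. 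The actual proof in \cite{Ked} proceeds by a Hensel-type lifting/descent argument that is not reproduced anywhere in this paper.
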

\begin{cor}\label{H1}
  The spectral decomposition \eqref{eq:52} can be refined as
  follows:
  \begin{equation}
      (M,\nabla)=\bigoplus_{i=1}^{\nu}(M_{i},\nabla_{i})
  \end{equation}
  such that for each $i$ and for all $a\in k$, every sub-quotient of
  $(M_{i},\nabla_{i}-a)$ is pure with radius equal to $\Rd{(M_{i},\nabla_{i}-a)}$.
\end{cor}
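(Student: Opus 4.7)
The plan is to iterate the spectral decomposition of Theorem~\ref{sec:bf-spectr-comp} across shifts by scalars $a\in k$. First I apply Theorem~\ref{sec:bf-spectr-comp} to $(M,\nabla)$ to obtain the initial decomposition $(M,\nabla)=\bigoplus_{r\leq r(x)}(M_r,\nabla_r)$. At this stage, every sub-quotient of each $(M_r,\nabla_r)$ is pure of radius $r$, so the desired property already holds in the special case $a=0$; the problem is thus to further refine so that the property survives every shift.

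Now I examine each summand $(M_r,\nabla_r)$ and test whether the purity of all sub-quotients persists under every shift $\nabla_r-a$ with $a\in k$. If it fails for some $a$, then $(M_r,\nabla_r-a)$ admits sub-quotients with distinct subsidiary radii, and applying Theorem~\ref{sec:bf-spectr-comp} to the shifted differential module over $(\Hx,S\d)$ yields a non-trivial decomposition $(M_r,\nabla_r-a)=\bigoplus_s (N_s,\mu_s)$. The key observation is that scalar multiplication by $a\in k$ preserves every $\Hx$-vector subspace, so any decomposition of the underlying $\Hx$-module that is stable under $\nabla_r-a$ is automatically stable under $\nabla_r$. Hence the splitting above produces a non-trivial decomposition $(M_r,\nabla_r)=\bigoplus_s(N_s,\mu_s+a)$ of differential modules over $(\Hx,S\d)$.

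I then iterate this refinement on every piece produced. Since each refinement step strictly increases the total number of direct summands and this number is bounded above by $\dim_{\Hx} M$, the procedure terminates after finitely many steps and yields a decomposition $(M,\nabla)=\bigoplus_{i=1}^{\nu}(M_i,\nabla_i)$.

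I expect the main, albeit mild, obstacle to be verifying that the terminal configuration satisfies the stated uniform purity condition. Once no further refinement is possible, for each summand $(M_i,\nabla_i)$ and each $a\in k$ the spectral decomposition of $(M_i,\nabla_i-a)$ provided by Theorem~\ref{sec:bf-spectr-comp} must be trivial; by the defining property of that decomposition, every sub-quotient of $(M_i,\nabla_i-a)$ shares the same subsidiary radii, so $(M_i,\nabla_i-a)$ is pure with radius equal to $\Rd{(M_i,\nabla_i-a)}$, exactly as required. Uniqueness of the refined decomposition then follows from the uniqueness of each spectral decomposition appearing along the way.
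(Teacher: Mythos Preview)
Your proof is correct and follows essentially the same approach as the paper: both apply the spectral decomposition of Theorem~\ref{sec:bf-spectr-comp} to the shifted modules $(M,\nabla-a)$ for varying $a\in k$ and use the finite rank of $M$ to conclude. The only difference is presentational—the paper phrases it as taking the common refinement of all these decompositions at once (writing $(M,\nabla)=\bigcap_{a\in k}\bigoplus_{r}(M_{r,a},\nabla_{r,a}+a)$ and then ``developing'' using finite rank), whereas you carry out the same refinement iteratively; the underlying argument is identical.
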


\begin{proof}
  We will proceed by contradiction and suppose that a decomposition
  \begin{equation}
    \label{eq:61}
    (M,\nabla)=\bigoplus_{i=1}^{\nu}(M_{i},\nabla_{i})
  \end{equation}
 such that for each $i$ and for all $a\in k$ the differential module $(M_{i},\nabla_{i}-a)$
 is pure does not exist. 
  Then we prove by
  induction that there exists a family of finite sets $(I_l)_{l\in
    \NN}$ such that $\text{Card}(I_l)<\text{Card}(I_{l+1})$ such that
  \[(M,\nabla)=\bigoplus_{i\in I_l} (M_i,\nabla_i),\]
  with $(M_i,\nabla_i)\ne 0$. We set $I_0$ such that
  \[(M,\nabla)=\bigoplus_{i\in I_0} (M_i,\nabla_i),\]
  is the decomposition induced by
  Theorem~\ref{sec:bf-spectr-comp}. We suppose now that the induction
  hypothesis is true until $l$, i.e there exist $(I_j)_{j\leq l}$ with $\text{Card}(I_j)<\text{Card}(I_{j+1})$ such that
  \[(M,\nabla)=\bigoplus_{i\in I_j} (M_i,\nabla_i),\]
   with $(M_i,\nabla_i)\ne 0$. By the contradiction hypothesis there exists
   $i_0\in I_l$ and $a_l\in k$ such that $(M_{i_0},\nabla_{i_0}-a)$ is
   not pure, then there exists a finite set $J_l$ with
   $\text{Card}(J_l)\geq 2$, such that
   \[(M_{i_0},\nabla_{i_0}-a)=\bigoplus_{j\in
       J_l}(M_j,\nabla_j-a_l),\]
   with $(M_j,\nabla_j-a_l)\ne 0$. We set $I_{l+1}=(I_l\setminus
   \{i_0\})\coprod J_l$, then we get
   \[(M,\nabla)=\bigoplus_{i\in I_{l+1}} (M_i,\nabla_i),\]
with $(M_i,\nabla_i)\ne 0$. Since for all $l$ and all $i\in I_l$ we
have $(M_i,\nabla_i)\ne 0$, we should have for all $l$,
$\text{Card}(I_l)\leq \text{dim}(M)=n$. Hence we obtain a strictly
increasing sequence of bounded integers, which is absurd. We 
consider now a decomposition like in \eqref{eq:61}, then by
Theorem~\ref{sec:bf-spectr-comp} every sub-quotient of
  $(M_{i},\nabla_{i}-a)$ is pure with radius equal to $\Rd{(M_{i},\nabla_{i}-a)}$. 
\end{proof}

\begin{Theo}[(Christol-Dwork) {\cite[Theorem~6.5.3]{Ked}}]\label{H4}
Let $\Omega\in E(k)$ and $d:\Omega\to \Omega$ be a bounded derivation. Let $\cL\in \DD_\Omega\setminus k$, we set $(M,\nabla):=(\DD_\Omega/\DD_\Omega\cdot \cL,d)$. Then we have 
\begin{equation}
    \label{eq:H3}
    \max(\nor{d},\nsp{\nabla})=\frac{\omega}{\cR_1^{\cL,d}}
\end{equation}
\end{Theo}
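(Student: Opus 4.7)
The plan is to unfold the definition $\cR_1^{\cL,d} = \omega/\max(\nor{d}, |\lambda_1|)$, where $|\lambda_1|$ is the largest absolute value of a root of $\cL$ viewed as a commutative polynomial, and reduce the claim to the equivalent identity $\max(\nor{d}, \nsp{\nabla}) = \max(\nor{d}, |\lambda_1|)$. This identity will be proved by a dichotomy on the comparison of $|\lambda_1|$ with $\nor{d}$.

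Case A ($|\lambda_1| > \nor{d}$): here the target is $\nsp{\nabla} = |\lambda_1|$, which is the classical Young theorem. The upper bound $\nsp{\nabla} \leq |\lambda_1|$ I would obtain by a weighted norm on the cyclic basis $\{m, \nabla(m), \ldots, \nabla^{n-1}(m)\}$, namely $\|\sum f_i \nabla^i(m)\|_r := \max_i |f_i| r^i$ with $r := |\lambda_1|$; combining the defining recurrence $\nabla^n(m) = -\sum_{j \geq 1} a_j \nabla^{n-j}(m)$ with the Newton polygon bound $|a_j| \leq |\lambda_1|^j$ produces $\|\nabla\|_r \leq r$, hence $\nsp{\nabla} \leq |\lambda_1|$. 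The reverse inequality I would extract either from \cite[Theorem~6.5.3]{Ked} directly, or intrinsically by applying the slope factorization (Lemma~\ref{sec:spectr-vers-youngs}) at a radius strictly between $\max(\nor{d}, |\lambda_2|)$ and $|\lambda_1|$ to isolate a quotient whose roots all have absolute value exactly $|\lambda_1|$, then applying Theorem~\ref{sec:spectr-vers-youngs-9} to this quotient and transferring the lower bound back to $\nabla$ via the short exact sequence of Lemma~\ref{sec:spectr-vers-youngs-1}.

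Case B ($|\lambda_1| \leq \nor{d}$): here I want $\nsp{\nabla} \leq \nor{d}$, which I would obtain by a shift trick that reduces to Case A. For any $a \in k$ with $|a| > \nor{d}$, the cyclic module $(M, \nabla - a)$ has attached commutative polynomial $\cL(T + a)$ with roots $\lambda_i - a$, and the ultrametric inequality gives $|\lambda_i - a| = |a| > \nor{d}$ for every $i$. Applying Case A to $\nabla - a$ yields $\nsp{\nabla - a} = |a|$, which translates via $\Sigma_{\nabla - a} = \Sigma_\nabla - a$ to $\Sigma_\nabla \subset \disf{a}{|a|}$. Intersecting over all such $a$ produces $\Sigma_\nabla \subset \bigcap_{|a| > \nor{d}} \disf{a}{|a|} \subset \disf{0}{\nor{d}}$, whence $\nsp{\nabla} \leq \nor{d}$. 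When $k$ is trivially valued and $\nor{d} \geq 1$ no such $a$ exists in $k$, so I would first extend scalars to $k'/k$ with $|k'^*|$ unbounded and use compatibility of the spectrum with the projection $\pik{k'}:\A{k'} \to \Ak$ (\cite[Proposition~7.1.6]{Ber}) to preserve the spectral radius.

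The main obstacle is the lower bound in Case A. The upper bound is elementary, and the shift argument of Case B is purely formal once Case A is settled, but $\nsp{\nabla} \geq |\lambda_1|$ requires producing an actual point of $\Sigma_{\nabla,k}(\Lk{M})$ at distance $|\lambda_1|$ from $0 \in \Ak$. If all roots of maximal absolute value project far from $k$, i.e.\ $r_k(\pik{\wac}(\lambda_i)) > \nor{d}$, Theorem~\ref{sec:spectr-vers-youngs-9} gives the spectrum outright and the lower bound is immediate; otherwise one must separate out those $\lambda_i$ with $|\lambda_i| = |\lambda_1|$ via Lemma~\ref{sec:spectr-vers-youngs} and glue the spectra of the factors using Lemma~\ref{sec:spectr-vers-youngs-1}, handling the degenerate case $\lambda_i \in k$ (where $r_k = 0$) by direct inspection of the rank-one sub-quotient $(\Omega, d - \lambda_i)$.
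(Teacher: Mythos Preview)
The paper does not prove Theorem~\ref{H4}; it is quoted as a known result from \cite[Theorem~6.5.3]{Ked}. So there is no in-paper proof to compare against, and your proposal is an attempt to supply one.

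Your overall dichotomy is sound, and Case~B is correct as written --- in fact it only needs the \emph{upper} bound $\nsp{\nabla-a}\leq |a|$ from Case~A (enough to get $\Sigma_\nabla\subset\disf{a}{|a|}=\disf{0}{|a|}$), and that upper bound is exactly your weighted-norm computation, which is fine.

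The genuine gap is the lower bound in Case~A. Your first option, citing \cite[Theorem~6.5.3]{Ked}, is circular: that \emph{is} the theorem you are proving. Your second option, via the spectral Young theorem, does not match hypotheses: Theorem~\ref{sec:spectr-vers-youngs-9} requires $\min_i r_k\big(\pik{\wac}(z_i)\big)>\nor{d}$, which is strictly stronger than the Case~A assumption $|\lambda_1|>\nor{d}$. After you factor off a piece $Q$ whose commutative roots all have absolute value $|\lambda_1|$, those roots can still project to points of $\Ak$ with $r_k\leq\nor{d}$ (any root $z$ with some $c\in k$ satisfying $|z-c|\leq\nor{d}$ will do this, and $c$ need not equal $z$). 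Your patch --- ``direct inspection of the rank-one sub-quotient $(\Omega,d-\lambda_i)$'' --- does not work: the condition $r_k(\pik{\wac}(z_i))\leq\nor{d}$ does not force $z_i\in k$, and even when $z_i\in k$ there is no reason for the \emph{differential} polynomial $Q(d)$ to split off a linear factor $d-z_i$. The lower bound $\nsp{\nabla}\geq|\lambda_1|$ genuinely needs a different mechanism (in Kedlaya's treatment it comes from Newton-polygon control of the iterates $\nabla^m$ in the cyclic basis, not from the Berkovich spectrum).
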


\begin{rem}\label{H6}
 From Theorem~\ref{H4} we can deduce the following. Let $x\in \Ak\setminus k$. For a differential module $(M,\nabla)$ over $(\Hx,g\d)$, with $g\in\Hx$, if $\Rm_1(x)\geq \omega.r(x)$, then we have
 \begin{equation}
 \label{eq:H4}
     \nsp{\nabla}\leq \nor{g\d}.
 \end{equation}
\end{rem}

\begin{Lem}[{\cite[Corollary~10.6.3]{Ked}}]\label{H5}
  Let $x=x_{0,r}\in \Ak\setminus k$. Let $(M,\nabla)$ be a
  differential module over $(\Hx,S\d)$. Suppose that all the spectral radii of $(M,\nabla)$ are not solvable, then $\nabla$ is a bijective operator.
\end{Lem}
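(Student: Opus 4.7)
The plan is to show that $0\notin\Sigma_{\nabla,k}(\Lk{M})$, which is equivalent to $\nabla$ being invertible in $\Lk{M}$. It proceeds by reducing to a pure block via the spectral decomposition, pushing forward by the Frobenius to arrange arbitrarily small subsidiary radii, and then invoking a Young-type theorem to locate the spectrum strictly away from $0\in\Ak$.

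\emph{Reduction to a pure block.} First I would apply the spectral decomposition (Theorem~\ref{sec:bf-spectr-comp}) and its refinement Corollary~\ref{H1} to write $(M,\nabla)=\bigoplus_i(M_i,\nabla_i)$ so that every $k$-translate $(M_i,\nabla_i-a)$ is pure. Since $\Sigma_{\nabla}=\bigcup_i\Sigma_{\nabla_i}$ by Remark~\ref{sec:spectr-diff-module}, bijectivity of $\nabla$ follows from that of each $\nabla_i$. The non-solvability hypothesis ($\Rm_j<r(x)$ for every $j$) forces each block to be pure of radius $R_i<r(x)$, so it suffices to treat a pure $(M,\nabla)$ with $R<r(x)$.

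\emph{Frobenius push-forward to small radii.} Next I would pick the minimal integer $l\geq 1$ such that $R^{p^l}<\omega\,r(x)^{p^l}$; such $l$ exists because $R/r(x)<1$ and $\omega^{1/p^l}\to 1$ as $l\to\infty$. Set $y:=\forp^l(x)$. By Corollary~\ref{H2} (or Proposition~\ref{sec:bf-frob-spectr-4} when $l=1$ and $R\leq\omega\,r(x)$), the push-forward $(\forp)^l_*(M,\nabla)$, viewed over $(\h{y},p^lS\d)$, has all its subsidiary radii strictly less than $\omega\,r(y)$. By Proposition~\ref{sec:push-forw-spectr} the spectrum is preserved: $\Sigma_{\nabla}=\Sigma_{(\forp)^l_*\nabla}$, so it suffices to show that $0\notin\Sigma_{(\forp)^l_*\nabla}$.

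\emph{Spectrum stays away from $0$.} Using a cyclic vector, present the push-forward as $\DD_{\h{y},p^lS\d}/\DD_{\h{y},p^lS\d}\cdot Q(p^lS\d)$ for a monic $Q(T)\in\h{y}[T]$ of degree $n$. Christol--Dwork (Theorem~\ref{H4}) combined with Proposition~\ref{sec:bf-like-between-3} translates the smallness of the radii into the lower bound $|\zeta_j|>|p|^l=\|p^lS\d\|$ for every root $\zeta_j$ of $Q(T)$ as a commutative polynomial. Theorem~\ref{sec:bf-frob-spectr} (applied via the Frobenius descent of $Q$, or the strong Young theorem~\ref{sec:spectr-vers-youngs-9}) then gives
\[
\Sigma_{(\forp)^l_*\nabla}\;=\;\bigcup_{i=0}^{p^l-1}\bigl(\pik{\widehat{\h{y}^{alg}}}(\{\zeta_1,\ldots,\zeta_n\})+i\bigr)\;\subset\;\{x\in\Ak:|T(x)|>|p|^l\},
\]
which excludes $0\in\Ak$. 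Hence $0\notin\Sigma_\nabla$, so $\nabla$ is invertible in $\Lk{M}$ and therefore bijective.

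\emph{Principal difficulty.} The crux of the last step is to upgrade the size bound $|\zeta_j|>|p|^l$ to the stronger hypothesis $\min_j r_k(\pik{\widehat{\h{y}^{alg}}}(\zeta_j))>|p|^l$ required by Theorem~\ref{sec:bf-frob-spectr}. This rests on the Frobenius descent of $Q$ (its coefficients lie in $\h{y}$), together with the purity of every $k$-translate of each block from Corollary~\ref{H1}, which prevents the $\zeta_j$ from clustering near $k^{alg}$-points of absolute value at most $|p|^l$.
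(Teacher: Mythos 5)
The paper offers no proof of this lemma at all: it is imported verbatim from Kedlaya (Corollary~10.6.3), whose argument is via Newton polygons/visible decompositions and never passes through the Berkovich spectrum. So your proposal is necessarily a "different route" — but it has a genuine gap, precisely at the point you flag as the "principal difficulty," and that gap cannot be closed along the lines you suggest. The hypothesis of Theorem~\ref{sec:spectr-vers-youngs-9} (and of Theorem~\ref{sec:bf-frob-spectr}) is $\min_j r_k(\pik{\wac}(\zeta_j))>\nor{d}$, i.e.\ every root must stay at distance $>\nor{d}$ from \emph{all} of $k^{alg}$, whereas non-solvability plus Christol--Dwork/Proposition~\ref{sec:bf-like-between-3} only yields $|\zeta_j|>\nor{d}$, the distance from the single point $0$. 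These are not the same, and the stronger condition is simply false under the hypotheses of the lemma: take $(M,\nabla)=(\Hx,S\d-a)$ with $a\in k\setminus\ZZ_p$ and $\delta(a)$ very small. This module is pure, all its $k$-translates are pure, and it is non-solvable by Lemma~\ref{sec:bf-spectr-comp-6}; yet the attached commutative polynomial is $T-a$ with root $a\in k$, so $r_k(\pik{\wac}(a))=0$, and after any Frobenius push-forward the roots are the $k$-points $a-i$. No choice of $l$ ever puts you in the range of the spectral Young theorem, and purity of translates does nothing to prevent this.

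More fundamentally, your target inclusion $\Sigma_{\nabla}\subset\{x:\,|T(x)|>|p|^l\}$ (equivalently $\Sigma_\nabla\cap k=\emptyset$) is false in general: in the example above $\Sigma_{S\d-a}=\{a\}+\ZZ_p$, a set of $k$-points. The correct conclusion is only the much weaker $0\notin\Sigma_\nabla$, which holds because $a\notin\ZZ_p$ — and this is exactly the content of the lemma, not something one can extract from a theorem that localizes the whole spectrum away from $k$. Note also that the paper's own logic runs in the opposite direction: Lemma~\ref{sec:bf-spectr-comp-1} \emph{uses} the present lemma (plus the Open Mapping Theorem and the stronger hypothesis that \emph{every} translate $\nabla-a$ is non-solvable) to deduce $\Sigma_\nabla\subset\Ak\setminus k$. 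Your first two reduction steps (spectral decomposition to pure blocks, Frobenius push-forward to small radii, invariance of the spectrum under push-forward) are fine and non-circular, but the final step needs an argument that excludes the single point $0$ without controlling $r_k$ of the roots — e.g.\ Kedlaya's direct proof, or an explicit inversion of the operator on a cyclic presentation — and that idea is missing.
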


\begin{Lem}\label{sec:bf-spectr-comp-1}
Let
$x=x_{0,r}\in\Ak\setminus k$ and we set $x^{p^l}:=\forp^l(x)$. Let $(M,\nabla)$ be a differential
module over $(\Hx,S\d)$. For each $a\in k$ we set
$(M_a,\nabla_a):=(M,\nabla-a)$. Assume that $\Rd{M_a}_i(x)< r$ for all
$i$ and $a\in k$. There
exists $l\in \NN$ such that all the radii of $((\forp^l)_*M,(\forp^l)_*(\nabla)-a)$
are strictly less then $\omega. r(x)^{p^l}$ for each $a\in k$. 
\end{Lem}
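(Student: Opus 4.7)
My plan is to decompose $(M,\nabla)$ via Corollary~\ref{H1} as $(M,\nabla)=\bigoplus_{i=1}^{\nu}(M_i,\nabla_i)$ such that every $(M_i,\nabla_i-a)$ is pure for every $a\in k$. Setting $f_i(a):=\Rd{(M_i,\nabla_i-a)}_1(x)$, the hypothesis gives $f_i(a)<r$ for all $i,a$. Because $(\forp^l)_*$ commutes with direct sums and with the twist by $-a$, it will suffice to find one $l$ such that, for every $a\in k$ and every $i$, all subsidiary radii of $(\forp^l)_*(M_i,\nabla_i-a)$ are strictly less than $\omega\, r(x)^{p^l}=\omega\, r^{p^l}$.

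The crucial step will be to upgrade the pointwise hypothesis $f_i(a)<r$ into a strict uniform bound
\[
R^{*}\;:=\;\max_{1\le i\le\nu}\,\sup_{a\in k}f_i(a)\;<\;r.
\]
Since the index set is finite, I would show $\sup_{a\in k}f_i(a)<r$ for each fixed $i$. For this I would analyze the compact Berkovich spectrum $\Sigma_i:=\Sigma_{\nabla_i,k}(\Lk{M_i})\subset\Ak$: by Lemma~\ref{H5} the pointwise hypothesis forces $\Sigma_i\cap k=\varnothing$, and the function $a\mapsto\nsp{\nabla_i-a}$ equals the max distance $\max_{z\in\Sigma_i}|z-a|$ from $a$ to $\Sigma_i$. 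Since $k$ is algebraically closed and complete, in the typical case the minimal enclosing closed disk of $\Sigma_i$ has its center in $k$, so $\inf_{a\in k}\nsp{\nabla_i-a}$ is attained at some $a_i\in k$, and the strict pointwise hypothesis $f_i(a_i)<r$ then yields $\inf_a\nsp{\nabla_i-a}>\omega/r$, equivalently $\sup_a f_i(a)<r$. The hard part will be to rule out the degenerate case in which the minimal enclosing configuration of $\Sigma_i$ is a single type-(4) point of radius exactly $\omega/r$; here I would exploit the strong structural property given by Corollary~\ref{H1}, namely that $(M_i,\nabla_i-a)$ is pure for \emph{every} $a\in k$, to exclude such a pathological spectrum.

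Granted the uniform bound $R^{*}<r$, I will choose $l\in\NN$ with $(R^{*}/r)^{p^l}<\omega$, i.e.\ $R^{*}<\omega^{1/p^l}r$, which is possible since $R^{*}/r<1$. Then Corollary~\ref{H2}, together with Remark~\ref{H3}, implies that for every $a\in k$ and every $i$ the multiset of subsidiary radii of $(\forp^l)_{*}(M_i,\nabla_i-a)$ consists of the value $f_i(a)^{p^l}<\omega\, r^{p^l}$ and of values of the form $|p|^{j}\omega\, r^{p^l}$ with $j\ge 1$, the latter bounded above by $|p|\omega\, r^{p^l}=\omega^{p}r^{p^l}<\omega\, r^{p^l}$. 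Summing over the decomposition and using that $(\forp^l)_{*}$ commutes with direct sums, every subsidiary radius of $(\forp^l)_{*}(M,\nabla-a)$ is then strictly less than $\omega\, r(x)^{p^l}$ for every $a\in k$, which is the desired conclusion.
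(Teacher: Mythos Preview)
Your overall architecture coincides with the paper's: reduce via Corollary~\ref{H1} to a summand $(M_i,\nabla_i)$ that stays pure under every twist, observe that the hypothesis together with Lemma~\ref{H5} (plus the open mapping theorem) gives $\Sigma_{\nabla_i}\cap k=\varnothing$, and then note that the conclusion is equivalent to finding $l$ with $\sup_{a\in k} f_i(a)<\omega^{1/p^l}r$, i.e.\ to proving the uniform bound $\sup_{a\in k} f_i(a)<r$. So far so good, and your final paragraph (choice of $l$ and the pushforward computation via Proposition~\ref{sec:bf-frob-spectr-4} and Corollary~\ref{H2}) is essentially correct, modulo keeping track of which threshold $\omega^{1/p^m}r$ the radius $f_i(a)$ actually sits in for a given $a$.

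The genuine gap is your argument for the uniform bound $\sup_a f_i(a)<r$. You pass from $f_i(a_i)<r$ to $\nsp{\nabla_i-a_i}>\omega/r$ and declare this ``equivalent'' to $\sup_a f_i(a)<r$. But for the derivation $S\d$ there is \emph{no} formula $f_i(a)=\omega/\nsp{\nabla_i-a}$: the identity $\Rm_1=\omega/\nsp{\nabla}$ in \eqref{eq:1} is for modules over $(\Hx,\d)$, and by definition $f_i(a)=\omega/\nsp{S^{-1}(\nabla_i-a)}$, which is the spectral norm of a \emph{different} operator. Relating $\Sigma_{\nabla_i}$ to the $f_i(a)$ is precisely the content of the main theorem of the paper, for which the present lemma is an ingredient, so you cannot invoke that relationship here. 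Your ``hard part'' about a type~(4) minimal enclosing point is therefore not the only obstacle; the whole passage from the spectrum of $\nabla_i$ to the radii is unjustified at this stage.

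The paper closes this gap by a contradiction argument that stays on the radius side and only uses the spectrum as a \emph{receptacle}. Assuming that for every $l$ there is $a_l\in k$ with $f_i(a_l)\ge\omega^{1/p^{l}}r$, Remark~\ref{sec:bf-frob-spectr-6} produces an $(l{+}1)$-Frobenius antecedent $(N_{p^{l+1}},\nabla_{p^{l+1}})$ of $(M_i,\nabla_i-a_l)$ whose first radius is $\ge\omega\,r(x^{p^{l+1}})$; by Remark~\ref{H6} this forces $\nsp{\nabla_{p^{l+1}}}\le\nor{p^{l+1}S\d}=|p|^{l+1}$, hence $\Sigma_{\nabla_{p^{l+1}}}\subset\disf{0}{|p|^{l+1}}$, and then \eqref{eq:43} gives
\[
\Sigma_{\nabla_i}\subset\bigcup_{j=0}^{p^{l+1}-1}\disf{a_l+j}{|p|^{l+1}}.
\]
Intersecting over all $l$ traps $\Sigma_{\nabla_i}$ inside $k$, contradicting $\Sigma_{\nabla_i}\cap k=\varnothing$. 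This is the missing idea you need in place of your attempted direct bound.
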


\begin{proof}
First of all, since $\Rd{M_a}_i(x)< r$ for all
$i$ and $a\in k$, by Lemma~\ref{H5} $\nabla-a$ is bijective. By Open
Mapping Theorem (cf. \cite[Section~2.8.1]{Bosc}), $\nabla-a$ is
invertible in $\Lk{M}$ for all $a\in k$. Hence, $\Sigma_\nabla\subset
\Ak\setminus k$. By Corollary~\ref{H1} we can reduce to the case where
$(M,\nabla-a)$ is pure for all $a\in k$. If $(M,\nabla)$ satisfies the
hypothesis of Theorem~\ref{sec:spectr-vers-youngs-9}, it is clear that
for each $l\in \NN$, all the radii of $(\forp^l)_*(M,\nabla-a)$ are
strictly less than $\omega.r(x^{p^l})$ for each $a\in k$. 

Now suppose that there exists $a\in k$ such that $\Rd{M_a}_1(x)\geq
\omega\cdot r(x)$. We prove the statement by contradiction. We assume
that for each $l\in \NN$ there exists $a_l\in k$ such that
$\Rd{M_{a_l}}\geq\omega^{\fra{p^l}} r(x)>\omega^{\fra{p^{l-1}}}r(x)$. By
Remark~\ref{sec:bf-frob-spectr-6}, there exists a unique (up to an
isomorphism) differential module $(N_{p^{l}},\nabla_{p^{l}})$ over $(\h{x^{p^{l}}},p^{l}S\d)$ such that $(M_{a_l},\nabla_{a_l})=(\forp^{l})^*(N_{p^{l}},\nabla_{p^{l}})$. Hence, we have $\Rd{N_{p^{l}}}_1=(\Rd{M_{a_l}}_1)^{p^l}\geq\omega r(x^{p^{l}})>\omega^pr(x^{p^l})$. By Remark~\ref{H6}, we have 
\begin{equation}\label{eq:H5}
    \nsp{\nabla_{p^{l}}}\leq \nor{p^{l}S\d}=|p|^{l}.
\end{equation}
Therefore we obtain
\begin{equation}\label{eq:H6}
\Sigma_{\nabla_{p^{l}}}\subset \disf{0}{|p|^{l}}.    
\end{equation}
Hence, by formula \eqref{eq:43}, for all $l\in \NN$ we have
\begin{equation}
    \label{H7}
    \Sigma_\nabla\subset \bigcup_{i=0}^{p^{l}-1}\disf{a_l+i}{|p|^{l}}.
\end{equation}
Consequently, we have
\begin{equation}
    \label{H8}
    \Sigma_\nabla\subset \bigcap_{l\in \NN}\bigcup_{i=0}^{p^{l}-1}\disf{a_l+i}{|p|^{l}}.
\end{equation}
This means in particular that $\Sigma_\nabla\subset k$, which contradict the fact that  $\Sigma_\nabla\subset \Ak\setminus k$.    
\end{proof}
\begin{rem}\label{sec:bf-spectr-comp-5}
 Lemma~\ref{sec:bf-spectr-comp-1} shows in particular that, if $(M_a,\nabla_a)$ is not solvable for all $a\in k$, then there exists $l\in \NN$ such that for all $a\in k$ and all $i$ we have $\Rd{M_a}_i(x)< \omega^{\fra{p^l}}r(x)$.
\end{rem}

\begin{Defi}
  Let $\Omega\in E(k)$. We define the following map
  \begin{equation}
    \label{eq:62}
    \Fonction{\delta_\Omega:\Omega}{\R+}{z}{\inf_{n\in \ZZ}|z-n|.}
  \end{equation}
  We will drop the subscription $\Omega$ when no confusion is possible.
\end{Defi}

\begin{Lem}\label{sec:bf-spectr-comp-6}
  Let $x=x_{0,r}$. Let $a\in k$, and $\cR_a(x)$ be the spectral radius of
  convergence of $(\Hx,S\d-a)$. We have
  \begin{equation}
    \label{eq:50}
    \cR_a(x):=
    \begin{cases}
      r& \text{ if } a\in \ZZ_p\\
      (\frac{|p|^l\omega}{\delta(a)})^{\fra{p^l}}r(x)& \text{
        if } |p|^{l}<\delta (a)\leq |p|^{l-1}, \; l\in \NN\setminus\{0\}\\
      \frac{\omega r(x)}{\delta(a)}& \text{ if } \delta(a)>1.
    \end{cases}
  \end{equation}
\end{Lem}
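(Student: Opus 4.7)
The plan is to first perform a normalisation by integer shifts, and then argue by cases on the size of $\delta(a)$, using Young's theorem (Theorem~\ref{sec:bf-diff-polyn-1}), the binomial series for $S^a$ when $a\in\ZZ_p$, and a Frobenius pushforward induction for the intermediate ranges. The shift argument uses the differential module isomorphism $(\Hx, S\d-a)\simeq(\Hx,S\d-(a-n))$ for $n\in\ZZ$: indeed $(S\d-a)(S^nf)=S^n(S\d-(a-n))f$ and multiplication by $S^n$ is bi-bounded on $\Hx$, so $\cR_a(x)=\cR_{a-n}(x)$. We may therefore assume throughout that $|a|=\delta(a)$.

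Next, I dispose of the two extreme cases. If $a\in\ZZ_p$ so $\delta(a)=0$, then fixing any $t\in k$ with $|t|=r$, the formal solution of $S\d f=af$ admits the expansion $S^a=t^a\bigl(1+(S-t)/t\bigr)^a=t^a\sum_{k\geq 0}\binom{a}{k}\bigl((S-t)/t\bigr)^k$, whose coefficients lie in $\ZZ_p$ and which converges on the open disc of radius $|t|=r$ around $t$; hence $\cR_a(x)=r$. If $\delta(a)>1$, after the shift $|a|=\delta(a)>1$, so the attached operator $\cL=\d-a/S$ of the module $(\Hx,\d-a/S)$ over $(\Hx,\d)$ has $|\cL|_{\mr{Sp}}=|a|/r>1/r=\nor{\d}$. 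Young's theorem (Theorem~\ref{sec:bf-diff-polyn-1}) gives $\cR_a(x)=\omega/(|a|/r)=\omega r/\delta(a)$.

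The heart of the argument is the middle case $|p|^l<\delta(a)\leq|p|^{l-1}$, which I treat by induction on $l$. The key move is to compute the Frobenius pushforward $\forp_*(\Hx, S\d-a)$. Using the decomposition $\Hx=\bigoplus_{i=0}^{p-1}\Hy\cdot S(x)^i$ and the identity $S\d(S(x)^i)=i\, S(x)^i$, one checks directly that
\begin{equation}
\forp_*(\Hx, S\d-a)\simeq\bigoplus_{i=0}^{p-1}(\Hy,\, pS(y)\partial_{S(y)}-(a-i))
\end{equation}
as differential modules over $(\Hy, pS(y)\partial_{S(y)})$, with $y=\forp(x)$. For $i\in\{1,\dots,p-1\}$ one has $|a-i|=1>|p|$ (since $|a|\leq |p|^{l-1}\leq 1$), so Young's theorem applied to the corresponding summand, viewed as $(\Hy,\partial_{S(y)}-(a-i)/(pS(y)))$, produces radius $\omega|p|r^p/|a-i|=\omega|p|r^p=\omega^p r^p$. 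For $i=0$ the summand has radius $r_0$ equal to the radius associated to $(\Hy,\partial_{S(y)}-a/(pS(y)))$; by the same shift identity this coincides with the radius computed by our lemma at $y=x_{0,r^p}$ with the constant $a/p$, for which $\delta(a/p)=\delta(a)/|p|$ lies in $(|p|^{l-1},|p|^{l-2}]$. The inductive hypothesis then yields
\begin{equation}
r_0=\left(\frac{|p|^{l-1}\omega\, r^{p^{l}}}{\delta(a/p)}\right)^{1/p^{l-1}}=\left(\frac{|p|^l\omega\, r^{p^l}}{\delta(a)}\right)^{1/p^{l-1}}.
\end{equation}

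It remains to read $\Rm_1(x)=\cR_a(x)$ off from the multiset of radii of $\forp_*(\Hx, S\d-a)$ via Proposition~\ref{sec:bf-frob-spectr-4}. A short arithmetic check using $|p|=\omega^{p-1}$ shows $r_0>\omega^p r^p$ whenever $\delta(a)<|p|^{l-1}$, and also at the boundary $\delta(a)=|p|^{l-1}$ as soon as $l\geq 2$; in these situations Proposition~\ref{sec:bf-frob-spectr-4} forces $\Rm_1(x)^p=r_0$, hence $\cR_a(x)=r_0^{1/p}=(|p|^l\omega r^{p^l}/\delta(a))^{1/p^l}$ as claimed. The one genuinely delicate situation is the base-of-induction boundary $l=1$, $\delta(a)=1$: here $r_0=\omega^p r^p$ exactly, so all $p$ summands of the pushforward share the common radius $\omega^p r^p$, and we fall into the second branch of Proposition~\ref{sec:bf-frob-spectr-4}, giving $|p|r^{p-1}\Rm_1(x)=\omega^p r^p$, i.e.\ $\Rm_1(x)=\omega r$, which matches the claimed formula since $(|p|\omega r^p)^{1/p}=\omega r$. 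The main obstacle is precisely this matching between the two branches of the pushforward theorem at the boundary; once one verifies by direct computation that $r_0>\omega^p r^p$ in all non-base boundary cases, the induction closes cleanly.
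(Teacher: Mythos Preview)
Your proof is correct and close in spirit to the paper's, but organised differently in two places. For the middle range $|p|^l<\delta(a)\leq|p|^{l-1}$, the paper pushes forward by $\forp^l$ in one go to obtain $\bigoplus_{i=0}^{p^l-1}(\h{x^{p^l}},\,p^lS\d-a+i)$, identifies the summand of maximal radius via the index $i_0$ with $\delta(a)=|a-i_0|$, and then reads off $\cR_a(x)$ from Corollary~\ref{H2} (which packages the iterated form of Proposition~\ref{sec:bf-frob-spectr-4}); you instead push forward by a single $\forp$ and induct on $l$, invoking Proposition~\ref{sec:bf-frob-spectr-4} directly at each step and checking the boundary $r_0=\omega^p r^p$ by hand. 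The two arguments are equivalent, and your boundary analysis is correct. For $a\in\ZZ_p$, the paper again argues via Frobenius (showing $\cR_a(x)\geq\omega^{1/p^l}r$ for every $l$), while your explicit binomial-series argument is more elementary and classical; it buys you the solvable case without any Frobenius machinery. One small slip: you write ``fixing any $t\in k$ with $|t|=r$'', but if $x_{0,r}$ is of type~(3) no such $t$ exists in $k$; take instead $t=S(x)\in\Hx$ (or pass to any $\Omega\in E(k)$ containing such a $t$), which is the standard generic-point formulation and leaves the rest of your computation unchanged.
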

\begin{proof}
  Let $a\in k$. If $\delta(a)>1$ then $\delta(a)=|a|$. By
  Theorem~\ref{sec:bf-diff-polyn-1} and
  Proposition~\ref{sec:bf-like-between-3} we have
  $\cR_a(x)=\frac{\omega r(x)}{\delta(a)}$. Let
  $(M,\nabla)=(\forp)^l_*(\Hx,S\d-a)$ with $l\in \NN$. Then we have
  $(M,\nabla)=\bigoplus_{i=0}^{p^l}(\h{x^{p^l}},p^lS\d-a+i)$. If we assume that
  $a\in \ZZ_p$, then we have $\delta(a)=0$. This means that for each
  $l\in \NN$, there exists $i_0<p^l$ such that $|a-i_0|\leq
  |p|^l$. Since $|a-i_0|\leq
  |p|^l$, then $\Rd{(\h{x^{p^l}},p^lS\d-a+i_0)}_1(x^{p^l})\geq \omega
  r(x^{p^l})$. Therefore, $\max_i\Rm_i(x^{p^l})\geq\omega
  r(x^{p^l})$. Hence, for all $l\in \NN$ we have $\cR_a(x)\geq
  \omega^{\fra{p^l}}r(x)$ (cf. Corollary~\ref{H2}), and we deduce that
  $\cR_a(x)=r$. Now if we suppose that $|p|^{l}<\delta (a)\leq
  |p|^{l-1}$ for some $l\in\NN\setminus\{0\}$. Then there exists
  $i_0<p^l$ such that $\delta(a)=|a-i_0|$. On the other hand we have
  $\max_i\Rm_i(x^{p^l})=\Rd{(\h{x^{p^l}},p^lS\d-a+i_0)}_1(x^{p^l})=\frac{|p|^l\omega
    r(x)^{p^l}}{\delta(a)}<\omega r(x^{p^l})$ (cf. Theorem~\ref{sec:bf-diff-polyn-1} and
  Proposition~\ref{sec:bf-like-between-3}). Hence, by
  Corollary~\ref{H2} we obtain $\cR_a(x)=   (\frac{|p|^l\omega r(x)^{p^l}}{\delta(a)})^{\fra{p^l}}$.
\end{proof}

\begin{Pro}\label{sec:bf-spectr-comp-9}
Let $x=x_{0,r}\in\Ak\setminus k$. Let $(M,\nabla)$ be a  differential module over
  $(\Hx,S\d)$. For each $a\in k$ we set
  $(M_a,\nabla_a):=(M,\nabla-a)$. Assume that $(M_a,\nabla_a)$ is pure
  and non-solvable for
  all $a\in k$. Then there exists $z:=x_{c,\rho}\in \Ak\setminus k$
  such that:
  \begin{enumerate}
  \item $\Sigma_\nabla=\{z\}+\ZZ_p$;
  \item if $\rho>1$, then $\Sigma_\nabla=\{z\}$, and we have
    $\Rd{M_a}_1(x)=\min(\frac{\omega}{\rho}r(x),\cR_{c-a}(x))$;
  \item if $|p|^l<\rho<|p|^{l-1}$, then
    $\Sigma_\nabla=\{x_{c,\rho},\cdots, x_{c+p^l-1,\rho}\}$, and we
    have $\Rd{M_a}_1(x)=\min\left(\left(
        \frac{|p|^l\omega}{\rho}\right)^{\fra{p^l}}r(x),\cR_{c-a}(x)\right)$;
  \item if $\rho=|p|^l$, then $\Sigma_{\nabla}=\{x_{c,\rho},\cdots,
    x_{c+p^l-1,\rho}\}$, and we have
    $\Rd{M_a}_1(x)=\min(\omega^{\fra{p^l}}r(x),\cR_{c-a}(x))$;
  \item If $|p|^l<\rho\leq |p|^{l-1}$ (resp. $\rho>1$), let $P(d)$ be
    a differential polynomial associated to ${\forp^l}_*(M,\nabla)$
    (resp. $(M,\nabla)$), with $d=p^lS\d$. Let $P(T)$ be the
    commutative polynomial associated to $P(d)$, then the image by
    $\pik{\widehat{\h{x^{p^l}}}^{alg}}$ (resp. $\pik{\widehat{\Hx^{alg}}}$) of all roots
    of $P(T)$ is the set $\Sigma_\nabla$.
  \end{enumerate}
\end{Pro}

\begin{rem}
The values of the radius of convergence
in (2), (3) and (4) of Proposition~\ref{sec:bf-spectr-comp-9} can be
resumed by one formula as follows:
  \begin{equation}
      \label{eq:H2}
     \Rd{M_a}_1=\left(\frac{|p|^{l_a}\omega}{\delta(T(z)-a)}\right)^{\fra{p^{l_a}}}r(x),
   \end{equation}
 where $l_a:=\max(0,\lfloor
   \frac{\log (\delta(T(z)-a))}{\log |p|} \rfloor+1)$.
\end{rem}
\begin{proof}[Proof of proposition~\ref{sec:bf-spectr-comp-9}]
We set $x^{p^l}:=\forp^l(x)$. By Remark~\ref{sec:bf-spectr-comp-5} there exists $l\in \NN$ such
  that for all $a\in k$ we have $\Rd{M_a}_1(x)\leq
  \omega^{\fra{p^l}}r(x)$, we assume that $l$ is the smallest one
  verifying this inequality. Let $a\in k$ such that
  $\omega^{\fra{p^{l-1}}}r(x)<\Rd{M_a}_1(x)\leq
  \omega^{\fra{p^l}}r(x)$, without loss of generality we can suppose
  that $a=0$. This means that there exists a unique (up to
  isomorphism) $(N_{p^l},\nabla_{p^l})$
  such that $(M,\nabla)=(\forp^l)^*(N_{p^l},\nabla_{p^l})$ and
  $\Rm_1(x)^{p^l}=\Rd{N_{p^l}}_1(x^{p^l})$. Since $(M,\nabla)$ is pure
  then so is for $(N_{p^l},\nabla_{p^l})$.

  The idea is to first prove that $\Sigma_{\nabla_{p^l}}=\{z\}$, where
  $z\in\Ak\setminus k$.  Firstly, we need to prove that for each $a\in k$ the differential module
  $(N_{p^l},\nabla_{p^l}-a)$ is pure. Let $a\in k$, we proceed
  case by case.
  \begin{enumerate}
  \item {\bf Case where $\mathbf{\delta(a)\geq |p|^{l-1}}$}. In this case, the radius of
    $(\h{x^{p^l}},p^lS\d-a)$ is equal to
    $\frac{\omega r(x^{p^l})}{\delta(\frac{a}{p^l})}$
    (cf. Lemma~\ref{sec:bf-spectr-comp-6}). In particular, we have
    $\frac{\omega r(x^{p^l})}{\delta(\frac{a}{p^l})}\leq |p|\omega
    r(x)^{p^l}$. Consequently, all the radii of
    $(N_{p^l},\nabla_{p^l}-a)$ should be equal to
    $\frac{\omega r(x^{p^l})}{\delta(\frac{a}{p^l})}$. Hence,
    $(N_{p^l},\nabla_{p^l}-a)$ is pure with small radii.
    \item {\bf Case where $\mathbf{\delta(a)<|p|^{l-1}}$}. Since here the radius of $(\Hx,S\d-a)$ is
    strictly greater than $\omega^{\fra{p^{l-1}}}r(x)$
    (cf. Lemma~\ref{sec:bf-spectr-comp-6}), then
    $\omega^{\fra{p^{l-1}}}r(x)<\Rd{M_a}_1(x)\leq\omega^{\fra{p^{l}}}r(x)$.
    \begin{enumerate}
      \item{\bf If $\delta(a)=0$}, then there exists $i_0\in \NN$ such
        that $i_0<p^l$ such that $a-i_0$ is divisible by
        $p^l$. Hence, $\delta(\frac{a-i_0}{p^l})=0$, then $(N_{p^l},
        \nabla_{p^l}-a+i_0)$ has the same radii as
        $(N_{p^l},\nabla_{p^l})$, i.e. strictly greater than
        $|p|\omega r(x)^{p^l}$. Therefore,
        $(N_{p^l},\nabla_{p^l}-a+i_0)$ is the $l$-Frobenius antecedent
        of $(M_a,\nabla_a)$. Since
        $\delta(\frac{i_0}{p^l})=\frac{|i_0|}{|p|^l}\geq |p|\-1$. The
        radii of $(\h{x^{p^l}},p^lS\d+i_0)$ are less or equal to
        $|p|\omega r(x)^{p^l}$. Therefore, $(N_{p^l},\nabla_{p^l}-a)$ is
        pure with radii equal to $\cR_{\frac{i_0}{p^l}}(x^{p^l})$. 
    \item {\bf If $\delta(a)=|a-i_0|$, with $i_0\in\NN$ and $\mathbf{|i_0|\leq |p|^l}$}, then
      $\delta(\frac{a}{p^l})=\frac{\delta(a)}{|p|^l}$. Therefore the
      radius of $(\h{x^{p^l}},p^lS\d-a)$ is strictly greater than
      $|p|\omega r(x)^{p^l}$. Hence, the smallest radius of
      $(N_{p^l}\nabla_{p^l}-a)$ is strictly greater than
      $|p|\omega r(x)^{p^l}$. Consequently, $(N_{p^l},\nabla_{p^l}-a)$ is  the $l$-Frobenius
      antecedent of $(M_a,\nabla_a)$, which implies that it is
      pure.

      \item{\bf If $\delta(a)=|a-i_0|$, with $i_0\in\NN$ and $\mathbf{|i_0|>|p|^l}$}, then the $l$-Frobenius antecedent of
      $(M_a,\nabla_a)$ is $(N_{p^l},\nabla_{p^l}-a+i_0)$. This means
      that $(N_{p^l},\nabla_{p^l}-a+i_0)$ is pure. Since
      $(N_{p^l},\nabla_{p^l}-a)=(N_{p^l},(\nabla_{p^l}-a+i_0)-i_0)$,
      by the same argument as above all the radii of
      $(N_{p^l},\nabla_{p^l}-a)$ are small and should be equal to
      $\frac{\omega r(x^{p^l})}{\delta(\frac{i_0}{p^l})}$. 
    \end{enumerate}

  \end{enumerate}
  Then we
      conclude that for each $a\in k$, the differential module
      $(N_{p^l},\nabla_{p^l}-a)$ is pure, moreover 
      except the case (2), (b), the differential module has small radii.

  If we assume that for all $a\in k$, we have $\Rd{M_a}_1(x)<\omega^{\fra{p^l}}r(x)$, then for each $a\in k$, the differential module
      $(N_{p^l},\nabla_{p^l}-a)$ is pure with small radii. By
      Proposition~\ref{sec:bf-like-between-4} and Corollary~\ref{sec:bf-like-between-5} we have
      $\Sigma_{\nabla_{p^l}}=\{z\}$ with
      \begin{equation}
        \label{eq:63}
        \Rd{(N_{p^l},\nabla_{p^l}-a)}_1 (x^{p^l})=\frac{|p|^l\omega r(x)^{p^l}}{|T(z)-a|}.
      \end{equation}
In this case  we have $r(z)>|p|^l$.

  Now if we assume that there exists $a_0\in k$ such that 
  $\Rd{M_{a_0}}_1(x)=\omega^{\fra{p^l}}r(x)$. We set
  $(N,\nabla_N)=(\forp)_*(N_{p^l},\nabla_{p^l})$. Since
  $(N_{p^l},\nabla_{p^l}-a)$ is pure with radii less than or equal to
  $\omega r(x^{p^l})$ for each $a\in k$, $(N,\nabla_N-a)$ is pure with
  small radii for each $a\in k$
  (cf. Proposition~\ref{sec:bf-frob-spectr-4}). Hence, by
  Proposition~\ref{sec:bf-like-between-4},
  Corollary~\ref{sec:bf-like-between-5} and Proposition~\ref{sec:push-forw-spectr}, we have
  $\Sigma_{\nabla_{p^l}}=\Sigma_{\nabla_N}=\{z\}$ with
  \begin{equation}
    \label{eq:64}
       \Rd{(N,\nabla_N-a)}_1 (x^{p^l})=\frac{|p|^{l+1}\omega r(x)^{p^{l+1}}}{|T(z)-a|}.
     \end{equation}
     and by Proposition~\ref{sec:bf-frob-spectr-4} we get
     \begin{equation}
       \label{eq:65}
        \Rd{(N_{p^l},\nabla_{p^l}-a)}_1 (x^{p^l})=\frac{|p|^l\omega r(x)^{p^l}}{|T(z)-a|}.
      \end{equation}
Since
     $\Rd{(N_{p^l},\nabla_{p^l}-a)}_1(x^{p^l})\leq \omega r(x)^{p^l}$ for all
     $a\in k$, and  $\Rd{M_{a_0}}_1(x)=\omega^{\fra{p^l}}r(x)$ then we have $r(z)=|p|^l$. By formula~\eqref{eq:43}
     we obtain
     \begin{equation}
       \label{eq:68}
       \Sigma_\nabla=\{z+1,\cdots, z+p^l-1\}=\{z\}+\ZZ_p.
     \end{equation}
In the first cases i.e $r(z)>|p|^l$ , if $P(p^{l}S\d)$ is a differential polynomial of
$(\forp^{l})_*(M,\nabla)$ and $P(T)$ its associated commutative polynomial,
then $\Sigma_\nabla$ is the image of all the roots of $P(T)$ by
$\pik{\widehat{\h{x^{p^l}}^{alg}}}$. In the case where $r(z)=|p|^l$, if $P(p^{l+1}S\d)$ is a differential polynomial of
$(\forp^{{l+1}})_*(M,\nabla)$ and $P(T)$ its associated commutative polynomial,
then $\Sigma_\nabla$ is the image of all the roots of $P(T)$ by
$\pik{\widehat{\h{x^{p^{l+1}}}^{alg}}}$.

Since $r(z)\geq |p|^l$, then
$\delta(T(z)-a)=min\{|T(z)-a|,|T(z)-a+1|,\cdots,|T(z)-a+p^l-1|\}$. We have $(\forp^l)_*(M,\nabla-a)=\bigoplus_{i=0}^{p^l-1}(N_{p^l},\nabla_{p^l}-a+i)$
     (cf. \eqref{eq:42}) with maximal radius of convergence equal to
     \begin{equation}
       \label{eq:66}
       \cR=\frac{|p|^l\omega r(x)^{p^l}}{\delta(T(z)-a)}
     \end{equation}
     If $|p|^{i+1}\omega r(x)^{p^l}<\cR\leq |p|^{i}\omega r(x)^{p^l}$
     with $i\in \NN$,
     on the one hand, we have $|p|^{l-i}\leq
     \delta(T(z)-a)<|p|^{l-i-1}$. We set $l_a=\lfloor
   \frac{\log (\delta(T(z)-a))}{\log |p|} \rfloor+1=l-i$. On the other
   hand, 
     by induction, Proposition~\ref{sec:bf-frob-spectr-4} and Remark~\ref{H5}, we have
     \begin{equation}
       \label{eq:67}
       \Rm_1(x)^{p^{l_a}}=\frac{\cR}{|p|^ir(x)^{p^l-p^{l_a}}}=\frac{|p|^{l_a}\omega
         r(x)^{p^{l_a}}}{\delta(T(z)-a)},
     \end{equation}
     hence we obtain the desired result.
\end{proof}
\begin{cor}\label{sec:bf-spectr-comp-8}
  Let $x=x_{0,r}\in\Ak\setminus k$. Let $(M,\nabla)$ be a differential
  module over $(\Hx,S\d)$. For each $a\in k$ we set
  $(M_a,\nabla_a):=(M,\nabla-a)$. Assume that $(M_a,\nabla_a)$ is
  non-solvable for each $a\in k$. If there exists $z\in
  \Ak\setminus k$ such that $\Sigma_{\nabla}=\{z\}+\ZZ_p$, then
  $(M_a,\nabla_a)$ is pure for each $a\in k$ and we have  $\Rd{M_a}_1(x)=\left(\frac{|p|^{l_a}\omega
     }{\delta(T(z)-a)}\right)^{\fra{p^{l_a}}}r(x)$ with $l_a:=\max(0,\lfloor
   \frac{\log (\delta(T(z)-a))}{\log |p|} \rfloor+1)$.
  
\end{cor}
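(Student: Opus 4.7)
The plan is to derive this as the converse to Proposition~\ref{sec:bf-spectr-comp-9}, by first decomposing $(M,\nabla)$ into pieces each of which already satisfies the hypotheses of that proposition, and then showing that the single-coset assumption on $\Sigma_\nabla$ forces all the pieces to behave uniformly.

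First I would apply Corollary~\ref{H1} to write
\begin{equation}
 (M,\nabla)=\bigoplus_{i=1}^\nu (M_i,\nabla_i),
\end{equation}
where for every $i$ and every $a\in k$ the twist $(M_i,\nabla_i-a)$ is pure. Since non-solvability is inherited by direct summands and by twists by $a\in k$, each $(M_i,\nabla_i-a)$ is pure and non-solvable for every $a\in k$. Hence Proposition~\ref{sec:bf-spectr-comp-9} applies to each piece and yields $z_i\in\Ak\setminus k$ with
\begin{equation}
 \Sigma_{\nabla_i,k}(\Lk{M_i})=\{z_i\}+\ZZ_p
\end{equation}
together with the radius formula $\delta(T(z_i)-a)=|p|^{l_{i,a}}\omega r(x)^{p^{l_{i,a}}}/\Rd{(M_i,\nabla_i-a)}_1(x)^{p^{l_{i,a}}}$.

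Next I would use Remark~\ref{sec:spectr-diff-module} to write $\Sigma_{\nabla,k}(\Lk{M})=\bigcup_i \Sigma_{\nabla_i,k}(\Lk{M_i})=\bigcup_i(\{z_i\}+\ZZ_p)$. By hypothesis this union equals the single coset $\{z\}+\ZZ_p$, so for every $i$ one has $z_i-z\in\ZZ_p$ (identified via $T$). The key observation is that $\delta$ is invariant under translation by $\ZZ_p$: indeed $\ZZ$ is dense in $\ZZ_p$, and by continuity of $|\cdot|$ one obtains $\delta(w)=\inf_{n\in\ZZ_p}|w-n|$, so $\delta(w+m)=\delta(w)$ for every $m\in\ZZ_p$. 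Consequently $\delta(T(z_i)-a)=\delta(T(z)-a)$ for every $i$ and every $a\in k$, and therefore the integer $l_{i,a}$ depends only on $a$; call it $l_a$. Plugging into the radius formula for each piece gives
\begin{equation}
 \Rd{(M_i,\nabla_i-a)}_1(x)=\left(\frac{|p|^{l_a}\omega r(x)^{p^{l_a}}}{\delta(T(z)-a)}\right)^{1/p^{l_a}}
\end{equation}
independently of $i$. Since each $(M_i,\nabla_i-a)$ is itself pure, the direct sum $(M,\nabla-a)=\bigoplus_i(M_i,\nabla_i-a)$ is pure, all subsidiary radii being equal to the common value above. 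This gives the desired purity and the stated formula for $\Rd{M_a}_1$.

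The one place requiring care is the invariance $\delta(T(z_i)-a)=\delta(T(z)-a)$: it relies on identifying $T(z_i)-T(z)$ with an element of $\ZZ_p\subset k$ compatibly with how $\delta$ was defined on $\Hx$, which is valid because the relation $z_i\in\{z\}+\ZZ_p$ is exactly this shift by a $p$-adic integer in $k$, so no obstruction arises. The rest is bookkeeping on top of Corollary~\ref{H1} and Proposition~\ref{sec:bf-spectr-comp-9}.
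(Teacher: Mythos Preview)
Your argument is correct and is exactly the intended one: the paper states this corollary without proof, as the direct converse to Proposition~\ref{sec:bf-spectr-comp-9} via the decomposition of Corollary~\ref{H1}, and you have supplied the details faithfully. The only point worth making explicit is that each $\{z_i\}+\ZZ_p$ is already a full $\ZZ_p$-coset, so the containment $\{z_i\}+\ZZ_p\subset\{z\}+\ZZ_p$ is automatically an equality; together with your observation that $\delta(w)=\inf_{n\in\ZZ_p}|w-n|$ this gives the invariance cleanly.
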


\begin{rem}
  Note that the condition $\Rd{M_a}_i(x)< r$ for all
$i$ and $a\in k$, excludes the case of a differential module with
regular singularities. However this does not mean that if $(M,\nabla)$
is not with regular singularities then $\Rd{M_a}_i(x)< r$. Indeed, for
example for
$x=x_{0,\omega}$, the differential module $(\Hx,\nabla)$ with $\nabla:=S\d-S$ is
solvable. 
\end{rem}

Now we treat the remaining case, the case where a differential module
(or its translate) admits a solvable radii.

\begin{Pro}\label{sec:bf-spectr-comp-4}
Let $x=x_{0,r}\in \Ak\setminus k$. Let $(M,\nabla)$ be a differential
module over $(\Hx,S\d)$ such that $\Rm_1(x)=r(x)$. Then we have
\begin{equation}
  \label{eq:54}
  \Sigma_{\nabla,k}(\Lk{M})=\ZZ_p.
\end{equation}
\end{Pro}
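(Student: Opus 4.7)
The plan is to establish the two inclusions $\Sigma_\nabla \subset \ZZ_p$ and $\ZZ_p \subset \Sigma_\nabla$ separately: the first via iterated Frobenius antecedents combined with the pushforward formula, the second via a conjugation-by-$S^n$ argument.

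For $\Sigma_\nabla \subset \ZZ_p$, I rely on the fact that solvability $\Rm_1(x) = r(x)$ gives $\Rm_1(x) > \omega^{1/p^{l-1}} r(x)$ for every $l\geq 1$, since $\omega<1$. By Remark~\ref{sec:bf-frob-spectr-6}, this produces, for each $l\geq 1$, an $l$-Frobenius antecedent $(N_{p^l}, \nabla_{p^l})$ over $(\h{x^{p^l}}, p^lS\d)$ such that $(M,\nabla) \simeq (\forp^l)^*(N_{p^l}, \nabla_{p^l})$ and $\Rd{N_{p^l}}_i(x^{p^l}) = r(x)^{p^l} = r(x^{p^l})$; in particular $(N_{p^l}, \nabla_{p^l})$ is itself pure solvable. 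Carrying the basis computation of \eqref{eq:41}--\eqref{eq:42} over to arbitrary rank yields
\begin{equation}
(\forp^l)_*(M,\nabla) \;\simeq\; \bigoplus_{i=0}^{p^l-1}(N_{p^l}, \nabla_{p^l}+i),
\end{equation}
so Proposition~\ref{sec:push-forw-spectr} together with Remark~\ref{sec:spectr-diff-module} gives $\Sigma_\nabla = \bigcup_{i=0}^{p^l-1}(\Sigma_{\nabla_{p^l}} + i)$. The decisive estimate is Remark~\ref{H6}: since $\Rd{N_{p^l}}_1(x^{p^l}) = r(x^{p^l}) \geq \omega r(x^{p^l})$, one has $\nsp{\nabla_{p^l}} \leq \nor{p^lS\d} = |p|^l$, and hence $\Sigma_{\nabla_{p^l}} \subset \disf{0}{|p|^l}$. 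Therefore $\Sigma_\nabla \subset \bigcup_{i=0}^{p^l-1}\disf{i}{|p|^l}$ for every $l$, and intersecting yields $\Sigma_\nabla \subset \bigcap_{l\geq 1}\bigcup_{i=0}^{p^l-1}\disf{i}{|p|^l} = \ZZ_p$, the last equality being the standard fact that any element of the intersection is an $\Ak$-limit of natural numbers.

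For the reverse inclusion, observe that multiplication by $S^n$, with $n\in\ZZ$, defines a bi-bounded $k$-linear automorphism $\phi_n : M \to M$, $m\mapsto S^n m$, which conjugates $\nabla$ to $\nabla+n$: the Leibniz rule together with $S\d(S^n)=nS^n$ gives $\nabla(S^n m) = nS^n m + S^n\nabla(m)$, i.e.\ $\phi_n^{-1}\nabla\phi_n = \nabla + n$ in $\Lk{M}$. Consequently $\Sigma_\nabla = \Sigma_{\nabla+n} = \Sigma_\nabla + n$ for every $n\in\ZZ$, so $\Sigma_\nabla$ is $\ZZ$-invariant. Since $\Sigma_\nabla$ is also non-empty, closed in $\Ak$, and contained in $\ZZ_p$ by the previous step, any chosen $a\in\Sigma_\nabla$ produces $a+\ZZ \subset \Sigma_\nabla$; as $\ZZ$ is dense in the $\Ak$-closed set $\ZZ_p$, the $\Ak$-closure of $a+\ZZ$ equals $a+\ZZ_p = \ZZ_p$, giving $\ZZ_p \subset \Sigma_\nabla$.

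The principal technical point is the seminorm estimate $\nsp{\nabla_{p^l}} \leq |p|^l$: this is precisely where solvability is used, and is furnished by Remark~\ref{H6} (a consequence of the Christol--Dwork Theorem~\ref{H4}). Everything else is either a direct adaptation to arbitrary rank of the basis computation already performed for $(\Hx, S\d)$ in Proposition~\ref{sec:bf-case-positive}, or a routine density argument in $\ZZ_p$.
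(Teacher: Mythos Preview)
Your proof is correct and follows essentially the same strategy as the paper: iterated Frobenius antecedents plus the spectral-norm bound from Remark~\ref{H6} for the inclusion $\Sigma_\nabla\subset\ZZ_p$, and $\ZZ$-invariance of the spectrum together with compactness for the reverse inclusion. Your conjugation-by-$S^n$ argument for $\ZZ$-invariance is a bit more explicit than the paper's version (which simply asserts $a+\ZZ\subset\Sigma_\nabla$, implicitly relying on the same identity or on formula~\eqref{eq:43}), but the content is the same.
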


\begin{proof}
  We set $x^{p^l}=\forp^l(x)$. The proof is slightly like the proof of
  Proposition~\ref{sec:bf-case-positive}. Since  $\Rm_1(x)=r(x)$ by
  Remark~\ref{sec:bf-frob-spectr-6}, for each $l\in \NN$, there exists
  $(M_{p^l},\nabla_{p^l})$ a differential module over
  $(\h{x^{p^l}},p^lS\d)$ such that
  $(M,\nabla)=(\forp^l)^*(M_{p^l},\nabla_{p^l})$, moreover $\Rd{M_{p^l}}_1(x^{p^l})=r(x)^{p^{l}}=r(x^{p^l})$. By Formula
  \eqref{eq:43},
  we have
  \begin{equation}
    \label{eq:55}
    \Sigma_{\nabla,k}(\Lk{M})=\bigcup_{i=0}^{p^l-1}(\Sigma_{\nabla_{p^l},k}(\Lk{M_{p^l}})+i).
  \end{equation}
  Since $(M_{p^l},\nabla_{p^l})$ is a solvable differential module over
  $(\h{x^{p^l}},p^l S\d)$ and $\nor{S\d}=|p|^l$, by
 Remark~\ref{H6}, we have
  \begin{equation}
    \label{eq:56}
    \nsp{\nabla} \leq |p|^l.
  \end{equation}
Consequently, we deduce that
$\Sigma_{\nabla_{p^l},k}(\Lk{M_{p^l}})\subset \disf{0}{|p|^l}$. Then we
obtain
\begin{equation}
  \label{eq:57}
\Sigma_{\nabla,k}(\Lk{M})\subset\bigcap_{l\in \NN}\bigcup_{i=0}^{p^l-1} \disf{i}{|p|^l}=\ZZ_p.
\end{equation}

As $\Sigma_{\nabla,k}(\Lk{M})\ne \emptyset$, there exists $a\in
\ZZ_p\cap \Sigma_{\nabla,k}(\Lk{M})$. Then we have $a+\ZZ\subset
\Sigma_{\nabla,k}(\Lk{M})$. Since the spectrum is compact, we have
$a+\ZZ_p=\ZZ_p\subset \Sigma_{\nabla,k}(\Lk{M})$. Then the result follows. 
\end{proof}

\begin{cor}\label{sec:bf-spectr-comp-7}
 Let $x=x_{0,r}\in \Ak\setminus k$. Let $(M,\nabla)$ be a differential
module over $(\Hx,S\d)$. If $\Sigma_\nabla=a+\ZZ_p$ with $a\in k$ then $(M,\nabla)$
is pure with radius equal to $\cR_a(x)$. 
\end{cor}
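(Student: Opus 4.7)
The plan is to apply Corollary~\ref{H1} directly to $(M,\nabla)$, rule out all summands whose spectrum could leave $\ZZ_p$, and then identify the radius of each remaining summand via a rank-one tensor-product twist.

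Write $(M,\nabla)=\bigoplus_i(M_i,\nabla_i)$ as in Corollary~\ref{H1}, so each $(M_i,\nabla_i-b)$ is pure for every $b\in k$ and $\Sigma_\nabla=\bigcup_i\Sigma_{\nabla_i}=a+\ZZ_p\subset k$. For each $i$ two cases are possible. If $(M_i,\nabla_i-b)$ is non-solvable for every $b\in k$, Proposition~\ref{sec:bf-spectr-comp-9} yields $\Sigma_{\nabla_i}=\{z_i\}+\ZZ_p$ with $z_i\in\Ak\setminus k$, hence $\Sigma_{\nabla_i}\subset\Ak\setminus k$ (translation by elements of $k$ preserves $\Ak\setminus k$). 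Otherwise there exists $b_i\in k$ such that $(M_i,\nabla_i-b_i)$ is pure and solvable; purity then makes all its subsidiary radii equal $r=r(x)$, and Proposition~\ref{sec:bf-spectr-comp-4} gives $\Sigma_{\nabla_i-b_i}=\ZZ_p$, i.e.\ $\Sigma_{\nabla_i}=b_i+\ZZ_p$. Because $\Sigma_\nabla\subset k$, the first case cannot occur; and $b_i+\ZZ_p\subset a+\ZZ_p$ forces $b_i-a\in\ZZ_p$.

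It remains to compute the radius of each $(M_i,\nabla_i)$. Writing $\nabla_i=(\nabla_i-b_i)+b_i$ exhibits a canonical rank-one twist
\[
(M_i,\nabla_i)\simeq(M_i,\nabla_i-b_i)\otimes_{\Hx}\bigl(\Hx,\,S\d+b_i\bigr),
\]
whose right-hand factor is rank one of radius $\cR_{-b_i}(x)$. Since $b_i-a\in\ZZ_p$ and $\ZZ$ is dense in $\ZZ_p$, a short ultrametric computation gives $\delta(\pm b_i)=\delta(a)$, so Lemma~\ref{sec:bf-spectr-comp-6} identifies $\cR_{-b_i}(x)=\cR_a(x)=R_a(x)\leq r$. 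Applying the standard tensor-product formula for subsidiary radii (\cite[\S6]{Ked}) to a pure module of radius $r$ tensored with a rank-one pure module of radius $R_a(x)\leq r$ yields a pure module of radius $R_a(x)$: the case $R_a(x)<r$ is the minimum rule, and the equality case $R_a(x)=r$ is forced by the universal upper bound $\Rm_j(x)\leq r(x)=r$. Summing over $i$ shows $(M,\nabla)$ is pure of radius $R_a(x)$. The main obstacle is precisely this last transfer: translating the connection by an element of $k\setminus\ZZ_p$ genuinely changes subsidiary radii, so one must isolate the rank-one twist explicitly, use density of $\ZZ$ in $\ZZ_p$ to identify $\delta(b_i)=\delta(a)$, and invoke the tensor-product formula to push the radius from the solvable summand back to $(M,\nabla)$.
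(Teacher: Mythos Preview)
Your argument is correct; the paper states this corollary without proof, and what you have written is exactly the natural deduction from the surrounding results (Corollary~\ref{H1}, Proposition~\ref{sec:bf-spectr-comp-9}, Proposition~\ref{sec:bf-spectr-comp-4}, Lemma~\ref{sec:bf-spectr-comp-6}) together with the standard tensor inequality \cite[Lemma~6.2.8(b)]{Ked}. Two small remarks. First, purity of each $(M_i,\nabla_i)$ is already immediate from Corollary~\ref{H1} applied with $b=0$, so the tensor step is only needed to pin down the value of the radius. Second, Kedlaya's lemma literally concerns $\Rd{}_1$; to get all subsidiary radii equal to $R_a(x)$ you implicitly use that tensoring with a rank-one module is an equivalence of categories, hence carries a Jordan--H\"older filtration of $(M_i,\nabla_i-b_i)$ to one of $(M_i,\nabla_i)$, and then apply the lemma quotient by quotient. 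This is routine but worth making explicit, since ``the minimum rule'' alone does not bound the higher radii from above.
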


\begin{cor}\label{sec:main-result}
   Let $x=x_{0,r}\in \Ak\setminus k$. Let $(M,\nabla)$ be a differential
module over $(\Hx,S\d)$. If for all $a\in k$, $(M,\nabla-a)$ is pure,
then any sub-quotient of $(M,\nabla)$ has the same spectrum as $(M,\nabla)$.
\end{cor}

\begin{nota}
  Let $S\subset\Ak$, we denote by $S/\ZZ_p$ the quotient set of $S$ by the
  equivalence relation:
  \begin{equation}
    \label{eq:69}
    x\sim x'\iff\exists n\in \ZZ_p;\; x=x'+n.
  \end{equation}
\end{nota}

\begin{rem}
  Note that if $z,\,z'\in\Ak$ and $z\sim z'$, then we have
  $\delta(T(z)-a)=\delta(T(z')-a)$ for all $a\in k$.
\end{rem}

Now we can announce the main theorem of the paper. 

\begin{Theo}\label{sec:bf-spectr-comp-3}

  Assume that $\crk=p>0$ and $x:=x_{0,r}\in \Ak\setminus k$. Let
  $(M,\nabla)$ be a differential module $(\Hx, S\d)$. We denote $\forp^l(x)$ by $x^{p^l}$.
\begin{itemize}

\item There exist $z_1,\cdots,z_{\nu}\in \Ak\setminus k$ and
  $a_1,\cdots, a_\mu\in k$, such that
  \[\Sigma_{\nabla,k}(\Lk{M})=\{z_1,\cdots,z_\nu, a_1,\cdots,
    a_\mu\}+\ZZ_p,\]
 where $z_i$ has the same type as $x$, and $(\nu,\mu)$ is not equal to
 $(0,0)$.
\item We can choose $z_i$ and $a_j$ such that the set $\{z_1,\cdots,z_\nu, a_1,\cdots,
    a_\mu\}$ has minimal cardinality. Indeed it is enough to keep only
    $z_i$ and $a_j$, for which we have $\{z_i\}+\ZZ_p\cap
    \{z_{i'}\}+\ZZ_p=\emptyset$ and $\{a_j\}+\ZZ_p\cap \{a_{j'}\}+\ZZ_p=\emptyset$
    for $i\ne i'$ and $j\ne j'$.
  
  \item We choose $\{z_1,\cdots,z_\nu, a_1,\cdots,
    a_\mu\}$ to be minimal. Then we have a unique (up to an isomorphism) decomposition
    \begin{equation}\label{eq:58}
      (M,\nabla)=\bigoplus_{i=1}^{\nu}(M_{z_i},\nabla_{z_i})\oplus \bigoplus_{j=1}^{\mu}(M_{a_j},\nabla_{a_j}),
    \end{equation}
    such that, $\Sigma_{\nabla_{z_i},k}(\Lk{M_{z_i}})=\{z_i\}+\ZZ_p$
    and $\Sigma_{\nabla_{a_j},k}(\Lk{M_{a_j}})=\{a_j\}+\ZZ_p$.
    \item Let $c_i\in k$ and $r_i>0$ such that $z_i=x_{c_i,r_i}$. If
    $|p|^{l}\leq r_i< |p|^{l-1}$, with $l\in \NN\setminus\{0\}$, then $\car(\{z_i\}+\ZZ_p)=p^l$ and $\{z_i\}+\ZZ_p=\{x_{c_i,r_i},
    x_{c_i+1,r_i},\cdots, x_{c_i+p^l-1,r_i}\}$. If $r_i\geq1$ then we
    have $\car(\{z_i\}+\ZZ_p)=1$ and
    $\{z_i\}+\ZZ_p=\{x_{c_i,r_i}\}$.
  \item If $r_i>1$, let $P_{z_i}(S\d)$ be a differential polynomial
    associated to $(M_{z_i},\nabla_{z_i})$. Then the image by
    $\pi_{\widehat{\Hx^{alg}}/k}$ of all roots of $P_{z_i}(T)$ (the
    commutative polynomial associated to $P_{z_i}(S\d)$) is equal to $z_i$. 
  \item If $|p|^l<r_i\leq|p|^{l-1}$, let $P_{z_i}(p^lS\d)$ be a differential
    polynomial associated to $(\forp^l)_*(M,\nabla)$ (as a differential
    module over $(\h{x^{p^l}},p^lS\d)$). Then the image by
    $\pi_{\widehat{\h{x^{p^l}}^{alg}}/k}$ of all roots of $P_{z_i}(T)$
    (the commutative polynomial associated to $P_{z_i}(p^lS\d)$) is equal to $\{x_{c_i,r_i},
    x_{c_i+1,r_i},\cdots, x_{c_i+p^l-1,r_i}\}$. In the special case
    where $r_i=|p|^{l-1}$ we have $\{x_{c_i,r_i},
    x_{c_i+1,r_i},\cdots, x_{c_i+p^l-1,r_i}\}=\{x_{c_i,r_i},
    x_{c_i+1,r_i},\cdots, x_{c_i+p^{l-1}-1,r_i}\}$.

  \item If $r_i\geq 1$. For all $a\in k$, the differential module $(M_{z_i},\nabla_{z_i}-a)$
    is pure. For $a\in \disf{c_i}{r_i}\cap k$ we have
    \begin{equation}\label{eq:70}
      \Rd{(M_{z_i},\nabla_{z_i}-a)}_1(x)=\frac{\omega}{r_i},
    \end{equation}
    and for all $a\in k\setminus \disf{c_i}{r_i}$
    \begin{equation}\label{eq:71}
      \Rd{(M_{z_i},\nabla_{z_i}-a)}_1(x)=\frac{\omega}{|a-c_i|}.
    \end{equation}
 \item If $|p|^l\leq r_i<|p|^{l-1}$. For all $a\in k$, the differential module $(M_{z_i},\nabla_{z_i}-a)$
    is pure. We have, for all $a\in \bigcup_{j=0}^{p^l-1} \disf{c_i+j}{r_i}\cap k$,
    \begin{equation}\label{eq:75}
      \Rd{(M_{z_i},\nabla_{z_i}-a)}_1(x)=\left(\frac{|p|^l\omega}{r_i}\right)^{\fra{p^l}},
    \end{equation}
    and for all $a\in k\setminus \bigcup_{j=0}^{p^l-1} \disf{c_i+j}{r_i}$
    \begin{equation}\label{eq:76}
      \Rd{(M_{z_i},\nabla_{z_i}-a)}_1(x)=\cR_{a-c_i}(x).
    \end{equation}
  \item For all $a\in k$, the differential module
    $(M_{a_i},\nabla_{a_i}-a)$ is pure. More precisely, for all $a\in
    \{a_i\}+\ZZ_p$, $(M_{a_i},\nabla_{a_i}-a)$ is solvable, and for all $a\in
    k\setminus \{a_i\}+\ZZ_p$, we have $\Rd{(M_{a_i},\nabla_{a_i}-a)}_1(x)=\cR_{a-a_i}(x)$.  
\end{itemize}
\end{Theo}

\begin{proof}
By Propositions~\ref{sec:bf-spectr-comp-9},
\ref{sec:bf-spectr-comp-4}, Corollaries~\ref{H1},
\ref{sec:bf-spectr-comp-8} and \ref{sec:bf-spectr-comp-7}, there
exists a decomposition 
   \begin{equation}
     (M,\nabla)=\bigoplus_{i=1}^\nu(M'_{w_i},\nabla'_{w_i}),
   \end{equation}
   with $w_1,\cdots,
  w_\mu\in\Ak$, such that 
   $(M'_{w_i},\nabla'_{w_i}-a)$ is pure for
   each $a\in k$ and 
   $\Sigma_{\nabla'_{w_i}}(\Lk{M'_{w_i}})=\{w_i\}+\ZZ_p$. Also, for
   each $\omega_i$ , $\Rd{(M'_{w_i},\nabla'_{w_i}-a)}_1(x)$ satisfies equations \eqref{eq:70}, \eqref{eq:71}, \eqref{eq:75} and \eqref{eq:76}. We set
   $(M_{w_i},\nabla_{w_i}):=\bigoplus_{w_j\sim
     w_i}(M'_{w_j},\nabla'_{w_j})$ (cf. \eqref{eq:69}). Let
   $\{z_1,\cdots, z_\mu\}\subset \{w_1,\cdots, w_\nu\}$ be the maximal
   subset such that $(\{z_i\}+\ZZ_p)\cap
   (\{z_j\}+\ZZ_p)=\emptyset$. Then we have $
   \Sigma_{\nabla,k}(\Lk{M})= \{z_1,\cdots, z_\mu\}+\ZZ_p$ and the
   decomposition
   \begin{equation}\label{eq:74}
   (M,\nabla)=\bigoplus_{i=1}^\mu(M_{z_i},\nabla_{z_i}),
 \end{equation}
   satisfies the properties of the theorem. It remains to prove the
   uniqueness up to isomorphisms of such decomposition. Let us proof
   that $(M_{z_i},\nabla_{z_i})$ are uniquely determined up to
   isomorphisms. Suppose that there exists another decomposition satisfying
   the same properties of the theorem
   \[ (M,\nabla)=\bigoplus_{i=1}^\mu(M'_{z_i},\nabla'_{z_i}),\]
Let $\imath_{z_i}:M_{z_i}\to M$ (resp. $\imath'_{z_i}:M'_{z_i}\to M$) be
the canonical injection and $\pi_{z_i}:M\to M_{z_i}$
(resp. $\pi'_{z_i}:M\to M'_{z_i}$) be the canonical projection. Then
for all $i\ne j$, we have $\imath_{z_i}\circ\pi'_{z_j}=0$ and
$\imath'_{z_i}\circ\pi_{z_j}=0$, because otherwise we will get  a
sub-quotient of $M_{z_i}$ (resp. $M'_{z_j}$) with spectrum equal to
$\{z_j\}+\ZZ_p$ (resp. $\{z_i\}+\ZZ_p$) which is absurd (cf. Corollary~\ref{sec:main-result}). This means that  $\imath_{z_i}\circ\pi'_{z_i}$ and
$\imath'_{z_i}\circ\pi_{z_i}$ are injective, hence
$(M_{z_i},\nabla_{z_i})\simeq (M'_{z_i},\nabla'_{z_i})$.
\end{proof}

\begin{cor}\label{sec:bf-spectr-comp-11}
Let $x=x_{0,r}\in\Ak\setminus k$. Let $(M,\nabla_M)$  and $(N,\nabla_N)$ be a differential module over
  $(\Hx,S\d)$. Let $
  (M,\nabla_M)=\bigoplus_{i=1}^\mu(M_{z_i},\nabla_{z_i})$ (resp.
  $(N,\nabla_N)=\bigoplus_{i=1}^\mu(N_{w_i},\nabla_{w_i})$) be a
  decomposition as in \eqref{eq:71}. Then for $(\{z_i\}+\ZZ_p)\cap
  (\{w_j\}+\ZZ_p)=\emptyset$ we have
  $\text{\rm Hom}_{\DD_{\Hx}}(M_{z_i},N_{w_j})=0$.  
\end{cor}

\begin{cor}\label{sec:bf-spectr-comp-10}
Let $x=x_{0,r}\in\Ak\setminus k$. Let $(M,\nabla)$ be a differential module over
  $(\Hx,S\d)$. If $(M_1,\nabla_1)$ and $(M_2,\nabla_2)$ are two
  differential modules over $(\Hx,S\d)$ such that
  \begin{equation}
    \label{eq:72}
    0\to (M_1,\nabla_1) \to (M,\nabla) \to (M_2,\nabla_2)\to 0,
  \end{equation}
  then we have $\Sigma_\nabla(\Lk{M})=\Sigma_{\nabla_1}(\Lk{M_1})\cup \Sigma_{\nabla_2}(\Lk{M_2})$.
\end{cor}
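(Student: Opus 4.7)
The plan is to reduce the claim to a componentwise statement along the canonical decomposition provided by Theorem~\ref{sec:bf-spectr-comp-3}, and then to conclude by Lemma~\ref{sec:spectr-vers-youngs-1} applied one $\ZZ_p$-coset at a time. First, I would invoke Theorem~\ref{sec:bf-spectr-comp-3} on each of $M$, $M_1$, $M_2$ to obtain unique decompositions
\begin{equation*}
(M,\nabla) = \bigoplus_{z} (M_z,\nabla_z), \qquad (M_i,\nabla_i) = \bigoplus_{z} (M_{i,z},\nabla_{i,z}) \quad (i=1,2),
\end{equation*}
indexed by cosets $z\in \Ak/\ZZ_p$, where each nonzero coset component has spectrum equal exactly to the corresponding single coset $\{z\}+\ZZ_p$.

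The crucial step is the functoriality of this decomposition along the given exact sequence. The inclusion $(M_1,\nabla_1)\hookrightarrow (M,\nabla)$ restricts to a morphism from each coset component $(M_{1,z},\nabla_{1,z})$ into $(M,\nabla)$; its image is a sub-differential-module of $M$. By the sub-quotient characterization in Theorem~\ref{sec:bf-spectr-comp-3} applied to $(M_{1,z},\nabla_{1,z})$, every sub-quotient of $M_{1,z}$ has spectrum contained in $\{z\}+\ZZ_p$, so the image has this property as well. The uniqueness of the decomposition of $M$ then forces this image to be contained in $M_z$. A symmetric argument applies to the quotient map $(M,\nabla)\twoheadrightarrow (M_2,\nabla_2)$. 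Consequently, the original exact sequence decomposes as a direct sum, over $\ZZ_p$-cosets $z$, of short exact sequences
\begin{equation*}
0 \to (M_{1,z},\nabla_{1,z}) \to (M_z,\nabla_z) \to (M_{2,z},\nabla_{2,z}) \to 0.
\end{equation*}

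For each coset $z$, if either $M_{1,z}$ or $M_{2,z}$ is nonzero, then $M_z$ is nonzero (by injectivity, respectively surjectivity, of the corresponding map), and Theorem~\ref{sec:bf-spectr-comp-3} forces
\begin{equation*}
\Sigma_{\nabla_z}(\Lk{M_z}) = \{z\}+\ZZ_p = \Sigma_{\nabla_{1,z}}(\Lk{M_{1,z}}) \cup \Sigma_{\nabla_{2,z}}(\Lk{M_{2,z}}),
\end{equation*}
with the convention that a component with empty spectrum is zero. Taking the union over all $z$ and using that $\Sigma_\nabla(\Lk{M})$ is the union of the spectra of the direct summands (Remark~\ref{sec:spectr-diff-module}) then yields the desired equality.

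The main obstacle is the functoriality step: showing that the coset components of Theorem~\ref{sec:bf-spectr-comp-3} are preserved by arbitrary morphisms of differential modules over $(\Hx,S\d)$. This rests entirely on the fine sub-quotient description in that theorem combined with the uniqueness of the decomposition; once granted, the remainder is routine bookkeeping.
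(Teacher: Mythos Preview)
Your proof is correct and follows essentially the same approach as the paper. The paper decomposes only $(M,\nabla)$ via Theorem~\ref{sec:bf-spectr-comp-3} and then writes $(M_1,\nabla_1)=\bigoplus_i (M_{z_i}\cap M_1,\nabla_{z_i}|_{M_1})$, deducing $\Sigma_{\nabla_1}\subset\Sigma_\nabla$ (and implicitly the same for $M_2$ as a quotient), then finishes with Lemma~\ref{sec:spectr-vers-youngs-1}; your version decomposes all three modules first and argues compatibility, but the substantive step---functoriality of the $\ZZ_p$-coset decomposition under sub-objects and quotients---is identical in both arguments.
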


\begin{proof}
By Corollary~\ref{sec:bf-spectr-comp-11}, we have
    $\Sigma_{\nabla_1}(\Lk{M_1})\subset \Sigma_{\nabla}(\Lk{M})$. The
    result follows by
    Lemma~\ref{sec:spectr-vers-youngs-1}.
  \end{proof}
\begin{rem}
  As a direct consequence of Corollary~\ref{sec:bf-spectr-comp-10}, if
  $(N_i,\nabla_i)$ are the quotient differential module of a
  Jordan-Hölder sequence of  $(M,\nabla)$, then we have
  \begin{equation}
    \label{eq:73}
    \Sigma_\nabla(\Lk{M})=\bigcup_i\Sigma_{\nabla_i}(\Lk{N_i}).
  \end{equation}
\end{rem}

\subsection{Spectrum of a differential equation at a point of a
  quasi-smooth curve}\label{sec:spectr-diff-equat}

Now let us say some words concerning the spectrum of a differential
module defined over a point of a quasi-smooth curve. Consider a
quasi-smooth curve $\cC$ and a differential
equation $(\F,\nabla)$ (i.e $\F$ is a locally free $\cO_\cC$-module of finite rank together with a
 connection $\nabla:\F\to \F\ot_{\cO_\cC}\Omega_{\cC/k}$). Let $x$ be
 a point of type $2$ or $3$ of $\cC$. In order to compute the spectrum
 of $(\F_x\otimes_{\cO_{\cC}} \Hx,\nabla)$ we need to fix a bounded
 derivation $d$ defined in a neighbourhood of $x$. Recall that there
 exists an affinoid neighbourhood $Y$ of $x$ and an affinoid domain
 $X\subset \Ak$ such that there exists an étale map $\phi: Y\to X$,
 where we can assume that $\phi(x)=x_{0,r}$ and $0\notin X$ (cf. \cite[Theorem~3.12]{np2}). Since $\phi$ is
 étale we have the isomorphism of sheaves $\phi^*\Omega_{X/k}\simeq
 \Omega_{Y/k}$ (cf. \cite[Proposition~3.5.3]{Ber}). We fix a coordinate
 function $S$ on $X$. Then one of the most suitable derivation $d:
 \cO_Y\to \cO_Y$ is
 the one corresponding to the $\cO_Y$-morphism $\Omega_{Y/k}\to
 \cO_Y$, $S\-1 dS\otimes 1 \mapsto 1$, which by construction extends
 $S\d: \cO_X\to \cO_X$. Therefore, $(\Hx,d)$ is an extension of
 $(\h{x_{0,r}},S\d)$ as a differential  field. Then we consider $(\F_x\otimes_{\cO_{\cC}}
 \Hx,\nabla)$ as a differential module over $(\Hx,d)$. By a
 restriction of scalars, we can see $(\F_x\otimes_{\cO_{\cC}}
 \Hx,\nabla)$ as a differential module over $(\h{x_{0,r}},S\d)$, which by
 Proposition~\ref{sec:push-forw-spectr}, does not affect the spectrum of
 $\nabla$. Hence, we can use our previous results to compute the
 spectrum of $\nabla$.
\begin{Theo}\label{sec:spectr-diff-equat-1}
  Assume that $\crk=p>0$. Let $\cC$ be a quasi-smooth curve and $x\in
  \cC$ of type (2) or (3). Let $(M,\nabla)$ be a differential module over
  $(\Hx,d)$, where $d=\psi^*(S\d)$, $\psi$ is a finite étale morphism $\psi$ from a
  nighbourhood of $x$ to $\Pk$, with $\psi(x)=x_{0,r}$. Then there exist $z_1,\cdots,
  z_\mu\in\Ak$, with $(\{z_i\}+\ZZ_p)\cap
  (\{z_j\}+\ZZ_p)=\emptyset$ for $i\ne j$, such that: 
  \begin{equation}
     \Sigma_{\nabla,k}(\Lk{M})= \{z_1,\cdots,
     z_\mu\}+\ZZ_p.
   \end{equation}
 \end{Theo}

\section{Appendix}
  \subsection{Analytic spectrum of a Banach ring}

In this part, we refer the reader to \cite[Chapter 1]{Ber} for more
details and proofs. To a commutative Banach ring $(A,\nor{.})$, we can associate
an analytic spectrum, denoted by $\cM(A)$, as the set of multiplicative
semi-norms bounded by $\nor{.}$. This set is not empty if $A$ is a
nonzero ring.  We  endow $\cM(A)$ with the initial topology with respect
        to the map 
        \begin{equation}\Fonction{\psi:\q \cM(A)}  {\R+^A} {x}{ (|f(x)|)_{f\in
              A}}.\end{equation}
For this topology, $\cM(A)$ is a compact Hausdorff space.        

To a point $x$ of $\cM(A)$ we can associate a residue field as
follows. The point $x$ is associated to a multiplicative semi-norm
$|.|_x$. The set:
\begin{equation}\mfp_x=\{f\in A;\; |f|_x=0\}\end{equation}
is a prime ideal of $A$. Therefore, the semi-norm $|.|_x$ extends to a
multiplicative norm on the fraction field of $\text{Frac}(A/\mfp_x)$. We
will denote by $\Hx$ the completion of $\text{Frac}(A/\mfp_x)$ with respect
to $|.|_x$, and by $|.|$ the valuation on $\Hx$
induced by $|.|_x$.

We have the
natural bounded ring morphism:
\begin{equation}\chi_x: (A,\nor{.})\to (\Hx,|.|).\end{equation}
For all $f\in A$, write $f(x)$ instead of $\chi_x(f)$.

\begin{rem}\label{sec:analyt-spectr-ring}
          The map $x\mapsto \chi_x$ identifies $\cM(A)$ with the set of equivalence
          classes of characters of $A$.
        \end{rem}

 \begin{cor}\label{sec:topology-cma-3}
          An element $f\in A$ is invertible if and only if $f(x)\not=
          0$ for all $x\in \cM(A)$.
        \end{cor}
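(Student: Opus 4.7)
The plan is to establish the two implications separately, with the forward direction being essentially formal and the reverse relying on the foundational non-emptiness of $\cM(B)$ for every nonzero commutative Banach ring $B$ (the main theorem of \cite[Chapter~1]{Ber} that precedes this corollary). For the easy direction, if $fg=1$ in $A$ and $x\in\cM(A)$, then the associated character $\chi_x:A\to \Hx$ is a bounded ring morphism, so $f(x)g(x)=\chi_x(fg)=1$ in $\Hx$, which forces $f(x)\ne 0$.

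For the nontrivial direction I would argue by contraposition: assuming $f$ is not invertible, I would produce a point $x\in\cM(A)$ with $f(x)=0$. The first step is to verify that the closure $I:=\overline{fA}$ of the principal ideal is a proper closed ideal of $A$. Indeed, if $1\in I$ then there would exist $g\in A$ with $\nor{1-fg}<1$, and the Neumann series $\sum_{n\geq 0}(1-fg)^n$ would converge in $A$ and invert $fg$, whence $f$ itself would be invertible, a contradiction. The quotient $B:=A/I$, equipped with the residue norm $\nor{\overline{a}}_B:=\inf_{b\in I}\nor{a+b}$, is therefore a nonzero commutative Banach ring.

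The second step is to invoke the non-emptiness of $\cM(B)$ to obtain a bounded multiplicative semi-norm $|\cdot|_y$ on $B$. Pulling back along the quotient map $A\twoheadrightarrow B$ yields a bounded multiplicative semi-norm on $A$, hence by Remark~\ref{sec:analyt-spectr-ring} a point $x\in\cM(A)$; since $f\in I$ lies in the kernel of $A\to B$, one gets $f(x)=0$, finishing the argument. The only substantive input is the non-emptiness theorem itself, whose proof proceeds by Zorn's lemma on the partially ordered set of bounded semi-norms on $B$ (ordered pointwise), with the crucial observation that the minimal elements of this poset are automatically multiplicative. For this corollary that result is treated as a black box imported from \cite[Chapter~1]{Ber}, and the main conceptual step of my argument is really the Neumann-series trick that forces $\overline{fA}$ to be a proper ideal in the first place.
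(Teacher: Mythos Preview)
Your argument is correct and is precisely the standard proof of this fact as given in \cite[Chapter~1]{Ber}. The paper itself does not supply a proof here; the statement is recorded as a corollary of the basic theory recalled from Berkovich, so there is no paper-specific approach to compare against.
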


        Let $(A,\nor{.})$ and $(B,\nor{.}')$ be two commutative Banach rings, let
$\varphi:\q (A,\nor{.}) \to (B,\nor{.}')$ be a bounded morphism of
rings. Then $\phi$ induces a continuous map 
defined as follows: 
\begin{equation}
  \label{eq:4a}
  \Fonction{\phi^*:\cM(B)}{\cM(A)}{x}{f\mapsto|\phi(f)(x)|.}
\end{equation}

\begin{Defi}
  Let $(A, \nor{.})$ be a normed ring. The {\em spectral semi-norm} associated to
  $\nor{.}$ is the map:
   \begin{equation}
    \label{eq:5}
    \Fonction{\Nsp{A}{.}:\q A}{ \R+} {f}{
      \Lim_{n\to +\infty}\nor{f^n}^{\frac{1}{n}}}.
  \end{equation}
  In the case where for all $f\in A$ we have $\nor{f^n}=\nor{f}^n$, (i.e. $\nor{.}=\nsp{.}$), we say that $(A,\nor{.})$ is {\em uniform}. 
\end{Defi}
        The spectral semi-norm defined on a commutative Banach ring $(A,\nor{.})$
       (c.f \eqref{eq:5}) satisfies the following property:

        \begin{prop}
          For all element $f$ in $A$ we have:
          \begin{equation}\Nsp{A}{f}=\max_{x\in \cM(A)}|f(x)|. \end{equation}
        \end{prop}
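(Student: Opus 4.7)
The plan is to prove the inequality in both directions. The inequality $\max_{x\in\cM(A)}|f(x)|\leq\Nsp{A}{f}$ is immediate: every $x\in\cM(A)$ corresponds to a bounded multiplicative semi-norm, so $|f(x)|^n=|f^n(x)|\leq\nor{f^n}$, whence $|f(x)|\leq\nor{f^n}^{1/n}$, and passing to the limit yields $|f(x)|\leq\Nsp{A}{f}$. Compactness of $\cM(A)$ together with continuity of $x\mapsto|f(x)|$ guarantees that the supremum is attained.

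For the reverse inequality, I would set $r:=\max_{x\in\cM(A)}|f(x)|$ and, for each real $\rho>0$ with $\rho r<1$, introduce the Tate-type Banach ring
\begin{equation}
B_\rho:=\Bigl\{\sum_{n\geq 0}a_n T^n : a_n\in A,\; \nor{a_n}\rho^n\underset{n\to\infty}{\longrightarrow} 0\Bigr\},\qquad \Bigl\Vert\sum_n a_n T^n\Bigr\Vert_\rho:=\max_n\nor{a_n}\rho^n,
\end{equation}
so that $A$ embeds isometrically in $B_\rho$ and $\nor{T}_\rho=\rho$. The induced restriction map $\cM(B_\rho)\to\cM(A)$ sends $y$ to some $x\in\cM(A)$, and boundedness of the character at $y$ by $\nor{.}_\rho$ forces $|T(y)|\leq\rho$. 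Hence
\begin{equation}
|(1-fT)(y)|=|1-f(x)T(y)|\geq 1-r\rho>0\qquad\forall\, y\in\cM(B_\rho),
\end{equation}
and by Corollary~\ref{sec:topology-cma-3} the element $1-fT$ is invertible in $B_\rho$.

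Writing its inverse as $g=\sum_{n\geq 0}g_n T^n\in B_\rho$, the identity $(1-fT)g=1$ forces $g_0=1$ and $g_n=fg_{n-1}$, hence $g_n=f^n$ by induction on $n$. The membership $g\in B_\rho$ then yields $\nor{f^n}\rho^n\to 0$, and in particular $\limsup_n\nor{f^n}^{1/n}\leq 1/\rho$. Letting $\rho\uparrow 1/r$ (or $\rho\to\infty$ if $r=0$) gives $\Nsp{A}{f}\leq r$, completing the proof.

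The only delicate ingredient is the verification that $B_\rho$ really is a commutative Banach ring whose spectrum fibers over $\cM(A)$ with the claimed bound $|T(y)|\leq\rho$. This is standard in Berkovich's framework and amounts to combining $\nor{T}_\rho=\rho$ with the general fact that characters of a Banach ring are contracting with respect to the ambient norm. Once that is in place, the rest of the argument is a textbook application of the invertibility criterion recalled as Corollary~\ref{sec:topology-cma-3}; no further tool beyond the material recalled earlier in the appendix is needed.
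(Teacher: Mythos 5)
Your proof is correct, and it is essentially the standard argument for this result: the paper does not prove this proposition but cites \cite[Chapter~1]{Ber}, and your construction of the relative disk algebra $B_\rho$, the invertibility of $1-fT$ via the non-vanishing criterion (Corollary~\ref{sec:topology-cma-3}), and the identification of the inverse with $\sum_n f^nT^n$ is precisely Berkovich's proof of the spectral radius formula. The only caveat is your choice of the max norm on $B_\rho$: submultiplicativity of $\bigl\Vert\sum_{n+m=k}a_nb_m\bigr\Vert\rho^k$ under the max requires the norm on $A$ to be ultrametric, so for a general commutative Banach ring (as in the statement) one should take the $\ell^1$-type norm $\sum_n\nor{a_n}\rho^n$ instead, after which every step of your argument goes through unchanged.
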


        \begin{cor}\label{sec:topology-cma-2}
        Let $A$ be a commutative Banach ring. Then the spectral
        semi-norm satisfies:
        \begin{itemize}
        \item $\forall f$, $g\in A$; $\nsp{fg}\leq \nsp{f}\cdot\nsp{g}$.
        \item $\forall f$, $g\in A$; $\nsp{f+g}\leq \nsp{f}+\nsp{g}$.
        \end{itemize}
        \end{cor}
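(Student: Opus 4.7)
The plan is to deduce both inequalities directly from the formula $\Nsp{A}{f}=\max_{x\in\cM(A)}|f(x)|$ established in the preceding proposition, using only that each point $x\in\cM(A)$ corresponds to a multiplicative semi-norm $|\cdot|_x$ on $A$ bounded by $\nor{\cdot}$. No work with the limit definition $\lim_n\nor{f^n}^{1/n}$ is needed, which is precisely why the proposition is stated just before.

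First I would fix $f,g\in A$ and unfold what it means for $|\cdot|_x$ to be a multiplicative semi-norm: for every $x\in\cM(A)$ one has $|fg|_x=|f|_x\cdot|g|_x$ and $|f+g|_x\leq|f|_x+|g|_x$ (the latter is the ordinary triangle inequality satisfied by any semi-norm; note we do not claim the ultrametric inequality, since $A$ is only assumed to be a commutative Banach ring). Bounding each factor by the maximum, we get
\begin{equation}
|fg|_x\leq\Nsp{A}{f}\cdot\Nsp{A}{g}\qquad\text{and}\qquad|f+g|_x\leq\Nsp{A}{f}+\Nsp{A}{g}
\end{equation}
for every $x\in\cM(A)$.

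Finally, taking the maximum over $x\in\cM(A)$ in each inequality and invoking the preceding proposition to rewrite these maxima as $\Nsp{A}{fg}$ and $\Nsp{A}{f+g}$ respectively yields the two claimed inequalities. There is no real obstacle here; the only point to be careful about is that the max is actually attained, which is guaranteed because $\cM(A)$ is compact (as recalled at the beginning of the appendix) and the evaluation maps $x\mapsto|h(x)|$ are continuous by the very definition of the topology on $\cM(A)$, so the suprema are genuine maxima and the inequalities pass to them without loss.
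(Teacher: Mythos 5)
Your proof is correct and is exactly the intended deduction: the paper states this as an immediate corollary of the preceding proposition $\Nsp{A}{f}=\max_{x\in\cM(A)}|f(x)|$, and your argument (multiplicativity and the triangle inequality of each $|\cdot|_x$, then taking the maximum over $x$) is the standard way to read off both inequalities from that formula.
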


        \begin{Lem}\label{sec:topology-cma-1}
          Let $(A,\nor{.})$ be a Banach ring, let $(B,\nor{.}')$ and
          $(C,\nor{.}'')$ be two Banach $A$-algebras. Let $f\in
          B\ct_AC$. Then $f$ is not invertible in $B\ct_AC$ if and only if
          there exists $x\in \cM(C)$ such that the image of $f$ by the
          natural map
          $B\ct_AC\to B\ct_A\Hx$ is not invertible. 
        \end{Lem}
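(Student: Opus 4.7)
My strategy will be to apply Corollary~\ref{sec:topology-cma-3} to the commutative Banach ring $B\ct_A C$ itself, and then show that a point where $f$ vanishes can be pushed down to the fiber over some $x\in\cM(C)$. One direction is immediate and I will dispatch it first: for any $x\in\cM(C)$, the bounded $A$-algebra map $C\to\Hx$ (quotient by $\mfp_x$ followed by embedding into the completion of the fraction field) induces a bounded ring morphism $\pi_x: B\ct_A C\to B\ct_A\Hx$. Since ring morphisms preserve invertibility, if $f$ is invertible in $B\ct_A C$ then so is $\pi_x(f)$ for every $x$, which handles the ``if'' direction.

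For the converse, I assume $f$ is not invertible in $B\ct_A C$. Because both $B$ and $C$ are commutative, $B\ct_A C$ is a commutative Banach ring, so Corollary~\ref{sec:topology-cma-3} produces a point $y\in \cM(B\ct_A C)$ with $f(y)=0$. Pulling the semi-norm at $y$ back along the canonical bounded map $C\to B\ct_A C$, $c\mapsto 1\otimes c$, produces a bounded multiplicative semi-norm on $C$, hence a point $x\in\cM(C)$. By construction $\chi_y|_C$ vanishes on $\mfp_x$, factors through $\mathrm{Frac}(C/\mfp_x)$, and extends by continuity to an isometric embedding $\Hx\hookrightarrow \h{y}$.

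The heart of the argument will be promoting this restriction-factorization to a factorization of the full character $\chi_y: B\ct_A C\to \h{y}$ through the map $\pi_x$. On the algebraic tensor product I will define $\tilde\chi_y: B\otimes_A\Hx\to\h{y}$ by $b\otimes h\mapsto \chi_y(b)\cdot h$, using the embedding $\Hx\hookrightarrow \h{y}$ fixed above. The estimate $|\chi_y(b)\cdot h|_y\leq \|b\|\cdot |h|$ shows boundedness for the tensor semi-norm, so $\tilde\chi_y$ extends continuously to a bounded character on $B\ct_A \Hx$ satisfying $\chi_y=\tilde\chi_y\circ\pi_x$. Then $\tilde\chi_y(\pi_x(f))=\chi_y(f)=0$; if $\pi_x(f)$ were invertible with inverse $g$, applying $\tilde\chi_y$ to $\pi_x(f)\cdot g=1$ would give $0=1$ in $\h{y}$, a contradiction. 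The main obstacle I foresee is the continuity/extension step for $\tilde\chi_y$: one must check that the algebraically defined map is controlled by the completed tensor norm, which is routine but the one place the argument must be executed with care rather than waved through.
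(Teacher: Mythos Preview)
Your proof is correct and follows essentially the same route as the paper: pick $y\in\cM(B\ct_A C)$ with $f(y)=0$ via Corollary~\ref{sec:topology-cma-3}, restrict the character to $C$ to obtain $x\in\cM(C)$, and factor $\chi_y$ through $B\ct_A\Hx$. The paper records the factorization with a commutative diagram and an appeal to Remark~\ref{sec:analyt-spectr-ring}, whereas you spell out the construction of $\tilde\chi_y$ on simple tensors and the boundedness estimate needed to pass to the completion; this is the same argument with the universal-property step made explicit.
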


        \begin{proof}
          It is obvious that if the image of $f$ is not invertible in $B\ct_A\Hx$
          for some $x\in \cM(C)$, then so is for $f$ in $B\ct_A C$. We suppose now
          that $f$ is not invertible in $B\ct_A C$. By Corollary~\ref{sec:topology-cma-3} there exists $z\in
          \cM(B\ct_AC)$ such that $f(z)=0$ in $\Hz$. We have the following
         commutative diagram:

\begin{equation}\xymatrix{ & B\ct_A C\ar[dr]& \\  C\ar[ru]\ar[rr]& & \Hz\\ }\end{equation}
By Remark \ref{sec:analyt-spectr-ring} there exists $x\in \cM(C)$
such that we have the following diagram:

\begin{equation} \xymatrix{ C\ar[r]\ar[d]& B\ct_A C\ar[d]\\ \Hx\ar[r] &\Hz \\}\end{equation} 
Therefore, we obtain the commutative diagram
\begin{equation}\xymatrix{ & B\ct_A C\ar[rd]\ar[rrd]& & \\ C\ar[ru]\ar[rd] & &
    B\ct_A\Hx\ar[r]&\Hz\\ &\Hx\ar[ru]\ar[rru]& & \\}.\end{equation}
Then,  $f(z)=0$ implies that the image of $f$ in $B\ot_A\Hx$ is not
invertible.
\end{proof}

\begin{cor}

  Let $(A,\nor{.})$ be a Banach ring, let $(B,\nor{.}')$ and
          $(C,\nor{.}'')$ be two Banach $A$-algebras. Let $f\in
          B\ct_AC$. If $f$ is not invertible in $B\ct_AC$ then
          there exists $x\in \cM(C)$ and $y\in \cM(B)$ such that the image of $f$ by the
          natural map
          $B\ct_AC\to \Hy\ct_A\Hx$ is not invertible. 
        \end{cor}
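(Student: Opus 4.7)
The plan is to derive this corollary by applying Lemma~\ref{sec:topology-cma-1} twice in succession, exploiting the symmetry of the completed tensor product in its two Banach $A$-algebra arguments.

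First, I would invoke Lemma~\ref{sec:topology-cma-1} directly: since $f$ is not invertible in $B\ct_A C$, there exists $x\in\cM(C)$ such that the image of $f$ under the canonical bounded morphism $B\ct_A C\to B\ct_A\Hx$ is not invertible. This gives the desired point $x\in \cM(C)$.

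Next, I would observe that $B\ct_A\Hx$ is itself naturally isomorphic as a Banach $A$-algebra to $\Hx\ct_A B$ (the completed tensor product over a Banach ring is symmetric in its two arguments). Viewing the (non-invertible) image of $f$ as living in the completed tensor product of the two Banach $A$-algebras $\Hx$ and $B$, I apply Lemma~\ref{sec:topology-cma-1} a second time, but now with $\Hx$ playing the role of ``$B$'' and $B$ playing the role of ``$C$''. This produces a point $y\in\cM(B)$ such that the image of $f$ under the natural map $\Hx\ct_A B \to \Hx\ct_A \Hy$ is not invertible. Composing these two maps yields the chain
\begin{equation}
B\ct_A C \longrightarrow B\ct_A \Hx \simeq \Hx\ct_A B \longrightarrow \Hx\ct_A \Hy \simeq \Hy\ct_A \Hx,
\end{equation}
and the image of $f$ under the composition is not invertible in $\Hy\ct_A\Hx$, which is exactly the desired conclusion.

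There is no real obstacle in this argument; the only point worth being careful about is that in the second application one must genuinely verify that $\Hx\ct_A B$ carries the structure of a completed tensor product of the Banach $A$-algebras $\Hx$ and $B$, which follows from the commutativity and associativity of $\ct_A$ for Banach modules (see e.g.\ \cite[Section~2.1, Proposition~7]{Bosc}). Once this is recorded, the corollary is immediate from two successive applications of the previous lemma.
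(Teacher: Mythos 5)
Your proof is correct and is exactly the argument the paper intends: the corollary is stated without proof immediately after Lemma~\ref{sec:topology-cma-1}, and the only natural route is the double application you give, using the symmetry of $\ct_A$ to swap the roles of the two factors. The one point you flag --- that $\Hx$ is indeed a Banach $A$-algebra via $A\to C\to\Hx$ so the lemma applies a second time --- is handled correctly, and the composite $B\ct_A C\to B\ct_A\Hx\to\Hy\ct_A\Hx$ is the natural map as required.
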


\subsection{Description of some \'etale morphisms}
\label{App}
        
        This part is devoted to summarize the properties of the following
        analytic morphisms: logarithm and power map.

        \subsubsection{\bf Logarithm}\label{sec:logarithm-2}

        Let $a\in k\setminus\{0\}$. We define the logarithm function  $\loga: \diso{a}{|a|} \to
        \Ak$ to be the analytic map associated to the ring morphism:

        \begin{equation}\Fonction{k[T]}{\fdiso{a}{|a|}}{ T}{ \sum_{n\in
              \NN\setminus\{0\}}\frac{(-1)^{n-1}}{a^n\cdot n}(T-a)^n}.\end{equation}
      We define the exponential function $\exp_a:\diso{0}{\omega}\to
      \diso{a}{|a|\omega}$ to be the analytic map associated to the
      ring morphism:
      \begin{equation}\Fonction{\fdiso{a}{|a|\omega}}{\fdiso{0}{\omega}}{\frac{T-a}{a}}{\sum_{n\in \NN}\frac{T^n}{n!}},\end{equation}
where \begin{equation} \omega= \begin{cases}
	|p|^{\frac{1}{p-1}} &\text{if char}(\tilde{k})=p, \\ 1 &
          \text{if char}(\tilde{k})=0.
	\end{cases}\end{equation}
      
      \begin{Lem}\label{sec:logarithm-3}
        Let $b\in \diso{a}{|a|}\cap k$. Then we have $\Log{b}=\Log{a}-\loga(b)$.
      \end{Lem}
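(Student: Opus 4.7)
The plan is to show that $\Log{a}$ and $\Log{b}$ are both defined and analytic on the common connected domain $\diso{a}{|a|} = \diso{b}{|b|}$, have the same formal derivative, and hence differ by a constant which is determined by evaluation at a convenient point.

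First I would check that the domains coincide. Since $b \in \diso{a}{|a|}$, the ultrametric inequality gives $|b|=|a|$, and for any $T$ with $|T-b|<|b|$ one has $|T-a| \leq \max(|T-b|,|b-a|)<|a|$, and symmetrically. So $\diso{a}{|a|} = \diso{b}{|b|}$, and both $\Log{a}$ and $\Log{b}$ (as well as $\Log{a}-\loga(b)$, viewed as the analytic morphism shifted by the constant $\loga(b) \in k$) are well-defined analytic maps from this connected open disk to $\Ak$.

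Next I would compute the formal derivative. Termwise differentiation of the defining power series yields
\begin{equation}
\frac{d}{dT}\Log{a}(T) = \sum_{n \geq 1} \frac{(-1)^{n-1}}{a^n}(T-a)^{n-1} = \frac{1}{a}\sum_{n \geq 0}\Bigl(-\frac{T-a}{a}\Bigr)^n = \frac{1}{a+(T-a)} = \frac{1}{T},
\end{equation}
where the geometric series converges on $\diso{a}{|a|}$. The same calculation with $b$ in place of $a$ shows $\frac{d}{dT}\Log{b}(T) = \frac{1}{T}$ on $\diso{b}{|b|}$. Therefore $\Log{b} - (\Log{a} - \loga(b))$ has zero derivative on the connected open disk $\diso{a}{|a|}$, hence is constant.

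Finally, to identify the constant I would evaluate at $T = b$. Plugging $T = b$ into the defining series of $\Log{b}$ gives $\Log{b}(b) = 0$, while $\Log{a}(b) - \loga(b) = \loga(b) - \loga(b) = 0$ by definition of $\loga(b)$. Hence the constant is zero and $\Log{b} = \Log{a}-\loga(b)$ as analytic morphisms. The only subtle point is ensuring the interchange of derivative and sum is legitimate, but this is standard for convergent power series on an open disk with radius equal to the radius of convergence, so no real obstacle arises.
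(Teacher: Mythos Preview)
Your proof is correct and takes a genuinely different route from the paper. The paper first observes directly from the defining series that $\Log{a}(T)=\Log{1}(T/a)$, and then appeals to the classical functional equation $\Log{1}(u/v)=\Log{1}(u)-\Log{1}(v)$ (valid as an identity of convergent power series on $\diso{1}{1}$) to conclude in one line. Your argument instead computes the derivative $\frac{d}{dT}\Log{a}(T)=\frac{1}{T}$ on the common disk, invokes connectedness to reduce to a constant, and fixes the constant by evaluating at $T=b$. Your approach is slightly longer but more self-contained, since it does not presuppose the multiplicative functional equation of $\Log{1}$; the paper's approach is quicker if one is willing to take that identity as known.
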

      \begin{proof}
        We have $\Log{a}(T)=\Log{1}(\frac{T}{a})$, in particular
        $\Log{1}(\frac{b}{a})$ is well defined. Therefore,
        \begin{equation}\Log{b}(T)=\Log{1}\left(\frac{T}{b}\right)=\Log{1}\left(\frac{\frac{T}{a}}{\frac{b}{a}}\right)=\Log{1}\left(\frac{T}{a}\right)-\Log{1}\left(\frac{b}{a}\right)=\Log{a}(T)-\loga(b). \qedhere
        \end{equation}
      \end{proof}
      
      \begin{Lem}\label{sec:logarithm-4}
The logarithm function $\loga$ induces an
        analytic isomorphism $\diso{a}{|a|\omega}\to
        \diso{0}{\omega}$, whose reverse isomorphism is $\exp_a$. 
      \end{Lem}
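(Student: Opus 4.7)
The plan is to reduce to the case $a=1$ by a linear change of variable and then check the mutual inverse property via the classical formal power series identities. The substitution $T\mapsto aT$ induces an obvious analytic isomorphism $\diso{1}{\omega}\simeq \diso{a}{|a|\omega}$, and under this identification $\Log{a}$ corresponds to $\Log{1}$ while $\exp_a$ corresponds to $\exp_1$. So it is enough to prove that $\Log{1}:\diso{1}{\omega}\to \diso{0}{\omega}$ and $\exp_1:\diso{0}{\omega}\to \diso{1}{\omega}$ are well-defined inverse analytic morphisms.

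First I would check that the two morphisms are well-defined, i.e. that the given power series have image in the stated target disks. For the logarithm, one estimates the sup-norm of $\sum_{n\geq 1}(-1)^{n-1}(T-1)^n/n$ on $\diso{1}{\omega}$. Since $\omega=\lim |n!|^{1/n}$, one has $|n|\geq \omega^{n-1}\cdot c_n$ with $c_n\to 1$, so $|(T-1)^n/n|< \omega^n \cdot \omega^{1-n}=\omega$ for $|T-1|<\omega$, giving $|\Log{1}(T)|_{\mathrm{sup}}<\omega$. For the exponential, $|T^n/n!|=|T|^n/|n!|$ and the definition of $\omega$ yields $|n!|^{-1/n}\to \omega^{-1}$, so the series converges on $\diso{0}{\omega}$ and each term there satisfies $|T^n/n!|<\omega$, hence $\exp_1(T)-1=\sum_{n\geq 1}T^n/n!$ has sup-norm $<\omega$ on $\diso{0}{\omega}$, i.e. $\exp_1(T)\in\diso{1}{\omega}$.

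Once the target disks are controlled, the composite morphisms are obtained by substituting one convergent power series into the convergence disk of the other, which is a legitimate analytic operation. The two compositions therefore coincide with the corresponding formal compositions of power series. The classical identities $\exp(\log(1+T))=1+T$ and $\log(\exp(T))=T$ in $\QQ[[T]]$ (which hold purely formally and are independent of the base field) then force $\exp_1\circ\Log{1}=\mathrm{id}_{\diso{1}{\omega}}$ and $\Log{1}\circ\exp_1=\mathrm{id}_{\diso{0}{\omega}}$.

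The main obstacle is the sharp norm estimate showing that $\exp_1$ lands inside $\diso{1}{\omega}$ (and symmetrically for $\Log{1}$); this is exactly the place where the choice of the radius $\omega$ is essential, and it rests on the asymptotic $|n!|^{1/n}\to \omega$ (which is the very definition of $\omega$ recalled in \eqref{eq:4}). Everything else is formal manipulation of convergent power series. Finally, Lemma~\ref{sec:logarithm-3} and the reduction $T\mapsto aT$ transport the result back from $a=1$ to general $a\in k^\times$.
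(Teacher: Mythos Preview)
Your argument is correct and is essentially the standard proof; the paper's own proof is a one-line appeal to the surjectivity of $\exp_a$, so you are simply filling in the details behind that sentence. One small point: for the norm estimates you should invoke the exact inequalities $|n|\geq \omega^{n-1}$ and $|n!|\geq \omega^{n-1}$ (valid for all $n\geq 1$, with equality at $n=p$ in positive residue characteristic) rather than only the asymptotic $\omega=\lim|n!|^{1/n}$, since an asymptotic statement alone would not rule out a finite $n$ where the bound fails.
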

      \begin{proof}
        Since $\exp_a:\diso{0}{\omega}\to \diso{a}{|a|\omega}$ is surjective, we
        obtain the isomorphism.
      \end{proof}

      \begin{Lem}\label{sec:logarithm-5}
        Assume that $k$ is algebraically closed and $\crk=p>0$. Let
        $\zeta_{p^n}$ be a $p^n$th root of unity. Then we have
        \begin{itemize}
        \item $|\zeta_{p^n}-1|=\omega^{\fra{p^{n-1}}}$;
        \item if $x\in \diso{a}{|a|}\cap k$, then $\loga(x)=0$ if and
          only if $x=a\zeta_{p^n}$.
        \end{itemize}
      \end{Lem}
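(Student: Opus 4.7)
The plan is to handle the two bullet points separately, both reducing to the unique extension of the absolute value to $k^{alg}$ together with the definition $\omega=|p|^{\fra{p-1}}$. Note first that the formula $|\zeta_{p^n}-1|=\omega^{\fra{p^{n-1}}}$ is nontrivial only when $n\ge 1$ and $\zeta_{p^n}$ is a \emph{primitive} $p^n$-th root of unity; I shall work under this hypothesis.

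For the first item, I would invoke the cyclotomic polynomial $\Phi_{p^n}(T)=(T^{p^n}-1)/(T^{p^{n-1}}-1)$, which is the minimal polynomial of $\zeta_{p^n}$ over $\QQ\subset k$ and has degree $\phi(p^n)=p^{n-1}(p-1)$. A direct evaluation gives $\Phi_{p^n}(1)=p$, and up to sign this equals the norm of $\zeta_{p^n}-1$, i.e.\ the product of its $p^{n-1}(p-1)$ Galois conjugates in $k^{alg}$. Since the absolute value extends uniquely to $k^{alg}$, all these conjugates share the same modulus, whence $|\zeta_{p^n}-1|^{p^{n-1}(p-1)}=|p|$, so that $|\zeta_{p^n}-1|=|p|^{\fra{p^{n-1}(p-1)}}=\omega^{\fra{p^{n-1}}}$ by the very definition of $\omega$.

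For the second item, the substitution $u=(T-a)/a$ identifies $\loga(T)$ with the classical $p$-adic logarithm $\Log{1}(T/a)$ at $1$; explicitly $\loga(x)=\Log{1}(x/a)$ for $x\in\diso{a}{|a|}$. The claim thus reduces to identifying the kernel of $\Log{1}$ on $\diso{1}{1}\cap k$ with the $p$-power roots of unity. The inclusion $\supseteq$ is immediate: since $\mathrm{char}(k)=0$ and $\Log{1}$ is a group homomorphism, $p^n\Log{1}(\zeta_{p^n})=\Log{1}(\zeta_{p^n}^{p^n})=\Log{1}(1)=0$ forces $\Log{1}(\zeta_{p^n})=0$. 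For the reverse inclusion, Lemma~\ref{sec:logarithm-4} shows that $\Log{1}$ is injective on the smaller disk $\diso{1}{\omega}$, so it suffices to show that for any $z\in\diso{1}{1}\cap k$ there exists $n$ with $z^{p^n}\in\diso{1}{\omega}$: then $\Log{1}(z)=0$ forces $z^{p^n}=1$, making $z$ a $p$-power root of unity. This follows from iterating the ultrametric binomial bound $|z^p-1|\le\max(|p|\cdot|z-1|,\,|z-1|^p)$, which uses $|\binom{p}{k}|=|p|$ for $1\le k\le p-1$; since $|z-1|<1$ strictly, in all three cases according to whether $|z-1|$ is less than, equal to, or greater than $\omega$, repeated application drives $|z^{p^n}-1|$ strictly below $\omega$ in finitely many steps, which is the only delicate (but routine) verification required.
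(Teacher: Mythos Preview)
Your argument is correct and, for the second item, follows exactly the paper's route: reduce $\loga$ to $\Log{1}$, use the homomorphism property for the inclusion $\supseteq$, and for $\subseteq$ combine the injectivity of $\Log{1}$ on $\diso{1}{\omega}$ (Lemma~\ref{sec:logarithm-4}) with the binomial estimate $|z^p-1|\le\max(|p|\,|z-1|,|z-1|^p)$ to force some $p$-th power of $z$ into that smaller disk. For the first item the paper actually gives no proof at all, so your cyclotomic-polynomial computation (equivalently, the observation that $\Phi_{p^n}(T+1)$ is Eisenstein, hence has a one-slope Newton polygon) supplies what the paper leaves implicit.
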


      \begin{proof}
        It is easy to see that $\loga(a\zeta_{p^n})=0$. Indeed,
        \begin{equation}\loga(a\zeta_{p^n})=\Log{1}(\zeta_{p^n})=\fra{p^n}\Log{1}(1)=0.\end{equation}
        Since $\loga(T)=\Log{1}(\frac{T}{a})$, it is enough to show
        that $\Log{1}(x)=0$ if and only if $ x=\zeta_{p^n}$. By
        Lemma~\ref{sec:logarithm-4}, we have  $\Log{1}(x)=0$ if and
        only if $x=1$, if $x\in \diso{1}{\omega}\cap k$. Now let $x\in \diso{1}{1}\cap k$,
        we have
        \begin{equation}x^p-1=(x-1)^p+p(x-1)\sum_{i=1}^{p-1}p\-1\binom{p}{i}(x-1)^{i-1}.\end{equation}
        Therefore,
        \begin{equation}|x^p-1|\leq \max(|p||x-1|,|x-1|^p).\end{equation}
        Hence, there exists $n\in \NN$ such that
        $|x^{p^n}-1|<\omega$. Since $\Log{1}(x^{p^n})=p^n\Log{1}(x)$,
        if $\Log{1}(x)=0$ then $x^{p^n}=1$. The result follows.  
      \end{proof}
        \begin{prop}\label{sec:logarithm}
          Assume that $k$ is algebraically closed and $\crk=p>0$. Let $\disf{b}{r}$ be a  closed sub-disk of  $\diso{a}{|a|}$. Then:
          \begin{itemize}
          \item The logarithm function induces an étale cover $\diso{a}{|a|}\to \Ak$. 
          \item $\loga(\disf{b}{r})=\disf{\loga(b)}{\varphi(r)}$ where
            \begin{equation}\Fonction{\varphi:[0,|a|)}{\R+}{r}{|\Log{b}(x_{b,r})|}.\end{equation} 
          \item The function $\phi$ depends only on the choice of the radius of the
            disk $\disf{b}{r}$. In particular, it is an increasing
            continuous function and piecewise
            logarithmically affine on $[0,|a|)$ and 
          \item  $\varphi(|a|\omega^{\frac{1}{p^{n}}})=\frac{\omega}{|p|^{n}}$, where $n\in \NN$.
          \item If $|a|\omega^{\frac{1}{p^{n-1}}}\leq r
            <|a|\omega^{\frac{1}{p^{n}}}$, then
            $\loga\-1(\loga(b))\cap\disf{b}{r}=\{b\zeta_{p^n}^i| 0\leq i\leq p^n-1\}$ where
            $\zeta_{p^n}$ is a $p^n$th root of unity.
            
          \end{itemize}
        \end{prop}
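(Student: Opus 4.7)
The plan is to prove the five bullet points largely in order, exploiting three tools: the explicit derivative of $\loga$, the translation identity of Lemma~\ref{sec:logarithm-3}, and the local isomorphism of Lemma~\ref{sec:logarithm-4} together with the root-of-unity calculation of Lemma~\ref{sec:logarithm-5}.

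First I would establish the étale-cover property. By the series defining $\loga$, termwise differentiation gives $\frac{d\Log{a}}{dT}(T)=\frac{1}{T}$, which is a unit in $\fdiso{a}{|a|}$ since every point of $\diso{a}{|a|}$ satisfies $|T|=|a|\ne 0$. By Remark~\ref{sec:sheaf-diff-etale-2} the restriction to any affinoid subdomain is étale, and Lemma~\ref{sec:logarithm-4} provides the local inverse $\exp_a$ on $\diso{a}{|a|\omega}$. To extend the cover property to all of $\Ak$ one iteratively composes translations by $p^n$-th roots of unity (justified at the last step of the proof) with the small-disk isomorphism, so every rigid point of $\Ak$ has preimages, and étaleness propagates by the derivative computation.

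Next I would compute $\varphi$ and the image of $\disf{b}{r}$. By Lemma~\ref{sec:logarithm-3}, $\Log{b}=\loga-\loga(b)$, so $|\loga(y)-\loga(b)|=|\Log{b}(y)|$ for any $y\in\disf{b}{r}$. The maximum modulus principle on the affinoid $\disf{b}{r}$ gives $\sup_{y}|\Log{b}(y)|=|\Log{b}(x_{b,r})|=\varphi(r)$, which shows the inclusion $\loga(\disf{b}{r})\subset\disf{\loga(b)}{\varphi(r)}$. The series expansion of $\Log{b}$ at $x_{b,r}$ then yields $\varphi(r)=\sup_{n\ge 1}(r/|b|)^n/|n|$; writing $n=p^km$ with $p\nmid m$ and using $|n|=|p|^k$, the supremum is attained on the subsequence $n=p^k$, so $\varphi(r)=\sup_{k\ge 0}(r/|a|)^{p^k}/|p|^k$. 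On each interval $\bigl[|a|\omega^{1/p^{k-1}},|a|\omega^{1/p^k}\bigr]$ a single value of $k$ dominates, which exhibits $\varphi$ as a continuous, strictly increasing, piecewise logarithmically affine function depending only on $r$; evaluation at $r=|a|\omega^{1/p^n}$ gives $\varphi(r)=\omega/|p|^n$, proving the fourth bullet. Surjectivity onto $\disf{\loga(b)}{\varphi(r)}$ is then obtained by exhausting $\disf{b}{r}$ as an increasing union of the small disks on which $\exp_a$ inverts $\loga$, combined with the fact that $\loga$ factors through multiplication by roots of unity.

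Finally I would determine the fiber $\loga\-1(\loga(b))\cap\disf{b}{r}$. The equation $\loga(y)=\loga(b)$ is equivalent to $\Log{b}(y)=0$; by Lemma~\ref{sec:logarithm-5} (applied with base $b$, using $|b|=|a|$) the rigid zeros of $\Log{b}$ in $\diso{b}{|b|}$ are precisely the points $b\zeta$ for $\zeta$ any $p^m$-th root of unity. For a primitive $p^m$-th root $\zeta$ one has $|b\zeta-b|=|a|\omega^{1/p^{m-1}}$, so the condition $b\zeta\in\disf{b}{r}$ with $|a|\omega^{1/p^{n-1}}\le r<|a|\omega^{1/p^n}$ forces $m\le n$, and conversely all $p^n$-th roots of unity qualify. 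Since $\{\zeta_{p^n}^i:0\le i\le p^n-1\}$ is exactly the group of $p^n$-th roots of unity, this yields the claimed enumeration. The degree count $p^n$ matches the value of $\varphi$ through the balance $\varphi(r)\cdot p^n\sim\text{const}$ that emerges from the piecewise formula, which closes the argument and also confirms that no non-rigid preimages are missed.

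The main obstacle is the surjectivity half of the image formula, i.e.\ proving $\loga(\disf{b}{r})=\disf{\loga(b)}{\varphi(r)}$ rather than mere inclusion; handling it cleanly requires a bootstrap from the isomorphism on the small disk $\diso{b}{|a|\omega}$ to successively larger disks, which is essentially equivalent to identifying the full fiber in the fifth bullet. I would therefore prove bullets (2)--(5) simultaneously by induction on $n$, using at each step that multiplication by $\zeta_{p^n}$ permutes the preimages already exhibited, so that covering an outer annulus reduces to a case already handled on the inner disk.
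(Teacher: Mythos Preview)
Your approach is correct and essentially the same as the paper's: derivative $1/T$ for \'etaleness via Remark~\ref{sec:sheaf-diff-etale-2}, the translation identity of Lemma~\ref{sec:logarithm-3} to rewrite the radius as $|\Log{b}(x_{b,r})|$, the explicit Gauss-norm computation $\varphi(r)=\max_{i\ge 1}|i|^{-1}(r/|a|)^i$ for the piecewise log-affine behaviour and the special values, and Lemma~\ref{sec:logarithm-5} together with $|\zeta_{p^m}-1|=\omega^{1/p^{m-1}}$ for the fibre count.

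The one difference is that you flag surjectivity of $\loga$ onto $\disf{\loga(b)}{\varphi(r)}$ as an obstacle and propose an inductive bootstrap through roots of unity, whereas the paper simply invokes the standard fact that a nonconstant analytic function on a closed Berkovich disk maps it onto a closed disk of radius equal to the value at the Gauss point. Your induction would work, but it is unnecessary: once you know $\loga$ is \'etale and have the radius formula, openness of \'etale maps plus the maximum modulus principle already gives the full image (or one can appeal directly to the Newton-polygon description of images of disks). If you keep your argument, be aware that your sketch of the \'etale-cover step (``iteratively compose translations by $p^n$-th roots of unity \ldots\ so every rigid point of $\Ak$ has preimages'') is circular as written, since surjectivity onto $\Ak$ is exactly what you are trying to establish; it follows cleanly instead from $\varphi(r)\to\infty$ as $r\to|a|^-$.
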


        \begin{proof}~\\
          \begin{itemize}
          \item Since $\dT\Log{a}(T)=\frac{1}{T}$ is invertible in
            $\fdiso{a}{|a|}$, by Remark~\ref{sec:sheaf-diff-etale-2}
            $\loga$ is locally étale. Hence, it is an étale cover.
      \item We know that the image of the disk $\disf{b}{r}$ by the analytic
        map $\loga$ is the disk $\disf{\loga(b)}{\phi(r)}$,
        with radius equal to
        $\phi(r)=|\loga(x_{b,r})-\loga(b)|$. By
        Lemma~\ref{sec:logarithm-3} we obtain
        $\varphi(r)=|\Log{b}(x_{b,r})|$.
      \item Since $|b|=|a|$ and $\varphi(r)=|\Log{b}(x_{b,r})|$, by construction it depends only on the
        value $r$. Since $\Log{b}$ is an analytic map well defined on
        $(b,x_{a,|a|})$, the map $\phi$ is an increasing continuous
        function piecewise logarithmically affine on $[0,|a|)$.
      \item We have $\phi(|a|\omega^{\fra{p^n}})=\max_{i\in \NN\setminus\{0\}}|i|\-1\omega^{\frac{i}{p^n}}=\frac{\omega}{|p^n|}$.
        
      \item Since $\Log{b}=\loga-\loga(b)$, we conclude by Lemma~\ref{sec:logarithm-5}.\qedhere 
          \end{itemize}
        \end{proof}

        \begin{Pro}\label{sec:logarithm-1}

          Assume that $\crk=p>0$. Let $y\in\diso{a}{|a|}$ and $x=\loga(y)$,
          then we have:
          \begin{itemize}
            \item If $0<r_k(y)<|a|\omega$, then $[\Hy :\Hx]=1$
            \item If $|a|\omega^{\frac{1}{p^{n-1}}}\leq r_k(y)
              <|a|\omega^{\frac{1}{p^{n}}}$ with
              $n\in\NN\setminus\{0\}$, then $[\Hy:\Hx]=p^n$.              
            \end{itemize}
          \end{Pro}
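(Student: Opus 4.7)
The plan is to reduce the computation of the extension degree $[\Hy:\Hx]$ to a fiber cardinality via Proposition~\ref{sec:sheaf-diff-etale-1}, after restricting the logarithm to a finite étale cover between suitable closed sub-disks; the count is then read off from the fifth bullet of Proposition~\ref{sec:logarithm}. To get started, I would choose a rigid point $b\in k$ close to $y$ with $|b|=|a|$, and a radius $r\geq r_k(y)$ in the same dichotomy interval as $r_k(y)$, so that $y\in\disf{b}{r}\subset\diso{a}{|a|}$. Using Lemma~\ref{sec:logarithm-3}, $\Log{b}=\loga-\loga(b)$ shows $\loga(y)$ and $\Log{b}(y)$ differ by an element of $k$, so $\mathscr{H}(\loga(y))=\mathscr{H}(\Log{b}(y))=\Hx$, and it suffices to work with $\Log{b}$, which has $b$ as a distinguished rigid preimage of $0$.

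In the first case $0<r_k(y)<|a|\omega$, take $r<|a|\omega$. By Lemma~\ref{sec:logarithm-4}, $\Log{b}$ restricts to an analytic isomorphism $\diso{b}{|b|\omega}\to\diso{0}{\omega}$, and $\disf{b}{r}\subset\diso{b}{|b|\omega}$, so $\Log{b}$ maps $\disf{b}{r}$ bijectively onto $\disf{0}{\varphi(r)}$; the fiber over $\Log{b}(y)$ is a singleton, and Proposition~\ref{sec:sheaf-diff-etale-1} (or simply the isomorphism) gives $[\Hy:\Hx]=1$. In the second case $|a|\omega^{1/p^{n-1}}\leq r_k(y)<|a|\omega^{1/p^n}$, I would take $r$ in the same range. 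By the first two and fifth bullets of Proposition~\ref{sec:logarithm}, the restriction $\Log{b}:\disf{b}{r}\to\disf{0}{\varphi(r)}$ is an étale map between affinoid domains of $\Ak$ whose fiber over $0$ inside $\disf{b}{r}$ is $\{b\zeta_{p^n}^i:0\leq i\leq p^n-1\}$, of cardinality $p^n$.

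To finish by applying Proposition~\ref{sec:sheaf-diff-etale-1} and conclude $[\Hy:\Hx]=p^n$, the essential verification is that $\disf{b}{r}$ is a connected component of $\Log{b}^{-1}(\disf{0}{\varphi(r)})$ in $\diso{b}{|b|}$, so that the restriction is a genuine finite étale cover. Any rigid preimage $b'$ of $0$ outside $\disf{b}{r}$ is of the form $b\zeta_{p^m}^j$ with $m\geq n+1$ and $\zeta_{p^m}^j\notin\{\zeta_{p^n}^i\}$; by the fifth bullet applied at radius $|a|\omega^{1/p^n}$, such $b'$ satisfies $|b'-b|\geq|a|\omega^{1/p^n}>r$. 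Combined with the standard fact that the preimage of a closed disk under an étale cover decomposes into disjoint closed disks of the same radius centered at the rigid preimages, this will show $\disf{b}{r}$ is a connected component of $\Log{b}^{-1}(\disf{0}{\varphi(r)})$, and the fiber count $p^n$ then yields the claimed degree.

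The main obstacle I expect is precisely this connected-component verification: one must show cleanly that the étale cover $\Log{b}:\diso{b}{|b|}\to\A{k}$, restricted over a closed disk $\disf{0}{\varphi(r)}$, splits into disjoint closed disks of radius $r$. This requires unfolding the power-series form of $\Log{b}$ and its inverse $\exp_b$ together with the ultrametric estimates on $|\zeta_{p^m}^j-1|$ from Lemma~\ref{sec:logarithm-5}; the boundary case $r_k(y)=|a|\omega^{1/p^{n-1}}$, where a fresh layer of $p^n$-th roots of unity just barely enters $\disf{b}{r}$, also deserves a careful check to ensure the degree is $p^n$ and not $p^{n-1}$.
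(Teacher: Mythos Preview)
Your approach is correct and is exactly what the paper's one-line proof (``It is a consequence of Propositions~\ref{sec:logarithm} and \ref{sec:sheaf-diff-etale-1}'') intends: count the fibre over a rigid point using the last item of Properties~\ref{sec:logarithm} and convert this into the degree $[\Hy:\Hx]$ via Proposition~\ref{sec:sheaf-diff-etale-1}. The connected-component verification you flag is the only thing to check and your sketch is sound; note in particular that since $\varphi$ is strictly increasing and depends only on the radius, the disks $\disf{b\zeta}{r}$ for $\zeta$ a $p^\infty$-th root of unity all map onto $\disf{0}{\varphi(r)}$ and any two are either equal or disjoint, which together with the distance estimate $|b\zeta-b|\geq |a|\omega^{1/p^n}$ for $\zeta$ not a $p^n$-th root of unity gives the decomposition you need.
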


          \begin{proof}
            It is a consequence of Propositions~\ref{sec:logarithm} and \ref{sec:sheaf-diff-etale-1}.  
          \end{proof}

      \subsubsection{\bf Power map}\label{sec:power-map}

       For the details of this part we refer the reader for example to
       \cite[Section 5]{and} and \cite[Chapter 10]{Ked}. We define the
       $n$th power map $\Delta_n: \Ak\to \Ak$ to be the analytic map
      associated to the ring morphism:
       \begin{equation}\Fonction{ k[T]}{k[T]}{ T}{T^n }\end{equation}

      \paragraph{\bf Frobenius map}\label{sec:frobenius-map}
      We assume here that $\crk=p$,  with
      $p>0$.\\
      We define the Frobenius map to be the $p$th power map. We will
      denote it by $\forp$.
        \begin{prop}\label{sec:forbenius-map}
          Let $a\in k$ and $r\in \R+^*$. The Frobenius map satisfies
          the following properties:
          \begin{itemize}
          \item It induces an finite étale morphism $\Ak\setminus \{0\} \to
            \Ak \setminus\{0\}$.
          \item $\forp(\disf{a}{r})=\disf{a^p}{\phi(a,r)}$ where
              $\phi(a,r)=\max(|p||a|^{p-1}r, r^p)$.
            \item $\forp (x_{a,r})=x_{a^p,\phi(a,r)}$.             
          \end{itemize}
        \end{prop}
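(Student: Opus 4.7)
The plan is to verify the three items by direct computation on $k[T]$, starting from the defining ring morphism $T \mapsto T^p$.

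First, for the finite étale property on $\Ak \setminus \{0\}$: finiteness is immediate because $k[T]$ is free of rank $p$ over itself via $T \mapsto T^p$ with basis $\{1, T, \dots, T^{p-1}\}$, and this remains true after localization, so $\forp$ restricts to a finite morphism $\Ak \setminus \{0\} \to \Ak \setminus \{0\}$. For étaleness I would apply Remark~\ref{sec:sheaf-diff-etale-2}: it suffices to show that the image of $T$ under $\phi^\#$, namely $T^p$, has invertible derivative. Since $\dT(T^p) = p\, T^{p-1}$ and $k$ has characteristic zero (so $p$ is a nonzero element of $k$), while $T$ is invertible on $\Ak \setminus \{0\}$, this derivative is invertible in $\cO(\Ak \setminus \{0\})$.

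For items 2 and 3, the central computation is the Gauss norm of $T^p - a^p$ at $x_{a,r}$. Writing $T = a + (T-a)$ and expanding gives
\begin{equation*}
T^p - a^p = \sum_{i=1}^p \binom{p}{i}\, a^{p-i}(T-a)^i,
\end{equation*}
so by the ultrametric property,
\begin{equation*}
|T^p - a^p|_{x_{a,r}} = \max_{1 \le i \le p} \left|\binom{p}{i}\right|\, |a|^{p-i}\, r^i.
\end{equation*}
Using $\bigl|\binom{p}{i}\bigr| \le |p|$ for $1 \le i \le p-1$ and $\binom{p}{p}=1$, a case split on whether $r \le \omega|a|$ or $r > \omega|a|$, with $\omega = |p|^{1/(p-1)}$, shows this maximum collapses to $\max(|p||a|^{p-1}r,\, r^p) = \phi(a,r)$. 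Applying the same expansion to an arbitrary polynomial $P(T) \in k[T]$ yields $|P(T)|_{\forp(x_{a,r})} = |P(T^p)|_{x_{a,r}}$, and this seminorm coincides with the Gauss norm at $x_{a^p, \phi(a,r)}$ by the computation just made, proving item 3.

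Finally, for item 2, the inclusion $\forp(\disf{a}{r}) \subseteq \disf{a^p}{\phi(a,r)}$ follows from applying the same ultrametric bound to rigid points $b \in k$ with $|b-a| \le r$, extended by continuity to the full Berkovich disk. The reverse inclusion uses properness of $\forp$ (as a finite morphism): its image is a compact connected subset of $\Ak$ containing both $a^p$ and the Shilov boundary point $x_{a^p, \phi(a,r)}$, hence must coincide with $\disf{a^p}{\phi(a,r)}$. The most delicate step, and what I expect to be the main obstacle, is the case analysis establishing that the intermediate binomial terms $\binom{p}{i} a^{p-i} r^i$ for $1 < i < p$ never exceed $\max(|p||a|^{p-1}r, r^p)$; this is precisely where the constant $\omega = |p|^{1/(p-1)}$ enters the formula for $\phi$.
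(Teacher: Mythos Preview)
The paper does not actually prove this statement; it is stated as a list of properties in the appendix with the remark that ``for the details of this part we refer the reader for example to \cite[Section~5]{and} and \cite[Chapter~10]{Ked}.'' So there is no in-paper proof to compare against, and your task is simply to give a correct self-contained argument.

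Your approach is the standard one and the core computation is right: the binomial expansion of $T^p - a^p$ and the observation that $|\binom{p}{i}| = |p|$ for $1 \le i \le p-1$ reduce the Gauss norm to $\max(|p||a|^{p-1}r, r^p)$. The case analysis you flag as ``the most delicate step'' is in fact easy once you note that $i \mapsto |p|\,|a|^{p-i} r^i$ is log-linear in $i$, so its maximum over $\{1,\dots,p-1\}$ is at an endpoint; comparing those endpoints to $r^p$ is then immediate.

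There is, however, a genuine gap in your reverse inclusion for item~2. You argue that the image is compact, connected, and contains both $a^p$ and the Shilov boundary point $x_{a^p,\phi(a,r)}$, ``hence must coincide with $\disf{a^p}{\phi(a,r)}$.'' This does not follow: a compact connected subset of a Berkovich disk containing the center and the boundary point need only contain the segment $[a^p, x_{a^p,\phi(a,r)}]$, not the other branches. The fix is to observe that $\forp$ is also \emph{open} --- either because it is finite flat (as $k[T]$ is free over $k[T^p]$), or because it is \'etale on $\Ak\setminus\{0\}$ and one handles $0$ separately. Then the image is open and closed in the connected target $\disf{a^p}{\phi(a,r)}$, hence equal to it.

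A smaller point: in item~3, your sentence ``applying the same expansion to an arbitrary polynomial $P(T)$'' is doing more work than the preceding computation justifies. You have only computed $|T^p - a^p|_{x_{a,r}}$; to identify $\forp(x_{a,r})$ with $x_{a^p,\phi(a,r)}$ you need this for all $|T^p - c|_{x_{a,r}}$ with $c \in k$, or else invoke the standard fact that a polynomial $f$ sends $x_{a,r}$ to $x_{f(a),R}$ with $R = \max_{i\ge 1} |f^{(i)}(a)/i!|\, r^i$. Either route is fine, but say which one you are taking.
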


        \begin{Pro}\label{sec:forbenius-map-1}
        Let $y:=x_{a,r}$ with $r>0$. We set $x=\forp (y)$. Then we have:
        \begin{itemize}
        \item If $r<\omega |a|$, then $[\Hy:\Hx]=1$.
        \item If $r\geq\omega |a|$, then $[\Hy:\Hx]=p$.
          
        \end{itemize}
        
      \end{Pro}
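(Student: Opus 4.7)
\medskip

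The strategy is to apply Proposition~\ref{sec:sheaf-diff-etale-1} to the Frobenius cover $\forp : \Ak \setminus \{0\} \to \Ak \setminus \{0\}$, which is finite étale by Proposition~\ref{sec:forbenius-map}. This reduces the computation of $[\Hy : \Hx]$ to counting the rigid preimages of a point $b \in \forp(U) \cap k$ lying inside a suitable connected neighborhood $U$ of $y$ that is a connected component of $\forp^{-1}(\forp(U))$. The key arithmetic input is that for any $b_0 \in k \setminus \{0\}$ the fiber $\forp^{-1}(\forp(b_0))$ equals $\{b_0, b_0 \zeta_p, \dots, b_0 \zeta_p^{p-1}\}$, where $\zeta_p$ is a primitive $p$-th root of unity, and by Lemma~\ref{sec:logarithm-5} one has $|\zeta_p^i - \zeta_p^j| = \omega$ for $i \not\equiv j \pmod p$, so these $p$ preimages lie at pairwise distance exactly $|b_0|\omega$.

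In the case $r < \omega|a|$, I would take $U := \diso{a}{\omega|a|}$, which contains $y$. The $p$ open disks $\diso{a\zeta_p^i}{\omega|a|}$ for $0 \leq i \leq p-1$ are pairwise disjoint (their centers lie at mutual distance $\omega|a|$, equal to their radius, and the disks are open), and their union equals $\forp^{-1}(\forp(U))$, so $U$ is a connected component. For $b \in U \cap k$ one has $|b| = |a|$, hence the other preimages $b\zeta_p^i$ with $i \neq 0$ satisfy $|b\zeta_p^i - a| = \omega|a|$ and lie outside the open disk $U$. Thus $\#(U \cap \forp^{-1}(\forp(b))) = 1$ and $[\Hy : \Hx] = 1$.

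In the case $r \geq \omega|a|$, assuming first that $a \neq 0$ and $r < |a|$, I would choose $R$ with $r < R < |a|$ and set $U := \diso{a}{R}$. Since $R \geq \omega|a|$, we have $\phi(a, R) = R^p$, so $\forp(U) = \diso{a^p}{R^p}$. For any $z \in U$, which forces $|z| = |a|$ since $R < |a|$, the Galois conjugates $z\zeta_p^i$ all satisfy
\[
|z\zeta_p^i - a| \leq \max(|z|\omega, |z - a|) = \max(\omega|a|, |z - a|) < R,
\]
where both terms of the max are strictly less than $R$ (this is the reason for choosing $R > r \geq \omega|a|$ strictly). Hence $\forp^{-1}(\forp(U)) = U$, so $U$ is the (unique) connected component containing $y$, and all $p$ preimages of each rigid $\forp(b)$ lie in $U$, giving $[\Hy : \Hx] = p$. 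The degenerate subcases ($a = 0$, or $a \neq 0$ with $r \geq |a|$, in which $x_{a,r} = x_{0,r}$) are handled analogously by taking $U = \diso{0}{R}$ with $R > r$ and observing that $\forp^{-1}(\forp(U)) = U$.

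The main delicate step is the strict ultrametric estimate in Case~2: one must avoid the borderline $|z\zeta_p^i - a| = R$ that would occur if $R$ were merely equal to $r = \omega|a|$, which is why $R$ is chosen strictly larger than $r$. Everything else is bookkeeping with the explicit formulas for $\phi(a,r)$ from Proposition~\ref{sec:forbenius-map} and the fiber description via $p$-th roots of unity.
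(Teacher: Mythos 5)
The paper states this proposition without proof (it is presented as a summary of standard properties of the Frobenius map), but the parallel statement for the logarithm, Proposition~\ref{sec:logarithm-1}, is proved exactly as you do, by combining the explicit description of the map with Proposition~\ref{sec:sheaf-diff-etale-1}; your argument is correct and matches the intended route, including the key facts $|\zeta_p^i-\zeta_p^j|=\omega$ and the careful choice of $R>r\geq\omega|a|$ in the second case. The one imprecision is in the degenerate subcase: $U=\diso{0}{R}$ contains the point $0$, which lies outside the étale locus $\Ak\setminus\{0\}$ of $\forp$, so Proposition~\ref{sec:sheaf-diff-etale-1} does not apply to that $U$ as written (indeed the fiber over $0$ has a single point, so the preimage count is not constant on $\forp(U)\cap k$). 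The fix is immediate: replace $U$ by the punctured disk $\diso{0}{R}\setminus\{0\}$ or by an open annulus $\couo{0}{R_1}{R_2}$ with $R_1<r<R_2$, for which $\forp\-1(\forp(U))=U$ still holds and every nonzero rigid point of $\forp(U)$ has exactly $p$ preimages in $U$.
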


\begin{cor}                     
        Let $y:=x_{0,r}$, with $r>0$. Let $n\in\NN\setminus\{0\}$, we
         set $x=(\forp )^n(y)$. Then we have $[\Hy :\Hx]=p^n$.
     \end{cor}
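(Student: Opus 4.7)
The plan is to reduce this to $n$ applications of Proposition~\ref{sec:forbenius-map-1} via the tower-of-extensions formula, after making the images of $y$ under iterates of $\forp$ completely explicit.

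First I would compute $\forp^{n}(y)$ explicitly. By Proposition~\ref{sec:forbenius-map}, we have $\forp(x_{a,r})=x_{a^p,\phi(a,r)}$ with $\phi(a,r)=\max(|p|\,|a|^{p-1}r,\;r^p)$. Specializing to $a=0$, this gives $\phi(0,r)=r^p$, hence $\forp(x_{0,r})=x_{0,r^p}$. An immediate induction on $n$ yields
\begin{equation}
\forp^{n}(x_{0,r})=x_{0,r^{p^n}},
\end{equation}
so in our setting $x=x_{0,r^{p^n}}$.

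Next, I would set $y_i:=\forp^{i}(y)=x_{0,r^{p^i}}$ for $0\leq i\leq n$, so that $y_0=y$ and $y_n=x$. For each $i$, the map $\forp:y_i\mapsto y_{i+1}$ falls under Proposition~\ref{sec:forbenius-map-1} with the center $a=0$: since $r^{p^i}>0=\omega\cdot|0|$, we are in the regime $r^{p^i}\geq \omega|a|$, so $[\h{y_i}:\h{y_{i+1}}]=p$.

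Finally, since $\h{y_n}\subset\h{y_{n-1}}\subset\cdots\subset\h{y_0}$ is a tower of finite field extensions, multiplicativity of degrees gives
\begin{equation}
[\h{y}:\h{x}]=\prod_{i=0}^{n-1}[\h{y_i}:\h{y_{i+1}}]=p^n,
\end{equation}
which is the claim. There is no real obstacle here beyond the identification $\forp^{n}(x_{0,r})=x_{0,r^{p^n}}$, which rests on the case distinction disappearing when $a=0$; the rest is the tower formula applied to the result already recorded in Proposition~\ref{sec:forbenius-map-1}. One should also verify that $\forp$ is indeed finite at each stage (so that the tower formula applies with finite degrees), but this is guaranteed by the first bullet of Proposition~\ref{sec:forbenius-map}, since $y_i\in\Ak\setminus\{0\}$ for all $i$ as $r>0$.
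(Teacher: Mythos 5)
Your proof is correct and is exactly the argument the paper intends: the corollary is stated without proof precisely because it follows from iterating Proposition~\ref{sec:forbenius-map-1} with $a=0$ (where $r^{p^i}>0=\omega|0|$ forces the degree-$p$ case at every stage) together with the multiplicativity of degrees in the tower $\h{y_n}\subset\cdots\subset\h{y_0}$. Your explicit identification $\forp^n(x_{0,r})=x_{0,r^{p^n}}$ and the finiteness check via the first bullet of Proposition~\ref{sec:forbenius-map} are the right supporting details.
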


      \paragraph{\bf Tame case}\label{sec:tame-case-2}
      Let $n\in \NN\setminus\{0\}$. We assume that $n$ is coprime to $\crk$. 
      
\begin{prop}\label{sec:tame-case}
          Let $a\in k$ and $r\in \R+^*$. The $n$th power map satisfies
          the following properties:
          \begin{itemize}
          \item It induces a finite étale morphism $\Ak\setminus \{0\} \to
            \Ak \setminus\{0\}$.
          \item $\Delta_n(\disf{a}{r})=\disf{a^n}{\phi(a,r)}$ where
              $\phi(a,r)=\max(|a|^{n-1}r, r^n)$.
            \item $ \Delta_n(x_{a,r})=x_{a^n,\phi(a,r)}$.             
          \end{itemize}
        \end{prop}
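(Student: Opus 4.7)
The plan is to verify the three bullets in order, all of them rooted in the observation that the tameness hypothesis $(n,\crk)=1$ forces $|n|=1$, and in particular $|\binom{n}{1}|=|\binom{n}{n}|=1$. This parallels the strategy used for the Frobenius map in Proposition~\ref{sec:forbenius-map}.

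For the first bullet, finiteness follows from the fact that $\Delta_n$ corresponds to the degree-$n$ ring morphism $k[T]\to k[T]$, $T\mapsto T^n$. For étaleness on $\Ak\setminus\{0\}$ I apply Remark~\ref{sec:sheaf-diff-etale-2}: the derivative $\D{T}(T^n)=nT^{n-1}$ is invertible in $\cO(Y)$ for any affinoid $Y\subset\Ak\setminus\{0\}$, since $|n|=1$ and $T$ is a unit there.

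For the third bullet, I compute the pull-back seminorm at $x_{a,r}$. Expanding $T^n-a^n=\sum_{i=1}^n\binom{n}{i}a^{n-i}(T-a)^i$, its $(T-a)$-Gauss norm of radius $r$ equals $\max_{1\leq i\leq n}|\binom{n}{i}||a|^{n-i}r^i$. Since $|\binom{n}{i}|\leq 1$ and the sequence $|a|^{n-i}r^i$ is monotone in $i$, its maximum equals $\max(|a|^{n-1}r,r^n)=\phi(a,r)$; tameness ensures this maximum is attained (by the $i=1$ term when $r\leq|a|$, by the $i=n$ term when $r\geq|a|$). The same expansion applied to an arbitrary polynomial $P(T-a^n)$ then identifies $\Delta_n(x_{a,r})$ as the Gauss point $x_{a^n,\phi(a,r)}$.

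For the second bullet, the forward inclusion $\Delta_n(\disf{a}{r})\subset\disf{a^n}{\phi(a,r)}$ is immediate from the same expansion applied to any $z$ with $|T(z)-a|\leq r$. For the reverse inclusion, I first settle rigid points: given $w\in k$ with $|w-a^n|\leq\phi(a,r)$, I want $z\in k$ with $z^n=w$ and $|z-a|\leq r$. When $r\geq|a|$, the target disk is $\disf{0}{r^n}$ and the source disk is $\disf{0}{r}$, so any $n$-th root of $w$ automatically works. When $r<|a|$, I study the Newton polygon of $T^n-w$ expanded around $a$: the coefficients $c_0=a^n-w$ satisfying $|c_0|\leq|a|^{n-1}r$, $c_1=na^{n-1}$ of norm exactly $|a|^{n-1}$ (by tameness), and $c_n=1$ produce a polygon with a short segment of slope $\leq\log r$ (horizontal length $1$) and a long segment of slope $\log|a|$ (horizontal length $n-1$). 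Reading off valuations yields exactly one root with $|z-a|\leq r$ and $n-1$ roots at distance $|a|$ from $a$. Since $\Delta_n$ is finite and hence proper, the image $\Delta_n(\disf{a}{r})$ is compact; containing every rigid point of $\disf{a^n}{\phi(a,r)}$, a dense subset, it must coincide with $\disf{a^n}{\phi(a,r)}$.

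The main obstacle is the regime $r<|a|$ in the second bullet, where one must show that exactly one of the $n$ preimages of $w$ falls into the small disk $\disf{a}{r}$ rather than at the ambient distance $|a|$ from $a$; the tameness hypothesis is used precisely to pin down $|c_1|=|a|^{n-1}$ on the Newton polygon and thereby isolate a single slope with valuation $\geq v(r)$.
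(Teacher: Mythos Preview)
Your argument is correct. Note, however, that the paper does not actually supply a proof of this statement: it appears in the appendix under the environment ``Properties'' and the reader is simply referred to the external sources cited at the beginning of Section~\ref{sec:power-map}. So there is no in-paper proof to compare against; you have written a self-contained argument where the paper gives none.

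Two minor remarks on presentation. First, the phrase ``the same expansion applied to an arbitrary polynomial $P(T-a^n)$'' is a little elliptic. Since $k$ is algebraically closed, the cleanest way to finish the third bullet is to factor $P$ into linear terms and check that $|T^n-c|_{x_{a,r}}=\max(\phi(a,r),|a^n-c|)=|S-c|_{x_{a^n,\phi(a,r)}}$ for every $c\in k$; multiplicativity then pins down the image point. Second, in the density step for the second bullet you do not need properness: $\disf{a}{r}$ is already compact, so its continuous image is compact and hence closed, which is all you use.
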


        \begin{Pro}\label{sec:tame-case-1}
        Let $y:=x_{a,r}$ with $r>0$. We set $x=\Delta_n(y)$, then we have:
        \begin{itemize}
        \item If $r< |a|$, then $[\Hy:\Hx]=1$.
        \item If $r\geq |a|$, then $[\Hy:\Hx]=n$.
          
        \end{itemize}
        
      \end{Pro}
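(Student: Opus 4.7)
The plan is to apply Proposition~\ref{sec:sheaf-diff-etale-1}: for a finite étale cover $\phi: Y \to X$ between analytic domains of $\Ak$ and a point $y \in Y$ with image $x = \phi(y)$, one has $[\Hy : \Hx] = \#(U \cap \phi^{-1}(b))$ for any neighbourhood $U$ of $y$ that is a connected component of $\phi^{-1}(\phi(U))$ and any $b \in \phi(U) \cap k$. Since $n$ is coprime to $\crk$, Proposition~\ref{sec:tame-case} guarantees that $\Delta_n$ restricts to a finite étale morphism on $\Ak \setminus \{0\}$, so it suffices in each case to exhibit a suitable $U$ and count its intersection with the fibre over a $k$-point of $\Delta_n(U)$.

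For the first case ($r < |a|$, which forces $a \ne 0$), I take $U := \diso{a}{|a|}$, an open neighbourhood of $y = x_{a,r}$. A short ultrametric calculation shows that the full preimage of $\Delta_n(U)$ splits as the disjoint union $\bigsqcup_{\zeta^n = 1} \diso{\zeta a}{|a|}$: for a $k$-point $z$ satisfying $|z^n - a^n| < |a|^n$, the factorisation $z^n - a^n = \prod_{\zeta^n = 1} (z - \zeta a)$ together with the estimate $|\zeta a - a| = |a|$ for $\zeta \ne 1$ (valid because the residues of distinct $n$th roots of unity are distinct when $(n,\crk)=1$) forces exactly one factor $z - a$ to be small. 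Hence $U$ is a connected component of the preimage, each fibre over a $k$-point of $\Delta_n(U)$ meets $U$ in exactly one point, and Proposition~\ref{sec:sheaf-diff-etale-1} yields $[\Hy : \Hx] = 1$.

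For the second case ($r \geq |a|$), one has $x_{a,r} = x_{0,r}$, so we may replace $y$ by $x_{0,r}$. Pick $0 < r_1 < r < r_2$ and set $U := \couo{0}{r_1}{r_2}$. Then $\Delta_n(U) = \couo{0}{r_1^n}{r_2^n}$, and for any $k$-point $c \in \Delta_n(U)$ the $n$ distinct roots of $T^n = c$ all have absolute value $|c|^{1/n} \in (r_1, r_2)$, so they all lie in $U$. Thus $U$ is itself the whole preimage of $\Delta_n(U)$, the fibre has cardinality $n$, and we obtain $[\Hy : \Hx] = n$. The only real point to verify in either case is that $U$ is a connected component of $\Delta_n^{-1}(\Delta_n(U))$; this ultrametric separation of the $n$ branches of $T \mapsto T^{1/n}$ is precisely where the tameness hypothesis $(n, \crk) = 1$ enters, since otherwise distinct $n$th roots of unity would collapse in $\rk$, the branches could merge, and $\Delta_n$ would cease to be étale.
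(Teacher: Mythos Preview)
Your proof is correct and follows precisely the approach the paper uses for the analogous statement about the logarithm (Proposition~\ref{sec:logarithm-1}), namely to invoke Proposition~\ref{sec:sheaf-diff-etale-1} and count $k$-points in a fibre over a suitably chosen neighbourhood; the paper itself does not spell out a proof here but refers to the literature, and your argument fills in exactly the expected details.
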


\printbibliography

\textsc{Tinhinane Amina, Azzouz}
\end{document}